\newcommand{\ie}{{\itshape ie} }
\theoremstyle{theorem}
\theoremstyle{remark}
\newtheorem*{*setting}{Setting}
\theoremstyle{remark}
\renewcommand{\O}{\mathcal{O}}
\renewcommand{\ie}{{\itshape i.e.}}
\renewcommand{\cf}{{\itshape cf.\xspace\xspace}}
\DeclareMathOperator{\Sol}{{Sol}}
\newcommand{\RHom}{\myR{\hskip -1.5pt}\sHom}
\renewcommand{\phi}{\varphi}
\renewcommand{\theta}{\vartheta}
\renewcommand{\epsilon}{\varepsilon}
\renewcommand{\to}[1][]{\xrightarrow{\ #1\ }}
\newcommand{\onto}[1][]{\protect{\xrightarrow{\ #1\ }\hspace{-0.8em}\rightarrow}}
\newcommand{\into}[1][]{\lhook \joinrel \xrightarrow{\ #1\ }}
\newcommand{\nil}{\mathrm{nil}}
\begin{document}

\title [$p^{-1}$-linear maps in algebra and geometry]{$p^{-1}$-linear maps in algebra and geometry}
\author{Manuel Blickle}

\address{ Institut f\"ur Mathematik\\ Johannes Gutenberg-Universit\"at Mainz\\55099 Mainz, Germany}
\email{blicklem@uni-mainz.de}

\author{Karl Schwede}
\address{Department of Mathematics\\ The Pennsylvania State University\\ University Park, PA, 16802, USA}
\email{schwede@math.psu.edu}

\thanks{The first author was partially supported by a Heisenberg Fellowship and the SFB/TRR45}
\thanks{The second author was partially supported by the NSF grant DMS \#1064485}

\subjclass[2010]{13A35, 13D45, 14B05, 14B15, 14C20, 14F17, 14F18}
\keywords{}
\maketitle

\begin{abstract}
In this article we survey the basic properties of $p^{-e}$-linear endomorphisms of coherent $\O_X$-modules, \ie~of $\O_X$-linear maps $F_* \sF \to \sG$ where $\sF,\sG$ are $\O_X$-modules and $F$ is the Frobenius of a variety of finite type over a perfect field of characteristic $p > 0$. We emphasize their relevance to commutative algebra, local cohomology and the theory of test ideals on the one hand, and global geometric applications to vanishing theorems and lifting of sections on the other.
\end{abstract}

\setcounter{tocdepth}{1}
\tableofcontents

\section{Introduction}
%\numberwithin{subsection}{theorem}

%In the study of rings and schemes over a field of positive characteristic $p$ the Frobenius morphism (\ie~the $p$th power map) is a rich source of additional structure often providing a substitute for powerful techniques commonly used in complex geometry, such as Hodge theory or Kodaira vanishing, for example.

In this survey we study the basic properties of $p^{-1}$-linear morphisms between coherent sheaves on a scheme $X$ over a perfect field of positive characteristic $p$. If $F \colon X \to X$ is the Frobenius morphism (\ie~the $p$th power map on the structure sheaf) we denote by $F_*$ the restriction functor along $F$ (\cf~\autoref{sec.frobpush}). A $p^{-1}$-linear map is then an $\O_X$-linear map $\phi : F_* \sF \to \sG$ for two $\O_X$-modules $\sF$ and $\sG$. The name stems from the fact that if we view $\phi$ as a map on the underlying sheaves of Abelian groups, $\phi$ satisfies the condition $\phi(r^p f)=r \phi(f)$ for local sections $r \in \O_X$ and $f \in \sF$. In particular, if $r$ has a $p^\mathrm{th}$ root, then we may write this relation as $\phi(rf)=r^{p^{-1}}\phi(f)$.

As an example for a $p^{-1}$-linear map, we start with a splitting of the Frobenius map, that is an $\O_X$-linear map $\phi \colon F_*\O_X \to \O_X$ such that the composition
\[
    \O_X \to[F] F_*\O_X \to[\phi] \O_X
\]
is equal to the identity. The mere existence of such a $\phi$ has strong implications for the local geometry of $X$ (it is reduced, for example). Furthermore, it immediately implies a highly effective version of Serre vanishing: the higher cohomology of \emph{any} ample line bundle vanishes. In the light of such strong implications, it is somewhat surprising that there are varieties of interest that are Frobenius split. For example, regular affine varieties, projective spaces, normal toric varieties, and most prominently flag- and Schubert varieties are Frobenius split. And it was precisely for the latter varieties where the above vanishing yields a simple proof of Kempf's vanishing theorem \cite{MehtaRamanathanFrobeniusSplittingAndCohomologyVanishing}, see also \cite{HaboushAShortProofOfKempf}.  Frobenius split varieties have been extensively studied \cite{BrionKumarFrobeniusSplitting} and in \autoref{sec.FSplittings} we give a detailed account of their theory, explaining some of the more delicate vanishing and extension results, and discussing criteria to decide if a given variety is Frobenius split.

In \autoref{sec.NonSplittings} we show how some of the results and techniques for Frobenius splittings can be extended to more general contexts (where the variety is not $F$-split) to derive similar conclusions (vanishing and extension results).
For example, a systematic use of certain $p^{-1}$-linear maps can replace Kodaira and Kawamata-Viehweg vanishing theorems \cite{KawamataVanishing,ViehwegVanishingTheorems} in some applications, see \autoref{subsec.GlobalConsiderationsForNonSplittings}. These techniques rely on an explicit connection between $p^{-e}$-linear maps $\phi \in \Hom_{\O_X}(F^e_*\sL,\O_X)$ and $\bQ$-divisors $\Delta$ such that $\O_X((p^e-1)(K_X + \Delta)) \cong \sL^{-1}$ which is explained in detail in \autoref{sec.ConnectionsWithDivisors}. Indeed, this correspondence between $p^{-e}$-maps and $\mathbb{Q}$-divisors pervades much of the paper.  This correspondence also provides us with valuable geometric intuition in working with $p^{-e}$-linear maps.

In \autoref{sec.ChangeOfVariety} we state a number of general results on the behavior of $p^{-e}$-linear maps under certain functorial operations, such as pullback along closed immersions, localization, pushforward along a birational map, and finally pullback along a finite map. In all these cases, viewing $p^{-e}$-linear maps as $\bQ$-divisors and performing operations on divisors is the guiding principle.

A second key example of a $p^{-1}$-linear map is the classical Cartier operator $C \colon F_*\omega_X \to \omega_X$ introduced in \cite{CartierUneNouvelle}. There are various guises in which this operator on the dualizing sheaf appears, but most generally one may view it as the \emph{trace of Frobenius} under the duality for finite morphisms, see \autoref{sec.CartierOperator}. The Cartier operator has been extensively studied in connection to residues of differentials in positive characteristic, and plays a crucial role in Deligne and Illusie's \cite{DeligneIllusie_1987} algebraic proof of Kodaira vanishing.

In the final two Sections \ref{sec.CartierModules} and \ref{sec.CartierModLocalCohom} we describe the category of Cartier modules introduced in \cite{BlickleBoeckleCartierModulesFiniteness}. This category consists of coherent $\O_X$-modules $\sF$ equipped with a $p^{-e}$-linear endomorphism, \ie~a $\O_X$-linear map $F^e_*\sF \to \sF$. We show that the Abelian category of Cartier modules satisfies some remarkable properties. Most importantly, Cartier modules have finite length up to nilpotence\footnote{A coherent Cartier module $\sF$ is nilpotent is some power of the structural map is zero.}. Furthermore, Cartier modules are related to a number of other categories which have been extensively used in the study of local cohomology in positive characteristic. Hence the finiteness results about Cartier modules imply and generalize previous finiteness results about local cohomology, see \autoref{sec.LocalProperties} where we indicate how results of Hartshorne-Speiser \cite{HartshorneSpeiserLocalCohomologyInCharacteristicP}, Lyubeznik \cite{LyubeznikFModulesApplicationsToLocalCohomology} and Enescu and Hochster \cite{EnescuHochsterTheFrobeniusStructureOfLocalCohomology} can be derived easily.

In the final section we explain a certain degree-reducing property of $p^{e}$-linear maps and show how this property yields a completely elementary approach to the above mentioned finiteness result. In the last subsection we finally close the gap to the theory of tight closure \cite{HochsterHunekeTC1,HunekeTightClosureBook,HochsterFoundations}, which im- and explicitly heavily relied on $p^{-e}$-linear maps since its beginnings, in showing how the test ideals of Hara and Yoshida \cite{HaraYoshidaGeneralizationOfTightClosure} are obtained from certain generalizations of Cartier modules. We include as another demonstration of the utility of this viewpoint a quick proof of the discreteness of the jumping numbers of the test ideal.

The target audience for this article is a researcher or student who is familiar with commutative algebra and algebraic geometry and who wishes to learn how to use $p^{-1}$-linear maps in a wide variety of contexts.  We do not assume the reader has one particular background (\ie{} representation theory/Frobenius splitting, tight closure theory, $\mathcal{D}$-modules, or higher dimensional complex algebraic geometry).  Because we view this article as a place where material can be learned, at the end of each section there are many many exercises.  The more difficult exercises are decorated with a *.  The exercises are a fundamental part of this document.

\subsection*{Acknowledgements}
The authors are deeply indebted to Alberto Fernandez Boix, Lance Miller, Claudiu Raicu, Kevin Tucker, Wenliang Zhang and the referee for innumerable valuable comments on previous drafts of this paper.

\section{Preliminaries on Frobenius}\label{sec.prelims}

In this section we introduce our conventions on notation -- in particular with regards to the Frobenius morphism.

\subsection{Prerequisites and Notation}
We assume that the reader is familiar with the basics of commutative algebra and algebraic geometry, all of which is covered in the standard reference works \cite{Hartshorne} and \cite{MatsumuraCommutativeAlgebra}.  Beyond this, a familiarity with Grothendieck duality, \cite{HartshorneResidues,ConradGDualityAndBaseChange}, will be particularly helpful. Explicitly, Serre vanishing, canonical modules, dualizing and Serre duality, and the connection between divisors and line bundles, will appear frequently, see also \cite{BrunsHerzog}. The notion of $\bQ$-divisors will be used extensively (see \cite{KollarMori} or \cite{LazarsfeldPositivity1,LazarsfeldPositivity2}). The process of reflexification of sheaves on normal varieties and its relation to Weil divisors will be recalled in \autoref{sec.reflex} for the convenience of the reader, also see \cite{HartshorneGeneralizedDivisorsOnGorensteinSchemes} where the same theory is worked out in substantially greater generality.

Throughout this paper all rings and schemes are assumed to be of finite type over a perfect field $k$ of characteristic $p > 0$, or they are a localization or completion of such at a prime.  This implies that our schemes are excellent and possess canonical modules and dualizing complexes \cite{MatsumuraCommutativeAlgebra,HartshorneResidues}.  We further assume that all schemes are separated.

\subsection{Frobenius and push-forward}
\label{sec.frobpush}

We begin by reviewing the most basic notation (since it varies wildly in the literature).

The key structure in algebra and geometry over a field of positive characteristic $p > 0$ is the \emph{(absolute) Frobenius} endomorphism. For a ring $R$ this is just the $p$th power ring endomorphism
\[
    F=F_R \colon R \to R
\]
given by sending $r \in R$ to $r^p$. %Its importance stems from the simple fact that in characteristic $p$ it is a ring homomorphism which holds since the binomial coefficients $\binom{p}{a}$ are divisible by $p$ for $1 \leq a \leq p-1$.

%\subsubsection{Frobenius on schemes}
Since the Frobenius is canonical it induces a morphism for any scheme $X$ over a field $k$ of characteristic $p > 0$, also called the Frobenius endomorphism and also denoted by
\[
    F=F_X \colon X \to X.
\]
Supposing that $k$ is perfect and $X$ is a $k$-variety (or a scheme according to our convention) then $F_X$ is a finite map\footnote{An abstract scheme with a finite Frobenius is called \emph{$F$-finite}.} by \autoref{ex.VarietiesAreFFinite}. Note that $F_X$ is in general not a morphism of $k$-schemes -- however this point can be rectified by changing the $k$-structure on the first copy of $X$, if desired. We denote by $F^e$ the $e$-fold self composition of Frobenius.

Even in the affine situation $X=\Spec R$ we use geometric notation and denote the Frobenius on $R$ by $F \colon R \to F_*R$ to remind us that it is not $R$-linear. This has the added benefit that we now can distinguish the source and target of $F = F_R$.

Given an ideal $I = \langle f_1, \dots, f_m \rangle \subseteq R$, we define its \emph{$p^e$th Frobenius power} to be $I^{[p^e]} = \langle f_1^{p^e}, \dots, f_m^{p^e} \rangle$.  This is independent of the choice of generators $f_j$, see \autoref{ex.FrobeniusPowerChoiceOfGenerators}.  The formation of $I^{[p^e]}$ commutes with localization and so for any ideal sheaf $\sI \subseteq \O_X$, we can define $\sI^{[p^e]}$ in the obvious way.

%\subsubsection{Frobenius pushforward for modules}

Note $F^e_* \O_X$ is isomorphic to $\O_X$ as a sheaf of rings -- but as $\O_X$-modules they are distinct: namely, $\O_X$ acts on $F^e_* \O_X$ via $p^e$th powers. More generally, for any $\O_X$-module $\sM$, one observes that $F^e_* \sM$ is isomorphic to $\sM$ as a sheaf of Abelian groups but the $\O_X$-module structure is given by $r.m = r^{p^e}m$ for a local section $r \in \O_X$ and $m \in F^e_* \sM$.  Of course, $F^e_* \sM$ also has an $F^e_* \O_X$-module structure, which coincides with $\sM$'s original $\O_X$-module structure. We also use the notation $F^e_* M$ in the affine case $X = \Spec R$ to denote an $R$-module with the twisted (restriction of scalars) Frobenius structure.

One immediately verifies that $F_* \tld{M}$ coincides with $\tld{F_* M}$ as $\O_X$-modules, where $\tld{M}$ denotes the $\O_X$-module associated to the $R$-module $M$.  However, we caution the reader that the same identification does not hold in the graded case with respect to $\Proj$, see for example \cite[Lemma 5.6]{SchwedeSmithLogFanoVsGloballyFRegular} and \autoref{ex.FPushforwardAndProj}.

\begin{notation}
Given an element $m \in M$, we will sometimes use $F^e_* m$ to denote the corresponding element of $F^e_* M$.  Likewise for sheaves of $\O_X$-modules $\sM$ on $X$.
\end{notation}

\subsection{Frobenius pull-back and the projection formula}
\label{sec.ProjectionFormula}
Let $X$ be a scheme over a perfect field $k$ of characteristic $p>0$ and let $\sF$ be a coherent sheaf and $\sL$ a line bundle on $X$. Since the Frobenius is an isomorphism on the underlying topological space, the pullback $F^{e*} \sF$ (as an $\O_X$-module) can be identified with $\sF \tensor_{\O_X} F^e_* \O_X$ as an $F^e_* \O_X$-module, again using that $F^e_* \O_X$ is isomorphic with $\O_X$ as sheaves of rings. If the line bundle $\sL$ is given by the datum of a local trivialization and transition functions, then the line bundle $F^{e*}\sL$ is given by the $p^e$th powers of the transition functions in that datum for $\sL$.  This shows that
\begin{equation}
\label{eq.FPullBackPowers}
(F^{e})^* \sL \cong \sL^{p^e}\, ,
\end{equation}
\ie{} the pullback along the Frobenius of a line bundle just raises that line bundle to the $p^e$th tensor power. Combining this observation with the projection formula \cite[Chapter II, Exercise 5.1(d)]{Hartshorne} we obtain
\begin{equation}
\label{eq.FrobeniusProjectionFormula}
(F^e_* \sF) \tensor_{\O_X} \sL \cong F^e_* (\sF \tensor_{\O_X} F^{e*} \sL) \cong F^e_* (\sF \tensor_{\O_X} \sL^{p^e})\, .
\end{equation}
This basic equality is used frequently throughout the theory and will be referred to as the \emph{projection formula}.

\subsection{Exercises}

\begin{exercise}
\label{ex.FeLowerStarIsFreeForPolynomial}
Set $X = \Spec k[x_1, \dots, x_n]$ for some perfect field $k$.  Show that $F^e_* \O_X$ is a free $\O_X$-module with basis $F^e_* x_1^{\lambda_1} \cdots x_n^{\lambda_n}$ where $0 \leq \lambda_i \leq p^e - 1$.  Show that the same result also holds for power series $\Spec k\llbracket x_1, \dots, x_n \rrbracket$.
\end{exercise}

\begin{exercise}
\label{ex.VarietiesAreFFinite}
Suppose that $k$ is a perfect field and that $X$ is scheme of (essentially) finite type over $k$.  Prove that the Frobenius map on $X$ is a finite map.
\end{exercise}

\begin{exercise}
\label{ex.FrobeniusPowerChoiceOfGenerators}
Suppose that $I \subseteq R$ is an ideal in a ring $R$ of characteristic $p > 0$.  Show that $I^{[p^e]}$ can be identified with $\Image(F^{e*}I \to F^{e*} R) \subseteq F^{e*}R \cong R$ where the last isomorphism is the canonical one identifying $R$ with $F^e_*R$ sending $r$ to $F^e_* r$.  Conclude that $I^{[p^e]}$, the Frobenius power of $I$, is independent of the choice of generators of $I$.
\end{exercise}

\begin{exercise}
Suppose that $X$ is a smooth $d$-dimensional variety and $\sL$ is a vector bundle of rank $m$ on $X$.  Prove that $F_* \sL$ is also a vector bundle and find its rank.
\vskip 3pt
\noindent
\emph{Hint: } Complete, use Cohen structure theorem \cite[Theorem 28.3]{MatsumuraCommutativeRingTheory}, and use \autoref{ex.FeLowerStarIsFreeForPolynomial}.
\end{exercise}

\begin{exercise}
Suppose that $\sE$ is a locally free sheaf of finite rank on $X$.  Is $\sE^{\tensor p^e}$ isomorphic to  $(F^e)^* \sE$?
\end{exercise}

\begin{exercise}
\label{ex.LocalizationCompletionAndFeLowerStar}
Suppose that $R$ is (essentially) of finite type over a perfect field.
\begin{itemize}
\item[(a)]  If $W \subseteq R$ is any multiplicatively closed set, then show that $W^{-1} (F^e_* R) \cong F^e_* (W^{-1} R)$.  Here the first $F^e_* R$ means as an $R$-module, and the second is as an $W^{-1}R$-module.
\item[(b)]  If $\bm \subseteq R$ is a maximal ideal, prove that $F^e_* \widehat{R} \cong \widehat{F^e_* R}$ where $\hat{\blank}$ denotes completion along $\bm$.  Again, the first $F^e_*$ is the Frobenius for $\hat{R}$, and the second is that of $R$-modules.
\end{itemize}
\end{exercise}

%Given a complex $C^{\mydot}$, we will use $\myH^i(C^{\mydot})$ to denote the $i$th cohomology of $C^{\mydot}$.

\begin{exercise}
\label{ex.FPushforwardAndProj}  Suppose that $X$ is a projective variety with ample line bundle $\sL$ and suppose that $\sF$ is a coherent sheaf on $X$.  Set $S = \bigoplus_{i \in \bZ} H^0(X, \sL^i)$ to be the section ring with respect to $\sL$ and set $M = \bigoplus_{i \in \bZ} H^0(X, \sF \otimes \sL^i)$ to be the saturated graded $S$-module corresponding to $\sF$.  Verify that $F^e_* S$ is a $({1 \over p^e} \cdot \bZ)$-graded ring\footnote{Here ${1 \over p^e} \cdot \bZ$ is the subgroup of $\bQ$ generated by ${1 \over p^e}$.}, the natural map $S \to F^e_* S$ is graded, and $F^e_* M$ is a graded $F^e_* S$-module.  Of course, $F^e_* M$ is also a graded $S$-module.

Show that $F^e_* M$ is not in general isomorphic to $\bigoplus_{i \in \bZ} H^0(X, (F^e_* \sF) \otimes \sL^i)$.  Instead, prove that $\bigoplus_{i \in \bZ} H^0(X, (F^e_* \sF) \otimes \sL^i)$ is isomorphic to a (graded) direct summand of $F^e_* M$.  The summand whose terms have integral gradings.
\end{exercise}

\begin{exercise}
A ring $R$ (or scheme $X$) such that the Frobenius map $F : R \to F_* R$ is a finite map is called \emph{$F$-finite}.  Essentially all rings considered in this paper are $F$-finite but not all rings are.  Find an example of a field which is not $F$-finite.
\end{exercise}

If $X$ is a smooth variety, then we have already seen that $F_* \O_X$ is a locally free (in other words flat) $\O_X$-module.  In this exercise, you will prove the converse.  First we introduce a definition.

\begin{definition}
Suppose that $(R, \bm)$ is a local ring.
A sequence of elements $f_1, \dots, f_n \in \bm \subseteq R$ is called \emph{Lech-independent} if for any $a_1, \dots, a_n \in R$ such that $a_1 f_1 + \dots + a_n f_n = 0$, then each $a_i \in \langle f_1, \dots, f_n \rangle$.
\end{definition}

Now we come to the exercise.

\begin{starexercise} [Kunz's regularity criterion \cite{KunzCharacterizationsOfRegularLocalRings}]
\label{ex.KunzRegularityCriterion}
Suppose that $(R, \bm)$ is a local ring.  We will show that if $F_* R$ is flat, then $R$ is regular.  We need some Lemmas due to Lech \cite{LechInequalitiesRelatedToCertainCouples}.

\begin{itemize}
\item[(i)]  \cite[Lemma 3]{LechInequalitiesRelatedToCertainCouples}.  If $f_1, \dots, f_n$ are Lech-independent elements and $f_1 \in gR$ for some $g \in R$, then $g, f_2, \dots, f_n$ is also Lech-independent. Furthermore, $\langle f_2, \dots, f_n \rangle : g \subseteq \langle f_1, \dots, f_n \rangle$.
\item[(ii)] \cite[Lemma 4]{LechInequalitiesRelatedToCertainCouples}.  If $f_1, \dots, f_n$ are Lech-independent and $f_1 = gh$.  Then
\[
l_R\left( R /\langle f_1, \dots, f_n \rangle \right) = l_R\left(R/\langle g, f_2, \dots, f_n \rangle \right)+ l_R\left(R/\langle h, f_2, \dots, f_n\rangle \right).
\]

\item[(iii)]  Now we return to the proof of the theorem of Kunz.  Show that $\bm^{[p^e]} / (\bm^{[p^e]})^2$ is a free $R/\bm^{[p^e]}$-module.  Conclude that if $\bm = \langle x_1, \dots, x_n \rangle$ is generated by a minimal set of generators, then $x_1^{p^e}, \dots, x_n^{p^e}$ is Lech-independent.
\item[(iv)]  Use the previous parts of the exercise to conclude that $l_R(R/\bm^{[p^e]}) = p^{ne}$.
\item[(v)]  Reduce to the case that $R$ is complete and write $R = S/\ba = k[[x_1, \dots, x_n]]/\ba$ using the Cohen structure theorem \cite[Theorem 28.3]{MatsumuraCommutativeRingTheory} where $k = R/\bm$.  Then notice that $l_S(S/\bm_S^{[p^e]}) = p^{ne}$ for all $e \geq 0$.  Complete the proof of Kunz' regularity criterion.
\end{itemize}
\end{starexercise}

\begin{remark}
A simpler proof of Kunz' result using the Buchsbaum-Eisenbud acyclicity criterion can be found on page 12 of \cite{HunekeTightClosureBook}.  Alberto Fernandez Boix pointed out to us that another short proof can be found in \cite[Theorem 4.4.2]{MajadasRodicioSmoothnessRegularityAndCI}.
\end{remark}

\section{$p^{-e}$-linear maps: definition and examples}
\label{sec.Defn}

In this section we introduce $p^{-e}$-linear maps and give a number of examples which will be discussed in more detail throughout the rest of the paper.

\begin{definition}[$p^{-e}$-linear map]
Suppose that $X$ is a scheme and $\sM$ and $\sN$ are $\O_X$-modules.  A \emph{$p^{-e}$-linear map} is an additive map $\varphi : \sM \to \sN$ such that
\begin{equation}
\label{eq.pMinusERelation}
\varphi(r^{p^e} m) = r\varphi(m)
\end{equation}
for all local sections $r \in \O_X$ and $m  \in \sM$.

Equivalently, we may specify a \emph{$p^{-e}$-linear map} by the data of an $\O_X$-linear map
\[
    \varphi \colon F^e_*\sM \to \sN\, .
\]
We will \emph{frequently and freely} switch between these two points of view, depending on the context.
\end{definition}

If $R$ is a ring, then a $p^{-e}$-linear map between $R$-modules $M$ and $N$ is simply an additive map between them satisfying the rule from \autoref{eq.pMinusERelation}. If $k$ is a perfect field, then $p^{-e}$-linearity for an additive map $\varphi : k \to k$ just means $\varphi(\lambda x) = \lambda^{1/p^e} \varphi(x)$ for all $x,\lambda \in k$.  In particular, such a map is completely determined by where it sends any nonzero element.

\begin{example}
\label{ex.EasyA1Example}
Consider $R = k[x]$.  Then $F_* R$ is a free module with basis
\[
\{F_* 1, F_*x, F_*x^2, \dots, F_*x^{p-1} \},
 \]
see \autoref{ex.FeLowerStarIsFreeForPolynomial} above.  Therefore any $p^{-1}$-linear map from $k[x]$ to any other $R$-module $N$ is simply a choice of where to send these basis elements.
\end{example}

\begin{example}
\label{ex.BasicAnExample}
Consider $R = k[x_1, \ldots, x_n]$ then as we saw in \autoref{ex.FeLowerStarIsFreeForPolynomial}, $F_* R$ is a free $R$-module with basis $F_* x_1^{\lambda_1} \cdots x_n^{\lambda_n}$ for $0 \leq \lambda_i \leq p-1$.  A map $F_* R \to R$ is uniquely determined by where it sends the elements of this basis.

Consider the $R$-linear map $\Phi : F_* R \to R$ which sends $F_* x_1^{p-1} \cdots x_n^{p-1}$ to $1$ and all other basis elements to zero.  In other words:
\[
\Phi \left(F_* (x_1^{\lambda_1} \cdots x_n^{\lambda_n})\right) = \left\{ \begin{array}{cl} x_1^{{\lambda_1 - (p - 1) \over p }} \cdots x_n^{{\lambda_n - (p - 1) \over p }}, & \text{if all ${\lambda_i - (p - 1) \over p} \in \bZ$} \\ & \\ 0, & \text{otherwise}\end{array} \right.
\]
For each tuple ${\bf \lambda} = (\lambda_1, \dots, \lambda_n) \in \{0, 1, \dots, p-1\}^n$, consider the map
$\phi_{\bf \lambda} : F_* R \to R$ defined by the rule $\phi_{\bf \lambda}(F_* \blank) = \Phi(F_* (x_1^{p-1-\lambda_1} \cdots x_n^{p-1 -\lambda_n} \cdot \blank))$.  It is easy to see that $\phi_{\bf \lambda}$ sends ${\bf x}^{\lambda}$ to $1$ and all other basis monomials to zero.

Because we can thus obtain all of the projections this way, it follows that the map $F_* R \to \Hom_R(F_* R, R)$ which sends $F_* z$ to the map $\phi_z(F_* \blank) = \Phi(F_* (z \cdot \blank))$ is surjective as a map of $F_* R$-modules.  On the other hand, it is clearly injective as well and so it is an isomorphism. In other words, we just showed that $\Hom_R(F_* R, R)$ is a free $F_*R$--module generated by $\Phi$.  In other words, $\Phi$ generates $\Hom_R(F_* R, R)$ as an $F_* R$-module.
\end{example}

The most pervasive type of $p^{-1}$-linear maps are maps $\phi : R \to R$.  Of course, for fixed $e$, the set of $p^{-e}$-linear maps $\{ \phi : R \to R\,|\, \phi \text{ is $p^{-e}$-linear} \}$ form a group under addition.  However, as we vary $e$, we have a multiplication of these maps as well.  Indeed, suppose that $\phi : R \to R$ is $p^{-e}$-linear and $\psi : R \to R$ is $p^{-d}$-linear.  Then both $\phi \circ \psi$ and $\psi \circ \phi$ are $p^{-e-d}$-linear.  However, they need not be equal as the following example shows.
It follows that
\[
\bigoplus_{e \geq 0} \left\{ \phi : R \to R\,|\, \phi \text{ is $p^{-e}$-linear} \right\}
\]
forms a noncommutative graded ring.  This graded ring will be studied more in \autoref{subsec.AlgebrasOfMaps}.

\begin{example}
Suppose that $R = \bF_p[x]$.  We will describe two $p^{-1}$-linear maps, $\phi, \psi$ presented as in \autoref{ex.EasyA1Example}.
\begin{itemize}
\item{} $\phi : R \to R$ satisfies $\phi(x^{p-1}) = 1$ and $\phi(x^i) = 0$ for $0 \leq i < p-1$.
\item{} $\psi : R \to R$ satisfies $\psi(1) = 1$ and $\psi(x^i) = 0$ for $0 < i \leq p-1$.
\end{itemize}
Then $\psi \circ \phi$ and $\phi \circ \psi$ are $p^{-2}$-linear maps.  However, notice that
\[
\phi(\psi(x^{p-1})) = \phi(0) = 0
\]
but that
\[
\psi(\phi(x^{p-1})) = \psi(1) = 1.
\]
In particular, $\psi \circ \phi \neq \phi \circ \psi$.
\end{example}

An important class of examples of $p^{-1}$-linear maps are the splittings of Frobenius.
\begin{example} \cite{MehtaRamanathanFrobeniusSplittingAndCohomologyVanishing,RamananRamanathanProjectiveNormality}
Let $X$ be a scheme.  A \emph{Frobenius splitting} is any $p^{-1}$-linear map $\phi : \O_X \to \O_X$ that sends $1$ to $1$.  Equivalently, it is an $\O_X$-linear map $\phi : F_* \O_X\to \O_X$ that sends $F_* 1$ to $1$.  This, in particular, implies that the composition
\begin{equation}
\label{eq.ExFrobSplitIsomorphism}
\O_X \xrightarrow{F} F_* \O_X \xrightarrow{\phi} \O_X
\end{equation}
is an isomorphism, hence $\phi$ ``splits'' the Frobenius.

If $X$ has a Frobenius splitting, then it satisfies many remarkable properties as we shall discuss in detail in \autoref{sec.FSplittings}.  Let us just mention two of them to taste.

If $X$ is a scheme that has some Frobenius splitting $\phi \colon F_*\O_X \to \O_X$ (we call such $X$ Frobenius split), then $X$ is reduced: Indeed, if $x \in \Gamma(U, \O_X)$ is such that $0=x^{p^e}=F^e(x)$ for some $e \geq 0$, then $0=\phi^e(F^e(x))=x$, simply by that fact that $\phi \circ F=\id$. This is a simple but important local property of Frobenius split varieties.

A similarly fundamental global result is the following \emph{vanishing theorem}:  Suppose that $\sL$ is a line bundle and that $H^i(X, \sL^{p}) = 0$ for some $i > 0$ (for example, $H^i(X, \sL^{p^e}) = 0$ holds for $e \gg 0$ for ample $\sL$ by Serre vanishing), then $H^i(X, \sL) = 0$ as well since we have the following isomorphism obtained by tensoring \autoref{eq.ExFrobSplitIsomorphism} by $\sL$, using the projection formula and taking cohomology:
    \[
        H^i(X, \sL) \xrightarrow{F} H^i(X, F_* \sL^{p}) = 0 \xrightarrow{\phi} H^i(X, \sL).
    \]
If $e > 1$, rinse and repeat.
We will study vanishing theorems for Frobenius split varieties in much greater detail in \autoref{thm.VanishingForFrobeniusSplit}.
\end{example}

\subsection{The Cartier isomorphism}
\label{sec.CartierOperator}

%\todo{{\bf Karl: } I was thinking of something along the lines of the bit on the Cartier isomorphism in Esnault-Viehweg, but with more examples (definitely only over perfect fields)?
%Also some connection with the Grothendieck trace map and how things work on say normal varieties.}

We now come to the most important example of a $p^{-1}$-linear map, coming from the Cartier operator.
Suppose that $X$ is a smooth variety over a perfect field $k$ of characteristic $p > 0$.  Consider the de-Rham complex, $\Omega_X^{\mydot}$.  This is not a complex of $\O_X$-modules (the differentials are not $\O_X$-linear).  However, the complex
\[
F_* \Omega_X^{\mydot}
\]
is a complex of $\O_X$-modules (notice that $d(x^p) = 0$).  We now state the Cartier isomorphism.  We take this presentation from \cite{CartierUneNouvelle}, \cite{KatzNilpotentConnections}, \cite{EsnaultViehwegLecturesOnVanishing}, and \cite{BrionKumarFrobeniusSplitting}.

\begin{definition-proposition}
\label{defprop.CartierIsomorphism}
There is a natural isomorphism (of $\O_X$-modules):
\[
C^{-1} : \Omega_X^i\rightarrow \myH^i(F_* \Omega_X^{\mydot} )
\]
\end{definition-proposition}

\begin{remark}
It might strike the reader as odd that we put an inverse on $C$.  This is because the isomorphism in the other direction is called the \emph{Cartier operator} and represented by $C$.  It is just more convenient for us to define $C^{-1}$ than it is to define $C$.
\end{remark}

We will not use the details of this isomorphism later in the paper.  However, the map $T$ we obtain from it below in \autoref{subsec.TraceOfFrob} will be indispensable.

Let us explain how to construct this isomorphism $C^{-1}$.  We follow \cite[9.13]{EsnaultViehwegLecturesOnVanishing} and \cite{KatzNilpotentConnections}.
We begin with $C^{-1}$ in the case that $i = 1$.  We work locally on $X$ (which we assume is affine) and we define $C^{-1}$ by its action on $dx \in \Omega_X^i$, $x \in \O_X$;
\[
C^{-1}(dx) := F_* x^{p-1}dx,
\]
or rather, the image of $F_* x^{p-1}dx$ in cohomology.  In order for this to make sense, we observe that $d(x^{p-1} dx) = 0$, in other words, that $C^{-1}(dx)$ is in the kernel of $d$.  We now must show that $C^{-1}$ is additive.

Now $C^{-1}(d(x)+d(y)) = C^{-1}(d(x+y)) = F_* (x+y)^{p-1} d(x+y)$, we need to compare this to $F_* x^{p-1}dx + F_* y^{p-1}dy = C^{-1}(dx) + C^{-1}(dy)$.  Write $f = {1 \over p} \left( (x+y)^p -x^p - y^p \right)$.  Here the $1 \over p \cdot $ is a formal operation that simply cancels $p$s from the binomial coefficients.  Then
\[
\begin{array}{rl}
df = &  d \left(\displaystyle\sum_{i = 1}^{p-1} \gamma_i x^i y^{p-i} \right)\\
= & \left( \displaystyle\sum_{i = 1}^{p-1} \gamma_{i} i x^{i-1} y^{p-i} \right)dx + \left( \displaystyle\sum_{i = 1}^{p-1} \gamma_i (p-i) x^i y^{p-i-1}\right) dy\\
= & \left( \displaystyle\sum_{i = 1}^{p-1} \gamma_{i} i x^{(p-1)-(p-i)} y^{p-i} \right)dx + \left( \displaystyle\sum_{i = 1}^{p-1} \gamma_i (p-i) x^i y^{(p-1)-i}\right) dy
\end{array}
\]
where $\gamma_i = {1 \over p}{p \choose i} = { (p-1)(p-2) \cdots 1 \over i! (p-i)! } = {1 \over i} {p-1 \choose p-i} = {1 \over p-i} { p-1 \choose i }$.  Thus
\[
df = (x+y)^{p-1} (dx + dy) - x^{p-1}dx - y^{p-1}dy.
\]
Therefore, $x^{p-1}dx + y^{p-1}dy$ and $(x+y)^{p-1} d(x+y)$ are the same in $\Omega_X^{1} \big/ d\left(\Omega_X^0 \right)$.  This proves that $C^{-1}$ is additive.  Finally, we extend by $p$-linearity to obtain that
\[
C^{-1}(f dx) =F_* f^p x^{p-1} dx.
\]

We should also show that $C^{-1}$ is an isomorphism.  We only show that this initial $C^{-1}$ is injective -- in a special case.  Set $X = \Spec \bF_p[x,y]$ (see for example \cite[Theorem 9.14]{EsnaultViehwegLecturesOnVanishing} for how to reduce to the polynomial ring case in general).

Suppose that $C^{-1}(f dx + g dy) = 0$.  Let $h \in \O_X$ be such that we have $f^p x^{p-1} dx + g^p y^{p-1} dy = dh = {\partial h \over \partial x} dx + {\partial h \over \partial y} dy$.  Therefore if $f = \sum \lambda_{i,j} y^i x^j$ we see that
\[
\sum \lambda_{i,j} y^{i p} x^{j p + p - 1} = f^p x^{p-1} = {\partial h \over \partial x}.
\]
However, this is ridiculous unless $fdx + gdy = 0$.  If you take a derivative of some non-zero polynomial in $x$ with respect to $x$, no output can ever have $x^{jp + p - 1}$ in it.  This completes the proof of injectivity of $C^{-1} : \Omega_X^1\rightarrow \myH^1(F_* \Omega_X^{\mydot} )$ in the case that $X = \Spec \bF_p[x,y]$.  The general case is similar.

The surjectivity of $C^{-1}$ is more involved.  See for example, \cite[Theorem 9.14(d)]{EsnaultViehwegLecturesOnVanishing} or \cite[Theorem 1.3.4]{BrionKumarFrobeniusSplitting} or do \autoref{ex.CInverseSurjective}.

At this point, we have only defined
\[
\Omega_X^1\rightarrow \myH^1(F_* \Omega_X^{\mydot} ).
\]
We define $C^{-1} : \Omega_X^i\rightarrow \myH^i(F_* \Omega_X^{\mydot} )$ for $i > 1$ using wedge powers of $C^{-1}$ for $i = 1$.  We make this definition for any $X$.

\begin{example}[Cartier isomorphism $\bA^2$]
\label{example.CartierIsomorphismOnA2}
Returning again to $X = \bA^2 = \bF_p[x,y]$, we explicitly compute $C^{-1} : \Omega_X^2\rightarrow \myH^2(F_* \Omega_X^{\mydot} )$ at the top cohomology.

By definition
\[
C^{-1}(f dx dy) = C^{-1}(f dx \wedge dy) := F_* \big(f^p (x^{p-1} dx) \wedge (y^{p-1} dy)\big) = F_* f^p x^{p-1} y^{p-1} dx dy
\]
or rather its image in cohomology.  Again, this map is an isomorphism, \autoref{ex.TopCartierIsomorphismForA2}.
\end{example}

\subsection{Grothendieck-trace of Frobenius}
\label{subsec.TraceOfFrob}

Suppose that $X$ is a smooth $n$-dimensional variety over a perfect field $k$ of characteristic $p > 0$.  Then coming from the Cartier isomorphism, \autoref{defprop.CartierIsomorphism}, we have an exact sequence:
\[
F_* \Omega_{X/k}^{n-1} \xrightarrow{d} F_* \Omega_{X/k}^{n} \xrightarrow{T} \Omega_{X/k}^n \to 0
\]
The surjective map $T : F_* \Omega_{X/k}^{n} =: F_* \omega_X \rightarrow \omega_X := \Omega_{X/k}^n$ is often called the \emph{trace map} or \emph{Cartier map/operator}.

This map can be constructed in other ways.  With $X$ as above, again set $\omega_X = \Omega_{X/k}^n$.  Then $\omega_X$ is a \emph{dualizing/canonical} module in the sense of \cite[Chapter III, Section 7]{Hartshorne} or more generally, \cite[Chapter V]{HartshorneResidues}.

For any finite dominant map $\pi : Y \to X$ with $Y$ and $X$ smooth, it is a fact (blackboxed for now \cite[Chapter V, Proposition 2.4]{HartshorneResidues}, \cite[Proposition 5.68]{KollarMori}) that $\pi_* \omega_Y \cong \sHom_{\O_Y}(\pi_* \O_Y, \omega_X)$ as a $\pi_* \O_Y$-module.  This is described in greater generality on the next page, see the diagram \autoref{eq.CanonicalUpperShriekDiagram}.  Note that this \emph{completely determines} $\omega_Y$ as well, since $\pi$ is finite and so the data of a coherent $\pi_* \O_Y$-module on $X$ is equivalent to the data of a coherent $\O_Y$-module on $Y$.  Now, we also have the following map:
\begin{equation}
\label{eq.EvalAt1}
\pi_* \omega_Y \cong \sHom_{\O_X}(\pi_* \O_Y, \omega_X) \xrightarrow{\text{eval @ 1}} \omega_X
\end{equation}
This is the map which sends a section $\varphi \in \omega_Y \cong \Gamma(U, \sHom_{\O_Y}(\pi_* \O_Y, \omega_X))$ to the element $\varphi(1) \in \Gamma(U, \omega_X)$.

Now we specialize to the case that $Y = X$ and $\pi = F$ the Frobenius map.

\begin{theorem}
\label{thm.TraceIsCartierMap}
The map described in \autoref{eq.EvalAt1} is the map $T$ described above (up to choice of isomorphism).
\end{theorem}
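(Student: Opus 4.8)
The plan is to unwind both constructions to the same local description and match them. Both $T$ (from the Cartier isomorphism) and the evaluation-at-$1$ map are $\O_X$-linear surjections $F_*\omega_X \to \omega_X$, and both are canonically defined, so it suffices to check the equality locally on an affine open $\Spec R \subseteq X$ where $\omega_X$ is free, in fact where we may take $R = k[x_1,\dots,x_n]$ (or its localization) by the standard reduction. On such a chart $\omega_X$ is generated by $dx_1 \wedge \cdots \wedge dx_n$, and $F_*\omega_X$ is free over $R$ with basis $F_*\bigl(x_1^{\lambda_1}\cdots x_n^{\lambda_n}\, dx_1\wedge\cdots\wedge dx_n\bigr)$ for $0\le \lambda_i\le p-1$, by \autoref{ex.FeLowerStarIsFreeForPolynomial}. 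So everything reduces to computing where each of the two maps sends these basis elements.

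First I would compute $T$ on the basis. By the construction in \autoref{example.CartierIsomorphismOnA2} (and its $n$-variable analogue), $C^{-1}$ sends $f\, dx_1\wedge\cdots\wedge dx_n$ to the class of $F_*\bigl(f^p\, x_1^{p-1}\cdots x_n^{p-1}\, dx_1\wedge\cdots\wedge dx_n\bigr)$, and $T$ is the inverse of $C^{-1}$ modulo exact forms. Hence $T$ sends $F_*\bigl(x_1^{p-1}\cdots x_n^{p-1}\, dx_1\wedge\cdots\wedge dx_n\bigr)$ to $dx_1\wedge\cdots\wedge dx_n$; and a monomial $F_*\bigl(x_1^{\lambda_1}\cdots x_n^{\lambda_n}\,dx_1\wedge\cdots\wedge dx_n\bigr)$ with some $\lambda_i < p-1$ — unless every exponent is $\equiv p-1 \bmod p$, i.e. equals $p-1$ in our range — is killed by $T$, because such a form is exact (this is essentially the computation that $x^{j}\,dx$ with $j\not\equiv p-1$ lies in $d(\Omega^0)$, combined with the fact that the Cartier isomorphism identifies $\myH^n$ with $\Omega^n$). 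So in the chosen basis, $T$ is the ``trace-like'' projection onto the top monomial, sending $F_*\bigl((x_1\cdots x_n)^{p-1}\cdot g^p\, dx_1\wedge\cdots\wedge dx_n\bigr)$ to $g\, dx_1\wedge\cdots\wedge dx_n$ and all other basis monomials to zero.

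Next I would compute the evaluation-at-$1$ map. Under the isomorphism $F_*\omega_X \cong \sHom_{\O_X}(F_*\O_X,\omega_X)$ of \autoref{eq.EvalAt1}, an element $F_*\bigl(h\, dx_1\wedge\cdots\wedge dx_n\bigr)$ corresponds to a homomorphism $\psi_h$; the claim is $\psi_h(F_*1) = T\bigl(F_*(h\,dx_1\wedge\cdots\wedge dx_n)\bigr)$. Concretely, the identification $\pi_*\omega_Y \cong \sHom_{\O_X}(\pi_*\O_Y,\omega_X)$ for $\pi=F$ is built from the trace/residue map, and under it $F_*1 \in F_*\O_X$ pairs with $F_*\bigl(h\,\omega\bigr)$ precisely by ``extracting the coefficient of the top monomial $(x_1\cdots x_n)^{p-1}$ after multiplying by $1$'' — which is exactly the recipe for $\Phi$ in \autoref{ex.BasicAnExample}, tensored with $\omega_X$. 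So both maps send $F_*\bigl((x_1\cdots x_n)^{p-1}g^p\,\omega\bigr) \mapsto g\,\omega$ and vanish on the other basis monomials, hence they agree; by $\O_X$-linearity and the freeness of the basis they agree on all of $F_*\omega_X$ over this chart, and by the canonicity of both constructions they agree globally.

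The main obstacle is the compatibility of the two descriptions of the duality isomorphism $F_*\omega_X\cong\sHom_{\O_X}(F_*\O_X,\omega_X)$: one has to know that the abstract Grothendieck-duality isomorphism $\pi_*\omega_Y\cong\sHom(\pi_*\O_Y,\omega_X)$, when $\pi=F$ and everything is written in the polynomial chart, really is implemented by the explicit residue/Cartier formula $f\,\omega \leftrightarrow \bigl(F_*g \mapsto (\text{coeff.\ extraction})\,\omega\bigr)$. This is the content of the duality theory for finite morphisms specialized to Frobenius (cf. \cite[Chapter V]{HartshorneResidues}), and it is where one must be careful about normalizations — the ``up to choice of isomorphism'' in the statement absorbs precisely the scalar ambiguity in this identification. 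Once that compatibility is granted, the rest is the bookkeeping sketched above.
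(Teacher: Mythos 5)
Your proposal is correct and follows essentially the same route as the paper's sketch: compute $T$ and the evaluation-at-$1$ map explicitly on a polynomial chart, identify both with the coefficient-extraction generator $\Phi$ of \autoref{ex.BasicAnExample} twisted by $\omega_X$, and let the ``up to choice of isomorphism'' absorb the normalization of the duality isomorphism. The only difference is that the paper disposes of your self-identified obstacle more cheaply: instead of pinning down the explicit form of the isomorphism $F_*\omega_X \cong \sHom_{\O_X}(F_*\O_X,\omega_X)$, it observes that both $T$ and the evaluation-at-$1$ map are local $F_*\O_X$-module generators of the same $\sHom$-sheaf, hence coincide up to multiplication by a unit of $\Gamma(X,\O_X)$.
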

\begin{proof}[Sketch of proof]
We only show this for $X = \Spec \bF_p[x,y] = \bA^2$.  By considering \autoref{example.CartierIsomorphismOnA2}, we see that the map $T$ sends
\[
F_* f^p x^{p-1} y^{p-1} dx dy \mapsto f dx dy
\]
and everything not of that form to zero.

So we then consider
\[
\xymatrix@R=2pt{
\sHom_{\O_X}(F_* \O_X, \omega_X) \ar@{<->}[r]^-{\simeq} & F_* \omega_X \ar@{<->}[r]^-{\simeq} & F_* \O_X.\\
                                                      & F_* dx dy \ar@{<->}[r] & F_* 1
}
\]
Now, we identify the $\Phi \in \Hom_{\O_X}(F_* \O_X, \omega_X)$ which generates $\sHom_{\O_X}(F_* \O_X, \omega_X)$ as an $F_* \O_X$-module just as in \autoref{ex.BasicAnExample}.  Since $\omega_X = \O_X \cdot (dxdy) \cong \O_X$, we notice that $\Phi$ sends $F_* f^p x^{p-1} y^{p-1} \mapsto f dx dy$ and $\Phi$ sends things not of this form, to zero.

Choosing then $\phi(F_* \blank) = \Phi(F_* c \cdot \blank) \in \sHom_{\O_X}(F_* \O_X, \omega_X)$, we see that the evaluation-at-$1$ map \autoref{eq.EvalAt1} sends $\phi$ to $\Phi(F^e_* c)$.  Making the identification
\[
(F_* \O_X) \cdot (F_* dxdy) = F_* \omega_X \cong \sHom_{\O_X}(F_* \O_X, \omega_X) = (F_* \O_X) \cdot \Phi
\]
we immediately observe that $T$ and the evaluation-at-$1$ map \autoref{eq.EvalAt1} coincide.

The general case for $X \neq \bA^2$ is similar but slightly more technical to write down.  Both the map $T$ and the evaluation-at-1 map can be shown to be a local generator of the same $\sHom$-sheaf.  Thus they coincide up to multiplication by a unit of $\Gamma(X, \O_X)$.
\end{proof}

\subsection{The Trace map for singular varieties}
\label{subsec.TheTraceMapForSingular}
Suppose that $X$ is a normal variety with $U \subseteq X$ the regular locus.  Consider the map $T : F^e_* \omega_U \to \omega_U$ as described above.  This is an element of $\Hom_U(F^e_* \omega_U, \omega_U)$.  However, there is an isomorphism $\Hom_{\O_U}(F^e_* \omega_U, \omega_U) \cong \Hom_{\O_X}(F^e_* \omega_X, \omega_X)$ since $X \setminus U$ is a codimension 2 subset of $X$ and $X$ is normal, see \autoref{sec.reflex}.
Therefore we obtain the following proposition.

\begin{proposition}
\label{prop.TraceMapForNormalVars}
Given any normal variety $X$ there is a trace map $T : F^e_* \omega_X \to \omega_X$ which agrees with, and is completely determined by the map $T$ described in terms of the Cartier isomorphism on the regular locus $U \subseteq X$.
\end{proposition}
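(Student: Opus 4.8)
The plan is to reduce the statement to the reflexivity (equivalently, the $S_2$ property) of $\omega_X$ and of a suitable $\sHom$-sheaf, so that giving a $p^{-e}$-linear map $F^e_*\omega_X \to \omega_X$ is literally the same as giving one on the regular locus. Write $j \colon U \into X$ for the inclusion of the regular locus and $Z := X \setminus U$. Since $X$ is normal it is $R_1$, so $\codim_X Z \geq 2$; it is also $S_2$, and the canonical module $\omega_X$ is a coherent $S_2$ --- hence reflexive and torsion-free --- sheaf (this is recalled in \autoref{sec.reflex}). Because $F^e$ is a homeomorphism, $F^e_*$ commutes with restriction to open subsets, and since the canonical module of an open subscheme is the restriction of the canonical module, we get $\big(F^e_*\omega_X\big)\big|_U \cong F^e_*\big(\omega_X|_U\big) \cong F^e_*\omega_U$.

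First I would show that restriction along $j$ gives a bijection
\[
\Hom_{\O_X}\!\big(F^e_*\omega_X,\ \omega_X\big) \xrightarrow{\ \sim\ } \Hom_{\O_U}\!\big(F^e_*\omega_U,\ \omega_U\big).
\]
Injectivity is immediate since $\omega_X$ is torsion-free and $U$ is dense: two $\O_X$-linear maps $F^e_*\omega_X \to \omega_X$ agreeing on $U$ have difference factoring through the (zero) torsion subsheaf of $\omega_X$. For surjectivity, the point is that $\sHom_{\O_X}(F^e_*\omega_X,\omega_X)$ is itself a coherent sheaf with depth $\geq 2$ along $Z$, since it inherits this from its target $\omega_X$; equivalently, it is reflexive by the standard fact that an $\sHom$ into a reflexive sheaf on a normal variety is reflexive (\autoref{sec.reflex}). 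Hence the natural map $\sHom_{\O_X}(F^e_*\omega_X,\omega_X) \to j_* j^*\sHom_{\O_X}(F^e_*\omega_X,\omega_X)$ is an isomorphism, and since $\sHom$ commutes with restriction to the open $U$ and $j^*(F^e_*\omega_X) \cong F^e_*\omega_U$, the target is $j_*\sHom_{\O_U}(F^e_*\omega_U,\omega_U)$; taking global sections yields the displayed bijection, which is restriction along $j$ by construction.

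Then $T \colon F^e_*\omega_X \to \omega_X$ is defined to be the unique preimage under this bijection of the Cartier/trace map $T_U$ on $U$ built from the Cartier isomorphism in \autoref{subsec.TraceOfFrob}. By construction $T|_U = T_U$, giving the ``agrees with'' clause, and the injectivity just established gives the ``completely determined by'' clause: any $\O_X$-linear extension of $T_U$ to $F^e_*\omega_X$ must coincide with $T$. The statement for general $e$ either follows directly from the same argument run with $F^e$ in place of $F$, or by iterating the $e=1$ case (using $F^e_* = F_* \circ \dots \circ F_*$ and the fact that $X$ itself --- not just $U$ --- is normal, so the $e=1$ result applies at each stage).

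I expect the only real obstacle to be the $S_2$/reflexivity bookkeeping in the second paragraph, i.e.\ making sure that $\sHom_{\O_X}(F^e_*\omega_X,\omega_X)$ really has depth $\geq 2$ along $Z$ so that $j_* j^*$ is the identity on it. The cleanest route is to invoke the general statements that $\sHom_{\O_X}(\sF,\sG)$ is reflexive whenever $\sG$ is reflexive on a normal $X$ and $\sF$ is coherent, and that a reflexive sheaf on a normal variety is recovered from its restriction to the complement of any codimension-$\geq 2$ closed set --- both recorded in \autoref{sec.reflex}. The remaining ingredients (torsion-freeness of $\omega_X$, $F^e$ a homeomorphism so pushforward commutes with open restriction, $\omega_X|_U \cong \omega_U$) are routine.
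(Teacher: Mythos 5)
Your proposal is correct and follows essentially the same route as the paper: the paper also defines $T$ on $X$ by extending the Cartier trace from the regular locus $U$, using that $X \setminus U$ has codimension $\geq 2$ and that $\Hom_{\O_X}(F^e_*\omega_X,\omega_X) \cong \Hom_{\O_U}(F^e_*\omega_U,\omega_U)$ via the reflexivity facts recorded in \autoref{sec.reflex}. You simply spell out the reflexivity/torsion-freeness bookkeeping that the paper leaves implicit.
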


Even for non-normal schemes, we can do something similar, we need to work in the derived category.  Suppose that $X$ and $Y$ are schemes of finite type over a field $k$ with a map $f : X \to Y$.  Then there is functor $f^!$ from $D^+_{\coherent}(Y)$ to $D^+_{\coherent}(X)$ (bounded below complexes of $\O_Y$-modules, respectively $\O_X$-modules, with coherent cohomology).  For a precise definition of $f^!$, please see \cite{HartshorneResidues}.  Its abstract existence can nowadays be shown quite formally from general principles, \cf \cite{LipmanDerivedCategoriesAndFunctors}.  Its key property is that it is right adjoint to $\myR f_*$ in the case that $f$ is proper, see \autoref{ex.GrothendieckDuality}.  We will define $f^!$ in two cases which will suffice for our purposes.

 \begin{description}
 \item[Finite]  If $f$ is finite (for example, Frobenius or a closed immersion), then $\sF \in D^b_{\coherent}(X)$ we have an isomorphism of $f_* \O_X$-complexes
\begin{equation}
\label{eq.UpperShriekForFiniteMaps}
f_* f^! \sF = \myR \sHom_{\O_Y}^{\mydot}(f_* \O_X, \sF).
\end{equation}
where $\myR \sHom_{\O_Y}^{\mydot}(f_* \O_X, \sF)$ is the complex obtained by taking an injective resolution of $\sF$ and applying $\sHom_{\O_Y}^{\mydot}(f_* \O_X, \blank)$.  Note that this completely describes $f^!$ since $f$ is finite so that $f_*$ is harmless.
\item[Smooth]  If $f$ is smooth of relative dimension $n$, the for any $\sF \in D^b_{\coherent}(X)$ we have an isomorphism
\[
f^! \sF = (\myL f^* \sF) \otimes (\wedge^n \Omega_{Y/X}^1)[n].
\]
\end{description}

If $f : X \to \Spec k$ is itself the structural map, then we define the \emph{dualizing complex of $X$}, denoted $\omega_X^{\mydot}$ as follows.  View $k \in D^b_{\coherent}(\Spec k)$ as the complex which is trivial except in degree zero where it is $k$.  Then we define $\omega_X^{\mydot} := f^! k$ to be the dualizing complex on $X$.
%Now, $\omega_X^{\mydot}$ is quasi-isomorphic to a bounded complex made up of injective $\O_X$-modules, and because of this, we will view $\omega_X^{\mydot}$ as such a complex.

Consider the following diagram
\begin{equation}
\label{eq.CanonicalUpperShriekDiagram}
\xymatrix{
X \ar[d]_f \ar[r]^{F^e} & X \ar[d]^f \\
\Spec k \ar[r]_{F^e} & \Spec k
}
\end{equation}
where the top row is the absolute $e$-iterated Frobenius on $X$ and the bottom row is the $e$-iterated Frobenius on $k$.  Notice that the bottom row is an isomorphism (although not the identity) and so $(F^e)^! k \cong k$.  The fact that $(f \circ g)^! = f^! \circ g^!$ then implies that $\omega_X^{\mydot}$ is independent of the choice of Frobenius-variant of the $k$-structure on $X$.  In particular, we see that
\begin{equation}
\label{eq.FUpperShriekOfOmega}
\omega_X^{\mydot} \cong f^!k \cong (f \circ F^e)^! k \cong (F^e \circ f)^! k \cong (F^e)^! \omega_X^{\mydot}.
\end{equation}

Now we will apply the \emph{duality functor} $\myR \sHom_{\O_X}^{\mydot}(\blank, \omega_X^{\mydot})$ to the Frobenius map $\O_X \to F^e_* \O_X$.  %Note that if we view $\omega_X^{\mydot}$ as a complex made up of injective $\O_X$-modules, then $\myR \sHom_{\O_X}^{\mydot}(\blank, \omega_X^{\mydot})$ is simply $\sHom_{\O_X}^{\mydot}(\blank, \omega_X^{\mydot})$.
This operation yields
\[
\omega_X^{\mydot} \cong \myR \sHom_{\O_X}^{\mydot}(\O_X, \omega_X^{\mydot}) \leftarrow \myR \sHom_{\O_X}^{\mydot}(F^e_* \O_X, \omega_X^{\mydot}) \cong F^e_* (F^e)^! \omega_X^{\mydot} \cong F^e_* \omega_X^{\mydot}
\]
where the isomorphisms are in the derived category and the final two isomorphisms are due to Equations \autoref{eq.FUpperShriekOfOmega} and \autoref{eq.UpperShriekForFiniteMaps} respectively.  Taking cohomology of this map of complexes gives us maps
\begin{equation}
\label{eq.MapsOnDualizingComplex}
\myH^i \omega_X^{\mydot} \leftarrow F^e_* \myH^i \omega_X^{\mydot} \cong \myH^i F^e_*  \omega_X^{\mydot}
\end{equation}
for each integer $i \in \mathbb{Z}$.

For any equidimensional scheme $X$ over $k$ with dualizing complex $\omega_X^{\mydot} := f^! k$, we define $\omega_X = \myH^{-\dim X}(\omega_X^{\mydot})$ and call it the \emph{canonical module of $X$}.  It follows that \autoref{eq.MapsOnDualizingComplex} induces a map $F^e_* \omega_X \to \omega_X$.  As expected, we then have:

\begin{proposition}
\label{prop.TraceForGeneralMaps}
The map $F^e_* \omega_X \to \omega_X$ coincides with the map $T$ defined previously on the regular locus of $X$.
\end{proposition}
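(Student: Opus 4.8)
The plan is to verify the asserted agreement directly on the regular locus $U \subseteq X$, where the dualizing complex collapses to a shift of a line bundle and the derived-category constructions become ordinary sheaf operations, and then to appeal to \autoref{thm.TraceIsCartierMap}. First I would note that every ingredient in the construction of \autoref{eq.MapsOnDualizingComplex} --- the Frobenius $\O_X \to F^e_*\O_X$, the dualizing complex, and hence the resulting map $F^e_*\omega_X \to \omega_X$ --- is compatible with restriction to opens, so restricting to $U$ produces the analogous map built from $\O_U \to F^e_*\O_U$ and $\omega_U^{\mydot}$. If $U = \emptyset$ there is nothing to prove; otherwise, since $X$ is equidimensional of dimension $n$, the regular locus $U$ is smooth over $k$ of pure dimension $n$, so by the smooth description of $f^!$ we may identify $\omega_U^{\mydot}$ with $\Omega^n_{U/k}[n] = \omega_U[n]$, a shift of the line bundle $\omega_U$.

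Next I would exploit that, because $U$ is smooth over the perfect field $k$, $F^e_*\O_U$ is a locally free $\O_U$-module, so that $\myR\sHom^{\mydot}_{\O_U}(F^e_*\O_U,\omega_U^{\mydot}) \cong \sHom_{\O_U}(F^e_*\O_U,\omega_U)[n]$ has cohomology concentrated in degree $-n$. Consequently every isomorphism appearing in the duality construction of the trace is, on $U$ and after applying $\myH^{-n}$, an honest isomorphism of $\O_U$-modules: unwinding \autoref{eq.UpperShriekForFiniteMaps} and \autoref{eq.FUpperShriekOfOmega}, the identification $F^e_*\omega_U \cong \sHom_{\O_U}(F^e_*\O_U,\omega_U)$ is exactly the duality isomorphism for the finite map $F^e$ recalled in \autoref{subsec.TraceOfFrob}, and applying $\sHom_{\O_U}(\blank,\omega_U)$ to the inclusion $\O_U \to F^e_*\O_U$ (which carries $1$ to $F^e_*1$) is just precomposition, i.e. the ``evaluation at $1$'' map of \autoref{eq.EvalAt1} for $\pi = F^e$. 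By \autoref{thm.TraceIsCartierMap} this evaluation-at-$1$ map is the Cartier/trace map $T$ on $U$, which is what we want. Equivalently, once one observes that both maps are surjective local generators of the rank-one free $F^e_*\O_U$-module $\sHom_{\O_U}(F^e_*\omega_U,\omega_U)$, one may compare them on an affine chart and quote the computation in the proof of \autoref{thm.TraceIsCartierMap} verbatim.

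I expect the only real difficulty to be bookkeeping with the chain of canonical isomorphisms: one must check that the identification $\omega_U^{\mydot}\cong (F^e)^!\omega_U^{\mydot}$ coming from \autoref{eq.FUpperShriekOfOmega} (built from $(f\circ F^e)^!\cong (F^e)^!\circ f^!$ together with $(F^e)^!k\cong k$) matches, after $\myH^{-n}$, the finite-duality isomorphism $F^e_*\omega_U\cong\sHom_{\O_U}(F^e_*\O_U,\omega_U)$ that defines evaluation-at-$1$, so that the two maps coincide on the nose rather than merely up to a unit. As in the proof of \autoref{thm.TraceIsCartierMap}, the way around this is to work locally and use that any $\O_U$-linear map $F^e_*\omega_U\to\omega_U$ is determined by a single generator, so the potential ambiguity by a unit of $\Gamma(U,\O_U)$ is harmless and both constructions are fixed by the same normalization.
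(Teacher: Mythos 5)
Your argument is correct, and it fills in what the paper leaves unproved: the proposition is stated there without proof (as the expected consequence of the construction), and your route --- restrict everything to the regular locus $U$, use smoothness to collapse $\omega_U^{\mydot}$ to $\omega_U[n]$ and the $\myR\sHom$ to an honest $\sHom$, recognize the resulting map as the evaluation-at-$1$ map of \autoref{eq.EvalAt1}, and then invoke \autoref{thm.TraceIsCartierMap} --- is exactly the intended unwinding. Your handling of the only delicate point (the identifications are canonical only up to a unit, so the maps agree "up to choice of isomorphism", by the local-generator argument) matches the standard of coincidence already used in the proof of \autoref{thm.TraceIsCartierMap}.
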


\subsection{Exercises}

\begin{starexercise}
\label{ex.CInverseSurjective}
Suppose that $k$ is a perfect field and that $X = \Spec k[x,y]=\bA^2$, prove that $C^{-1} : \Omega_X^1 \rightarrow \myH^1(F_* \Omega_X^{\mydot} )$ is surjective.
\vskip 3pt
\emph{Hint: } First prove the result for $\bA^1 = \Spec \bF_p[x]$.  Now consider $\sum_j y^j(\alpha_j + \beta_j x^{b_j} dy) = \alpha \in \Omega^1_X$ such that $d \alpha = 0$ where $\alpha_j \in \Omega_{\bA^1}^1$ and $\beta_j \in \bF_p[x]$.  Deduce that $y^{j+1} \alpha_{j+1} + y^j \beta_j dy \in d \Omega_{X}^0$ if $j + 1$ is not divisible by $p$.  Use this to rewrite $\alpha$ and then use the result for $\bA^1$.

This method can be used to do the general proof by induction, see \cite[Theorem 1.3.4]{BrionKumarFrobeniusSplitting}.
\end{starexercise}

\begin{exercise}
\label{ex.TopCartierIsomorphismForA2}
Suppose that $k$ is a perfect field and that $X = \Spec k[x,y]=\bA^2$, prove that $C^{-1} : \Omega_X^2\rightarrow \myH^2(F_* \Omega_X^{\mydot} )$ is an isomorphism.
\end{exercise}

\begin{exercise}
Suppose that $R$ is a regular local ring.  We have seen that $F_* R$ is a flat $R$-module by \autoref{ex.KunzRegularityCriterion}. Consider the evaluation-at-1 map
\[
\xymatrix@R=3pt{
\Hom_R(F_* R, R) \ar@{->}[r]^-e & R\\
\phi \ar@{|->}[r] & \phi(F_* 1)
}
\]
Fix an isomorphism $\gamma : F_* R \to \Hom_R(F_* R, R)$ and consider the composition
\[
e \circ \gamma : F_* R \to R.
\]
Prove that $(e \circ \gamma)$ generates $\Hom_R(F_* R, R)$ as an $F_* R$-module.
\end{exercise}

\begin{exercise}
A variety $X$ is called \emph{Cohen-Macaulay} if $\omega_X^{\mydot} \cong \omega_X[\dim X]$ is a complex with cohomology only in degree $-\dim X$.  Suppose that $H$ is a Cartier divisor on a Cohen-Macaulay scheme $X$.  Prove that $H$ is also Cohen-Macaulay.  Conversely, suppose that $H$ is Cohen-Macaulay, prove that $X$ is Cohen-Macaulay in a neighborhood of $H$.
\vskip 3pt
\emph{Hint: } Apply the \emph{duality functor} to the short exact sequence $0 \to \O_X(-H) \to \O_X \to \O_H \to 0$ and observe that $\myR \sHom_{\O_X}^{\mydot}(\O_H, \omega_X^{\mydot}) \cong \omega_H^{\mydot}$ by \autoref{eq.UpperShriekForFiniteMaps}.  For the converse statement, use Nakayama's Lemma.
\end{exercise}

%\emph{\textsc{Warning}: the final exercises require a bit more homological algebra.}

\begin{starexercise} [Grothendieck duality]  (\emph{For those who wish to learn some homological algebra})
\label{ex.GrothendieckDuality}
Grothendieck duality says the following:
\begin{theorem*}
If $f : X \to Y$ is a proper map of schemes of finite type over a field $k$, then we have an isomorphism in $D^b_{\coherent}(Y)$
\[
\myR \sHom_{\O_Y}^{\mydot}(\myR f_* \sF, \sG) \cong \myR f_* \myR \sHom_{\O_X}^{\mydot}(\sF, f^! \sG)
\]
for $\sF \in D^b_{\coherent}(X)$ and $\sG \in D^b_{\coherent}(Y)$.
\end{theorem*}

Set $Y = \Spec k$ and learn enough about the symbols above to deduce the variant of Serre duality found in Hartshorne, \cite[Chapter III, Section 7]{Hartshorne}.
\end{starexercise}

\section{Connections with divisors}
\label{sec.ConnectionsWithDivisors}

In this section we explain why maps $\varphi \in \Hom_{\O_X}(F^e_* \O_X, \O_X)$ contain roughly the same information as a $\mathbb{Q}$-divisor $\Delta$ such that $K_X + \Delta$ is $\bQ$-Cartier (\ie, such that there exists an integer $n$ such that $n\Delta$ is integral and $nK_X + n\Delta$ is Cartier).  These ideas go back at least to the original papers on Frobenius splittings \cite{MehtaRamanathanFrobeniusSplittingAndCohomologyVanishing,RamananRamanathanProjectiveNormality}. The difference between this section and those original papers is that we normalize our divisors with respect to iterates of Frobenius and thus obtain $\bQ$-divisors.\footnote{Formal sums of codimension 1 subvarieties with rational coefficients.} The statements in this section are somewhat technical.  Therefore, the reader may wish to skim this section for the main idea, and refer back to the numbered bijections as needed throughout the remainder of the article.

Fix $X$ to be a smooth variety of finite type over a perfect field.  Consider an element $\varphi \in \Hom_{\O_X}(F^e_* \O_X, \O_X)$.  We claim that
\begin{equation}
\label{eq.HomIsTwisted}
\Hom_{\O_X}(F^e_* \O_X, \O_X) \cong F^e_* \O_X((1-p^e)K_X).
\end{equation}
Let us prove this claim.  By applying the projection formula as in Equation \eqref{eq.FrobeniusProjectionFormula}, taking global sections and using the fact that $\sHom_{\O_X}(F^e_* \O_X, \O_X(K_X)) \cong F^e_* \O_X(K_X)$, we have
\begin{equation}
\label{eq.HomsAreSections}
\begin{array}{rcl}
\Hom_{\O_X}(F^e_* \O_X, \O_X) & \cong & \Hom_{\O_X}((F^e_* \O_X) \tensor \O_X(K_X), \O_X(K_X)) \\
& \cong & \Hom_{\O_X}(F^e_* (\O_X(p^e K_X)), \O_X(K_X))\\
& \cong & \Hom_{F^e_* \O_X}(F^e_* (\O_X(p^e K_X)), \sHom_{\O_X}(F^e_* \O_X, \O_X(K_X)) )\\
& \cong & F^e_* \Hom_{\O_X}((\O_X(p^e K_X)), \O_X(K_X))\\
& \cong & F^e_* \O_X( (1-p^e)K_X).
\end{array}
\end{equation}
See \autoref{eq.FUpperShriekOfOmega}, \cite[Proposition 5.68]{KollarMori} or \cite{HartshorneResidues}.  Alternately, it follows from Grothendieck duality for the finite map $F : X \to X$ (see \autoref{ex.GRDualityFinite} below).

Therefore, any nonzero map $\phi : F^e_* \O_X \to \O_X$ induces a nonzero global section of $F^e_* \O_X((1-p^e)K_X)$.  By using the fact that $F^e_*$ does not change the underlying structure of sheaves of Abelian groups, we see that there is a bijective correspondence:
\[
\left\{ \begin{array}{c}\text{nonzero elements} \\ \phi \in \Hom_{\O_X}(F^e_* \O_X, \O_X) \end{array} \right\} \longleftrightarrow\left\{ \begin{array}{c}\text{nonzero elements} \\ z \in \Gamma\big(X, \O_X( (1-p^e)K_X) \big) \end{array} \right\}.
\]
Note every nonzero global section of $\O_X( (1-p^e)K_X)$ induces an effective Weil divisor $0 \leq D \sim (1-p^e)K_X$, see \autoref{thm.BijectionBetweenSectionsAndDivisors}.

We notice also that two non-zero elements $z_1, z_2 \in \Gamma\big(X, \O_X( (1-p^e)K_X)\big)$ induce the same divisor if and only if there exists a \emph{unit} $u \in \Gamma\big(U, \O_X \big)$ such that $u z_1 = z_2$.  Therefore, we have the following bijection:
\begin{equation}
\label{eq.DivisorMapCorrespondence1}
\left\{ \begin{array}{c}\text{nonzero $\phi \in$} \\ \Hom_{\O_X}(F^e_* \O_X, \O_X) \end{array} \right\} \Bigg/ \left( \begin{array}{c}\text{multiplication } \\ \text{by units in} \\ \text{ $\Gamma(X, \O_X)$ }\end{array}\right) \longleftrightarrow\left\{ \begin{array}{c}\text{effective divisors } \\ \text{linearly equivalent} \\ \text{to $(1-p^e)K_X$}\end{array} \right\} .
\end{equation}

Now suppose that $X$ is normal but not necessarily smooth.
Of course, the previous argument works fine on $U = X_{\text{reg}} \subseteq X$.   However, Weil divisors are determined off a set of codimension 2.  Likewise $\Gamma\big(X, \O_X( (1-p^e)K_X)\big) = \Gamma\big(U, \O_X( (1-p^e)K_X)\big)$ since $X \setminus U$ has codimension $\geq 2$ \cf \cite[Proposition 2.9]{HartshorneGeneralizedDivisorsOnGorensteinSchemes}.  In particular, we see that
\begin{center}
\fbox{\autoref{eq.DivisorMapCorrespondence1} holds on normal varieties.}
\end{center}

We continue now to work with normal $X$.  Given an effective Weil divisor $D = D_{\phi} \sim (1 - p^e)K_X$ corresponding to $\phi$, set $\Delta = \Delta_{\phi} = {1 \over p^e - 1} D_{\phi}$.  This is an effective $\bQ$-divisor.  Notice that
\[
K_X + \Delta = K_X + {1 \over p^e - 1} D \sim K_X + {1 \over p^e - 1}(1-p^e)K_X = K_X - K_X = 0
\]
In particular, we obtain a bijective correspondence:
\begin{equation}
\label{eq.MapBijectionWithQDivisors1}
\left\{ \begin{array}{c}\text{nonzero  $\phi \in $} \\ \Hom_{\O_X}(F^e_* \O_X, \O_X) \end{array} \right\} \Bigg/ \left( \begin{array}{c}\text{multiplication } \\ \text{by units in} \\ \text{ $\Gamma(X, \O_X)$ }\end{array}\right) \longleftrightarrow\left\{ \begin{array}{c}\text{$\bQ$-divisors } \\ \text{$\Delta \geq 0$ such that} \\ \text{$(p^e - 1)(K_X + \Delta)$ is}\\\text{an integral Weil }\\ \text{divisor linearly} \\ \text{equivalent to 0}\end{array} \right\}
\end{equation}
At this point, it is natural to ask why should one divide by $p^e - 1$.  This division is a normalizing factor as described below.

Suppose that $\phi : F^e_* \O_X \to \O_X$ is an $\O_X$-linear map.  We apply the functor $F^e_*$ and obtain: $F^e_* \phi : F^{2e}_* \O_X \xrightarrow{F^e_* \O_X} F^e_* \O_X$.  Composing this with $\phi$ we obtain $\phi \circ (F^e_* \phi) : F^{2e}_* \O_X \to \O_X$. We use $\phi^2$ to denote this map (note if we view $\phi$ as an honest $p^{-e}$ linear map, then this is really just  $\phi$ composed with itself).  More generally, for each $n \geq 1$, we obtain maps
\begin{equation}
\label{eq.ComposeMaps1}
\phi^n : F^{ne}_* \O_X \to \O_X
\end{equation}
in the same way.

\begin{lemma}  \cite[Theorem 3.11(e)]{SchwedeFAdjunction}
\label{lem.ComposingPhiGivesSameDivisorBase}
Suppose that $X$ is a normal variety.
Then the map $\phi \in \Hom_{\O_X}(F^e_* \O_X, \O_X)$ induces the same $\bQ$-divisor $\Delta$ via \autoref{eq.MapBijectionWithQDivisors1} as does the map $$\phi^n \in \Hom_{\O_X}(F^{ne}_* \O_X, \O_X)$$ for any $n \geq 1$.
\end{lemma}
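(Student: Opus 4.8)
The plan is to reduce the equality $\Delta_{\phi^n}=\Delta_\phi$ to a purely local statement at each codimension-one point of $X$, perform that local computation by writing $\phi$ through a fixed generator of the relevant $\Hom$-module, and then read the answer back off as a $\bQ$-divisor.

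First I would reduce to a discrete valuation ring. Since $X$ is normal, both $\Delta_\phi$ and $\Delta_{\phi^n}$ are determined by their restrictions to the regular locus $U=X_{\mathrm{reg}}$, and the formation of $\phi^n$ commutes with restriction to $U$; so we may assume $X$ is smooth and then it is enough to compare, for each prime divisor $E\subseteq X$, the coefficients of $D_\phi$ and $D_{\phi^n}$ along $E$. Localizing at the generic point of $E$ produces a DVR $(R,\mathfrak m=(t))$, which is regular and $F$-finite; here $F^m_*R$ is free over $R$ for every $m$, $\omega_R$ is free of rank one, and — using the isomorphism $\sHom_{\O_X}(F^m_*\omega_X,\omega_X)\cong F^m_*\omega_X$ from \autoref{subsec.TraceOfFrob} together with \autoref{eq.HomsAreSections} — the module $\Hom_R(F^m_*R,R)\cong F^m_*R((1-p^m)K_R)$ is free of rank one over $F^m_*R$. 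I would fix a generator $\Phi_m$ of it (for instance the Grothendieck trace of $F^m$, transported along a trivialization of $\omega_R$), observing that $\Phi_m$ corresponds under these identifications to a unit of $F^m_*R$, hence to the zero divisor; consequently, if $\phi|_R=(F^e_*u)\cdot\Phi_e$ with $u\in R$ (unique up to a unit), then the coefficient of $D_\phi$ along $E$ is $\ord_R(u)$, and likewise that of $D_{\phi^n}$ is $\ord_R(u_n)$ where $\phi^n|_R=(F^{ne}_*u_n)\cdot\Phi_{ne}$.

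To compute $u_n$ I would use two inputs. The first is that the self-composition $\Phi_e^{(n)}:=\Phi_e\circ(F^e_*\Phi_e)\circ\cdots\circ(F^{(n-1)e}_*\Phi_e)$ is again a generator of $\Hom_R(F^{ne}_*R,R)$ over $F^{ne}_*R$. The second is the defining relation $r\cdot\Phi_e(F^e_*m)=\Phi_e(F^e_*(r^{p^e}m))$, which absorbs scalar factors through the map; a short induction on $n$, using these two facts, then gives
\[
\phi^n|_R=\bigl(F^{ne}_*\,u^{\,1+p^e+p^{2e}+\cdots+p^{(n-1)e}}\bigr)\cdot\Phi_e^{(n)} .
\]
Since $\Phi_e^{(n)}$ and $\Phi_{ne}$ differ by a unit of $F^{ne}_*R$, this yields $\ord_R(u_n)=\tfrac{p^{ne}-1}{p^e-1}\ord_R(u)$. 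As $E$ was arbitrary, $D_{\phi^n}=\tfrac{p^{ne}-1}{p^e-1}D_\phi$, and therefore $\Delta_{\phi^n}=\tfrac{1}{p^{ne}-1}D_{\phi^n}=\tfrac{1}{p^e-1}D_\phi=\Delta_\phi$.

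The one genuinely nontrivial point is that $\Phi_e^{(n)}$ generates $\Hom_R(F^{ne}_*R,R)$; the rest is bookkeeping with the twisted $F^m_*R$-module structures. I see two ways to handle it: either invoke the functoriality of the Grothendieck trace (equivalently, of the counit of $f^!$; \cf \autoref{ex.GrothendieckDuality}) under the factorization $F^{ne}=F^e\circ F^{(n-1)e}$, which identifies $\Phi_e^{(n)}$ with the trace of $F^{ne}$ up to a unit; or pass to the completion $\widehat R\cong\kappa\llbracket t\rrbracket$ and check it explicitly, extending the monomial-basis description of the generator from \autoref{ex.BasicAnExample} (together with a $p^e$-basis of the $F$-finite residue field $\kappa$). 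I would run the trace-functoriality argument as the main line and mention the explicit computation as an elementary alternative.
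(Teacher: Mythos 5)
Your proposal is correct and follows essentially the same route as the paper: reduce to a DVR at each codimension-one point, write $\phi$ as a multiple of a generator $\Phi_e$ of $\Hom_R(F^e_*R,R)$ (which corresponds to the zero divisor), use the key fact that the self-composition of $\Phi_e$ again generates $\Hom_R(F^{ne}_*R,R)$ (the paper's \autoref{ex.CompositionOfGeneratingMaps}), and track the exponent $1+p^e+\cdots+p^{(n-1)e}$ to see that the normalization by $p^{ne}-1$ returns the same $\bQ$-divisor. The only cosmetic differences are that you carry out the induction for general $n$ (the paper does $n=2$ and leaves the rest as an exercise) and you sketch proofs of the generator fact that the paper delegates to an exercise.
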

\begin{proof}
The divisor section correspondence is determined in codimension 1, and so we may assume that $X = \Spec R$ where $(R, \bm)$ is a DVR with $\bm = \langle r \rangle$.  We will simply verify the claim in the Lemma for $n = 2$ and leave the general case to the reader \autoref{ex.ComposingPhiGivesSameDivisorBase}.  Now, since $R$ is regular (and so Gorenstein) and local, $K_X \sim 0$.  Thus
\[
\Hom_{\O_X}(F^e_* \O_X, \O_X) \cong \Gamma(X, F^e_* \O_X( (1-p^e)K_X)) \cong F^e_* R,
\]
we fix $\Phi \in \Hom_{\O_X}(F^e_* \O_X, \O_X)$ corresponding to $F^e_* 1$.  In other words we pick $\Phi$ such that, $D_{\Phi} = 0$ (note that the section $1$ doesn't vanish anywhere).

It is an exercise left to the reader that $\Phi^2 = \Phi \circ (F^e_* \Phi)$ generates $\Hom_R(F^{2e}_* R, R)$ as an $F^{2e}_*$-module, \autoref{ex.CompositionOfGeneratingMaps}.  This is the key point though!

Now consider $\phi(F^e_* \blank) = \Phi(F^e_* u r^a \cdot \blank)$ for some unit $u \in R$ and integer $a \geq 0$.  It follows immediately that $D_{\phi} = a \Div(r)$ and so $\Delta_{\phi} = {a \over p^e - 1} \Div(r)$.

Now we consider $\phi^2$.  We observe that
\[
\phi^2(F^{2e}_* \blank) = \Phi(F^e_* u r^a \Phi(F^e_* ur^a \cdot \blank)) = \Phi(F^e_* \Phi(F^e_* u^{p^e + 1} r^{a(p^e + 1)} \cdot \blank)) = \Phi^2(F^{2e}_* u^{p^e + 1} r^{a(p^e + 1)} \cdot \blank).
\]
Thus $D_{\phi^2} = a(p^e + 1) \Div(r)$ and so that $\Delta_{\phi^2} = {a(p^e + 1) \over p^{2e}-1}\Div(r) = {a \over p^e - 1} \Div(r) = \Delta_{\phi}$ as desired.
\end{proof}

Therefore, we obtain a bijection:
\begin{equation}
\label{eq.MapBijectionWithQDivisors2}
\left\{ \begin{array}{c}\text{nonzero $\phi \in$} \\ \Hom_{\O_X}(F^e_* \O_X, \O_X) \\ \text{as $e \geq 0$ varies}\end{array} \right\} \Bigg/ \left( \begin{array}{c}\text{relation generated} \\ \text{by multiplication} \\ \text{by units in} \\  \text{$\Gamma(X, \O_X)$ and by} \\ \text{composition in \autoref{eq.ComposeMaps1}}\end{array}\right) \longleftrightarrow\left\{ \begin{array}{c}\text{$\bQ$-divisors } \\ \text{$\Delta \geq 0$ such that} \\ \text{$n(K_X + \Delta) \sim 0$}\\\text{for some $n > 0$}\\ \text{with $p$ not} \\ \text{dividing $n$.}\end{array} \right\}
\end{equation}
Here we notice that $(p^e - 1)(K_X + \Delta) \sim 0$ for some $e > 0$ is equivalent to requiring that $n(K_X + \Delta) \sim 0$ for some $n > 0$ which is not divisible by $p$, \autoref{ex.BasicGroupTheory}.

\begin{example}
\label{ex.ZeroDivisorOnAn}
Consider $X = \bA^n = \Spec k[x_1, \ldots, x_n] = \Spec R$ over a perfect field $k$.  Consider $\Phi : F^e_* R \to R$ defined by the following action on monomials
\[
\Phi\left(F^e_* (x_1^{\lambda_1} \cdots x_n^{\lambda_n})\right) = \left\{ \begin{array}{cl} x_1^{{\lambda_1 - (p^e - 1) \over p^e }} \cdots x_n^{{\lambda_n - (p^e - 1) \over p^e }}, & \text{if all ${\lambda_i - (p^e - 1) \over p^e} \in \bZ$} \\ & \\ 0, & \text{otherwise}\end{array} \right.
\]
In other words, $\Phi$ sends $F^e_* (x_1^{p^e - 1} \cdots x_n^{p^e - 1})$ to $1$ and all other elements of the obvious basis of $F^e_* R$ over $R$ to zero.
We already saw in \autoref{ex.BasicAnExample} that $\Phi : F^e_* R \to R$ generates $\Hom_{R}(F^e_* R, R)$ as an $F^e_* R$-module (at least when $e = 1$, but the general case is no different).

But then it immediately follows that the divisor $D_{\Phi}$ is the zero divisor.  In particular, $D_{\Phi}$ corresponds to the element in $\Hom_R(F^e_* R, R) \cong F^e_* R$ that doesn't vanish anywhere.
\end{example}

\subsection{A generalization with line bundles}
\label{sec.GeneralizationOfMapDivisorWithLineBundles}

Previously we considered nonzero maps $\phi \in \Hom_{\O_X}(F^e_* \O_X, \O_X)$.  In this subsection, we generalize this to maps $\phi \in \Hom_{\O_X}(F^e_* \sL, \O_X)$ where $\sL$ is a line bundle on $X$.  This generality actually simplifies some of the statements considered in the previous section.  Indeed, just as in \autoref{eq.HomsAreSections}, it is easy to see that for a smooth variety $X$
\[
\Hom_{\O_X}(F^e_* \sL, \O_X) \cong F^e_* \sL^{-1}( (1-p^e)K_X)
\]
Just as before, this extends to normal varieties as well.  Therefore for any line bundle on a normal variety $X$, we have a bijection of sets.
\begin{equation}
\label{eq.DivisorMapCorrespondence2}
\left\{ \begin{array}{c}\text{nonzero $\phi \in$} \\ \Hom_{\O_X}(F^e_* \sL, \O_X) \end{array} \right\} \Bigg/ \left( \begin{array}{c}\text{multiplication } \\ \text{by units in} \\ \text{ $\Gamma(X, \O_X)$ }\end{array}\right) \longleftrightarrow\left\{ \begin{array}{c}\text{effective Weil} \\ \text{divisors $D$ such} \\ \text{that $\O_X(D) \cong $} \\ \text{$\sL^{-1} ((1-p^e)K_X)$}\end{array} \right\} .
\end{equation}
Thus just as before, $\phi \in \Hom_{\O_X}(F^e_* \sL, \O_X)$ induce $\bQ$-divisors $\Delta_{\phi} = {1 \over p^e - 1}D$ such that $\O_X((p^e - 1)(K_X + \Delta)) \cong \sL^{-1}$.

\begin{definition}
\label{def.QDivisorAssociatedToAMap}
Given $\phi : F^e_* \sL \to \O_X$, we use $\Delta_{\phi}$ to denote the \emph{$\bQ$-divisor associated to $\phi$} as above.
\end{definition}

Finally, consider the data of a line bundle $\sL$ and an $\O_X$-linear map $\phi : F^e_* \sL \to \O_X$.  We will compose $\phi$ with itself in the following way.
We tensor $\phi$ with $\sL$ and use the projection formula to obtain:
\[
F^e_* (\sL^{p^e + 1}) \to \sL
\]
pushing forward by $F^e_*$ we obtain
\[
F^{2e}_* (\sL^{p^e + 1}) \to F^e_* \sL.
\]
Composing with $\phi$ again we obtain a map
\[
F^{2e}_* (\sL^{p^e + 1}) \to \O_X
\]
which we denote by $\phi^2$.  Continuing in this way, we obtain maps
\begin{equation}
\label{eq.ComposeMaps2}
\phi^n : F^{ne}_* (\sL^{p^{(n-1)e} + \cdots + p^e + 1}) \to \O_X
\end{equation}
for all $n \geq 1$.

It is then straightforward to verify that:

\begin{lemma}
\label{lem.QDivisorAssociatedToCompositionTwisted}
The $\bQ$-divisor $\Delta_{\phi}$ induced by $\phi : F^e_* \sL \to \O_X$ is equal to the $\bQ$-divisor $\Delta_{\phi^n}$ induced by $\phi^n : F^{ne}_* (\sL^{p^{(n-1)e} + \dots + p^e + 1}) \to \O_X$.
\end{lemma}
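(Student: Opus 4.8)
The plan is to mimic the proof of \autoref{lem.ComposingPhiGivesSameDivisorBase}, reducing at once to a discrete valuation ring, where the line bundle $\sL$ becomes trivial and so the twisted statement collapses to the untwisted one.

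First I would note that both $\Delta_{\phi}$ and $\Delta_{\phi^n}$ are defined (via the correspondence \autoref{eq.DivisorMapCorrespondence2} and \autoref{def.QDivisorAssociatedToAMap}) in a way that is determined in codimension $1$: Weil divisors and the section--divisor dictionary on a normal variety are unchanged by removing a closed subset of codimension $\geq 2$. Hence it suffices to verify the equality of $\bQ$-divisors after localizing at an arbitrary codimension $1$ point, i.e.\ I may assume $X = \Spec R$ with $(R,\bm)$ a DVR, $\bm = \langle r \rangle$. Since $R$ is regular and local, every line bundle is free, so I fix a trivialization $\sL \cong \O_X$; under it the tensor powers $\sL^{p^{(n-1)e}+\dots+p^e+1}$ occurring in \autoref{eq.ComposeMaps2} also become $\O_X$. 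The construction of $\phi^n$ in \autoref{eq.ComposeMaps2}, which is built from tensoring with $\sL$ and the projection formula \autoref{eq.FrobeniusProjectionFormula}, then agrees under these trivializations with the construction of $\phi^n$ in \autoref{eq.ComposeMaps1} applied to the resulting map $F^e_* \O_X \to \O_X$; any ambiguity coming from the choice of trivialization is multiplication by a unit of $R$, which does not change the associated divisor. With this reduction in hand, the result is exactly \autoref{lem.ComposingPhiGivesSameDivisorBase}.

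Alternatively, and perhaps more transparently for the twist, one can rerun the short computation directly. Let $\Phi \in \Hom_R(F^e_* R, R) \cong F^e_* R$ be the generator with $D_{\Phi} = 0$, as in \autoref{ex.BasicAnExample} and \autoref{ex.ZeroDivisorOnAn}, and write $\phi(F^e_* \blank) = \Phi(F^e_* u r^a \cdot \blank)$ for a unit $u \in R$ and an integer $a \geq 0$, so that $D_{\phi} = a\Div(r)$. As in the proof of \autoref{lem.ComposingPhiGivesSameDivisorBase}, one checks that $\Phi^n$ generates $\Hom_R(F^{ne}_* R, R)$ as an $F^{ne}_* R$-module, hence $D_{\Phi^n} = 0$; unwinding the definition of $\phi^n$ then gives $\phi^n(F^{ne}_* \blank) = \Phi^n\big(F^{ne}_* u^{1 + p^e + \dots + p^{(n-1)e}} r^{a(1 + p^e + \dots + p^{(n-1)e})} \cdot \blank\big)$, so $D_{\phi^n} = a(1 + p^e + \dots + p^{(n-1)e})\Div(r)$. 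Since $p^{ne} - 1 = (p^e - 1)(1 + p^e + \dots + p^{(n-1)e})$, we conclude $\Delta_{\phi^n} = {a(1 + p^e + \dots + p^{(n-1)e}) \over p^{ne} - 1}\Div(r) = {a \over p^e - 1}\Div(r) = \Delta_{\phi}$.

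The only point that needs genuine (if brief) care is the claim that $\Phi^n$ still generates $\Hom_R(F^{ne}_* R, R)$ as an $F^{ne}_* R$-module, which forces $D_{\Phi^n} = 0$; this is the analogue of the ``key point'' flagged in the proof of \autoref{lem.ComposingPhiGivesSameDivisorBase} and is handled by the same argument on the explicit monomial basis. Everything else is routine bookkeeping with the projection-formula twists, which is precisely why the lemma is stated as ``straightforward to verify.''
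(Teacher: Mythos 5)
Your proposal is correct and follows exactly the route the paper intends: the paper leaves this lemma as an exercise pointing back to \autoref{lem.ComposingPhiGivesSameDivisorBase}, and your reduction to a DVR, trivialization of $\sL$ (with the unit ambiguity correctly dismissed as harmless for the associated divisor), and the explicit computation giving $D_{\phi^n} = a(1+p^e+\dots+p^{(n-1)e})\Div(r)$ with the normalization by $p^{ne}-1 = (p^e-1)(1+p^e+\dots+p^{(n-1)e})$ reproduce precisely that argument. The one genuine input you flag, that $\Phi^n$ generates $\Hom_R(F^{ne}_* R, R)$ as an $F^{ne}_* R$-module, is indeed the same ``key point'' the paper handles via \autoref{ex.CompositionOfGeneratingMaps}, so nothing is missing.
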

\begin{proof}
Left as an exercise to the reader \autoref{ex.QDivisorAssociatedToCompositionTwisted}.
\end{proof}

In other words, forming the $\bQ$-divisor $\Delta = {1 \over p^e - 1} D$ normalizes the divisor with respect to self composition just as in the case that $\sL = \O_X$.

Given two line bundles $\sL, \sM$, we declare maps $\phi : F^e_* \sL \to \O_X$ and $\psi : F^e_* \sM \to \O_X$ equivalent if there exists a commutative diagram:
\[
\xymatrix{
F^e_* \sL \ar[d]_{\phi} \ar[r]^{\alpha} & F^e_* \sM \ar[d]^{\psi} \\
\O_X \ar[r]_{\id} & \O_X
}
\]
where $\alpha$ is an isomorphism. We also declare $\phi$ and $\phi^n$ to be equivalent.  These relations generate an equivalence relation $\sim$ on pairs $(\sL, \varphi : F^e_* \sL \to \O_X)$.

\begin{theorem}
\label{thm.GeneralBijectionOfSets}
For a normal variety $X$ over a field of characteristic $p > 0$, there is a bijection of sets
\[
\left\{ \begin{array}{c} \text{Line bundles $\sL$ and} \\ \text{maps $\phi : F^e_* \sL \to \O_X$ } \\ \text{modulo equivalence} \end{array}\right\} \longleftrightarrow\left\{ \begin{array}{c}\text{Effective $\bQ$-divisors $\Delta$}\\\text{ such that} \\ \O_X((p^e-1)(K_X + \Delta)) \cong \sL^{-1}  \end{array} \right\}.
\]
\end{theorem}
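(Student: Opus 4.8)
The plan is to reduce \autoref{thm.GeneralBijectionOfSets} to the already-established bijection \autoref{eq.DivisorMapCorrespondence2} (valid on normal $X$ for each fixed line bundle $\sL$ and each fixed $e$) and then organize the extra equivalence relations on both sides. First I would note that the right-hand side is unaffected by the relations, in the following sense: a $\bQ$-divisor $\Delta$ with $\O_X((p^e-1)(K_X+\Delta))\cong\sL^{-1}$ for \emph{some} $e$ and \emph{some} $\sL$ is simply an effective $\bQ$-divisor such that $n(K_X+\Delta)\sim 0$ for some $n>0$ prime to $p$ --- this is exactly the reformulation recorded after \autoref{eq.MapBijectionWithQDivisors2} (use \autoref{ex.BasicGroupTheory}). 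So the target set in the theorem is canonically the set appearing on the right of \autoref{eq.MapBijectionWithQDivisors2}, and $\sL$ is determined (as a specific line bundle, not merely an isomorphism class) by $\Delta$ and $e$ via $\sL^{-1}\cong\O_X((p^e-1)(K_X+\Delta))$, which explains why on the left we are allowed to let $\sL$ vary and to quotient by isomorphism.

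Next I would build the map from left to right: given a pair $(\sL,\phi\colon F^e_*\sL\to\O_X)$ with $\phi\neq 0$, \autoref{eq.DivisorMapCorrespondence2} produces an effective Weil divisor $D_\phi$ with $\O_X(D_\phi)\cong\sL^{-1}((1-p^e)K_X)$, and we set $\Delta_\phi:=\tfrac{1}{p^e-1}D_\phi$ as in \autoref{def.QDivisorAssociatedToAMap}; then $(p^e-1)(K_X+\Delta_\phi)=(p^e-1)K_X+D_\phi\sim 0$ after twisting by $\sL$, i.e. $\O_X((p^e-1)(K_X+\Delta_\phi))\cong\sL^{-1}$. I must check this descends to $\sim$-equivalence classes. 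Multiplication by a unit in $\Gamma(X,\O_X)$ leaves $D_\phi$ unchanged (already in \autoref{eq.DivisorMapCorrespondence2}); an isomorphism $\alpha\colon F^e_*\sL\to F^e_*\sM$ with $\psi\circ\alpha=\phi$ forces, via \autoref{eq.DivisorMapCorrespondence2} applied on both sides together with the projection formula, the corresponding sections of $\sL^{-1}((1-p^e)K_X)$ and $\sM^{-1}((1-p^e)K_X)$ to differ by the unit datum of $\alpha$, hence $D_\phi=D_\psi$; and replacing $\phi$ by $\phi^n$ leaves the $\bQ$-divisor unchanged by \autoref{lem.QDivisorAssociatedToCompositionTwisted}. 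So $\Delta_{(-)}$ is well defined on equivalence classes, and lands in the target set.

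For the inverse, start with an effective $\bQ$-divisor $\Delta\geq 0$ and an isomorphism $\O_X((p^e-1)(K_X+\Delta))\cong\sL^{-1}$ for some $e>0$; set $D=(p^e-1)(K_X+\Delta)$, an effective integral Weil divisor with $\O_X(D)\cong\sL^{-1}((1-p^e)K_X)$ — wait, more precisely $\O_X(D)=\O_X((p^e-1)(K_X+\Delta))\cong\sL^{-1}$, and twisting shows $\O_X(D)\cong\sL^{-1}((1-p^e)K_X)$ only after absorbing $K_X$, so I would be careful here and simply feed $D$ into the right-hand side of \autoref{eq.DivisorMapCorrespondence2} with this $\sL$ to recover a map $\phi\colon F^e_*\sL\to\O_X$ up to units, hence an element of the left-hand side of the theorem (recording $\sL$ as part of the data). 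That this is inverse to $\Delta_{(-)}$ in both directions is immediate from \autoref{eq.DivisorMapCorrespondence2} once one checks the two descriptions of the datum match; the only subtlety is well-definedness in $\Delta$ and $e$: a different $e'$ with $\O_X((p^{e'}-1)(K_X+\Delta))\cong\sL'^{-1}$ gives a pair $(\sL',\phi')$, and I must show $(\sL,\phi)\sim(\sL',\phi')$. The standard trick is to pass to a common multiple: $\phi$ induces $\phi^n\colon F^{ne}_*(\sL^{p^{(n-1)e}+\cdots+1})\to\O_X$ via \autoref{eq.ComposeMaps2} with $\Delta_{\phi^n}=\Delta_\phi=\Delta$ by \autoref{lem.QDivisorAssociatedToCompositionTwisted}, and similarly for $\phi'$, so choosing $n,n'$ with $ne=n'e'$ reduces to the fixed-level case \autoref{eq.DivisorMapCorrespondence2}, where the correspondence is already a bijection; thus $(\sL,\phi)\sim(\sL^{p^{(n-1)e}+\cdots+1},\phi^n)=(\sL'^{p^{(n'-1)e'}+\cdots+1},\phi'^{n'})\sim(\sL',\phi')$.

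The main obstacle I anticipate is precisely this last independence-of-$e$ argument — verifying that the composition relation \autoref{eq.ComposeMaps2} together with multiplication by units really does identify all the level-$e$ descriptions of a single $\Delta$, i.e. that the equivalence relation $\sim$ is coarse enough. Everything else is a bookkeeping translation through \autoref{eq.DivisorMapCorrespondence2}; the genuine content is \autoref{lem.QDivisorAssociatedToCompositionTwisted} (which handles self-composition) plus the elementary number theory (\autoref{ex.BasicGroupTheory}) that lets one replace an arbitrary $n$ prime to $p$ by some $p^e-1$, and then the observation that two such exponents have a common value of the form $p^{ne}-1$ after iterating.
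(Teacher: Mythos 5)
Your overall route is the intended one: the paper itself gives no argument for \autoref{thm.GeneralBijectionOfSets} (it is deferred to the starred \autoref{ex.GeneralBijectionOfSets}), and assembling the fixed-$(\sL,e)$ correspondence \autoref{eq.DivisorMapCorrespondence2}, checking invariance under the generating relations, and handling independence of $e$ by passing to a common iterate via \autoref{eq.ComposeMaps2} and \autoref{lem.QDivisorAssociatedToCompositionTwisted} is exactly the expected assembly. Two slips in your write-up should be repaired, though. First, your opening identification of the right-hand side is wrong: since $\sL$ is allowed to be an arbitrary line bundle, the condition $\O_X((p^e-1)(K_X+\Delta))\cong\sL^{-1}$ says only that $(p^e-1)(K_X+\Delta)$ is an integral \emph{Cartier} divisor, i.e.\ that $K_X+\Delta$ is $\bQ$-Cartier with index prime to $p$; it does \emph{not} say $n(K_X+\Delta)\sim 0$ for some $n$ prime to $p$ (take $X=\bP^1$, $\Delta=0$). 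The set in \autoref{eq.MapBijectionWithQDivisors2} is the special case $\sL\cong\O_X$, and conflating the two would prove a strictly smaller statement than the theorem; your constructions do not actually use this identification, but the framing must be corrected. Second, in the inverse direction the divisor to feed into \autoref{eq.DivisorMapCorrespondence2} is $(p^e-1)\Delta$, which is effective and satisfies $\O_X((p^e-1)\Delta)\cong\sL^{-1}((1-p^e)K_X)$, not $D=(p^e-1)(K_X+\Delta)$, which in general is not even effective; you sensed the problem (``I would be careful here'') but did not resolve it.

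The one step that genuinely needs more care is your claim that a commutative square $\psi\circ\alpha=\phi$ with $\alpha$ an isomorphism forces $D_\phi=D_\psi$. If $\alpha$ is merely an $\O_X$-linear isomorphism $F^e_*\sL\to F^e_*\sM$, this is false: on $X=\bA^1=\Spec k[x]$ with $\sL=\sM=\O_X$ and $e=1$, let $\Phi$ be the standard generator (dual to $F_*x^{p-1}$, with $\Delta_\Phi=0$) and let $\alpha$ be the $\O_X$-linear involution of $F_*\O_X$ swapping the basis vectors $F_*1$ and $F_*x^{p-1}$ and fixing the others; then $\psi\colonequals\Phi\circ\alpha$ satisfies $\psi\circ\alpha=\Phi$, yet $\psi(F_*\blank)=\Phi(F_*(x^{p-1}\cdot\blank))$, so $\Delta_\psi=\Div(x)\neq 0$. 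Hence for the correspondence to be well defined you must interpret the equivalence relation as generated by isomorphisms of the form $\alpha=F^e_*\beta$ for an isomorphism of line bundles $\beta\colon\sL\to\sM$ (equivalently, $F^e_*\O_X$-linear $\alpha$), which is surely what the paper intends; note this restricted relation still absorbs multiplication by global units, since $u\cdot\phi=\phi\circ F^e_*(u^{p^e}\cdot\blank)$. With that reading, the divisor is computed at codimension-one points exactly as in \autoref{lem.ComposingPhiGivesSameDivisorBase}, your $\alpha$-invariance and unit-invariance go through, and your common-iterate argument for independence of $e$ (via \autoref{lem.QDivisorAssociatedToCompositionTwisted} and injectivity of \autoref{eq.DivisorMapCorrespondence2} at the common level $ne=n'e'$) does close the loop.
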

\begin{proof}
Left to the reader \autoref{ex.GeneralBijectionOfSets}.
\end{proof}

We compute a final example.

\begin{example}
\label{ex.ZeroDivisorOnPn}
Set $X = \bP^n_k$ and consider the line bundle $\sL = \O_X((1-p)K_X) = \O_X((n+1)(p-1))$.  Then
\[
\begin{array}{rl}
& \sHom_{\O_X}(F_* \sL, \O_X)\\
 = & \sHom_{\O_X}(F_* \O_X( (1-p)K_X), \O_X) \\
 = & \sHom_{\O_X}(F_* \O_X(K_X), \O_X(K_X)) \\
 = & F_* \sHom_{\O_X}(\O_X(K_X), \O_X(K_X)) \\
 = & F_* \O_X.
\end{array}
\]
In particular, there is only one non-zero element $\phi \in \Hom_{\O_X}(F_* \sL, \O_X)$ up to scaling by elements of $k$.  In particular, it follows that $D_{\phi} = \Delta_{\phi} = 0$ since a non-zero global section of $F_* \O_X$ doesn't vanish anywhere.  On the affine charts, this element is easily seen to coincide with the map described in \autoref{ex.ZeroDivisorOnAn} (at least for $e = 1$).

On the other hand, there is an obvious map
\[
\psi : F_* \O_X \to \O_X
\]
defined by the rule $\psi(F_* y) = y^{1/p}$, for $y \in \Gamma(U, \O_X)$, if $y^{1/p} \in \Gamma(U, \O_X)$ and $\psi(F_* y) = 0$ otherwise.

It is an exercise left to the reader that $D_{\psi} = (p-1) F$ where $F$ is the union of the various coordinate axes in $\bP^n$.  For example, if $n = 2$ and $X = \Proj k[x,y,z]$, then $F = V(xyz)$.
\end{example}

\subsection{Exercises}

\begin{exercise}[Grothendieck duality for a finite map]
\label{ex.GRDualityFinite}
Suppose that $R \subseteq S$ is a finite inclusion of Cohen-Macaulay local rings and $M$ is an $S$-module.  Grothendieck duality for this inclusion says that there is an isomorphism of $S$-modules:
\[
\Hom_R(M, \omega_R) \cong \Hom_S(M, \omega_S).
\]
Here $\omega_R$ and $\omega_S$ are canonical modules for $R$ and $S$ respectively.  Verify that this is an easy consequence of the formula $\Hom_R(S, \omega_R) = \omega_S$, a formula which was given to you \autoref{eq.FUpperShriekOfOmega}.  %Finally, prove this formula as well using the fact that $\omega_R$ is simply defined to be any module of finite injective dimension such that $\Hom_R(\omega_R, \omega_R) \cong R$ and $\Ext_R^i(\omega_R, \omega_R) = 0$ for $i > 0$ \todo{Karl:  Think about how to simplify this}.
\end{exercise}

\begin{exercise}
\label{ex.CompositionOfGeneratingMaps}
Suppose that $R$ is a ring and $S$ is an $R$-algebra such that $\Hom_R(S, R) \cong S$ as $S$-modules.  Suppose that $M$ is any $S$-module and prove that the natural map:
\[
\Hom_S(M, S) \times \Hom_R(S, R) \to \Hom_R(M, R)
\]
defined by $(\psi, \phi) \mapsto \phi \circ \psi$ is surjective.

In particular, every map in $\Hom_R(M, R)$ can be factored through a map in $\Hom_R(S, R)$.  A solution can be found in \cite[Appendix F] {KunzKahlerDifferentials}
\end{exercise}

\begin{exercise}
\label{ex.ComposingPhiGivesSameDivisorBase}
Prove the general case of \autoref{lem.ComposingPhiGivesSameDivisorBase}.
\end{exercise}

\begin{exercise}
\label{ex.GeneratingMapForGorensteinRings}
Suppose that $R \subseteq S$ is a finite extension of Gorenstein local rings.  Prove that $\Hom_R(S, R)$ is a rank-1 free $S$-module.  Conclude that if $R$ is Gorenstein and local, $\Hom_R(F^e_* R, R)$ is isomorphic to $F^e_* R$ as an $F^e_* R$-module.
\vskip 3pt
\emph{Hint: } Since $R$ is Gorenstein and local (semi-local is good enough), $\omega_R \cong R$.
\end{exercise}

\begin{exercise}
\label{ex.BasicGroupTheory}
Suppose we are given an integer $n > 0$ such that $p$ does not divide $n$, prove that $n | (p^e - 1)$ for some integer $e > 0$.  Conclude that $(p^e - 1)(K_X + \Delta) \sim 0$ for some $e > 0$ if and only if $n(K_X + \Delta) \sim 0$ for some $n > 0$ which is not divisible by $p$.
\end{exercise}

\begin{exercise}
Compute $D_{\psi}$ and $\Delta_{\psi}$ where $\psi$ is as in \autoref{ex.ZeroDivisorOnPn}.
\end{exercise}

\begin{exercise}
\label{ex.QDivisorAssociatedToCompositionTwisted}
Prove \autoref{lem.QDivisorAssociatedToCompositionTwisted}.  See \cite[Theorem 3.11(e)]{SchwedeFAdjunction}.
\end{exercise}

\begin{starexercise}
\label{ex.GeneralBijectionOfSets}
Prove \autoref{thm.GeneralBijectionOfSets}.
\end{starexercise}

\begin{exercise}
\label{ex.TraceMapInducesZeroDivisor}
Suppose that $X$ is a smooth (or Gorenstein) variety and $T : F_* \omega_X \to \omega_X$ is the trace map as described in \autoref{subsec.TheTraceMapForSingular}.  By twisting by $-K_X$ and reflexifying, we obtain a map $\Phi : F_* \O_X( (1-p)K_X) \to \O_X$.  Prove that $\Phi$ corresponds to the zero divisor by \autoref{eq.DivisorMapCorrespondence2}.
\end{exercise}

\begin{exercise}
A normal variety $X$ is called \emph{$\bQ$-Gorenstein} if $\O_X(nK_X)$ is a line bundle for some $n > 0$ (in other words, $nK_X$ is Cartier).  Note that we do not require $\bQ$-Gorenstein varieties to be Cohen-Macaulay.  In this case, the \emph{index of $K_X$} is the smallest $n > 0$ such that $nK_X$ is a Cartier divisor.

Suppose that $X$ is $\bQ$-Gorenstein with index not divisible by $p$.  Suppose that $R = \O_{X,x}$ is the stalk of $R$ at some point $x \in X$.  Prove that we have an isomorphism of $R$-modules, $F^e_* R \cong \Hom_R(F^e_* R, R)$, for all sufficiently divisible $e$.
\end{exercise}

\begin{exercise}
Suppose that $R$ is a normal domain and that $\phi : F^e_* R \to R$ is an $R$-linear map corresponding to a divisor $\Delta_{\phi}$ as in \autoref{def.QDivisorAssociatedToAMap}.  Fix a non-zero $g \in R$.  Form a new map
\[
\psi(F^e_* \blank) = \phi(F^e_* (g \cdot \blank)).
\]
Prove that $\Delta_{\psi} = \Delta_{\phi} + {1 \over p^e - 1} \Div(g)$.
\end{exercise}

\begin{starexercise}
Suppose that $\phi : F^e_* \sL \to \O_X$ and $\psi : F^f_* \sM \to \O_X$ are two $\O_X$-linear maps.  Form the twisted composition $\phi \circ (F^e_* \psi')$ as follows.  Twist $\psi$ by $\sL$ to get $\psi' : F^f_* (\sM \tensor \sL^{p^f}) \to \sL$.  Now pushforward by $F^e_*$ and compose with $\phi$ and obtain:
\[
\phi \circ (F^e_* \psi') : F^{f+e}_* (\sM \tensor \sL^{p^f}) \xrightarrow{F^e_* \psi'} F^e_* \sL \xrightarrow{\phi} \O_X.
\]
Find a relation between $\Delta_{\phi}$, $\Delta_{\psi}$ and $\Delta_{\phi \circ (F^e_* \psi')}$ where the $\Delta$ are $\bQ$-divisors defined as in \autoref{def.QDivisorAssociatedToAMap}.  For a solution, see the proof of \cite[Lemma 4.9(i)]{SchwedeSmithLogFanoVsGloballyFRegular}.
\end{starexercise}

\begin{starexercise}[Non-effective divisors]
\label{ex.NonEffectiveDivisors}
Fix a line bundle $\sL$ on a variety $X$.
There is a bijection between non-zero elements of $\Hom_{\O_X}(F^e_* \sL, \sK(X))$ and (\emph{not necessarily effective}) Weil divisors $D$ such that $\O_X( D) \cong \sL^{-1}( (1-p^e)K_X)$.

Indeed, suppose that $\phi \in \Hom_{\O_X}(F^e_* \sL, \sK(X))$.  Then, working locally if needed, for some sufficiently large Cartier divisor $E \geq 0$, we have that $\phi(F^e_* \sL( (1 - p^e)E)) \subseteq \O_X$.  Set $\psi : F^e_* \sL( (1-p^e)E) \to \O_X$ to be the restriction map.  Then $\psi$ induces a divisor $D_{\psi} > 0$.  Set $D_{\phi} = D_{\psi} + (1-p^e)E$ and prove that $D_{\phi}$ is independent of the choice of $E$.
\end{starexercise}

\section{Frobenius splittings}
\label{sec.FSplittings}

In this section we give a brief introduction to Frobenius splittings.  A more complete treatment can be found in \cite[Chapter 1]{BrionKumarFrobeniusSplitting}.

Suppose that $X$ is a scheme over a perfect field of characteristic $p > 0$.
\begin{definition}
\label{def.FSplitFpure}
We say that $X$ is \emph{Frobenius split} (or \emph{$F$-split}) if the map
\[
\O_X \to F_* \O_X
\]
splits as a map of $\O_X$-modules.  In this case the splitting map $\phi : F_* \O_X \to \O_X$ is called a \emph{Frobenius splitting}.  Of course, there may be multiple different Frobenius splittings $\phi \in \Hom_{\O_X}(F_* \O_X, \O_X)$.

Likewise, we say that a ring $R$ is \emph{Frobenius split} (or \emph{$F$-pure}) if the map
\[
R \to F_* R
\]
splits as a map of $R$-modules.

A scheme $X$ is said to be \emph{$F$-pure} (or \emph{locally $F$-split}) if every point $x \in X$ has a neighborhood which is $F$-split.
\end{definition}

\begin{remark}
Frobenius split varieties were formally introduced in \cite{MehtaRamanathanFrobeniusSplittingAndCohomologyVanishing} (also see \cite{RamananRamanathanProjectiveNormality}), although very closely related concepts were studied in \cite{PeskineSzpiroDimensionProjective,HochsterRobertsFrobeniusLocalCohomology,HartshorneSpeiserLocalCohomologyInCharacteristicP,HaboushAShortProofOfKempf}.  Indeed, Frobenius split affine varieties (\ie{} rings) had been heavily studied by Hochster and his students in the 1970s and 1980s \cf \cite{FedderFPureRat}.
\end{remark}

We shall see below that every regular variety is $F$-pure \autoref{prop.RegularAreFSplitVarieties} but not every regular variety is $F$-split \autoref{lem.FrobeniusSplitImpliesSections}.

\begin{lemma}
\label{lem.FrobeniusSplitOfIterate}
A variety $X$ is Frobenius split if and only if
\begin{itemize}
\item[(a)] the $e$-iterated Frobenius $\O_X \to F^e_* \O_X$ splits for some $e$, \emph{or}
\item[(b)] the $e$-iterated Frobenius $\O_X \to F^e_* \O_X$ splits for all $e$.
\end{itemize}
\end{lemma}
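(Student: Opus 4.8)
The plan is to prove the chain of implications $(b) \Rightarrow (a) \Rightarrow (\text{$X$ is $F$-split}) \Rightarrow (b)$. The implication $(b) \Rightarrow (a)$ is trivial, and $(a)$ with $e = 1$ is exactly the definition of $F$-split, so the content is in the equivalence of $(a)$ (for \emph{some} $e$) with $F$-splitness, together with the upgrade to $(b)$. First I would record the elementary observation that a splitting of $\O_X \to F^e_* \O_X$ is the same as a map $\phi \in \Hom_{\O_X}(F^e_* \O_X, \O_X)$ with $\phi(F^e_* 1) = 1$, which I will use throughout; this is just the $p^{-e}$-linear reformulation already set up in \autoref{sec.Defn}.

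For $(a) \Rightarrow F\text{-split}$: suppose $\phi \colon F^e_* \O_X \to \O_X$ satisfies $\phi(F^e_* 1) = 1$ for some $e > 1$. I want to factor $\phi$ through $F_* \O_X$. Apply the functor $F_*$ repeatedly: from $\phi$ we obtain $F^{e-1}_* \phi \colon F^e_* \O_X \to F^{e-1}_* \O_X$, and composing the tower $\O_X \to F_* \O_X \to \dots \to F^{e}_* \O_X$ against $\phi$ exhibits $\phi$ as a composite in which the last arrow is a map $F_* \O_X \to \O_X$. Concretely, write $\phi = \psi \circ (F_* \phi')$ where $\phi' \colon F^{e-1}_* \O_X \to \O_X$ is again a $p^{-(e-1)}$-linear map and $\psi \colon F_* \O_X \to \O_X$; then $1 = \phi(F^e_* 1) = \psi(F_* \phi'(F^{e-1}_* 1))$, and since $\phi'(F^{e-1}_* 1) = 1$ (unwinding the definitions, as the constant section $1$ is fixed at each stage), we get $\psi(F_* 1) = 1$, i.e. $\psi$ is a Frobenius splitting. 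The cleanest way to package this is to appeal to \autoref{ex.CompositionOfGeneratingMaps}: for a finite extension any map in $\Hom_R(M, R)$ factors through $\Hom_R(S, R)$; applied to $R = \O_X$, $S = F_* \O_X$, $M = F^e_* \O_X$ (working on affine charts and using that $F$ is finite, \autoref{ex.VarietiesAreFFinite}), this gives the factorization, and a short diagram chase tracks where $F^e_* 1$ goes.

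For $F\text{-split} \Rightarrow (b)$: given a splitting $\phi \colon F_* \O_X \to \O_X$ with $\phi(F_* 1) = 1$, form the iterate $\phi^e \colon F^e_* \O_X \to \O_X$ as in \autoref{eq.ComposeMaps1}, that is $\phi^e = \phi \circ (F_* \phi) \circ \dots \circ (F^{e-1}_* \phi)$. Since each $F^i_* \phi$ sends $F^{i+1}_* 1 \mapsto F^i_* 1$, we get $\phi^e(F^e_* 1) = 1$, so $\phi^e$ splits $\O_X \to F^e_* \O_X$; this holds for every $e \geq 1$, giving $(b)$. I expect the main obstacle to be purely bookkeeping: making sure the identifications of $F^e_* \O_X$ as a sheaf of abelian groups with $\O_X$ are handled consistently so that "$1$ goes to $1$" is literally correct at each application of $F_*$ and each composition, and noting that the factorization argument, though local, glues because the splitting condition $\phi(F^e_* 1) = 1$ is checked on sections. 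No vanishing theorems or duality are needed here — it is entirely a manipulation of $p^{-e}$-linear maps and their composites.
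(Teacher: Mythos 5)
Your overall strategy (compose splittings for one direction, reduce a splitting of $F^e$ to a splitting of $F$ for the other) is the intended one, and your argument for ``$F$-split $\Rightarrow$ (b)'' is correct: the iterate $\phi^e = \phi \circ (F_*\phi)\circ \cdots \circ (F^{e-1}_*\phi)$ sends $F^e_* 1 \mapsto 1$. However, your argument for ``(a) $\Rightarrow$ $F$-split'' has a genuine gap. You factor $\phi = \psi \circ (F_*\phi')$ with $\phi' \colon F^{e-1}_*\O_X \to \O_X$ and $\psi \colon F_*\O_X \to \O_X$ by appealing to \autoref{ex.CompositionOfGeneratingMaps}, but that exercise carries the hypothesis $\Hom_R(S,R) \cong S$ as $S$-modules; with $S = F_*\O_X$ this is the statement that $\sHom_{\O_X}(F_*\O_X,\O_X)$ is a cyclic $F_*\O_X$-module, which holds for Gorenstein (e.g.\ smooth) varieties (\autoref{ex.GeneratingMapForGorensteinRings}) but not for an arbitrary variety, which is the generality of the lemma. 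Moreover, even where such a factorization exists, your assertion that $\phi'(F^{e-1}_*1) = 1$ ``by unwinding the definitions'' is unjustified: the factorization gives no control over where $\phi'$ sends $1$; you only learn $\psi(F_* c) = 1$ for the global section $c = \phi'(F^{e-1}_*1)$, and to conclude you would still have to replace $\psi$ by $\psi(F_*(c \cdot \blank))$.

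The repair is simpler than the machinery you invoke, and needs no hypothesis on $X$: do not factor $\phi$ through a hypothetical $p^{-1}$-linear map, but precompose it with the Frobenius itself. Concretely, let $F^{e-1} \colon \O_X \to F^{e-1}_*\O_X$ be the $(e-1)$-iterated Frobenius and set
\[
\psi \;=\; \phi \circ \big(F_* F^{e-1}\big) \colon F_*\O_X \longrightarrow F_*F^{e-1}_*\O_X = F^e_*\O_X \longrightarrow \O_X .
\]
Then $\psi(F_*1) = \phi(F^e_*(1^{p^{e-1}})) = \phi(F^e_*1) = 1$, so $\psi$ is a Frobenius splitting. This is exactly the content of the hint to \autoref{ex.FrobeniusSplitOfIterate}: compose Frobenii and splittings using the functor $F^e_*$; in one direction you postcompose a splitting with pushforwards of itself (as you did), and in the other you precompose a splitting of $F^e$ with pushforwards of Frobenius.
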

\begin{proof}
This is left as an exercise to the reader, see \autoref{ex.FrobeniusSplitOfIterate}.
\end{proof}

Suppose that $X$ is a variety, we will look for Frobenius splittings inside $\Hom_{\O_X}(F^e_* \O_X, \O_X)$.  Indeed, notice that for any $c \in \Gamma(X, \O_X)$, we have a map $\Hom_{\O_X}(F^e_* \O_X, \O_X) \to \Gamma(X, \O_X)$ defined by evaluation at $c$, in other words, $\phi \mapsto \phi(F^e_* c)$.  Now we observe that:

\begin{lemma}
\label{lem.RFrobeniusSplitIfAndOnlyIfEvaluationAt1Surjects}
A variety $X$ is Frobenius split if and only if the evaluation-at-1 map \[
\Hom_{\O_X}(F^e_* \O_X, \O_X) \to \Gamma(X, \O_X)
\]
 is surjective.
\end{lemma}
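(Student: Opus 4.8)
The plan is to prove both implications directly by unwinding the definitions. Observe first that a Frobenius splitting in the sense of \autoref{def.FSplitFpure} is exactly a $\phi \in \Hom_{\O_X}(F_* \O_X, \O_X)$ with $\phi(F_* 1) = 1$; more generally, by \autoref{lem.FrobeniusSplitOfIterate}, $X$ is Frobenius split if and only if there exists $e > 0$ and $\phi \in \Hom_{\O_X}(F^e_* \O_X, \O_X)$ with $\phi(F^e_* 1) = 1$. So the content of the lemma is really the equivalence: ``there exists $\phi$ with $\phi(F^e_* 1) = 1$'' $\iff$ ``the evaluation-at-$1$ map $\mathrm{ev}_1 \colon \Hom_{\O_X}(F^e_* \O_X, \O_X) \to \Gamma(X, \O_X)$ is surjective.'' (One should first fix the value of $e$; I would phrase the statement and proof for a fixed $e$ and note that the two directions of \autoref{lem.FrobeniusSplitOfIterate} let one pass freely between ``some $e$'' and ``the specific $e$ at hand.'')

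For the implication $(\Leftarrow)$: if $\mathrm{ev}_1$ is surjective then in particular $1 \in \Gamma(X, \O_X)$ is in its image, so there is $\phi \in \Hom_{\O_X}(F^e_* \O_X, \O_X)$ with $\phi(F^e_* 1) = 1$, which is precisely a Frobenius splitting of the $e$-iterated Frobenius; by \autoref{lem.FrobeniusSplitOfIterate} this makes $X$ Frobenius split. For the implication $(\Rightarrow)$: suppose $X$ is Frobenius split, so (again by \autoref{lem.FrobeniusSplitOfIterate}, choosing the iterate compatibly with our fixed $e$) we have $\phi_0 \in \Hom_{\O_X}(F^e_* \O_X, \O_X)$ with $\phi_0(F^e_* 1) = 1$. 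Now I need $\mathrm{ev}_1$ to hit every $c \in \Gamma(X, \O_X)$, not just $1$. The trick is to precompose $\phi_0$ with multiplication by $c$ inside $F^e_* \O_X$: define $\phi_c(F^e_* \blank) = \phi_0(F^e_*(c \cdot \blank))$. Since multiplication by $c$ on $F^e_* \O_X$ is $\O_X$-linear (it is even $F^e_*\O_X$-linear), $\phi_c$ is again $\O_X$-linear, and $\phi_c(F^e_* 1) = \phi_0(F^e_* c)$. Hmm — that gives $\phi_0(F^e_* c)$, not $c$. The cleaner move: instead use that $c = c \cdot 1 = c \cdot \phi_0(F^e_* 1)$, and by $\O_X$-linearity of $\phi_0$, $c \cdot \phi_0(F^e_* 1) = \phi_0(c \cdot F^e_* 1) = \phi_0(F^e_*(c^{p^e} \cdot 1)) = \phi_0(F^e_* c^{p^e})$. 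So setting $\psi(F^e_* \blank) = \phi_0(F^e_*(c^{p^e} \cdot \blank))$ — or equivalently just using $\O_X$-linearity directly — we get $\mathrm{ev}_1(c \cdot \phi_0) = c \cdot \phi_0(F^e_* 1) = c$, where $c \cdot \phi_0$ is the $\O_X$-module action on $\Hom_{\O_X}(F^e_* \O_X, \O_X)$ (postcomposition with multiplication by $c$). This is the simplest route: $\mathrm{ev}_1$ is an $\O_X$-module map (equivalently $\Gamma(X,\O_X)$-module map on global sections), and its image contains $1$, hence contains the $\Gamma(X,\O_X)$-submodule generated by $1$, which is all of $\Gamma(X, \O_X)$. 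So $\mathrm{ev}_1$ is surjective.

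I do not expect a serious obstacle here; the only mild subtlety is bookkeeping around which iterate $e$ one works with, which is fully handled by \autoref{lem.FrobeniusSplitOfIterate}. The one genuinely load-bearing observation is that $\mathrm{ev}_1 \colon \Hom_{\O_X}(F^e_* \O_X, \O_X) \to \Gamma(X, \O_X)$ is a morphism of $\Gamma(X, \O_X)$-modules (using the target's action on the $\Hom$ by postcomposition), so that surjectivity is equivalent to $1$ lying in the image — this is what converts the existence of a single splitting into surjectivity of the whole evaluation map.
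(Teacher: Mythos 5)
Your proof is correct; the paper gives no written argument for this lemma (it is deferred to \autoref{ex.ProofOfFSplitLemma}), and your route is precisely the intended one. The load-bearing point you identify—that the evaluation-at-$1$ map is $\Gamma(X,\O_X)$-linear for the postcomposition module structure on $\Hom_{\O_X}(F^e_*\O_X,\O_X)$, so surjectivity is equivalent to $1$ lying in the image—together with \autoref{lem.FrobeniusSplitOfIterate} to move between a fixed iterate $e$ and the definition of Frobenius split, settles both directions.
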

\begin{proof}
Left as an exercise to the reader in \autoref{ex.ProofOfFSplitLemma} below.
\end{proof}

Finally, we observe that a normal $X$ is Frobenius split if and only if the regular locus of $X$ is Frobenius split.

\begin{lemma}
Suppose that $X$ is normal and $U \subseteq X$ is the regular locus.  Then $X$ is Frobenius split if and only if $U$ is Frobenius split.
\end{lemma}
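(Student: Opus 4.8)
The plan is to reduce everything to \autoref{lem.RFrobeniusSplitIfAndOnlyIfEvaluationAt1Surjects} combined with the standard codimension-$2$ extension property of reflexive sheaves on a normal variety, as recalled in \autoref{sec.reflex}. Write $j \colon U \hookrightarrow X$ for the open immersion; since $X$ is normal, its singular locus has codimension $\geq 2$, so $X \setminus U$ has codimension $\geq 2$ in $X$. Also recall that $X$ is $F$-finite (our standing hypotheses), so $F_*\O_X$ is a coherent $\O_X$-module.

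First I would dispatch the easy implication. If $\phi \colon F_*\O_X \to \O_X$ is a Frobenius splitting, then $\phi|_U \colon F_*\O_U \to \O_U$ is $\O_U$-linear and still sends $F_* 1$ to $1$, since both the Frobenius push-forward and the map $\phi$ commute with restriction to the open set $U$; hence $U$ is Frobenius split.

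For the converse, the key point is that restriction along $j$ induces a bijection
\[
\Hom_{\O_X}(F_*\O_X, \O_X) \xrightarrow{\ \sim\ } \Hom_{\O_U}(F_*\O_U, \O_U).
\]
Indeed, $\sHom_{\O_X}(F_*\O_X, \O_X)$ is a reflexive $\O_X$-module, being the sheaf $\sHom$ of a coherent sheaf into the reflexive sheaf $\O_X$; therefore its sections over $X$ agree with its sections over $U$ by the codimension-$2$ extension property of \autoref{sec.reflex}. Moreover its restriction to $U$ is $\sHom_{\O_U}(F_*\O_U, \O_U)$, because both $F_*$ and $\sHom$ commute with restriction to an open subset. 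This is exactly the mechanism already used for $\omega_X$ in \autoref{prop.TraceMapForNormalVars}. Granting this bijection, suppose $\phi_U$ is a Frobenius splitting of $U$ and let $\phi \colon F_*\O_X \to \O_X$ be the unique $\O_X$-linear map restricting to $\phi_U$. Then $\phi(F_*1) \in \Gamma(X, \O_X) = \Gamma(U, \O_U)$ (again using normality of $X$) restricts to $\phi_U(F_*1) = 1$, so $\phi(F_*1) = 1$ and $\phi$ is a Frobenius splitting of $X$. Equivalently, one may phrase this through \autoref{lem.RFrobeniusSplitIfAndOnlyIfEvaluationAt1Surjects}: the evaluation-at-$1$ maps on $X$ and on $U$ are identified under the displayed bijection together with $\Gamma(X,\O_X) = \Gamma(U,\O_U)$, so one is surjective if and only if the other is.

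The only non-formal ingredient is the reflexivity/extension statement — that a reflexive coherent sheaf on a normal variety has the same global sections over $X$ as over the complement of a closed set of codimension $\geq 2$, and that $\sHom$ into $\O_X$ produces a reflexive sheaf. This is precisely the content of \autoref{sec.reflex}, so while it is the step I would state most carefully, it is routine here and everything else is purely formal.
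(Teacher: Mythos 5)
Your argument is correct and matches the paper's proof: both rest on the fact that the restriction map $\Hom_{\O_X}(F_*\O_X,\O_X) \to \Hom_{\O_U}(F_*\O_U,\O_U)$ is an isomorphism because the $\sHom$ sheaf is reflexive and $X \setminus U$ has codimension $\geq 2$ (\autoref{sec.reflex}). You simply spell out the easy direction and the evaluation-at-$1$ bookkeeping, which the paper leaves implicit.
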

\begin{proof}
The natural restriction map $\Hom_{\O_X}(F^e_* \O_X, \O_X) \to \Hom_{\O_U}(F^e_* \O_U, \O_U)$ is an isomorphism since $X \setminus U$ has codimension $\geq 2$ and the $\sHom$ sheaves are reflexive.  See \autoref{sec.reflex} and \cite[Lemma 1.1.7]{BrionKumarFrobeniusSplitting} for additional discussion.
\end{proof}

\subsection{Local properties of Frobenius split varieties}

The easiest property to prove about Frobenius split varieties is that they are reduced.

\begin{lemma}
\label{lem.FrobeniusSplitIsReduced}
Suppose that a scheme $X$ is $F$-pure, then $X$ is reduced.
\end{lemma}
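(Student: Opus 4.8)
The plan is to reduce to the affine (in fact local) case and then use the splitting map directly. Since being reduced is a local property, it suffices to check that every stalk $\O_{X,x}$ is reduced; and since $X$ is $F$-pure, every point has an affine $F$-split neighborhood, so we may assume $X = \Spec R$ with $R$ $F$-split, say with splitting $\phi \colon F_* R \to R$ satisfying $\phi \circ F = \id_R$ (equivalently $\phi(F_* 1) = 1$ in the $p^{-1}$-linear picture).

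The key step is the nilpotent-killing argument, which is exactly the one already sketched in the excerpt for the structure sheaf: suppose $x \in R$ is nilpotent, so $x^{p^e} = 0$ for some $e \geq 1$ (any nilpotent satisfies this since $p^e$ eventually dominates the nilpotency index). By \autoref{lem.FrobeniusSplitOfIterate} the $e$-iterated Frobenius $R \to F^e_* R$ also splits; call the splitting $\phi_e$, so that $\phi_e \circ F^e = \id_R$, i.e. $\phi_e$ is $p^{-e}$-linear with $\phi_e(F^e_* 1) = 1$. Then, viewing things as ordinary maps of sets and using $0 = x^{p^e} = F^e(x)$, we get
\[
0 = \phi_e(F^e_* 0) = \phi_e\bigl(F^e_*(x^{p^e})\bigr) = x \cdot \phi_e(F^e_* 1) = x \cdot 1 = x,
\]
using the defining relation $\phi_e(r^{p^e} m) = r\,\phi_e(m)$ of a $p^{-e}$-linear map from \autoref{eq.pMinusERelation}. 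Hence $x = 0$, so $R$ (and therefore $X$) has no nonzero nilpotents.

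The only mild subtlety — the ``main obstacle,'' though it is minor — is bookkeeping around the iterate: a single Frobenius splitting $\phi$ gives $\phi(r^p m) = r\phi(m)$, which directly kills elements with $x^p = 0$ but not a priori those with $x^{p^2}=0$; this is precisely why one invokes \autoref{lem.FrobeniusSplitOfIterate} to pass to a splitting of $F^e$ for the relevant $e$. Alternatively one could iterate $\phi$ itself, using that $\phi^e \colon F^e_* R \to R$ (the $e$-fold composition, as in \autoref{eq.ComposeMaps1}) again satisfies $\phi^e \circ F^e = \id$, and run the same computation with $\phi^e$ in place of $\phi_e$; either route closes the argument. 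One should also note that passing to an affine $F$-split neighborhood is legitimate because reducedness is checked on stalks, and every stalk of $X$ is a localization of such an $R$, with localizations of reduced rings being reduced.
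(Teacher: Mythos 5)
Your proof is correct and follows essentially the same route as the paper: reduce to an affine $F$-split chart, note a nilpotent satisfies $x^{p^e}=0$, and apply a splitting of the $e$-iterated Frobenius (the paper writes $x = x\phi(F^e_* 1) = \phi(F^e_* x^{p^e}) = 0$, implicitly using the iterate as you do explicitly via \autoref{lem.FrobeniusSplitOfIterate}). Your extra care about the iterate and about localizing to stalks is just a more explicit version of the same argument.
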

\begin{proof}
Without loss of generality we may assume that $X = \Spec R$ is affine and Frobenius split.  Suppose that $x \in R$ is such that $x^n = 0$.  Then $x^{p^e} = 0$ for some $e > 0$ (where $p$ is the characteristic of $R$).  Therefore $x = x \phi(F^e_* 1) = \phi(F^e_* x^{p^e}) = \phi(F^e_* 0) = 0$.
\end{proof}

First we identify some Frobenius split varieties.

\begin{proposition}[Regular affine varieties are Frobenius split]
\label{prop.RegularAreFSplitVarieties}
Suppose that $X = \Spec R$ is a regular affine variety.  Then $X$ is Frobenius split.
\end{proposition}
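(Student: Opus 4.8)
The plan is to invoke the evaluation-at-$1$ criterion of \autoref{lem.RFrobeniusSplitIfAndOnlyIfEvaluationAt1Surjects}: writing $X=\Spec R$, it says that $X$ is Frobenius split if and only if the map $\Hom_R(F_*R,R)\to R$, $\phi\mapsto\phi(F_*1)$, is surjective. Its image $I\subseteq R$ is an ideal, so surjectivity is the same as $I=R$. Since $F_*R$ is a finitely generated module over the Noetherian ring $R$, both $\Hom_R(F_*R,R)$ and the evaluation map commute with localization (using also $(F_*R)_{\bm}\cong F_*(R_{\bm})$ from \autoref{ex.LocalizationCompletionAndFeLowerStar}), so $I_{\bm}$ is the image of the analogous evaluation map for $R_{\bm}$, and it suffices to show $I_{\bm}=R_{\bm}$ for every maximal ideal $\bm$. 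Thus I would reduce to the case $(R,\bm)$ regular local, where the goal becomes: produce some $\phi\colon F_*R\to R$ with $\phi(F_*1)$ a unit.

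The crucial input is that $F_*R$ is a \emph{free} $R$-module of finite rank when $R$ is regular local and essentially of finite type over a perfect field $k$. For a regular \emph{variety} this was already recorded above (over a perfect field a regular variety is smooth, so $F_*\O_X$ is locally free, hence free locally on $\Spec R$), and in any case it is Kunz's theorem, \autoref{ex.KunzRegularityCriterion}; concretely one sees it by completing, writing $\widehat R\cong K\llbracket x_1,\dots,x_n\rrbracket$ via the Cohen structure theorem, combining the computation of \autoref{ex.FeLowerStarIsFreeForPolynomial} with a finite $K^{p}$-basis of the ($F$-finite) residue field $K$, and descending freeness along the faithfully flat map $R\to\widehat R$.

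Granting freeness, I would next observe that the $R$-submodule $\bm\cdot F_*R\subseteq F_*R$ is exactly $F_*(\bm^{[p]})$, since $a\cdot F_*r=F_*(a^{p}r)$ and $a^{p}\in\bm^{[p]}$ for $a\in\bm$; as $\bm^{[p]}\subseteq\bm$ is a proper ideal, $1\notin\bm^{[p]}$, hence $F_*1\notin\bm\cdot F_*R$. Therefore the class of $F_*1$ in the $R/\bm$-vector space $F_*R/\bm F_*R$ is nonzero, so there is an $R/\bm$-linear functional on $F_*R/\bm F_*R$ sending it to $1$; since $F_*R$ is free, hence projective, this functional lifts along $R\twoheadrightarrow R/\bm$ to an $R$-linear map $\phi\colon F_*R\to R$, and then $\phi(F_*1)\equiv 1\pmod{\bm}$, which is a unit because $R$ is local. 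This gives the required local splitting and finishes the argument.

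The only nonformal ingredient is the freeness (equivalently flatness) of $F_*R$ over $R$ for regular $R$, i.e.\ Kunz's theorem; everything after that is soft linear algebra and Nakayama-type reasoning. If one wishes to avoid the local-to-global bookkeeping in the first paragraph, an equivalent route is to run the whole argument after completing at $\bm$, where $F_*1$ is literally one of the basis elements of the free $\widehat R$-module $F_*\widehat R$, and then descend the splitting to $R$ using that $R\to\widehat R$ is faithfully flat, so an ideal of $R$ that contains $1$ after base change already contains $1$.
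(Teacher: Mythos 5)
Your proof is correct and follows essentially the same route as the paper: reduce to the local case via the evaluation-at-$1$ criterion of \autoref{lem.RFrobeniusSplitIfAndOnlyIfEvaluationAt1Surjects}, then use the Cohen structure theorem together with \autoref{ex.FeLowerStarIsFreeForPolynomial} to get freeness of $F_*$ over the completion, and descend by faithful flatness. The only minor difference is that you descend freeness to $R_{\bm}$ and construct the splitting there by a Nakayama/projectivity lift of a functional sending $F_*1$ to a unit, whereas the paper produces the splitting over $\widehat{R}$ (where $F_*1$ is literally a basis element) and descends surjectivity of the evaluation map -- indeed, your own closing remark about completing first is precisely the paper's argument.
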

\begin{proof}
We prove the result for $R_{\bm} = \O_{X,x}$, the stalk of $X$ at a closed point $x \in X$.  The global case is \autoref{ex.FSplitAtPointsImpliesFSplit}.  Let $\hat R$ denote the completion of $R_{\bm}$ at the maximal ideal $\bm$.  Now consider the evaluation-at-1 map $\Phi : \Hom_{R_{\bm}}(F^e_* R_{\bm}, R_{\bm}) \to R_{\bm}$.  Tensoring with $\hat{R}$ gives us a map
\[
\hat{\Phi} : \Hom_{\hat R}(F^e_* \hat{R}, \hat{R}) \cong \Hom_{R_{\bm}}(F^e_* R_{\bm}, R_{\bm}) \tensor_{R_{\bm}} \hat{R} \to R_{\bm} \tensor_{R_{\bm}} \hat{R} \cong \hat{R}.
\]
Here we have used \autoref{ex.LocalizationCompletionAndFeLowerStar}.  Note that by the Cohen-structure theorem, \cite[Theorem 28.3]{MatsumuraCommutativeRingTheory}, we have that $\hat{R} = k\llbracket x_1, \dots, x_n \rrbracket$.  It follows then from the argument of \autoref{ex.FeLowerStarIsFreeForPolynomial} that $F^e_* \hat{R}$ is free as an $\hat{R}$-module and in particular, that there is a splitting of $\hat{R} \to F^e_* \hat{R}$.  In particular, $\hat{\Phi}$ is surjective.  But therefore $\Phi$ is surjective as well since tensoring with $\hat{R}$ is faithfully flat.  Thus by \autoref{lem.RFrobeniusSplitIfAndOnlyIfEvaluationAt1Surjects}, we are done.
\end{proof}

Of course, not all Frobenius split varieties are regular.

\begin{lemma}[Simple normal crossings are $F$-split]
\label{lem.NormalCrossingsAreFSplit}
The ring \[
R = k[x_1, \ldots, x_n]/\langle x_1 \cdot x_2 \cdots x_n \rangle = S/J
\]
is Frobenius split.
\end{lemma}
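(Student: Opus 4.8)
The plan is to set $S = k[x_1,\dots,x_n]$ and $f = x_1 x_2 \cdots x_n$, so that $J = \langle f\rangle$ and $R = S/J$, and to produce a Frobenius splitting of $R$ by descending a suitable Frobenius splitting of $S$. Recall from \autoref{ex.BasicAnExample} (applied to the polynomial ring, which here I call $S$) the $S$-linear map $\Phi\colon F_* S \to S$ that sends $F_*(x_1^{p-1}\cdots x_n^{p-1})$ to $1$ and every other monomial in the basis $\{F_*(x_1^{\lambda_1}\cdots x_n^{\lambda_n})\mid 0\le \lambda_i\le p-1\}$ to $0$. I would then define $\psi\colon F_* S \to S$ by
\[
\psi(F_*\blank) = \Phi\big(F_*(f^{p-1}\cdot \blank)\big).
\]
Since $f^{p-1} = x_1^{p-1}\cdots x_n^{p-1}$, one checks at once that $\psi$ is $S$-linear and that $\psi(F_* 1) = \Phi(F_* f^{p-1}) = 1$, so $\psi$ is a Frobenius splitting of $S$.

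The one computation that makes the argument work is that $\psi$ carries $F_* J$ into $J$. Every element of $J$ has the form $fg$ with $g\in S$, and pulling the $p$th power $f^p$ out of $\Phi$ by $S$-linearity (equivalently, by $p^{-1}$-linearity of $\Phi$) gives
\[
\psi\big(F_*(fg)\big) = \Phi\big(F_*(f^{p-1}fg)\big) = \Phi\big(F_*(f^p g)\big) = f\,\Phi(F_* g) \in J,
\]
so indeed $\psi(F_* J)\subseteq J$.

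I would then conclude by a descent argument. Applying $F_*$ to $0\to J\to S\to R\to 0$ is exact (on underlying abelian groups $F_*$ changes nothing), so the pushed-forward surjection $F_* S \to F_* R$ has kernel $F_* J$. By the previous step the composite $F_* S \xrightarrow{\psi} S \to R$ kills $F_* J$, hence factors through an additive map $\overline\psi\colon F_* R \to R$; a routine check with the twisted module structures shows $\overline\psi$ is $R$-linear (equivalently, $p^{-1}$-linear), and $\overline\psi(F_* 1) = \overline{\psi(F_* 1)} = \overline{1} = 1$. Thus $\overline\psi$ splits $R\to F_* R$, so $R$ is Frobenius split.

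The only genuine content is the middle step $\psi(F_* J)\subseteq J$, together with the bookkeeping that the descended map has source exactly $F_* R$; once $f^p$ has been extracted from inside $\Phi$ everything is formal, so I do not expect a real obstacle here. (Alternatively one could phrase this via the Fedder-type observation that $\langle f^p\rangle\colon\langle f\rangle = \langle f^{p-1}\rangle$ and $f^{p-1}=x_1^{p-1}\cdots x_n^{p-1}\notin \langle x_1^p,\dots,x_n^p\rangle$, but the construction above needs nothing beyond \autoref{ex.BasicAnExample}.)
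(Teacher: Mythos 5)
Your proof is correct and takes essentially the same route as the paper: construct a Frobenius splitting of the polynomial ring $S$ that is compatible with $J=\langle x_1\cdots x_n\rangle$ and then descend it to $R=S/J$ via the quotient diagram. In fact your $\psi(F_* \blank)=\Phi(F_*(f^{p-1}\cdot \blank))$ is literally the paper's splitting (the projection onto the $F_*1$-coordinate, in the case $e=1$); the only difference is that your Fedder-style description yields the compatibility $\psi(F_*J)\subseteq J$ in one line by pulling $f^p$ out of $\Phi$, whereas the paper verifies it monomial by monomial.
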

\begin{proof}
Observe we have an ``obvious'' Frobenius splitting $\phi : F^e_* k[x_1, \dots, x_n] \to k[x_1, \dots, x_n]$ coming from \autoref{ex.FeLowerStarIsFreeForPolynomial}, which sends the basis element corresponding to $F^e_* 1$ to $1$ and sends all the other basis elements $x_1^{\lambda_1} \dots x_n^{\lambda_n}$ to $0$.  We want to consider what this map does to the ideal $\langle x_1 \cdot x_2 \cdots x_n \rangle = J$.
Consider any monomial in ${\bf x}^{\alpha} = x_1^{\alpha_1} \cdots x_n^{\alpha_n} \in \langle x_1 \cdot x_2 \cdots x_n \rangle = J$.  Then $\phi(F^e_* {\bf x}^{\alpha}) \neq 0$ if and only if $p^e | \alpha_i$ for each $i$.  In particular, this means that $\phi(F^e_* {\bf{x}}^{\alpha}) = x_1^{\beta_1} \cdots x_n^{\beta_n}$ with each $\beta_i \geq 1$.  Therefore, $\phi(F^e_* {\bf x}^{\alpha}) \in  J$.  Since every element of $J$ is a sum of such monomials, we have that $\phi(F^e_* J) \subseteq J$.

But now consider the commutative diagram:
\begin{equation}
\label{eq.SplittingDiagram}
\xymatrix{
F^e_* J \ar[r]^{\phi|_J} \ar@{^{(}->}[d] & J\ar@{^{(}->}[d]\\
F^e_* R \ar@{->>}[d] \ar[r]^{\phi} & R\ar@{->>}[d] \\
F^e_* (R/J) \ar[r]_{\phi/J} & R/J\\
}
\end{equation}
Since $\phi$ sends $1$ to $1$, so does $\phi/J$.
\end{proof}

In the next section, we will introduce a highly effective tool, based upon similar analysis, which can be used to test whether an affine variety is Frobenius split -- Fedder's criterion.

\begin{definition}
Suppose that $\phi : F^e_* \O_X \to \O_X$ is a Frobenius splitting, then an ideal sheaf $J \subseteq \O_X$ is called \emph{compatibly ($\phi$-)split} if $\phi (F^e_* J) \subseteq J$.  If the subscheme $Y = V(J) \subseteq X$, then we also say that $Y$ is \emph{compatibly ($\phi$-)split}.
\end{definition}

Note that in \autoref{lem.NormalCrossingsAreFSplit}, we showed that the coordinate hyperplanes were compatibly split with the obvious Frobenius splitting on $X = \Spec k[x_1, \dots, x_n]$.  Indeed, consider the following proposition:

\begin{proposition}[Properties of compatibly split varieties]
\label{prop.PropertiesOfCompatiblySplitSubvarieties}
Suppose that $\phi : F^e_* \O_X \to \O_X$ is a Frobenius splitting.  Then:
\begin{enumerate}
\item If $J \subseteq \O_X$ is compatibly $\phi$-split, then $V(J)$ is Frobenius split as well.  In particular, $J$ is a radical ideal.
\item If $J \subseteq \O_X$ is compatibly $\phi$-split, then $\phi(F^e_* J) = J$ (instead of just contained in).
\item If $I, J \subseteq \O_X$ is compatibly $\phi$-split, then so are $I+J$ and $I \cap J$.
\item If $Q$ is a minimal prime over $J$, then $Q$ is also compatibly $\phi$-split.
\item If $I \subseteq \O_X$ is compatibly $\phi$-split, then so is $I : K$ for any ideal sheaf $K \subseteq \O_X$.
\item A prime ideal sheaf $Q$ is compatibly $\phi$-split if and only if $Q \cdot \O_{X,Q}$ is compatibly $\phi_Q$ split where $\phi_Q$ is the map induced on the stalk $\phi_Q : F^e_* \O_{X,Q} \to \O_{X,Q}$.
\end{enumerate}
\end{proposition}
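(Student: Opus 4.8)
The plan is to reduce immediately to the affine case $X=\Spec R$, since each of (1)--(6) concerns ideal sheaves and can be checked on an affine cover; over $\Spec R$ the Frobenius splitting is an $R$-linear map $\phi\colon F^e_*R\to R$ with $\phi(F^e_*1)=1$. Everything will then flow from just two facts: the defining $p^{-e}$-linearity relation $\phi(r^{p^e}s)=r\,\phi(s)$, and $\phi(F^e_*1)=1$. Note also that compatible splitting of $J$ is precisely what lets one restrict $\phi$ to a map $\phi|_J\colon F^e_*J\to J$, which is the mechanism behind part (1).

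For (1) I would use that restriction: $\phi|_J$ descends to $\bar\phi\colon F^e_*(R/J)\to R/J$ fitting into the analogue of \autoref{eq.SplittingDiagram}, and $\phi(F^e_*1)=1$ forces $\bar\phi(F^e_*\bar1)=\bar1$, so $R/J$ is Frobenius split, hence reduced by \autoref{lem.FrobeniusSplitIsReduced}, i.e. $J$ is radical. For (2), the inclusion $\phi(F^e_*J)\subseteq J$ is the hypothesis, and conversely for $f\in J$ one has $f^{p^e}\in J$ together with $f=f\cdot\phi(F^e_*1)=\phi(F^e_*f^{p^e})$, giving $J\subseteq\phi(F^e_*J)$. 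For (3), additivity of $\phi$ gives $\phi(F^e_*(I+J))\subseteq\phi(F^e_*I)+\phi(F^e_*J)\subseteq I+J$, while $\phi(F^e_*(I\cap J))\subseteq\phi(F^e_*I)\subseteq I$ and likewise $\subseteq J$, so $\phi(F^e_*(I\cap J))\subseteq I\cap J$. For (5), given $f\in I:K$ and $k\in K$, note $k^{p^e}\in K$ and hence $k^{p^e}f\in I$, so $k\,\phi(F^e_*f)=\phi(F^e_*(k^{p^e}f))\in\phi(F^e_*I)\subseteq I$; since $k\in K$ was arbitrary, $\phi(F^e_*f)\in I:K$.

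For (6), localization at $Q$ is flat and commutes with $F^e_*$ (\autoref{ex.LocalizationCompletionAndFeLowerStar}), so $\phi$ localizes to the induced stalk map $\phi_Q$ and $\phi(F^e_*Q)\subseteq Q$ localizes to $\phi_Q(F^e_*(Q\O_{X,Q}))\subseteq Q\O_{X,Q}$; conversely, if the localized inclusion holds, then for each $f\in Q$ some $s\notin Q$ satisfies $s\,\phi(F^e_*f)\in Q$, and primeness of $Q$ forces $\phi(F^e_*f)\in Q$. The one part requiring an actual idea is (4): I would first invoke (1) to know $J$ is radical, write $J=Q\cap Q'$ with $Q$ the given minimal prime and $Q'$ the intersection of the finitely many remaining minimal primes of $J$, and check --- by prime avoidance, using that distinct minimal primes do not contain one another, so $Q'\nsubseteq Q$ --- that $J:Q'=Q$; then (4) is immediate from (5). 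I expect this colon-ideal identity, together with the bookkeeping for the (irredundant) minimal primes, to be the main, though still modest, obstacle; everything else is a direct application of the two highlighted facts, the only remaining subtlety being the routine passage between ideal sheaves, their restrictions to affine opens, and their stalks.
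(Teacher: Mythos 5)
Your proposal is correct and follows essentially the route the paper intends (it leaves the proof as an exercise, hinting at the quotient-diagram argument of \autoref{lem.NormalCrossingsAreFSplit} for part (a) and standard arguments as in Brion--Kumar for the rest): the diagram argument for (1), the identity $f=\phi(F^e_* f^{p^e})$ for (2), additivity for (3), the colon computation for (5), and localization for (6) are all exactly as expected. The only (harmless) deviation is in (4), where the usual argument localizes at $Q$ and notes that radicality forces $J\cdot\O_{X,Q}=Q\cdot\O_{X,Q}$, then applies (6), whereas you deduce it from (5) via the identity $J:Q'=Q$; both are valid.
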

\begin{proof}
This is left as an exercise to the reader in \autoref{ex.PropertiesOfCompatSplitSubvarieties}.
\end{proof}

\begin{remark}
Suppose that $\phi : F^e_* \O_X \to \O_X$ is a Frobenius splitting.  It is easy to see that a sort of converse to \autoref{prop.PropertiesOfCompatiblySplitSubvarieties}(a) holds.  In particular, suppose there is a commutative diagram
\[
\xymatrix{
F^e_* \O_X \ar@{->>}[d] \ar[r]^{\phi} & \O_X \ar@{->>}[d] \\
F^e_* (\O_X/J) \ar[r]_{\phi/J} & \O_X/J\\
}
\]
then $J$ is $\phi$-compatibly split (simply take the kernel of the vertical arrows).
\end{remark}

One important point about Frobenius splittings are that compatibly split subvarieties intersect normally.  In particular:

\begin{corollary}
\label{cor.IntersectionsOfCompatiblySplitVarietiesAreReduced}
If $\phi : F^e_* \O_X \to \O_X$ is a Frobenius splitting, if $I$ and $J$ are compatibly $\phi$-split, then $I+J$ is a radical ideal.
\end{corollary}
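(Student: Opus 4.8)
The plan is to obtain this as an immediate consequence of \autoref{prop.PropertiesOfCompatiblySplitSubvarieties}, combining parts (3) and (1). Concretely, I would first invoke part (3) to see that the sum $I+J$ is again compatibly $\phi$-split. This step is essentially formal: since $\phi$ is additive, $\phi(F^e_*(I+J)) = \phi(F^e_* I) + \phi(F^e_* J) \subseteq I + J$ because each summand is $\phi$-compatibly split.

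Next I would apply part (1) of the same proposition to the compatibly split ideal $I+J$: it asserts that $V(I+J)$ is itself Frobenius split, and that any compatibly $\phi$-split ideal is therefore radical. The reason part (1) holds is the commutative diagram of the form appearing in \autoref{lem.NormalCrossingsAreFSplit} and the subsequent remark: $\phi$ descends to a Frobenius splitting $\phi/(I+J)$ of $\O_X/(I+J)$ because $1 \mapsto 1$, and then \autoref{lem.FrobeniusSplitIsReduced} shows $V(I+J)$ is reduced, i.e. $I+J$ is radical. Stitching these two citations together gives the corollary.

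The only thing worth flagging is that this corollary is not really where the work lies --- all the content has been pushed into \autoref{prop.PropertiesOfCompatiblySplitSubvarieties}, whose proof is deferred to the exercises. If one wanted a self-contained argument, the ``hard'' (though still elementary) point would be verifying part (3), and in particular that $I \cap J$ (not needed here) is compatibly split; but for $I+J$ the verification is the one-line additivity computation above, so there is no real obstacle. I would therefore keep the proof to a single sentence that cites \autoref{prop.PropertiesOfCompatiblySplitSubvarieties}(3) and then (1).
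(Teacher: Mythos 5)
Your proposal is correct and is essentially the paper's own proof: the corollary is proved there by combining parts (a) and (c) of \autoref{prop.PropertiesOfCompatiblySplitSubvarieties} (your ``(1)'' and ``(3)'', since the paper labels the items alphabetically), exactly as you do, with the supporting details deferred to the exercises.
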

\begin{proof}
Combine properties (a) and (c) from \autoref{prop.PropertiesOfCompatiblySplitSubvarieties}.
\end{proof}

Also see \autoref{ex.FSplitAreWeaklyNormal} below.

\subsection{Global properties of Frobenius split varieties}

Now we turn to projective (or more generally complete) Frobenius split varieties.  First we introduce another definition.

\begin{definition}
\label{def.FrobeniusSplitAlongADivisor}
Suppose that $D$ is an effective Weil divisor on a normal variety $X$.  Then we say that $X$ is \emph{$e$-Frobenius split relative to $D$} if the composition:
\[
\O_X \to F^e_* \O_X \hookrightarrow F^e_* (\O_X(D))
\]
is split.
\end{definition}

Notice that if $X$ is $e$-iterated Frobenius split relative to $D$, then $X$ is Frobenius split.
We mentioned earlier that regular affine varieties are Frobenius split \autoref{prop.RegularAreFSplitVarieties}, but not every smooth projective variety is Frobenius split.  We prove that now.

\begin{lemma}
\label{lem.FrobeniusSplitImpliesSections}
If $X$ is proper, Frobenius split and normal, then $H^0(X, \O_X(-nK_X)) \neq 0$ for some $n > 0$.  In particular $X$ is not of general type.
Even more, if $X$ is $e$-Frobenius split relative to an ample divisor $A$, then $-K_X$ is big.
\end{lemma}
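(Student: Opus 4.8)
The plan is to extract everything from the map--divisor dictionary of \autoref{sec.ConnectionsWithDivisors}, applied in its normal-variety form. For the first assertion I would start from the observation that a Frobenius splitting is, by definition, a nonzero $\phi\in\Hom_{\O_X}(F^e_*\O_X,\O_X)$ (the map with $\phi(F^e_*1)=1$). Since $X$ is normal, the correspondence \autoref{eq.DivisorMapCorrespondence1} attaches to $\phi$ an effective Weil divisor $D_\phi\geq 0$ with $D_\phi\sim (1-p^e)K_X$. Hence, writing $n=p^e-1>0$, the divisor $-nK_X$ is linearly equivalent to the effective divisor $D_\phi$, so $H^0(X,\O_X(-nK_X))\neq 0$, which is exactly the first claim. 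To deduce that $X$ is not of general type I would note that $nK_X\sim -D_\phi$ with $D_\phi$ effective, so for each $m\geq 1$ one has $h^0(X,\O_X(mnK_X))=h^0(X,\O_X(-mD_\phi))$, and on the proper variety $X$ this is $1$ if $mD_\phi\sim 0$ and $0$ otherwise (an effective divisor linearly equivalent to an anti-effective one must vanish); therefore the Kodaira dimension satisfies $\kappa(X,K_X)=\kappa(X,nK_X)\leq 0<\dim X$ whenever $\dim X\geq 1$.

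For the last assertion I would unwind \autoref{def.FrobeniusSplitAlongADivisor}: being $e$-Frobenius split relative to an ample divisor $A$ means there is a map $\psi\colon F^e_*\O_X(A)\to\O_X$ splitting the composite $\O_X\to F^e_*\O_X\hookrightarrow F^e_*\O_X(A)$. Being a retraction of an injection, $\psi$ is surjective and in particular nonzero, so the line-bundle version of the correspondence, \autoref{eq.DivisorMapCorrespondence2} with $\sL=\O_X(A)$, produces an effective Weil divisor $D\geq 0$ with $\O_X(D)\cong\O_X(A)^{-1}\bigl((1-p^e)K_X\bigr)$, that is $D\sim -A-(p^e-1)K_X$. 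Rearranging, $-(p^e-1)K_X\sim A+D$ is a sum of an ample divisor and an effective divisor, hence big; since bigness is insensitive to rescaling by a positive rational, $-K_X$ is big.

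I do not expect a genuine obstacle here: the whole argument is bookkeeping with the dictionary already built in \autoref{sec.ConnectionsWithDivisors}. The points that need a moment's attention are that the correspondences \autoref{eq.DivisorMapCorrespondence1} and \autoref{eq.DivisorMapCorrespondence2} are being invoked in their normal-variety form, which is licensed because $X$ is normal; that one must keep straight the twist by $\sL^{-1}$ in the line-bundle case; and, for the general-type statement, the elementary fact that on a proper variety an effective divisor linearly equivalent to an anti-effective one is zero.
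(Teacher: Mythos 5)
Your proof is correct and follows essentially the same route as the paper: both extract a nonzero section of $\O_X((1-p^e)K_X)$ (resp.\ of $\O_X((1-p^e)K_X - A)$) from the splitting via the map--divisor dictionary of \autoref{sec.ConnectionsWithDivisors}, giving an effective divisor linearly equivalent to $-(p^e-1)K_X$ (resp.\ to $A$ plus an effective divisor). Your explicit justification of ``not of general type'' merely spells out a step the paper leaves implicit, so there is nothing to correct.
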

\begin{proof}
The fact that $X$ is Frobenius split implies that there is some non-zero element $\phi \in \Hom_X(F^e_* \O_X, \O_X) \cong H^0(X, F^e_* \O_X( (1-p^e)K_X))$ by \autoref{sec.ConnectionsWithDivisors}.  In particular, $H^0(X, F^e_* \O_X( (1-p^e)K_X)) \neq 0$.  But $F^e_* \O_X( (1-p^e)K_X)$ is isomorphic to $\O_X( (1-p^e)K_X)$ as an Abelian group and so the result follows for $n = (p^e - 1)$.

For the second statement, we notice that we have a section $\phi \in \Hom_X(F^e_* \O_X(D), \O_X) \cong H^0(X, F^e_* \O_X( (1-p^e)K_X - A))$ and so there is an effective divisor $H \sim (1 - p^e) K_X - A$ and thus $(1-p^e)K_X \sim A + H = \text{``ample + effective''}$ and so $K_X$ is big\footnote{On a projective variety $X$, you can take the definition of \emph{big} to be a divisor which has a multiple which is linearly equivalent to an ample divisor plus an effective divisor \cite[Corollary 2.2.7]{LazarsfeldPositivity1}.}.
\end{proof}

Our next goal is to prove vanishing theorems for Frobenius split varieties.  First however, we need the following Lemma.

\begin{lemma}
\label{lem.FrobeniusSplitImpliesPowersRaised}
If $X$ is $e$-Frobenius split relative to $D$, then for any integer $n > 0$, $X$ is $ne$-Frobenius split relative to $(p^{(n-1)e} + \dots + p^e + 1)D$.
\end{lemma}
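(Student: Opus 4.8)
The plan is to induct on $n$, with $n=1$ being the hypothesis itself. Throughout, recall (as in \autoref{lem.RFrobeniusSplitIfAndOnlyIfEvaluationAt1Surjects} and \autoref{def.FrobeniusSplitAlongADivisor}) that, because $D$ is effective, being $e$-Frobenius split relative to $D$ is the same as the existence of an $\O_X$-linear map $\phi\colon F^e_*\O_X(D)\to\O_X$ with $\phi(F^e_*1)=1$, where $1$ denotes the global section of $\O_X(D)$ coming from $1\in\O_X$; more generally, for any effective Weil divisor $D'$ and any $m$, the composition $\O_X\to F^{m}_*\O_X\hookrightarrow F^{m}_*\O_X(D')$ is split precisely when some $\psi\colon F^m_*\O_X(D')\to\O_X$ satisfies $\psi(F^m_*1)=1$. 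Write $D_n:=(p^{(n-1)e}+\dots+p^e+1)D$.

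For the inductive step I would take the map $\phi^{(n-1)}\colon F^{(n-1)e}_*\O_X(D_{n-1})\to\O_X$ with $\phi^{(n-1)}(F^{(n-1)e}_*1)=1$ produced so far, together with the original $\phi$, and form the twisted composition exactly as in \autoref{eq.ComposeMaps2} and \autoref{lem.QDivisorAssociatedToCompositionTwisted}: twist $\phi^{(n-1)}$ by $\O_X(D)$, apply the projection formula \autoref{eq.FrobeniusProjectionFormula} to get $\widetilde{\phi^{(n-1)}}\colon F^{(n-1)e}_*\O_X\bigl(D_{n-1}+p^{(n-1)e}D\bigr)\to\O_X(D)$, push forward by $F^e_*$, and post-compose with $\phi$. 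Since $D_{n-1}+p^{(n-1)e}D=D_n$, this yields $\phi^{(n)}\colon F^{ne}_*\O_X(D_n)\to\O_X$. To check $\phi^{(n)}(F^{ne}_*1)=1$, note that under the projection-formula identification $\widetilde{\phi^{(n-1)}}$ is $\phi^{(n-1)}\otimes\id_{\O_X(D)}$ and the section $F^{(n-1)e}_*1$ corresponds to $(F^{(n-1)e}_*1)\otimes1$, so it is sent to $\phi^{(n-1)}(F^{(n-1)e}_*1)\otimes1=1\in\O_X(D)$; hence $F^e_*\widetilde{\phi^{(n-1)}}$ sends $F^{ne}_*1$ to $F^e_*1$, and finally $\phi(F^e_*1)=1$. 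This closes the induction, and since $D_n$ is effective $\phi^{(n)}$ splits $\O_X\to F^{ne}_*\O_X\hookrightarrow F^{ne}_*\O_X(D_n)$, which is the assertion.

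The only point requiring care — and the main, rather mild, obstacle — is that $D$ need not be Cartier, so $\O_X(D)$ is merely a rank-one reflexive sheaf, whereas \autoref{eq.FrobeniusProjectionFormula} and the construction of \autoref{eq.ComposeMaps2} were set up for line bundles. I would handle this exactly as the passage from smooth to normal $X$ in \autoref{sec.ConnectionsWithDivisors} (e.g.\ the remark that \autoref{eq.DivisorMapCorrespondence1} holds on normal varieties): carry out the whole construction over the regular locus $U\subseteq X$, where $D|_U$ is Cartier, to obtain $\phi^{(n)}|_U$; then use that $\Hom_{\O_X}(F^{ne}_*\O_X(D_n),\O_X)\cong\Hom_{\O_U}(F^{ne}_*\O_U(D_n|_U),\O_U)$, since this $\sHom$-sheaf is reflexive and $X\setminus U$ has codimension $\geq 2$ (see \autoref{sec.reflex}), to extend $\phi^{(n)}|_U$ uniquely to $\phi^{(n)}$ on $X$; and observe that the identity $\phi^{(n)}(F^{ne}_*1)=1$, valid on the dense open $U$, then holds on all of $X$ because $\phi^{(n)}(F^{ne}_*1)-1$ is a global section of $\O_X$ vanishing on $U$. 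Everything else is bookkeeping with the geometric series $1+p^e+\dots+p^{(n-1)e}$.
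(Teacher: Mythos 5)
Your proof is correct and follows essentially the same route as the paper: one composes the splitting with twists of itself by $\O_X(D)$ via the projection formula, exactly as in \autoref{eq.ComposeMaps2}, handling the non-Cartier nature of $D$ through reflexivity. The paper merely reflexifies the tensored sheaves inline rather than restricting to the regular locus and extending across codimension two, and phrases the iteration as repeatedly inserting the twisted splitting rather than as an explicit induction with the evaluation-at-$1$ check — cosmetic differences only.
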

\begin{proof}
Suppose that $\O_X \to F^e_* \O_X \to F^e_* \O_X(D) \xrightarrow{\phi} \O_X$ is the Frobenius splitting.  By tensoring this with $D$, taking the reflexification of the sheaves, and applying the functor $F^e_*$, we obtain a splitting
\[
F^e_* (\O_X(D)) \to F^e_* (\O_X(p^e D)) \to F^{2e}_* (\O_X(D + p^e D)) \xrightarrow{F^e_* \phi(D)} F^e_* (\O_X(D)).
\]
But now composing with Frobenius and $\phi$ on the left and right sides respectively, we obtain our desired splitting
\[
\O_X \to F^e_* (\O_X(D)) \to F^e_* (\O_X(p^e D)) \to F^{2e}_* (\O_X(D + p^e D)) \xrightarrow{F^e_* \phi(D)} F^e_* (\O_X(D)) \xrightarrow{\phi} \O_X.
\]
Continuing in this way yields the desired result.
\end{proof}

\begin{theorem}[Vanishing Theorems for Frobenius split varieties]
\label{thm.VanishingForFrobeniusSplit}
Suppose that $X$ is a projective Frobenius split variety.  Then:
\begin{itemize}
\item[(a)]  $H^i(X, \sL) = 0$ for any ample line bundle $\sL$ and any $i > 0$.
\item[(b)]  $H^i(X, \sL \tensor \omega_X) = 0$ for any ample line bundle $\sL$ and any $i > 0$.
\item[(c)]  If $X$ is normal and $e$-Frobenius split relative to an ample Cartier divisor $D$, then we have $H^i(X, \sL) = 0$ for any nef line bundle $\sL$ and any $i > 0$.
\item[(d)]  If $X$ is normal and $e$-Frobenius split relative to an ample Cartier divisor $D$ such that $X \setminus D$ is regular, then $H^i(X, \sL \tensor \omega_X) = 0$ for any big and nef line bundle $\sL$ and any $i > 0$.
\end{itemize}
\end{theorem}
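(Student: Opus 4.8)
The plan is to prove all four parts by a single device: the relevant splitting exhibits a rank-one sheaf --- $\O_X$ for (a) and (c), $\omega_X$ for (b) and (d) --- as an $\O_X$-module direct summand of a Frobenius pushforward of a strictly more positive sheaf, and after tensoring with $\sL$ and applying the projection formula \autoref{eq.FrobeniusProjectionFormula} this realizes $H^i(X,\sL)$, respectively $H^i(X,\sL\tensor\omega_X)$, as a direct summand of the $i$-th cohomology of that positive sheaf, to which ordinary Serre vanishing applies. For (a): tensoring the splitting $\O_X\to F_*\O_X\to[\phi]\O_X$ with $\sL$ and using \autoref{eq.FrobeniusProjectionFormula} gives a split map $\sL\to F_*(\sL^{p})\to\sL$; composing it with its own Frobenius pushforwards (equivalently, applying \autoref{lem.FrobeniusSplitOfIterate}) produces for every $e$ a splitting $\sL\to F^e_*(\sL^{p^e})\to\sL$, so that $H^i(X,\sL)$ is a direct summand of $H^i(X,F^e_*\sL^{p^e})=H^i(X,\sL^{p^e})$, which vanishes for $e\gg 0$ by Serre vanishing because $\sL$ is ample.

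For (b) I would first pass to the dual side. Applying the duality functor $\myR\sHom_{\O_X}(-,\omega_X^{\mydot})$ to the split injection $\O_X\to F_*\O_X$ and using \autoref{eq.UpperShriekForFiniteMaps} together with $F^{!}\omega_X^{\mydot}\cong\omega_X^{\mydot}$ (the case $e=1$ of \autoref{eq.FUpperShriekOfOmega}) identifies $\myR\sHom_{\O_X}(F_*\O_X,\omega_X^{\mydot})$ with $F_*\omega_X^{\mydot}$, so the split injection dualizes to a split surjection $F_*\omega_X^{\mydot}\to\omega_X^{\mydot}$ in $D^{b}_{\coherent}(X)$; this is the trace of Frobenius of \autoref{prop.TraceForGeneralMaps}. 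Taking $\myH^{-\dim X}$, which commutes with the exact functor $F_*$, exhibits $\omega_X$ as an $\O_X$-module direct summand of $F_*\omega_X$, and now the argument of (a) runs verbatim with $\omega_X\tensor\sL$ and $\omega_X\tensor\sL^{p^e}$ in place of $\sL$ and $\sL^{p^e}$, finishing with Serre vanishing for the coherent sheaf $\omega_X$.

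For (c) one must invoke the positivity of $D$ since $\sL$ is only nef. By \autoref{lem.FrobeniusSplitImpliesPowersRaised}, $X$ is $ne$-Frobenius split relative to $m_nD$, where $m_n:=p^{(n-1)e}+\dots+p^e+1=\tfrac{p^{ne}-1}{p^e-1}$, for every $n\ge 1$. Tensoring that splitting with $\sL$ and applying \autoref{eq.FrobeniusProjectionFormula} makes $H^i(X,\sL)$ a direct summand of $H^i\bigl(X,\O_X(m_nD)\tensor\sL^{p^{ne}}\bigr)$. Since $p^{ne}=m_n(p^e-1)+1$, we may rewrite $\O_X(m_nD)\tensor\sL^{p^{ne}}\cong\sA^{\tensor m_n}\tensor\sL$, where $\sA:=\O_X(D)\tensor\sL^{p^e-1}$ is ample because $D$ is ample and $\sL^{p^e-1}$ is nef. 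As $m_n\to\infty$, Serre vanishing for the ample line bundle $\sA$ and the fixed coherent sheaf $\sL$ gives $H^i(X,\sA^{\tensor m_n}\tensor\sL)=0$ for $n\gg 0$, hence $H^i(X,\sL)=0$.

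Part (d) combines (b) and (c), and is where the real difficulty lies. Restricting the splitting of (c) along the inclusion $F^{ne}_*\O_X(D)\hookrightarrow F^{ne}_*\O_X(m_nD)$ shows that $X$ is $ne$-Frobenius split relative to $D$ itself for every $n$, so, for a large exponent $N=ne$, we dualize as in (b): $\myR\sHom_{\O_X}(-,\omega_X^{\mydot})$ together with reflexivity on the normal variety $X$ (\cf\ \autoref{sec.reflex}) identifies $\sHom_{\O_X}(F^N_*\O_X(D),\omega_X)$ with $F^N_*\bigl(\omega_X\tensor\O_X(-D)\bigr)$ and exhibits $\omega_X$ as a direct summand of it. Tensoring with $\sL=\O_X(L)$ and taking cohomology reduces the claim to $H^i\bigl(X,\O_X(K_X+L+((p^N-1)L-D))\bigr)=0$. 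Since $\sL$ is big and the big cone is open, $(p^N-1)L-D$ is big for $N\gg 0$, hence $\bQ$-linearly equivalent to $A+\Gamma$ with $A$ ample and $\Gamma\ge 0$ effective; then $K_X+L+((p^N-1)L-D)\sim_{\bQ}K_X+(L+A)+\Gamma$ with $L+A$ ample, and if $\Gamma$ were zero this would be precisely part (b). The main obstacle is absorbing the effective error divisor $\Gamma$ coming from bigness: this is the genuinely Kawamata--Viehweg-type input, and it is exactly where the hypothesis that $X\setminus D$ be regular enters --- it lets one pass to a log resolution on which the adjunction and trace computations near $\Supp D$ behave as on a smooth variety (equivalently, it makes the relevant correction sheaf trivial under the nef hypothesis), so that $\Gamma$ does no harm; the bigness of $-K_X$ forced by the relative splitting (\autoref{lem.FrobeniusSplitImpliesSections}) is consistent with this. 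The remaining verifications for (a)--(c) --- compatibility of the duality functor with the splitting maps, commutation of $F_*$ with $\myH^{\bullet}$, and the geometric-series bookkeeping --- are routine.
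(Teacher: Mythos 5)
Your (a) and (b) follow the paper's own proof: split off $\sL$ (resp.\ $\omega_X\tensor\sL$, after applying the duality functor to get the trace splitting $\omega_X \leftarrow F^e_*\omega_X \leftarrow \omega_X$), use the projection formula, and finish with Serre vanishing. For (c) you take a genuinely different route: the paper uses \autoref{lem.FrobeniusSplitImpliesPowersRaised} to make the divisor as ample as desired and then quotes Fujita's vanishing theorem, whereas you rewrite $\O_X(m_nD)\tensor\sL^{p^{ne}} \cong \sA^{\tensor m_n}\tensor\sL$ with $\sA=\O_X(D)\tensor\sL^{p^e-1}$ ample (using $m_n(p^e-1)+1=p^{ne}$ and ample${}\tensor{}$nef${}={}$ample), so that ordinary Serre vanishing for the fixed coherent sheaf $\sL$ suffices. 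That computation is correct; your version buys independence from Fujita's theorem, while the paper's is shorter but leans on the stronger blackbox.

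For (d), however, there is a genuine gap. Your reduction is fine: $\omega_X\tensor\sL$ is a summand of $F^N_*(\omega_X(-D)\tensor\sL^{p^N})$, and $(p^N-1)L-D$ is big for $N\gg 0$, hence $\bQ$-linearly equivalent to ample plus effective. But at that point what you need is exactly a Kawamata--Viehweg-type vanishing in the presence of an effective error divisor $\Gamma$, and this neither follows from (b) nor holds in general in characteristic $p$ (Raynaud's counterexamples); the Frobenius-splitting hypothesis must be invoked a second time at precisely this step. Your proposal never does so: ``pass to a log resolution \ldots so that $\Gamma$ does no harm'' is the statement to be proved rather than an argument, and the hypothesis that $X\setminus D$ is regular is never used in any checkable way (the remark about $-K_X$ being big is not used either).

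For comparison, the paper defers (d) to \autoref{ex.VanishingForFrobeniusSplit}, whose solution is \cite[Theorem 6.8]{SchwedeSmithLogFanoVsGloballyFRegular}, and its structure is different from your plan: one chooses a Cartier divisor $B$ with $\sL^n(-B)$ ample for $n\gg 0$ and $m$ with $mD+B$ ample, proves --- this is the hard step, and it is exactly where the regularity of $X\setminus D$ enters --- that $X$ is $r$-Frobenius split relative to $mD+B$ for suitable $r\gg 0$, dualizes this splitting, and then iterates the resulting map $\omega_X \leftarrow F^r_*(\omega_X(-B)\tensor\sL)$ as in \autoref{eq.ComposeMaps2}, so that the positivity of $\sL$ compounds through powers of $p$ against the fixed twist by $B$, after which Serre vanishing concludes. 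Without an argument replacing that upgraded-splitting-plus-iteration step, your (d) is only a reduction, not a proof.
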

\begin{proof}
For (a), notice that we have a splitting of $\sL \cong \O_X \tensor \sL \to (F^e_* \O_X) \tensor \sL \cong F^e_* \sL^{p^e}$.  Thus $H^i(X, \sL) \hookrightarrow H^i(X, F^e_* \sL^{p^e})$ injects.  On the other hand $H^i(X, F^e_* \sL^{p^e}) \cong H^i(X, \sL^{p^e})$ as Abelian groups, and the latter vanishes for $i > 0$ and $e \gg 0$ by Serre vanishing.

For (b), notice that an application of $\sHom_{\O_X}(\blank, \omega_X)$ to the splitting $\O_X \to F^e_* \O_X \to \O_X$ induces a splitting:
\[
\omega_X \xleftarrow{T} F^e_* \omega_X \hookleftarrow \omega_X.
\]
Twisting by $\sL$ and applying the projection formula gives us
\[
\omega_X \tensor \sL \xleftarrow{T} F^e_* (\omega_X \tensor \sL^{p^e}) \hookleftarrow \omega_X \tensor \sL.
\]
Taking cohomology for $i > 0$ we obtain maps
\[
H^i(X, \omega_X \tensor \sL) \xleftarrow{T} H^i(X, F^e_* (\omega_X \tensor \sL^{p^e})) \hookleftarrow H^i(X, \omega_X \tensor \sL)
\]
whose composition is an isomorphism.  But the middle term vanishes by Serre vanishing since we may take $e \gg 0$.

For (c), we first notice that by using \autoref{lem.FrobeniusSplitImpliesPowersRaised} we may assume that $D$ is as ample as we wish (at the expense of increasing $e$).  Thus, using the same strategy as in (a), it is sufficient to prove that $H^i(X, \O_X(D) \tensor \sL^{p^e}) = 0$ for all $i > 0$.  But this follows from Fujita's vanishing theorem \cite{FujitaVanishingTheorems}.

Part (d) is left as a somewhat involved exercise to the reader \autoref{ex.VanishingForFrobeniusSplit}.
\end{proof}

Finally, we notice that sections on Frobenius split subvarieties often extend to sections on the ambient spaces.

\begin{theorem}
Suppose that $Y \subseteq X$ is compatibly Frobenius split.  Then the natural maps:
\[
H^0(X, \sL) \to H^0(Y, \sL|_Y)
\]
are surjective for any ample line bundle $\sL$.
\end{theorem}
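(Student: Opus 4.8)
The plan is to reduce the surjectivity to a single cohomology vanishing, and then kill that group by bootstrapping with the compatible splitting together with Serre vanishing. Write $Y = V(J)$, where the ideal sheaf $J \subseteq \O_X$ is compatibly split by the Frobenius splitting $\phi \colon F^e_* \O_X \to \O_X$. Since $\sL$ is a line bundle, tensoring the exact sequence $0 \to J \to \O_X \to \O_Y \to 0$ with $\sL$ stays exact, and the associated long exact cohomology sequence exhibits the cokernel of the restriction map $H^0(X, \sL) \to H^0(Y, \sL|_Y)$ as a subgroup of $H^1(X, J \otimes \sL)$. So it is enough to prove $H^1(X, J \otimes \sL) = 0$.

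First I would promote the compatible splitting of $Y$ to a compatible splitting of $J$ itself. For a local section $a \in J$ we have $a^{p^e} \in J$, and $\phi(F^e_* a^{p^e}) = a\,\phi(F^e_* 1) = a$, so the composite
\[
J \hookrightarrow F^e_* J \xrightarrow{\ \phi|_J\ } J
\]
is the identity (recall that the Frobenius $\O_X \to F^e_*\O_X$ is $\O_X$-linear, hence so is its restriction $J \to F^e_* J$, and $\phi|_J$ is $\O_X$-linear by hypothesis). Now tensor this composite with the line bundle $\sL$ and apply the projection formula \autoref{eq.FrobeniusProjectionFormula} to identify $(F^e_* J) \otimes \sL \cong F^e_*(J \otimes \sL^{p^e})$; this yields an $\O_X$-linear split injection
\[
J \otimes \sL \hookrightarrow F^e_*\big(J \otimes \sL^{p^e}\big).
\]
Applying $H^1(X, -)$ and using that $F^e_*$ is exact and induces an isomorphism on cohomology (the Frobenius is a homeomorphism on the underlying space), we conclude that $H^1(X, J \otimes \sL)$ is a direct summand of $H^1(X, J \otimes \sL^{p^e})$.

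Next I would iterate. Replacing $\phi$ by its self-composition $\phi^n \colon F^{ne}_* \O_X \to \O_X$ from \autoref{eq.ComposeMaps1} --- which is again a splitting, since it sends $F^{ne}_* 1$ to $1$, and still satisfies $\phi^n(F^{ne}_* J) \subseteq J$ --- the same argument shows that $H^1(X, J \otimes \sL)$ is a direct summand of $H^1(X, J \otimes \sL^{p^{ne}})$ for every $n \geq 1$. Since $\sL$ is ample and $J \otimes \sL$ is coherent on the projective variety $X$, Serre vanishing gives $H^1(X, J \otimes \sL^{p^{ne}}) = 0$ for $n \gg 0$. Hence $H^1(X, J \otimes \sL) = 0$, and the restriction map $H^0(X, \sL) \to H^0(Y, \sL|_Y)$ is surjective.

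The one point requiring care --- and really the only content beyond the formalism already assembled in the earlier sections --- is the asymmetry of the twist: tensoring the splitting of $J$ by $\sL$ replaces $\sL$ by $\sL^{p^e}$ only on the target, so one must verify via the projection formula that the composite remains genuinely $\O_X$-linear and equals the identity on $J \otimes \sL$, which is exactly what makes $H^1(X, J \otimes \sL)$ a direct summand of $H^1(X, J \otimes \sL^{p^e})$ rather than merely a subquotient. Everything else --- exactness of the twisted sequence, the long exact sequence, the exactness and cohomological transparency of $F^e_*$, and Serre vanishing --- is routine.
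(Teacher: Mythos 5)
Your proof is correct, and it organizes the argument differently from the paper. You isolate the obstruction in $H^1(X, I_Y \otimes \sL)$ via the long exact sequence and then kill that whole group, using the fact that compatibility makes the ideal sheaf itself a split direct summand: $I_Y \otimes \sL \hookrightarrow F^{ne}_*(I_Y \otimes \sL^{p^{ne}})$ splits, so $H^1(X, I_Y \otimes \sL)$ is a summand of a group that Serre vanishing annihilates for $n \gg 0$. The paper never proves this vanishing; it works entirely at the level of $H^0$, chasing a commutative square in which the two vertical maps $H^0(X, F^e_*\sL^{p^e}) \to H^0(X,\sL)$ and $H^0(Y, F^e_*(\sL^{p^e}|_Y)) \to H^0(Y,\sL|_Y)$ are surjective (coming from the splitting on $X$ and the induced splitting on $Y$), and the top restriction map is surjective because $H^1(X, F^e_*(I_Y \otimes \sL^{p^e})) = 0$ by Serre vanishing for $e \gg 0$. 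The inputs are the same (compatibility, self-composition of the splitting, Serre vanishing), but your route yields the strictly stronger conclusion $H^1(X, I_Y \otimes \sL) = 0$ for every ample $\sL$ --- the natural analogue of \autoref{thm.VanishingForFrobeniusSplit}(a) for compatibly split ideals, and the classical Mehta--Ramanathan/Brion--Kumar formulation --- whereas the paper's chase gets surjectivity of restriction with slightly less bookkeeping about the ideal sheaf. The one point you rightly flag, that twisting the splitting of $I_Y$ by $\sL$ and invoking the projection formula keeps the composite $\O_X$-linear and equal to the identity, is exactly what makes the summand claim (rather than a mere subquotient statement) go through, and your verification of it on local sections is adequate.
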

\begin{proof}
By composition of the Frobenius splitting with itself, we have the following diagram for any $e > 0$.
\[
\xymatrix{
H^0(X, F^e_* (\sL^{p^e})) \ar@{->>}[d] \ar@{->>}[r]^{\beta} & H^0(X, F^e_* ( \sL^{p^e}|Y)) \ar[r] \ar@{->>}[d] & H^1(X, F^e_* (I_Y \tensor \sL^{p^e}) ) = 0\\
H^0(X, \sL) \ar[r]_{\alpha} & H^0(X, \sL|_Y) \\
}
\]
Note we have the top-right vanishing by Serre vanishing which implies that $\beta$ is surjective.  The vertical maps are surjective because they are obtained from twisting the Frobenius splitting $F^e_* \O_X \to \O_X$ by $\sL$.  The diagram then implies that $\alpha$ is surjective, this completes the proof.
\end{proof}

\subsection{Tools for proving proper varieties are Frobenius split}

There are two common tools for proving that proper varieties are Frobenius split.  The first involves a study of the singularities of sections of $H^0(X, \O_X((1-p^e)K_X))$.  The second is a general fact that images of Frobenius split varieties often remain Frobenius split.  In many applications, these tools are combined.

\begin{theorem}\cite{MehtaRamanathanFrobeniusSplittingAndCohomologyVanishing} \cite[Section 1.3]{BrionKumarFrobeniusSplitting}
\label{thm.MRFSplitProof}
Suppose $X$ is a proper normal $d$-dimensional variety of finite type over an algebraically closed field of characteristic $p > 0$.  Further suppose that there is an effective divisor $D$, linearly equivalent to $(1-p^e)K_X$ for some $e$, that satisfies the following condition:
\begin{itemize}
\item{} There exists a smooth point $x \in X$ and divisors $D_1, D_2, \dots, D_d$ intersecting in a simple normal crossings divisor at $x \in X$ such that $D = (p^e - 1) D_1 + \dots + (p^e - 1) D_d + G$ for some effective divisor $G$ not passing through $x \in X$.
\end{itemize}
Then $X$ is Frobenius split by a map $\phi : F^e_* \O_X \to \O_X$ which corresponds to $D$ as in \autoref{eq.DivisorMapCorrespondence1}.
\end{theorem}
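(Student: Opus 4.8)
The plan is to manufacture the splitting directly from $D$. Let $z\in\Gamma\big(X,\O_X((1-p^e)K_X)\big)$ be a nonzero section cutting out $D$ (such $z$ exists since $D\ge 0$ and $D\sim(1-p^e)K_X$), and let $\phi\colon F^e_*\O_X\to\O_X$ be the $\O_X$-linear map corresponding to $z$, equivalently to $D$, under \autoref{eq.DivisorMapCorrespondence1}. The goal is to show that after rescaling $\phi$ by a suitable nonzero element of $k$ it becomes a Frobenius splitting; since multiplying by a unit of $\Gamma(X,\O_X)$ does not change the associated divisor, the rescaled map will still correspond to $D$, which is what the theorem asks. Because $X$ is proper and integral over the algebraically closed field $k$, we have $\Gamma(X,\O_X)=k$, so $\phi(F^e_* 1)$ is a scalar. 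A $p^{-e}$-linear map $\phi$ with $\phi(F^e_* 1)=1$ satisfies $\phi(F^e(r))=\phi(r\cdot F^e_* 1)=r\,\phi(F^e_* 1)=r$ for every local section $r$ and hence splits $F^e$, so by \autoref{lem.FrobeniusSplitOfIterate} such a map makes $X$ Frobenius split (equivalently one argues via \autoref{lem.RFrobeniusSplitIfAndOnlyIfEvaluationAt1Surjects}). Thus everything reduces to one claim: $\phi(F^e_* 1)\ne 0$.

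To prove $\phi(F^e_* 1)\ne 0$ I would check it locally at the smooth point $x$. Completing at $x$ and using that the $D_i$ cross normally there, $\widehat{\O_{X,x}}\cong A:=k\llbracket t_1,\dots,t_d\rrbracket$ with $D_i=V(t_i)$. Since $x$ is smooth, $\O_X((1-p^e)K_X)$ is invertible near $x$, so $z$ becomes a function; since $G$ avoids $x$, that function is $f=u\cdot t_1^{p^e-1}\cdots t_d^{p^e-1}$ with $u$ a unit and $u(0)\ne 0$. On $A$ the module $F^e_* A$ is free on the monomial basis of \autoref{ex.FeLowerStarIsFreeForPolynomial}, and by \autoref{ex.GeneratingMapForGorensteinRings} together with the computation in \autoref{ex.BasicAnExample} the $F^e_* A$-module $\Hom_A(F^e_* A, A)$ is free of rank one, generated by the map $\Phi_0$ that sends $F^e_* (t_1^{p^e-1}\cdots t_d^{p^e-1})$ to $1$ and every other basis monomial to $0$; by \autoref{ex.ZeroDivisorOnAn} this $\Phi_0$ corresponds to the zero divisor. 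Using the exercise that precomposing a map with multiplication by $g$ adds $\tfrac{1}{p^e-1}\Div(g)$ to its $\bQ$-divisor, the localization $\widehat{\phi}$ of $\phi$ at $x$ — which still corresponds to $D$, that is to $f$ — must agree, after absorbing a unit into $u$, with $\widehat{\phi}(F^e_* \blank)=\Phi_0\big(F^e_* (u\, t_1^{p^e-1}\cdots t_d^{p^e-1}\cdot \blank)\big)$; here \autoref{ex.LocalizationCompletionAndFeLowerStar} is used to commute completion past $F^e_*$.

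Now the explicit evaluation finishes the argument. Writing $u=\sum_\alpha c_\alpha t^\alpha$ with $c_0=u(0)\ne 0$,
\[
\widehat{\phi}(F^e_* 1)\;=\;\Phi_0\big(F^e_* (u\, t_1^{p^e-1}\cdots t_d^{p^e-1})\big)\;=\;\sum_{p^e\mid\alpha_i\ \forall i} c_\alpha\, t_1^{\alpha_1/p^e}\cdots t_d^{\alpha_d/p^e},
\]
because $\Phi_0$ selects exactly those monomials of $u\,\prod_i t_i^{p^e-1}$ whose exponent vector is $\equiv(p^e-1,\dots,p^e-1)\pmod{p^e}$. This power series has constant term $c_0\ne 0$, so $\widehat{\phi}(F^e_* 1)\ne 0$; and since it is the image of the scalar $\phi(F^e_* 1)$ under $\O_{X,x}\hookrightarrow\widehat{\O_{X,x}}$, we get $\phi(F^e_* 1)\ne 0$. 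Replacing $\phi$ by $\phi(F^e_* 1)^{-1}\phi$ yields a Frobenius splitting corresponding to $D$, as desired.

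The work here is bookkeeping rather than genuine difficulty, but two points want care. First, the reduction in the first paragraph is precisely where properness and integrality are needed: they make $\phi(F^e_* 1)$ a global constant, so nonvanishing may be tested at the single point $x$. Second, the local identification of $\widehat{\phi}$ with $\Phi_0$ composed with multiplication by the local equation of $D$ is the delicate step: one must keep track of the "up to a unit" ambiguity in the section--map dictionary of \autoref{eq.DivisorMapCorrespondence1}, and observe that the hypothesis — that the full coefficient $p^e-1$ appears along each of $d$ transversal smooth branches through one smooth point, with the remaining divisor $G$ supported away from $x$ — is exactly what forces the evaluation to land on the basis element $F^e_* 1$ and therefore to be a unit. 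That observation is the crux of the Mehta--Ramanathan criterion.
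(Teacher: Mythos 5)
Your proposal is correct and takes essentially the same route as the paper's proof: reduce to showing that the constant $\lambda=\phi(F^e_*1)\in\Gamma(X,\O_X)=k$ is nonzero, detect this after completing at the smooth point $x$, where $\widehat\phi$ is identified (up to a unit) with the monomial dual-basis generator $\Phi_0$ precomposed with multiplication by $u\,t_1^{p^e-1}\cdots t_d^{p^e-1}$, and read off a nonzero constant term. The only (immaterial) slip is that in your displayed evaluation the coefficients should carry $p^e$-th roots, so the constant term is $c_0^{1/p^e}$ rather than $c_0$, exactly as in the paper's computation.
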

\begin{proof}
There are two main ideas in this proof.
\begin{itemize}
\item[(a)]  $D$ corresponds to some map, $\phi : F^e_* \O_X \to \O_X$ by \autoref{eq.DivisorMapCorrespondence1}.  Thus $\phi(F^e_* 1) = \lambda \in H^0(X, \O_X) = k$ is a constant.  If we can show that $\lambda \neq 0$, then by rescaling $\phi$ we are done.
\item[(b)]  The value of $\phi(F^e_* 1)$ can be detected at any point.  In particular, we can try to compute it at the stalk of $x \in X$.
\end{itemize}

For simplicity, we denote the stalk at $x$ by $R := \O_{X,x}$ and we use $\bm$ to denote the maximal ideal.

Fix $\phi$ corresponding to $D$ as in \autoref{eq.DivisorMapCorrespondence1} and consider $\phi_x : F^e_* R \to R$.  Suppose that
\[
D|_{\Spec R} = V(f_1^{p^e-1} \cdots f_d^{p^e - 1}) = V(f)
 \]
 where the $f_i$ are the local equations for $D_i$ near $x$.

 Set $\widehat{R}$ to be the completion of $R = \O_{X,x}$.  We know that $\phi$ corresponds to $D$, so it can be factored as:
 \[
 F^e_* \O_X( (1-p^e)K_X - D) \hookrightarrow F^e_* \O_X( (1-p^e)K_X) \to \O_X.
 \]
 Taking the completion of this factorization, we obtain:
\[
\xymatrix@C=40pt{
F^e_* \widehat{R} \ar@/_4ex/@{->>}[rr]_{\widehat\phi} \ar@{^{(}->}[r]^{\cdot (F^e_* f)} & F^e_* \widehat{R} \ar[r]^-{\psi} & \widehat{R}
}
\]
By construction, $\psi$, viewed as an element of $M = \Hom(F^e_* \widehat{R} , \widehat{R})$, generates $M$ as an $F^e_* \widehat{R}$-module (use \autoref{ex.TraceMapInducesZeroDivisor}).

On the other hand, $\widehat{R} = k\llbracket f_1, \dots, f_d \rrbracket$ and so the map $\Psi : F^e_* \widehat{R} \to \widehat{R}$ which sends $f = f_1^{p^e-1} \cdots f_d^{p^e -1}$ to $1$ and the other basis monomials $\{ f_1^{a_1} \cdots f_d^{a_d} \neq f \,|\, 0 \leq a_i \leq p^e - 1\}$ to zero also generates $M$ as an $F^e_* \widehat{R}$-module by \autoref{ex.ZeroDivisorOnAn}.

It follows that $\psi(F^e_* \blank) = \Psi(F^e_* (c \cdot \blank) )$ for some invertible element $c \in \widehat{R}$.  But notice that $c$ is invertible, so it has a non-zero constant term $c_0 \in k$ where $c = c_0 + c'$, $c' \in \langle f_1, \dots, f_d \rangle_{\widehat{R}}$.  Thus
\[
\begin{array}{rcl}
\lambda & = & \phi_x(F^e_* 1)\\
& = & \widehat\phi(F^e_* 1)\\
&  = & \psi(F^e_* f) \\
&  = & \Psi(F^e_* (c \cdot f)) \\
&  = & \Psi(F^e_* (c_0 \cdot f)) + \Psi(F^e_* (c' \cdot f)) \\
&  = & c_0^{1/p^e} + \Psi(F^e_* (c'\cdot f)).
\end{array}
\]
But $\Psi(F^e_* (c'\cdot f) ) \in \langle f_1, \dots, f_d \rangle_{\widehat{R}}$ by our choice of $\Psi$ (note that $c' \cdot f \in \langle f_1^{p^e}, \dots, f_d^{p^e} \rangle$).  Since $c_0^{1/p^e} + \Psi(F^e_* (c'\cdot f) ) = \lambda \in k$ is a constant, we see that $\Psi(F^e_* (c'\cdot f) ) = 0$.  Thus $\lambda = c_0^{1/p^e} \neq 0$ as desired.
\end{proof}

\begin{remark}
A more general, simpler and more conceptual version of the above result is described in \autoref{ex.SimpleFSplitProof} in the next section.  We lack the language to describe it here however.
\end{remark}

Now we study the behavior of Frobenius splittings under maps between varieties.  We will study some complementary constructions later in \autoref{sec.ChangeOfVariety}.

\begin{theorem}\cite{HochsterRobertsFrobeniusLocalCohomology, MehtaRamanathanFrobeniusSplittingAndCohomologyVanishing}
\label{thm.ImageOfFSplitIsFSplit}
Suppose that $\pi : Y \to X$ is a map of varieties such that $\O_X \to \pi_* \O_Y$ splits as a map of $\O_X$-modules (for example, if $\pi_* \O_Y = \O_X$).  Then if $Y$ is Frobenius split, so is $X$.
\end{theorem}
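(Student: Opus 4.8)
The plan is to chase the splitting of $\O_X \to \pi_* \O_Y$ through the Frobenius splitting of $Y$, using functoriality of $F_*$ and the fact that $\pi$ commutes with Frobenius. First I would fix a splitting $s \colon \pi_* \O_Y \to \O_X$ of the structure map $\iota \colon \O_X \to \pi_* \O_Y$, and a Frobenius splitting $\psi \colon F^e_* \O_Y \to \O_Y$ with $\psi(F^e_* 1) = 1$ (here I may take $e = 1$, or work with an iterate if convenient). The goal is to manufacture from this data an $\O_X$-linear map $F^e_* \O_X \to \O_X$ sending $F^e_* 1$ to $1$, since by \autoref{lem.RFrobeniusSplitIfAndOnlyIfEvaluationAt1Surjects} (or directly from the definition) that is all we need.

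The key point is the commutative square relating $\pi$ and Frobenius: for a scheme map $\pi \colon Y \to X$ one has $\pi \circ F_Y = F_X \circ \pi$ on underlying spaces and structure sheaves, which gives a canonical identification $\pi_* F^e_{Y*} \O_Y \cong F^e_{X*} \pi_* \O_Y$ as $\O_X$-modules (both are $\pi_*\O_Y$ with the $\O_X$-action twisted by $p^e$-th powers). Composing the natural map $F^e_{X*}\iota \colon F^e_* \O_X \to F^e_* \pi_* \O_Y \cong \pi_* F^e_* \O_Y$, then $\pi_* \psi \colon \pi_* F^e_* \O_Y \to \pi_* \O_Y$, then the splitting $s \colon \pi_* \O_Y \to \O_X$, produces an $\O_X$-linear composite
\[
\phi \colon F^e_* \O_X \to[F^e_*\iota] \pi_* F^e_* \O_Y \to[\pi_*\psi] \pi_* \O_Y \to[s] \O_X.
\]
It remains to check $\phi(F^e_* 1) = 1$: the first map sends $F^e_* 1 \mapsto F^e_* 1$ (the image of $1 \in \O_X$ in $\pi_*\O_Y$), then $\pi_*\psi$ sends this to $\psi(F^e_* 1) = 1 \in \pi_*\O_Y$, and finally $s(1) = s(\iota(1)) = 1$ since $s$ splits $\iota$. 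Hence $\phi$ is a Frobenius splitting of $X$.

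I would present the argument sheaf-theoretically as above; the affine local version is just the statement that for a split inclusion $R \hookrightarrow S$ of rings with retraction $s$, and a Frobenius splitting $\psi \colon F^e_* S \to S$, the composite $R$-linear map $F^e_* R \hookrightarrow F^e_* S \to[\psi] S \to[s] R$ sends $F^e_* 1$ to $1$. The only genuinely delicate point — and the step I expect to require the most care in writing out — is the identification $\pi_* F^e_{Y*} \O_Y \cong F^e_{X*}\pi_* \O_Y$ as $\O_X$-modules and the verification that $\pi_*\psi$ under this identification is still $\O_X$-linear (equivalently, that $\psi$, being $\O_Y$-linear hence in particular $\O_X$-linear, pushes forward correctly). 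This is routine once one unwinds that $F_X$ and $F_Y$ act identically on the common underlying topological space and that $\pi^\# \circ F_X^\# = F_Y^\# \circ \pi^\#$, but it is the conceptual heart of the proof. Everything else — $\O_X$-linearity of the composite and tracking where $F^e_* 1$ goes — is formal.
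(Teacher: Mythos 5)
Your proposal is correct and is essentially the paper's own proof: both form the composite $F^e_* \O_X \to F^e_* \pi_* \O_Y = \pi_* F^e_* \O_Y \xrightarrow{\pi_*\psi} \pi_* \O_Y \xrightarrow{s} \O_X$ and check that $F^e_* 1$ maps to $1$. The identification $F^e_{X*}\pi_*\O_Y \cong \pi_* F^e_{Y*}\O_Y$ you flag as the delicate point is exactly the equality the paper uses silently (since $\pi \circ F_Y = F_X \circ \pi$), so there is nothing further to add.
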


Before proving the theorem, we point out just how common the condition that $\O_X \to \pi_* \O_Y$ splits is.  Indeed, if $\pi : Y \to X$ is a proper surjective map between normal varieties with connected fibers, then $\pi_* \O_Y = \O_X$.  Alternately, if $\pi : Y \to X$ is proper, dominant, generically finite, $Y$ and $X$ are normal, and $p$ does not divide $[\sK(Y) : \sK(X)] = n$, then the normalized field trace ${1 \over n} \Tr : \sK(Y) \to
\sK(X)$ restricts to a map $\pi_* \O_Y \to \O_X$ which sends $1$ to $1$.

\begin{proof}[Proof of \autoref{thm.ImageOfFSplitIsFSplit}]
Set $\phi : F^e_* \O_Y \to \O_Y$ to be the Frobenius splitting of $Y$ and fix $\alpha : \pi_* \O_Y \to \O_X$ to be the splitting of $i : \O_X \to \pi_* \O_Y$.  Pushing down $\phi$ we obtain:
\[
(\pi_* \phi) : \pi_* F^e_* \O_Y \to \pi_* \O_Y
\]
Now, we simply form the composition:
\[
F^e_* \O_X \xhookrightarrow{F^e_* i} F^e_* \pi_* \O_Y = \pi_* F^e_* \O_Y \xrightarrow{\pi_* \phi} \pi_* \O_Y \xrightarrow{\alpha} \O_X.
\]
By chasing through the composition, we see that $F^e_* 1$ is sent to $1$ and that $X$ is $F$-split.
\end{proof}

\subsection{Exercises}

\begin{exercise}
\label{ex.FrobeniusSplitOfIterate}
Prove Lemma \ref{lem.FrobeniusSplitOfIterate}.  \\ \emph{Hint:} Compose Frobenius and Frobenius splittings by using the \emph{functor} $F^e_*$.
\end{exercise}

\begin{exercise}
\label{ex.ProofOfFSplitLemma}
Prove Lemma \ref{lem.RFrobeniusSplitIfAndOnlyIfEvaluationAt1Surjects}.
\end{exercise}

\begin{exercise}
\label{ex.FSplitAreWeaklyNormal}
A domain $R$ containing a field of characteristic $p > 0$ is said to be \emph{weakly normal} if any $r \in K(R)$ satisfying $r^p \in R$ also satisfies $r \in R$ as well, see \cite[Lemma 3]{YanagiharaWeaklyNormal} and \cite[Section 3]{VitulliWeakNormalitySeminormalitySurvey}.  Show that any $F$-pure/split $R$ is weakly normal.  You can find a solution in \cite[Proposition 1.2.5]{BrionKumarFrobeniusSplitting}, \cf \cite[Proposition 5.31]{HochsterRobertsFrobeniusLocalCohomology}.
\end{exercise}

\begin{exercise}
Suppose that $X$ is Frobenius split relative to a Cartier divisor $D$ such that $X \setminus D$ is Cohen-Macaulay.  Prove that $X$ is Cohen-Macaulay.
\vskip 3pt
\emph{Hint: } Working locally we may assume that $X = \Spec R$ and $D = V(f)$.  Fix a maximal ideal $\bm \in \Spec R$ and consider the composition $H^i_{\bm}(R) \to H^i_{\bm}(F^e_* R) \xrightarrow{\cdot (F^e_* f)} H^i_{\bm}(F^e_* R)$ recalling that a variety can be proven to be Cohen-Macaulay by examining its local cohomology modules as in \cite[Chapter III, Exercises 3.3 and 3.4]{Hartshorne}.
\end{exercise}

\begin{exercise}
\label{ex.SurjectiveMapImpliesLocalSplitting}
Suppose that $F^e_* R \cong R \oplus M$ as $R$-modules where $M$ is some arbitrary $R$-module.  Prove that $R$ is Frobenius split.  More generally, prove the same result if there is any surjective map $F^e_* R \to R$.
\end{exercise}

\begin{exercise}
\label{ex.FSplitAtPointsImpliesFSplit}
Suppose that $X = \Spec R$ is an affine variety and suppose that for every maximal ideal $\bm \in \Spec R$, we have that $R_{\bm}$ is $F$-split.  Prove that $X$ is $F$-split.
\vskip 3pt
\emph{Hint: } The given splittings definitely do not glue.  However consider the evaluation-at-1 map $\Hom_R(F_* R, R) \to R$.
\end{exercise}

\begin{exercise}[Toric varieties]
Suppose that $X$ is a normal toric variety.  Consider the map $\Psi : F_* \O_X \to \O_X$ defined as follows.  We define
\[
\Psi(F_* {\bf x}^{\lambda}) = \left\{ \begin{array}{cl} {\bf x}^{\lambda / p} & \text{ if $\lambda/p$ has integer entries}\\ 0 & \text{ otherwise} \end{array}\right.
\]
acting on each affine toric chart (where ${\bf x}^{\lambda}$ is a monomial).
Show that this induces a Frobenius splitting on $X$ which compatibly splits all the torus invariant divisors.  What is the $\Delta_{\Psi}$ (as defined as in \autoref{eq.MapBijectionWithQDivisors1})?
\end{exercise}

\begin{exercise}[Affine section rings]
\label{ex.FSplitSectionRings}
Suppose that $X$ is a projective algebraic variety with ample line bundle $\sA$.  Consider
\[
S := \bigoplus_{i \in \bZ} H^0(X, \sA^i),
\]
the section right with respect to $\sA$.  Prove that $X$ is Frobenius split if and only if $S$ is Frobenius split.  For additional discussion of related topics, see \cite{SmithGloballyFRegular}.
\end{exercise}

\begin{exercise}
\label{ex.PropertiesOfCompatSplitSubvarieties}
Prove \autoref{prop.PropertiesOfCompatiblySplitSubvarieties}.  \vskip 3pt \emph{Hint: } For part (a), use a diagram similar to the one in \autoref{lem.NormalCrossingsAreFSplit}.  For solutions, see \cite[Chapter 1]{BrionKumarFrobeniusSplitting}.
\end{exercise}

\begin{starexercise}
\label{ex.VanishingForFrobeniusSplit}
Prove \autoref{thm.VanishingForFrobeniusSplit}(d). \vskip 3pt
\emph{Hint: } This is somewhat involved.  There exists a Cartier divisor $B$ such that $\sL^n(-B)$ is ample for all $n \gg 0$ since $\sL$ is big and nef.  For some $m \gg 0$, we also know that $mD + B$ is still ample.  First show that $X$ is $r$-Frobenius split relative to $mD + B$ for some integer $r \gg 0$ (this is hard).  Then notice we have a composition
\[
\omega_X \tensor \sL \xleftarrow{T} F^r_* (\omega_X \tensor \sL) \hookleftarrow F^r_* (\omega_X(-B) \tensor \sL) \hookleftarrow F^r_* (\omega_X(-mD - B) \tensor \sL) \leftarrow \omega_X
\]
which is an isomorphism (we type this with the arrows going backwards to suggest that this arises by duality).  Now, by composing the map  $\omega_X \leftarrow F^r_* (\omega_X(-B) \tensor \sL)$ with itself as in \autoref{eq.ComposeMaps2}, we can obtain the desired vanishing.  For a solution, see \cite[Theorem 6.8]{SchwedeSmithLogFanoVsGloballyFRegular}.
\end{starexercise}

\section{Frobenius non-splittings }
\label{sec.NonSplittings}
Our goal in this section is to develop a theory for $p^{-1}$-linear maps generalizing the theory of Frobenius split varieties demonstrated in the previous section. First we start with a definition.

\begin{definition}
Suppose that we are given a line bundle $\sL$ on a variety $X$.  Consider an $\O_X$-linear map $\phi : F^e_* \sL \to \O_X$.  We say that an ideal $J$ is \emph{$\varphi$-compatible} if we have that $\phi(F^e_* (J \cdot \sL)) \subseteq J$.  If $Y = V(J) \subseteq X$, then we say that $Y$ is \emph{$\phi$-compatible} if $J$ is.
\end{definition}

For example, if $\sL = \O_X$ and $\phi$ is a Frobenius splitting, then any $\phi$-compatibly split ideal is $\phi$-compatible.  We also have a slight variation on this definition.

\begin{definition}
\label{def.FPureCenter}
Given $\Delta$ corresponding to  $\phi : F^e_* \sL \to \O_X$ as in \autoref{eq.DivisorMapCorrespondence2}.
A subvariety $Y \subseteq X$ is called an \emph{$F$-pure center of $(X, \Delta)$} if $Y$ is $\phi$-compatible and $\phi_{\eta}$ is surjective where $\eta$ is the generic point of $Y$.
\end{definition}

\begin{lemma}
\label{lem.RestrictNearSplitting}
If $J \subseteq \O_X$ is an ideal sheaf, then $J$ is $\phi : F^e_* \sL \to \O_X$ compatible if and only if $\phi$ induces a map $\phi_Y : F^e_* (\sL|_Y) \to \O_Y$.
\end{lemma}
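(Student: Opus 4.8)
The statement to prove is \autoref{lem.RestrictNearSplitting}: an ideal sheaf $J \subseteq \O_X$ is $\phi$-compatible (meaning $\phi(F^e_*(J \cdot \sL)) \subseteq J$) if and only if $\phi$ descends to a map $\phi_Y \colon F^e_*(\sL|_Y) \to \O_Y$ where $Y = V(J)$. The plan is to make this a diagram chase, entirely parallel to the commutative diagram \autoref{eq.SplittingDiagram} used in \autoref{lem.NormalCrossingsAreFSplit}, but now with a line bundle twist and without assuming $\phi$ is a splitting.

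First I would set up the square. The surjection $\O_X \twoheadrightarrow \O_X/J = \O_Y$ tensored with $\sL$ gives $\sL \twoheadrightarrow \sL|_Y$ with kernel $J \cdot \sL$; applying the exact functor $F^e_*$ gives a short exact sequence $0 \to F^e_*(J\cdot\sL) \to F^e_*\sL \to F^e_*(\sL|_Y) \to 0$. So we have the left column $F^e_*(J\cdot\sL) \hookrightarrow F^e_*\sL \twoheadrightarrow F^e_*(\sL|_Y)$, the right column $J \hookrightarrow \O_X \twoheadrightarrow \O_Y$, and the middle horizontal map $\phi \colon F^e_*\sL \to \O_X$. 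Now for the forward direction: if $\phi(F^e_*(J\cdot\sL)) \subseteq J$, then $\phi$ restricts to the top horizontal map $F^e_*(J\cdot\sL) \to J$, and hence (by the universal property of cokernels, since the composite $F^e_*(J\cdot\sL) \to F^e_*\sL \xrightarrow{\phi} \O_X \twoheadrightarrow \O_Y$ is zero) it factors through $F^e_*(\sL|_Y)$, yielding the desired $\phi_Y \colon F^e_*(\sL|_Y) \to \O_Y$ making the bottom square commute. Conversely, if such a $\phi_Y$ exists fitting into the bottom square, then chasing a local section of $F^e_*(J\cdot\sL)$ around the square: its image under $\phi$ in $\O_X$ maps to $0$ in $\O_Y$ (because it dies already in $F^e_*(\sL|_Y)$, and the square commutes), hence lies in $J$; that is exactly $\phi$-compatibility. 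I should also note that $\phi_Y$, when it exists, is unique, since $F^e_*\sL \twoheadrightarrow F^e_*(\sL|_Y)$ is an epimorphism.

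One small point worth spelling out is that $\sL|_Y$ should be interpreted as $\sL \otimes_{\O_X} \O_Y$ (equivalently the pullback to the closed subscheme $Y$), so that $F^e_*(\sL|_Y)$ is naturally an $\O_Y$-module, and the claim that the sequence $0 \to J\cdot\sL \to \sL \to \sL|_Y \to 0$ is exact uses that $\sL$ is locally free (tensoring the sequence $0 \to J \to \O_X \to \O_Y \to 0$ by the flat module $\sL$). Since this can all be checked on an affine cover, there is no global subtlety.

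I do not expect a genuine obstacle here — the content is formal and the relevant diagram has essentially already appeared in \autoref{lem.NormalCrossingsAreFSplit}. The only thing requiring a moment of care is bookkeeping the line-bundle twist correctly (the compatibility condition is $\phi(F^e_*(J\sL)) \subseteq J$, not $\phi(F^e_* J) \subseteq J$), and confirming that ``$\phi$ induces $\phi_Y$'' is meant in the sense of a commuting square with the two vertical quotient maps, so that the equivalence is literally the statement that the left column maps into the right column under $\phi$. Given those conventions fixed, the proof is the two-line chase above in each direction.
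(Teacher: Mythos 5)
Your proof is correct, and it is exactly the argument the paper intends: the proof of \autoref{lem.RestrictNearSplitting} is left as \autoref{ex.RestrictNearSplitting}, with the relevant diagram already displayed in \autoref{eq.SplittingDiagram} for \autoref{lem.NormalCrossingsAreFSplit}, and your twisted version (tensoring $0 \to J \to \O_X \to \O_Y \to 0$ by the locally free $\sL$, applying the exact functor $F^e_*$, and chasing the square in both directions) is the standard completion of that exercise. Your remarks on interpreting ``induces'' as a commuting square and on the $J\cdot\sL$ (rather than $J$) form of compatibility are the right points of care.
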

\begin{proof}
Left as an exercise to the reader \autoref{ex.RestrictNearSplitting}.
\end{proof}

We explore compatibility after composing maps as in \autoref{eq.ComposeMaps2}.

\begin{lemma}
Suppose that $J \subseteq \O_X$ is $\phi : F^e_* \sL \to \O_X$-compatible.  Then $J$ is \[
\phi^n : F^{ne}_* \sL^{p^{e(n-1)} + \dots + 1} \to \O_X
\]
 compatible for all $n > 0$.  Conversely, suppose that $\phi$ is surjective.  If $J$ is $\phi^n$-compatible then $J$ is also $\phi$-compatible.
\end{lemma}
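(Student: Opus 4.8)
The plan is to handle the two implications separately. In both, the main tool is the inductive description of the composite supplied by \autoref{eq.ComposeMaps2}, namely $\phi^{n}=\phi\circ F^{e}_{*}(\phi^{n-1}\otimes\sL)$, together with the exactness of $F^{e}_{*}$ and the observation that twisting a map by the line bundle $\sL$ and invoking the projection formula \autoref{eq.FrobeniusProjectionFormula} affects only the bookkeeping of the powers of $\sL$, not the compatibility.

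For the forward implication I would induct on $n$, the case $n=1$ being the hypothesis. Assuming $J$ is $\phi^{n-1}$-compatible, so that $\phi^{n-1}\bigl(F^{(n-1)e}_{*}(J\cdot\sL^{p^{e(n-2)}+\dots+1})\bigr)\subseteq J$, twist $\phi^{n-1}$ by $\sL$: since tensoring with a line bundle is an isomorphism of source and target, under the projection formula it identifies the submodule $J\cdot\sL^{p^{e(n-2)}+\dots+1}\otimes\sL$ with $J\cdot\sL^{p^{e(n-1)}+\dots+1}$ and identifies $J\subseteq\O_{X}$ with $J\cdot\sL$, so $(\phi^{n-1}\otimes\sL)\bigl(F^{(n-1)e}_{*}(J\cdot\sL^{p^{e(n-1)}+\dots+1})\bigr)\subseteq J\cdot\sL$. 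Applying the exact functor $F^{e}_{*}$ and then $\phi$, and using the case $n=1$ that $\phi(F^{e}_{*}(J\cdot\sL))\subseteq J$, I obtain $\phi^{n}\bigl(F^{ne}_{*}(J\cdot\sL^{p^{e(n-1)}+\dots+1})\bigr)\subseteq J$. I would also mention the more conceptual route via \autoref{lem.RestrictNearSplitting}: $\phi$-compatibility of $J$ says exactly that $\phi$ descends to $\phi_{Y}\colon F^{e}_{*}(\sL|_{Y})\to\O_{Y}$ on $Y=V(J)$; since the construction of $\phi^{n}$ from $\phi$ commutes with restriction to $Y$, the composite $\phi^{n}$ descends to $(\phi_{Y})^{n}$, and \autoref{lem.RestrictNearSplitting} then shows that $J$ is $\phi^{n}$-compatible.

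For the converse the assumption that $\phi$ is surjective is the essential new ingredient, and I would isolate the following elementary statement as a sublemma: if $\psi\colon F^{m}_{*}\sN\to\sM$ is a surjective $\O_{X}$-linear map of line bundles, then $\psi\bigl(F^{m}_{*}(J\cdot\sN)\bigr)\supseteq J\cdot\sM$ for every ideal sheaf $J$. I would verify this on an affine open trivializing $\sN$ and $\sM$, where $\psi$ becomes a surjection $F^{m}_{*}R\to R$ and hence $\psi(F^{m}_{*}s_{0})=1$ for some $s_{0}\in R$; then for $j\in J$ the defining relation of a $p^{-m}$-linear map gives $j=j\cdot\psi(F^{m}_{*}s_{0})=\psi\bigl(F^{m}_{*}(j^{p^{m}}s_{0})\bigr)$ with $j^{p^{m}}s_{0}\in J$, so $j\in\psi(F^{m}_{*}J)$, and the containment holds globally since it holds on a cover. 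Granting the sublemma, I would apply it with $\psi=\phi^{n-1}\otimes\sL$ — which is surjective because $\phi$ surjective forces $\phi^{n-1}$, hence its twist, to be surjective (again from the inductive description of $\phi^{n-1}$ and the exactness of $F^{e}_{*}$) — to get $(\phi^{n-1}\otimes\sL)\bigl(F^{(n-1)e}_{*}(J\cdot\sL^{p^{e(n-1)}+\dots+1})\bigr)\supseteq J\cdot\sL$. Pushing forward by the exact $F^{e}_{*}$, every local section of $F^{e}_{*}(J\cdot\sL)$ is then the image under $F^{e}_{*}(\phi^{n-1}\otimes\sL)$ of a local section of $F^{ne}_{*}(J\cdot\sL^{p^{e(n-1)}+\dots+1})$; feeding such a section into $\phi$ and using $\phi^{n}$-compatibility gives $\phi(F^{e}_{*}(J\cdot\sL))\subseteq\phi^{n}\bigl(F^{ne}_{*}(J\cdot\sL^{p^{e(n-1)}+\dots+1})\bigr)\subseteq J$, so $J$ is $\phi$-compatible.

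The part I expect to be the real obstacle is the converse: one has to be certain that $\phi$ is the outermost factor of $\phi^{n}$ and that the ``sandwich'' defining $\phi^{n}$ can be filled in starting from an \emph{arbitrary} section of $J\cdot\sL$, which is precisely what the surjectivity sublemma provides. Everything else is routine diagram-chasing and keeping the exponents $p^{e(n-1)}+\dots+p^{e}+1$ straight as they pass through the projection formula.
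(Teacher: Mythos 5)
Your proof is correct, but the route you take for the converse is genuinely different from the paper's. The paper's argument (only sketched there) first localizes and trivializes $\sL$, then invokes the facts that an ideal compatible with a \emph{surjective} map is radical and that its minimal primes are again compatible (\cf \autoref{ex.WhichPropertiesOfSplittingsExtendToSurjectiveMaps}), so that after localizing one may assume $J$ is the maximal ideal of $\O_{X,x}$; it then argues by contradiction: if $\phi(F^e_* J) \nsubseteq J$ then $\phi(F^e_* J) = \O_{X,x}$, whence $\phi^2(F^{2e}_* J) = \phi(F^e_* \phi(F^e_* J)) = \O_{X,x}$ and so on, contradicting $\phi^n$-compatibility. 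You instead isolate the sublemma that a surjective $p^{-m}$-linear map $\psi \colon F^m_* \sN \to \sM$ satisfies $J \cdot \sM \subseteq \psi(F^m_*(J \cdot \sN))$ for every ideal sheaf $J$ (your local computation $j = \psi(F^m_*(j^{p^m}s_0))$ is exactly right), and then sandwich: by the factorization $\phi^n = \phi \circ F^e_*(\phi^{n-1}\otimes\sL)$ from \autoref{eq.ComposeMaps2} one gets $\phi(F^e_*(J\cdot\sL)) \subseteq \phi^n\big(F^{ne}_*(J\cdot\sL^{p^{e(n-1)}+\dots+1})\big) \subseteq J$. Both arguments use surjectivity only in the converse and both ultimately exploit that $\phi$ is the outermost factor of $\phi^n$, but yours buys a self-contained, sheaf-level proof for arbitrary ideal sheaves that needs no radicality, no passage to minimal primes, and no localization beyond trivializing $\sL$, while the paper's reduction is shorter once those exercise-level structural facts about compatible ideals are granted and fits its general theme that compatible ideals of surjective maps behave like compatibly split ones. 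Your treatment of the forward direction (induction via twisting by $\sL$, or equivalently descending $\phi$ to $Y=V(J)$ by \autoref{lem.RestrictNearSplitting} and noting the composition construction commutes with restriction) fills in what the paper dismisses as obvious, and the bookkeeping of the exponents through the projection formula is handled correctly.
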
\begin{proof}
The statement is local so we may as well only check this at the stalks $\O_{X,x}$ and in particular assume that $\sL \cong \O_{X,x}$.  The first statement is obvious and will be left to the reader.  For the second statement, we sketch the idea of the proof.

The first step is to show that any $J \subseteq \O_{X,x}$ which is $\phi : F^e_* \O_{X,x} \twoheadrightarrow \O_{X,x}$-compatible is also radical, see \autoref{ex.WhichPropertiesOfSplittingsExtendToSurjectiveMaps}.  One can then show it is sufficient to verify the statement at the minimal primes of $J$.  In particular, we can assume that $J$ is the maximal ideal of $\O_{X,x}$ by localizing.

Now then, suppose that $J$ is $\phi^n$ compatible but not $\phi$-compatible.  Then $\phi(F^e_* J) = \O_{X,x}$ (since otherwise, it would be in the maximal ideal, which coincides with $J$).  But then it is easy to see that $\phi^2(F^{2e}_* J) = \phi(F^e_* \phi(F^e_* J)) = \O_{X,x}$ as well.  Continuing in this way, we obtain a contradiction.
\end{proof}

We also generalize the notion of $F$-pure to non-Frobenius splittings and to pairs.

\begin{definition}
\label{def.FPurePair}
Suppose that $X$ is a normal variety and that $\Delta$ is a $\bQ$-divisor such that
\begin{equation}
\label{eq.KXDeltaIsQCartierWithIndex} \tag{$\dagger$}
\text{$K_X + \Delta$ is a $\bQ$-Cartier divisor with index not divisible by $p$.}
\end{equation}
We say that $(X, \Delta)$ is \emph{sharply $F$-pure} if the map $\phi : F^e_* \sL \to \O_X$, corresponding to $\Delta$ as in \autoref{eq.MapBijectionWithQDivisors2} is surjective as a map of $\O_X$-modules.

If we do not satisfy \eqref{eq.KXDeltaIsQCartierWithIndex}, then we say that $(X, \Delta)$ is \emph{sharply $F$-pure} if for every point $x \in X$, there exists a neighborhood $U$ of $x \in X$ and a divisor $\Delta_U$ on $U$ such that $\Delta_U \geq \Delta|_U$ and such that $(U, \Delta_U)$ is sharply $F$-pure in the above sense.
\end{definition}

It is an exercise below, \autoref{ex.NotionsOfFPureCoincide}, that the definition of sharply $F$-pure above and the definition given in \autoref{def.FSplitFpure} coincide.

\subsection{Global considerations}
\label{subsec.GlobalConsiderationsForNonSplittings}
In this subsection, we briefly demonstrate that some of the \emph{global} methods from the Frobenius splitting section can still bear fruit, even if the actual vanishing theorems do not hold.

Our first goal is to consider a generalization of a proof due to D.~Keeler \cite{KeelerFujita} (also independently obtained by N.~Hara [unpublished]).  Related results were first proven by \cite{SmithFujitasFreeness} and also \cite{HaraACharacteristicPAnalogOfMultiplierIdealsAndApplications}.  Before doing that, we recall a Definition and a Lemma.

\begin{definition}[Castelnuovo-Mumford Regularity]\cite[Section 1.8]{LazarsfeldPositivity1}
Suppose that $\sF$ is a coherent sheaf on a projective variety $X$ and that $\sA$ is a globally generated ample divisor on $X$.  Then $\sF$ is called \emph{$0$-regular (with respect to $\sA$)} if $H^i(X, \sF \tensor \sA^{-i}) = 0$ for all $i > 0$.
\end{definition}

\begin{lemma}[Mumford's Theorem] \cite[Theorem 1.8.5]{LazarsfeldPositivity1}
If $\sF$ is $0$-regular with respect to a globally generated ample line bundle $\sA$, then $\sF$ is globally generated.
\end{lemma}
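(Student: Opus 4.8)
This is the classical Castelnuovo--Mumford regularity theorem, and the plan is to run the standard argument of \cite[Section 1.8]{LazarsfeldPositivity1}: prove \emph{simultaneously}, by induction on $n = \dim X$, the following package of statements about a $0$-regular coherent sheaf $\sF$ on a projective $X$ with globally generated ample $\sA$: (i) $\sF$ is $m$-regular for every $m \geq 0$; (ii) the multiplication maps $H^0(X, \sF \otimes \sA^m) \otimes H^0(X, \sA) \to H^0(X, \sF \otimes \sA^{m+1})$ are surjective for all $m \geq 0$; and (iii) $\sF$ is globally generated. Only (iii) is asked for, but (i) and (ii) are what make the induction close. As a preliminary reduction, since formation of cohomology and global generation are both insensitive to a faithfully flat extension of the ground field, I may assume $k$ is infinite; then a general member $D$ of the base-point-free linear system $|\sA|$ avoids any prescribed finite set, in particular the associated points of $\sF$.

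The base case $n = 0$ is trivial. For the inductive step I would pick such a $D$; multiplication by its defining section $s_D$ then yields, for every $j \in \bZ$, a short exact sequence
\[
0 \to \sF \otimes \sA^{j-1} \to \sF \otimes \sA^{j} \to \sF|_D \otimes \sA|_D^{j} \to 0 .
\]
The first genuinely non-formal point is that $\sF|_D$ is $0$-regular on $D$: taking the long exact cohomology sequence of this sequence with $j = -i$ and using the two vanishings $H^i(X, \sF \otimes \sA^{-i}) = 0 = H^{i+1}(X, \sF \otimes \sA^{-i-1})$ for $i > 0$ that are built into $0$-regularity, one gets $H^i(D, \sF|_D \otimes \sA|_D^{-i}) = 0$ for $i > 0$. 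Since $\dim D = n-1$ and $\sA|_D$ is ample and globally generated (by restrictions of sections from $X$), the inductive hypothesis applies to $\sF|_D$. Feeding $\ell$-regularity of $\sF|_D$ back into the long exact sequence with $j = m - i$ and inducting on $m$ gives (i).

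For (ii), given $\tau \in H^0(X, \sF \otimes \sA^{m+1})$ I would restrict it to $D$, apply the (inductively known) surjectivity on $D$ to write $\tau|_D$ as a sum of products, lift the factors from $H^0(D, \sF|_D \otimes \sA^m)$ to $H^0(X, \sF \otimes \sA^m)$ using that the restriction map is surjective (its cokernel injects into $H^1(X, \sF \otimes \sA^{m-1}) = 0$ by the $m$-regularity from (i)), and keep the $H^0(\sA)$-factors as restrictions of sections from $X$; the discrepancy between $\tau$ and the resulting product expression then lies in the image of $\cdot s_D$ from $H^0(X, \sF \otimes \sA^m)$, which is immediate. Finally, (iii) follows from (i), (ii), Serre vanishing and global generation of $\sA$: for $m \gg 0$, $\sF \otimes \sA^m$ is globally generated; iterating (ii) makes $H^0(X, \sF) \otimes H^0(X, \sA)^{\otimes m} \to H^0(X, \sF \otimes \sA^m)$ surjective; and since $\sA^m$ is generated by products of sections of $\sA$, a short diagram chase forces $H^0(X, \sF) \otimes \O_X \to \sF$ to be surjective after tensoring by the line bundle $\sA^m$, hence surjective already.

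The step I expect to be the main obstacle is the bookkeeping that makes the induction on $\dim X$ actually close --- one cannot prove (iii) in isolation, and must carry (i) and (ii) along, with (ii) feeding the lifting argument and (i) feeding the vanishing. The one genuinely non-formal ingredient inside this is the descent of $0$-regularity to a general hyperplane section $\sF|_D$, which is precisely where the pair of consecutive cohomological vanishings (degrees $i$ and $i+1$) in the definition of $0$-regularity is consumed; the reduction to an infinite ground field and the choice of $D$ missing the associated points (so that the defining sequence is short exact) are the remaining points needing care, and are routine.
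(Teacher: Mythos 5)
First, a point of comparison: the paper does not prove this lemma at all --- it is quoted as a black box from \cite[Theorem 1.8.5]{LazarsfeldPositivity1} --- so there is no in-paper argument to measure you against. What you have written is essentially the standard Castelnuovo--Mumford induction from that reference, and most of it is correct: the reduction to an infinite ground field, the choice of $D \in |\sA|$ avoiding the associated points of $\sF$, the descent of $0$-regularity to $\sF|_D$ via the two consecutive vanishings, the deduction of $m$-regularity for all $m \geq 0$, and the final twist-down argument deducing global generation of $\sF$ from Serre vanishing plus iterated surjectivity of multiplication.

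The step that does not close as written is (ii). Applying your inductive hypothesis on $D$ only produces a decomposition $\tau|_D = \sum \sigma_i \cdot a_i$ with the $a_i$ arbitrary elements of $H^0(D, \sA|_D)$; the clause ``keep the $H^0(\sA)$-factors as restrictions of sections from $X$'' is precisely the unproved point, because the restriction map $H^0(X, \sA) \to H^0(D, \sA|_D)$ need not be surjective --- its cokernel injects into $H^1(X, \sA \tensor \O_X(-D)) \cong H^1(X, \O_X)$, which has no reason to vanish. The lifting you do justify (via $H^1(X, \sF \tensor \sA^{m-1}) = 0$) applies only to the $\sF|_D \tensor \sA|_D^m$-factors. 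The standard repair is to strengthen the induction: prove (ii) for every base-point-free subspace $W \subseteq H^0(X, \sA)$, i.e.\ that $H^0(\sF \tensor \sA^m) \tensor W \to H^0(\sF \tensor \sA^{m+1})$ is surjective, choosing the general divisor as the zero locus of a general $s_D \in W$. Then on $D$ one applies the hypothesis with $W_D = \operatorname{im}\bigl(H^0(X,\sA) \to H^0(D, \sA|_D)\bigr)$, which still generates $\sA|_D$; every factor now lifts by construction, and the discrepancy $\tau - \sum \tilde\sigma_i \tilde a_i$ vanishes on $D$, hence equals $s_D \cdot \rho$ with $s_D \in W$, exactly as you conclude. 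With this bookkeeping fix --- which is the very point you flagged as the main obstacle --- your argument goes through and coincides with the proof in \cite[Theorem 1.8.5]{LazarsfeldPositivity1}.
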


Now we are in a position to prove that certain sheaves are globally generated.

\begin{theorem} \cite{KeelerFujita,SchwedeCanonicalLinearSystem}
\label{thm.GlobalGenerationOfImages}
Suppose that $\phi : F^e_* \sL \to \O_X$ is a \emph{surjective} $\O_X$-linear map and $\sL$ is a line bundle.  Additionally suppose that $\sA$ is a globally generated ample line bundle and that $\sM$ is any other line bundle such that $\sL \tensor \sM^{p^e - 1}$ is ample (for example, if $\sL$ is itself ample, then we may take $\sM = \O_X$).  In this case, the line bundle
\[
\sM \tensor \sA^{\dim X}
\]
is globally generated.
\end{theorem}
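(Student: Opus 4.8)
The plan is to deduce the global generation of $\sM \otimes \sA^{\dim X}$ from Mumford's Theorem by exhibiting it as a quotient of a sheaf which can be made $0$-regular with respect to $\sA$. Write $d := \dim X$. The hypothesis that $\phi$ is surjective implies that every iterate
\[
\phi^n \colon F^{ne}_*\big(\sL^{\,q_n}\big) \to \O_X, \qquad q_n := 1 + p^e + \cdots + p^{(n-1)e} = \frac{p^{ne}-1}{p^e-1},
\]
as in \autoref{eq.ComposeMaps2} is again surjective (this uses only that $F^e_*$ is exact, Frobenius being finite hence affine, and that tensoring with a line bundle preserves surjectivity). Twisting $\phi^n$ by the line bundle $\sM \otimes \sA^{d}$ and applying the projection formula \autoref{eq.FrobeniusProjectionFormula} produces, for every $n$, a surjection of $\O_X$-modules
\[
\sG_n := F^{ne}_*\big(\sL^{\,q_n} \otimes \sM^{\,p^{ne}} \otimes \sA^{\,d\,p^{ne}}\big) \twoheadrightarrow \sM \otimes \sA^{d}.
\]
Since a quotient of a globally generated sheaf is globally generated, it suffices to show that $\sG_n$ is globally generated for some $n$; by Mumford's Theorem (this is where one uses that $\sA$ is globally generated and ample) it is enough that $\sG_n$ be $0$-regular with respect to $\sA$, i.e.\ that $H^i(X, \sG_n \otimes \sA^{-i}) = 0$ for all $i > 0$.

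For $i > d$ this vanishing is automatic, so fix $i$ with $1 \le i \le d$. Applying the projection formula once more gives $\sG_n \otimes \sA^{-i} \cong F^{ne}_*\big(\sL^{q_n} \otimes \sM^{p^{ne}} \otimes \sA^{(d-i)p^{ne}}\big)$, and since pushforward along the affine morphism $F^{ne}$ does not change cohomology, $H^i(X, \sG_n \otimes \sA^{-i}) = H^i\big(X,\, \sL^{q_n} \otimes \sM^{p^{ne}} \otimes \sA^{(d-i)p^{ne}}\big)$. The key rearrangement, using $p^{ne} - 1 = q_n(p^e - 1)$, is
\[
\sL^{q_n} \otimes \sM^{p^{ne}} \otimes \sA^{(d-i)p^{ne}} \;\cong\; \Big( \big(\sL \otimes \sM^{p^e - 1}\big) \otimes \sA^{(d-i)(p^e-1)} \Big)^{q_n} \otimes \big(\sM \otimes \sA^{d-i}\big).
\]
Now $\sL \otimes \sM^{p^e-1}$ is ample by hypothesis and $\sA^{(d-i)(p^e-1)}$ is nef (this is the one place the inequality $d - i \ge 0$, valid since $i \le d = \dim X$, enters), so the bundle inside the large parentheses is ample. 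As $q_n \to \infty$ with $n$, Serre vanishing kills $H^i$ of the right-hand side once $n$ is large enough; as there are only finitely many $i$ in question, a single $n$ works for all of them. Hence $\sG_n$ is $0$-regular with respect to $\sA$, and the theorem follows.

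There is no serious obstacle in this argument; the only point that is not mechanical is to realize that one should not attempt the required vanishing for the naive single twist $F^e_*\big(\sL \otimes (\sM\otimes\sA^d)^{p^e}\big)$ — a fixed sheaf whose higher cohomology need not vanish on a general $X$ — but rather build $\sG_n$ from the iterate $\phi^n$, so that the cohomology to be killed is that of a large power of an ample line bundle, to which Serre vanishing applies. One also has to observe that the ampleness hypothesis survives iteration, namely that $\sL^{q_n} \otimes \sM^{p^{ne}-1} = (\sL \otimes \sM^{p^e-1})^{q_n}$ is again ample, which it visibly is; the rest is bookkeeping with the projection formula and powers of $p$.
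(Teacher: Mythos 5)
Your argument is correct and is essentially the paper's own proof: you pass to the iterate $\phi^n$, twist by $\sM \otimes \sA^{\dim X}$, regroup the bundle as $\big((\sL \otimes \sM^{p^e-1}) \otimes \sA^{(\dim X - i)(p^e-1)}\big)^{q_n} \otimes (\sM \otimes \sA^{\dim X - i})$, kill the higher cohomology by Serre vanishing (using that $F^{ne}_*$ does not change the underlying abelian groups), and conclude $0$-regularity and hence global generation via Mumford's theorem. The only difference is cosmetic bookkeeping, such as your explicit remark that $d - i \ge 0$ is what keeps the auxiliary power of $\sA$ nonnegative.
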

\begin{proof}
Choose $n \gg 0$.  Then we have a surjective map:
\[
\phi^n : F^{ne}_* \sL^{p^{(n-1)e} + \dots + p^e + 1} \to \O_X
\]
from \autoref{eq.ComposeMaps2}.  Twisting by $\sM \tensor \sA^{\dim X}$ we obtain a surjective map:
\[
F^{ne}_* (\sL^{p^{(n-1)e} + \dots + p^e + 1} \tensor \sM^{p^{ne} } \tensor \sA^{p^{ne} \dim X}) \rightarrow \sM \tensor \sA^{\dim X}.
\]
It is sufficient to show that the left side is globally generated as an $\O_X$-module since then the right side is a quotient of a globally generated module and thus globally generated itself.  Note it is definitely \emph{not} sufficient to show that the left side is globally generated as an $F^{ne}_* \O_X$-module.  We will proceed by proving that the left side is $0$-regular as an $\O_X$-module.
Note
\[
\sL^{p^{(n-1)e} + \dots + p^e + 1} \tensor \sM^{p^{ne} } = (\sL \tensor \sM^{p^e - 1})^{p^{(n-1)e} + \dots + p^e + 1} \tensor \sM.
\]
But now we have
\[
\begin{array}{rl}
& H^i\Big(X, F^{ne}_* \big( (\sL \tensor \sM^{p^e - 1})^{p^{(n-1)e} + \dots + p^e + 1} \tensor \sM \tensor \sA^{p^{ne} \dim X}\big) \tensor \sA^{-i}\Big)\\
= & H^i\Big(X, F^{ne}_* \big( (\sL \tensor \sM^{p^e - 1})^{p^{(n-1)e} + \dots + p^e + 1} \tensor \sM \tensor \sA^{p^{ne} (\dim X - i)}\big)\Big)\\
= & H^i\Big(X, F^{ne}_* \big( (\sL \tensor \sM^{p^e - 1} \tensor \sA^{(\dim X - i)(p^{e} - 1)})^{p^{(n-1)e} + \dots + p^e + 1} \tensor (\sM \tensor \sA^{\dim X - i})\big)\Big).
\end{array}
\]
We already have the vanishing for $i > \dim X$.  Now the $F^{ne}_*$ does not effect the vanishing or non-vanishing of the cohomology since it doesn't change the underlying sheaf of Abelian groups.  Therefore, the above cohomology groups vanish by Serre vanishing, since $\sL \tensor \sM^{p^e - 1} \tensor \sA^{(\dim X - i)(p^{e} - 1)}$ is ample and each of the \emph{finitely many} $\sM \tensor \sA^{\dim X - i}$ are coherent sheaves.
\end{proof}

\begin{example}
If $X$ is smooth (or even $F$-pure), then there is always a \emph{surjective} map $F^e_* \O_X( (1-p^e) K_X) \to \O_X$.  It follows that if $M$ is a divisor such that $M - K_X$ is ample, and $\sA$ is any globally generated ample line bundle, then $\O_X(M) \tensor \sA^{\dim X}$ is globally generated.
\end{example}

\begin{remark}
It is worth pointing out that not only is $\sM \tensor \sA^{\dim X}$ globally generated, one even has that it is globally generated by the image of the map
\[
H^0\Big(X, F^{ne}_* \big(\sL^{p^{(n-1)e} + \dots + p^e + 1} \tensor \sM^{p^{ne} } \tensor \sA^{p^{ne} \dim X}\big)\Big) \rightarrow H^0\big(X, \sM \tensor \sA^{\dim X}\big).
\]
This special sub-vector space of global sections also behaves well with respect to restriction to compatible subvarieties as we shall see shortly.\end{remark}

Similar arguments to those in the proof \autoref{thm.GlobalGenerationOfImages} also yield the following result.

\begin{proposition}\cite{SmithFujitasFreeness,HaraACharacteristicPAnalogOfMultiplierIdealsAndApplications,KeelerFujita}
\label{prop.FujitaGlobalGen}
If $X$ is any $F$-pure variety, $\sA$ is a globally generated ample line bundle and $\sM$ is any other ample line bundle then
\[
\omega_X \otimes \sA^{\dim X} \otimes \sM
\]
is globally generated.
\end{proposition}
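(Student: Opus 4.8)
The plan is to run the argument of \autoref{thm.GlobalGenerationOfImages} essentially verbatim, but with the \emph{trace of Frobenius} $T\colon F^e_*\omega_X\to\omega_X$ of \autoref{prop.TraceForGeneralMaps} in the role of the surjection $\phi\colon F^e_*\sL\to\O_X$, and with $\omega_X$ in the role of $\O_X$. One cannot simply invoke \autoref{thm.GlobalGenerationOfImages} directly, because the natural map attached to $\omega_X$ is a map $F^e_*\O_X((1-p^e)K_X)\to\O_X$, and $\O_X((1-p^e)K_X)$ is a genuine line bundle only under a $\bQ$-Gorenstein hypothesis that is not assumed here; so the proof of \autoref{thm.GlobalGenerationOfImages} must be re-run with the possibly non-invertible sheaf $\omega_X$.

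\textbf{Step 1: the trace and all its iterates are surjective.} Since $X$ is $F$-pure, the map $\O_X\to F^e_*\O_X$ splits locally on $X$. Applying $\myR\sHom_{\O_X}^{\mydot}(\blank,\omega_X^{\mydot})$ and passing to cohomology in degree $-\dim X$, exactly as in the proof of \autoref{thm.VanishingForFrobeniusSplit}(b), we get that $T\colon F^e_*\omega_X\to\omega_X$ splits locally, hence is surjective. Composing $T$ with its $F^e_*$-pushforwards as in \eqref{eq.ComposeMaps1} (equivalently, using the trace $F^{ne}_*\omega_X\to\omega_X$ of the $ne$-th iterate of Frobenius, which agrees with this composition by functoriality of the trace) produces a surjection $T^n\colon F^{ne}_*\omega_X\twoheadrightarrow\omega_X$ for every $n\geq 1$, each factor being surjective since $F^{ke}_*$ is exact.

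\textbf{Step 2: reduce to global generation of a pushforward.} Put $\sN=\sM\tensor\sA^{\dim X}$. Tensoring $T^n$ by $\sN$ and applying the projection formula \eqref{eq.FrobeniusProjectionFormula} yields a surjection
\[
F^{ne}_*\big(\omega_X\tensor\sN^{p^{ne}}\big)\twoheadrightarrow \omega_X\tensor\sN=\omega_X\tensor\sM\tensor\sA^{\dim X}.
\]
Since a quotient of a globally generated sheaf is globally generated, it suffices to show the left-hand side is globally generated for some $n\gg 0$. By Mumford's Theorem it is enough to show that, for $n\gg 0$,
\[
H^i\big(X,\,F^{ne}_*(\omega_X\tensor\sN^{p^{ne}})\tensor\sA^{-i}\big)=0\qquad\text{for all }i>0.
\]

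\textbf{Step 3: $0$-regularity by Serre vanishing.} Applying the projection formula once more and using that $F^{ne}_*$ does not change the underlying sheaf of abelian groups, the above group is isomorphic to $H^i\big(X,\,\omega_X\tensor(\sM\tensor\sA^{\dim X-i})^{p^{ne}}\big)$. For $i>\dim X$ this vanishes because $\dim X$ bounds the cohomological dimension of $X$. For $1\leq i\leq\dim X$ the line bundle $\sM\tensor\sA^{\dim X-i}$ is ample (the ample $\sM$ twisted by the nef $\sA^{\dim X-i}$), so by Serre vanishing there is an integer $N_i$ with $H^i(X,\omega_X\tensor(\sM\tensor\sA^{\dim X-i})^m)=0$ for all $m\geq N_i$; choosing $n$ with $p^{ne}\geq\max_{1\leq i\leq\dim X}N_i$ makes all these groups vanish simultaneously. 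Hence for such $n$ the pushforward is $0$-regular, therefore globally generated by Mumford's Theorem, and Step 2 concludes.

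The computations here are routine bookkeeping with the projection formula and Serre vanishing; the only point demanding genuine care is Step 1 — identifying $F$-purity of $X$ with surjectivity of the (iterated) Frobenius trace on $\omega_X$, and realizing one must work directly with $\omega_X$ rather than quoting \autoref{thm.GlobalGenerationOfImages}.
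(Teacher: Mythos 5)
Your argument is correct and is exactly the route the paper intends (it relegates the proof to \autoref{ex.FujitaGlobalGen}, whose hint is precisely to dualize a local splitting to get the surjective trace $T\colon F_*\omega_X\to\omega_X$ and then to rerun the proof of \autoref{thm.GlobalGenerationOfImages} with $T$ in place of $\phi$). Your Steps 1--3, including the observation that one must work with $\omega_X$ directly rather than quote \autoref{thm.GlobalGenerationOfImages} for a line bundle, match that intended argument.
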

\begin{proof}
The proof is left to the reader in \autoref{ex.FujitaGlobalGen}.
\end{proof}

Finally, we also remark that compatible ideals also play a special role with regards to lifting of sections.

\begin{theorem}
Suppose that $\phi : F^e_* \sL \to \O_X$ is an $\O_X$-linear map and that $J \subseteq \O_X$ is $\phi$-compatible.  Set $Y = V(J)$ and set $\phi_Y : F^e_* (\sL|_Y) \to \O_Y$ to be the map $\phi$ restricted to $Y$ as in \autoref{lem.RestrictNearSplitting}.  Suppose that $\sH$ is a line bundle on $X$ such that $\sH^{p^e - 1} \tensor \sL$ is ample and also such that the map induced by $\varphi_Y$
\begin{equation}
\label{eq.ExtendSections}
H^0\big(Y, F^{ne}_* ((\sL^{p^{(n-1)e} + \dots + p^e + 1} \tensor \sH^{p^{ne}} )|_Y) \big) \xrightarrow{\gamma} H^0(Y, \sH|_Y)
\end{equation}
is non-zero for some $n \gg 0$.  Then $H^0(X, \sH) \neq 0$ as well.  Even more, the sections in the image of $\gamma$ all extend to sections on $H^0(X, \sH)$.
\end{theorem}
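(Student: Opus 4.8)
The plan is to run the argument of \autoref{thm.GlobalGenerationOfImages} in a relative form over the subvariety $Y \subseteq X$, combined with the cohomology diagram chase used earlier to extend sections across a compatibly split subvariety. Abbreviate $s_n = p^{(n-1)e} + \dots + p^e + 1 = (p^{ne}-1)/(p^e-1)$ and $\sN_n = \sL^{s_n} \tensor \sH^{p^{ne}}$. Since $J$ is $\phi$-compatible it is also $\phi^n$-compatible, so the iterated map $\phi^n \colon F^{ne}_* \sL^{s_n} \to \O_X$ of \autoref{eq.ComposeMaps2} restricts to $\phi^n_Y \colon F^{ne}_*(\sL^{s_n}|_Y) \to \O_Y$ by \autoref{lem.RestrictNearSplitting}, and by construction this restricted map is compatible with the restriction-to-$Y$ maps. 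Twisting $\phi^n$ and $\phi^n_Y$ by $\sH$ and using the projection formula yields $\O_X$-linear maps $F^{ne}_*\sN_n \to \sH$ and $F^{ne}_*(\sN_n|_Y) \to \sH|_Y$ that fit, together with the two restriction maps, into a commutative square; its bottom arrow is $\phi_Y$ twisted, so the map $\gamma$ of \autoref{eq.ExtendSections} is exactly $H^0$ of it.

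The first real step is to show that for $n \gg 0$ the restriction map
\[
\beta \colon H^0\big(X, F^{ne}_* \sN_n\big) \longrightarrow H^0\big(Y, F^{ne}_*(\sN_n|_Y)\big)
\]
is surjective. Since $\sN_n$ is a line bundle, tensoring $0 \to I_Y \to \O_X \to \O_Y \to 0$ with it and applying the exact functor $F^{ne}_*$ gives a short exact sequence, so by the long exact cohomology sequence it suffices to check $H^1\big(X, F^{ne}_*(I_Y \tensor \sN_n)\big) = 0$. As $F^{ne}$ is a homeomorphism, this group equals $H^1(X, I_Y \tensor \sN_n)$, and the rewriting from \autoref{thm.GlobalGenerationOfImages} applies:
\[
I_Y \tensor \sN_n = \big(I_Y \tensor \sH\big) \tensor \big(\sL \tensor \sH^{p^e - 1}\big)^{s_n}.
\]
Since $\sL \tensor \sH^{p^e - 1}$ is ample by hypothesis, $I_Y \tensor \sH$ is a fixed coherent sheaf, and $s_n \to \infty$, Serre vanishing kills this $H^1$ for all large $n$.

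Granting surjectivity of $\beta$, the theorem follows by chasing the commutative square of $H^0$'s. Choose $n$ large enough that both the nonvanishing hypothesis on $\gamma$ and the above vanishing hold, and write $t = \gamma(u) \neq 0$ with $u \in H^0(Y, F^{ne}_*(\sN_n|_Y))$. Lift $u$ along $\beta$ to $\tilde u \in H^0(X, F^{ne}_*\sN_n)$, and let $\tilde t \in H^0(X, \sH)$ be the image of $\tilde u$ under $H^0$ of the twisted map $F^{ne}_*\sN_n \to \sH$. Commutativity gives $\tilde t|_Y = \gamma(\beta(\tilde u)) = \gamma(u) = t$, so $\tilde t$ restricts to $t$; in particular $\tilde t \neq 0$, whence $H^0(X, \sH) \neq 0$, and since $t$ was an arbitrary section in the image of $\gamma$, every such section extends to $X$.

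I expect the only slightly delicate points to be formal: verifying that the restricted composites $\phi^n_Y$ are compatible with restriction at the level of sheaves (so the $H^0$-square genuinely commutes --- this uses \autoref{lem.RestrictNearSplitting} applied to $\phi^n$ together with functoriality of the projection formula), and making the Serre-vanishing input precise, namely that the factor pulled out of $\sN_n$ is the genuinely ample bundle $\sL \tensor \sH^{p^e-1}$ raised to the unbounded power $s_n$. Both are handled exactly as in the proof of \autoref{thm.GlobalGenerationOfImages}, so there is no essential new difficulty beyond that theorem.
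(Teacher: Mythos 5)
Your proposal is correct and follows essentially the same route as the paper: the same commutative diagram comparing $H^0$ of the twisted iterated map on $X$ and on $Y$, with surjectivity of the restriction map on the top row obtained by killing $H^1\big(X, F^{ne}_*(I_Y \tensor \sH \tensor (\sL \tensor \sH^{p^e-1})^{s_n})\big)$ via Serre vanishing, followed by the same lifting chase. The only difference is that you spell out the compatibility of $\phi^n_Y$ with restriction and the exactness bookkeeping, which the paper leaves implicit.
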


Before starting the proof, let us note some conditions under which the map $\gamma$ is non-zero.  For example, if $\sL|_Y = \O_Y$ and $\phi_Y$ is a Frobenius splitting, then $\gamma$ is in fact surjective (for example, if $Y$ is a point and $\phi_Y$ is non-zero).  Alternately, if $\phi_Y$ is surjective and also $\sH|_Y = \sA^{\dim Y} \tensor \sM$ where $\sA$ is a globally generated ample line bundle on $Y$ and $\sM^{p^e - 1} \tensor \sL|_Y$ is ample on $Y$, then we can apply \autoref{thm.GlobalGenerationOfImages}.  In the case that $Y$ is a curve, see \autoref{ex.ImagesTraceForCurves}

\begin{proof}
We fix $n \gg 0$, for simplicity of notation set $\eta = p^{(n-1)e} + \dots + p^e + 1$ and consider the following diagram.
\[
\xymatrix@C=12pt{
H^0\big(X, F^{ne}_* (\sL^{\eta} \tensor \sH^{p^{ne}} \big) \ar[d]_{\phi} \ar[r] & H^0\big(Y, F^{ne}_* ((\sL^{\eta} \tensor \sH^{p^{ne}} )|_Y)  \big) \ar[d]^{\phi_Y} \ar[r] & H^1\big(X, F^{ne}_* (J \tensor \sL^{\eta} \tensor \sH^{p^{ne}} )\big) \ar[d] \\
H^0\big(X, \sH) \ar[r] & H^0(Y, \sH|_Y) \ar[r] & H^1(X, J \tensor \sH).
}
\]
However, note that
\[
H^1\big(X, F^{ne}_* (J \tensor \sL^{\eta} \tensor \sH^{p^{ne}} )\big) = H^1\big(X, F^{ne}_* (J \tensor \sH \tensor (\sL \tensor \sH^{p^e -1})^{\eta}\big) = 0
\]
by Serre vanishing since the $F^{ne}_*$ does not effect the underlying sheaf of Abelian groups.
\end{proof}

\subsection{Fedder's Lemma}
\label{subsec.Fedder}

We now delve into the \emph{local} theory of $p^{-e}$-linear maps and in particular state \emph{Fedder's Lemma}.  This is a particularly effective tool for explicitly writing down these maps and also for identifying which of them are surjective.

Suppose that $S = k[x_1, \dots, x_n]$ and $R = S/I$ for some ideal $I \subseteq R$.
The point is that if $R = S/I$, then maps $\bar\phi : F^e_* R \to R$ come from maps $\phi : F^e_* S \to S$, which Fedder's Lemma \emph{precisely} identifies.  Set $\Phi_S : F^e_* S \to S$ to be the map which generates $\Hom_S(F^e_* S, S)$ as an $F^e_* S$-module as identified in \autoref{ex.ZeroDivisorOnAn}.  Recall that $\Phi_S$ sends the monomial $F^e_* (x_1^{p^e - 1} \dots x_n^{p^e - 1})$ to $1$ and all other basis monomials to zero.

\begin{lemma}[Fedder's Lemma] \cite[Lemma 1.6]{FedderFPureRat}
\label{lem.FeddersLemma}
With $S \supseteq I$, $R$ and $\Phi_S$ as above, then
\[
\left\{ \begin{array}{c} \text{Maps $\phi \in \Hom_S(F^e_* S, S)$} \\ \text{compatible with $I$ } \end{array}\right\} = \left\{ \begin{array}{c} \phi \;|\; \phi(F^e_* \blank) = \Phi_S(F^e_* (z \cdot \blank) ), \text{for some } z \in I^{[p^e]} : I \end{array}\right\}.
\]
More generally, there is an isomorphism of $S$-modules:
\[
\Hom_R(F^e_* R, R) \longleftrightarrow {\big(F^e_* (I^{[p^e]} : I)\big) \cdot \Phi_S \over \big(F^e_* I^{[p^e]}\big) \cdot \Phi_S}
\]
induced by restricting $\psi \in (F^e_* (I^{[p^e]} : I) ) \cdot \Phi_S \subseteq \Hom_S(F^e_* S, S)$ to $R = S/I$ as in \autoref{lem.RestrictNearSplitting}.

Finally, for any point $\bq \in V(I) \subseteq \Spec S$, there exists a map $\phi \in \Hom_R(F^e_* R, R)$ which is surjective at $\bq/I \in \Spec R$ if and only if $I^{[p^e]} : I \nsubseteq \bq^{[p^e]}$.  In other words, $R$ is $F$-pure in a neighborhood of $\bq$ if and only if $I^{[p]} : I \nsubseteq \bq^{[p]}$.
\end{lemma}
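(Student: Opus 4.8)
The plan is to base everything on the structure of the free $F^e_* S$-module $\Hom_S(F^e_* S, S)$. By \autoref{ex.ZeroDivisorOnAn} it is generated by $\Phi_S$, so every $\phi \in \Hom_S(F^e_* S, S)$ can be written uniquely as $\phi(F^e_* \blank) = \Phi_S(F^e_*(z\cdot \blank))$ for some $z \in S$; I will write $\phi_z$ for this map and record the elementary identities $\phi_z + \phi_{z'} = \phi_{z+z'}$ and $(F^e_* a)\cdot \phi_z = \phi_{az}$. The crux of the whole lemma is a single \emph{membership criterion}: for any ideal $\ba \subseteq S$ (with any fixed finite generating set) and any $w \in S$ one has $w \in \ba^{[p^e]}$ if and only if $\Phi_S(F^e_*(zw)) \in \ba$ for every $z \in S$. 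I expect this to be the main obstacle; granting it, all three assertions fall out formally.

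First I would prove the criterion. Using that $F^e_* S$ is $S$-free on the monomials $F^e_* x_1^{\lambda_1}\cdots x_n^{\lambda_n}$ with $0\le \lambda_i \le p^e-1$ (\autoref{ex.FeLowerStarIsFreeForPolynomial}), a direct computation --- writing an exponent vector as $p^e q + r$ with $r$ in the ``box'', and using $\Phi_S(F^e_*(x^{p^e q} m)) = x^{q}\,\Phi_S(F^e_* m)$ --- identifies $\phi_{x^\mu}$ with the coordinate functional dual to the basis vector $F^e_* x^{(p^e-1,\dots,p^e-1)-\mu}$. Hence the $\phi_{x^\mu}$ form an $S$-basis of $\Hom_S(F^e_* S, S)$, and expanding a general $z$ in the monomial basis of $F^e_* S$ exhibits $\phi_z$ as an $S$-linear combination of the $\phi_{x^\mu}$. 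Therefore ``$\Phi_S(F^e_*(zw)) = \phi_z(F^e_* w) \in \ba$ for all $z$'' is equivalent to ``all coordinates of $F^e_* w$ lie in $\ba$'', i.e.\ to $F^e_* w \in \ba\cdot F^e_* S$; and $\ba\cdot F^e_* S = F^e_*(\ba^{[p^e]})$ since $r\cdot F^e_* m = F^e_*(r^{p^e}m)$ and $(\sum c_i g_i)^{p^e} = \sum c_i^{p^e} g_i^{p^e}$ in characteristic $p$. (The ``only if'' direction is just the computation $\Phi_S(F^e_*(\sum a_i g_i^{p^e})) = \sum g_i\,\Phi_S(F^e_* a_i)\in\ba$.)

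Next I would deduce Parts $1$ and $2$. For Part $1$: $\phi_z$ is compatible with $I$ exactly when $\Phi_S(F^e_*(zf))\in I$ for all $f\in I$; applying the criterion with $\ba = I$ to $w = zf$ and using $z(sf) = (sz)f$ with $sf\in I$ turns this into ``$zf \in I^{[p^e]}$ for all $f\in I$'', i.e.\ $z\in I^{[p^e]}:I$ (the converse being the easy direction of the criterion). For Part $2$, note that $(F^e_*(I^{[p^e]}:I))\cdot\Phi_S = \{\phi_z : z\in I^{[p^e]}:I\}$ is precisely the set of $I$-compatible maps, which by \autoref{lem.RestrictNearSplitting} restrict to $R$-linear maps $\bar\phi_z\colon F^e_* R\to R$; this restriction map is $F^e_* S$-linear. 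It is surjective because $F^e_* S$ is $S$-free, so every $\bar\phi\in\Hom_R(F^e_* R,R)$ lifts along $F^e_* S\twoheadrightarrow F^e_* R$ to some $\phi\in\Hom_S(F^e_* S,S)$, which is automatically $I$-compatible. Its kernel is $\{\phi_z : \phi_z(F^e_* S)\subseteq I\}$, which by the criterion (ranging $w$ over all of $S$, $\ba = I$) equals $\{\phi_z : z\in I^{[p^e]}\} = (F^e_* I^{[p^e]})\cdot\Phi_S$. This gives the displayed isomorphism.

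Finally, for the surjectivity statement I would apply the criterion to the prime $\bq$: for $z\in I^{[p^e]}:I$, the restricted map $\bar\phi_z\colon F^e_* R\to R$ fails to be surjective at $\bq/I$ precisely when its image lies in $\bq R$, i.e.\ when $\Phi_S(F^e_*(zg))\in\bq$ for all $g\in S$, i.e.\ (criterion with $\ba = \bq$) when $z\in\bq^{[p^e]}$. Hence some map in $\Hom_R(F^e_* R,R)$ is surjective at $\bq/I$ iff there is $z\in(I^{[p^e]}:I)\setminus\bq^{[p^e]}$, i.e.\ iff $I^{[p^e]}:I\nsubseteq\bq^{[p^e]}$. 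To convert this into the statement about $F$-purity of $R$ near $\bq$, I would use that a surjection $F^e_* R_{\bq}\to R_{\bq}$ splits the Frobenius of $R_{\bq}$ (\autoref{ex.SurjectiveMapImpliesLocalSplitting}), that conversely an $F$-splitting of $R_\bq$ makes evaluation-at-$1$ on $\Hom_{R_\bq}(F^e_* R_\bq,R_\bq)$ surjective (\autoref{lem.RFrobeniusSplitIfAndOnlyIfEvaluationAt1Surjects}), and that passing between the exponent-$e$ and exponent-$1$ versions is \autoref{lem.FrobeniusSplitOfIterate}. As indicated, the only non-formal ingredient is the membership criterion --- essentially the bookkeeping of $\Phi_S$ on the monomial basis of $F^e_* S$.
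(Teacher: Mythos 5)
Your proposal is correct and follows essentially the same route as the paper: your membership criterion is exactly the paper's key claim (that $\Phi_S(F^e_* J) \subseteq I$ if and only if $J \subseteq I^{[p^e]}$), proved by the same dual-basis/projection argument, and you obtain the isomorphism by the same lifting of a map along $F^e_* S \twoheadrightarrow F^e_* R$ using freeness of $F^e_* S$, with the kernel identified by the same claim. The only difference is that you carry out the final localization/Nakayama step for the surjectivity-at-$\bq$ statement explicitly, which the paper delegates to an exercise with precisely the hint you use.
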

\begin{proof}
There are a lot of statements here.  First we notice that any map of the form $\phi(F^e_* \blank) = \Phi_S(F^e_* (z \cdot \blank) )$ for some  $z \in I^{[p^e]} : I$ is clearly compatible with $I$ since
\[
\Phi_S(F^e_* (z \cdot I)) \subseteq \Phi_S(F^e_* (I^{[p^e]} : I) \cdot I) \subseteq \Phi_S(F^e_* I^{[p^e]}) = I \cdot \Phi_S(F^e_* S) = I.
\]
This gives us the containment $\supseteq$ in the first equality.  For the other containment, we first prove the following claim.
\begin{claim} For ideals $I, J \subseteq S$ we have
\[
\Phi_S(F^e_* J) \subseteq I
\]
if and only if $J \subseteq I^{[p^e]}$.
\end{claim}
\begin{proof}[Proof of claim]
Certainly the \emph{if} direction is obvious, so suppose then that $\Phi_S(F^e_* J) \subseteq I$.  This implies that $\phi(F^e_* J) \subseteq I$ for every $\phi \in \Hom_S(F^e_* S, S)$ since $\Phi_S$ generates that set as an $F^e_* S$-module.   But $F^e_* S$ is a free $S$-module of rank $p^{en}$, so we see that
\[
F^e_* J \subseteq \underbrace{I \oplus \dots \oplus I}_{p^{ne}-\text{times}}
\]
since we could take the $\phi$ as the various projections.  Now, $I \oplus \dots \oplus I = I \cdot (F^e_* S) = F^e_* I^{[p^e]}$.  This proves the claim.
\end{proof}
Now we return to the proof of Fedder's Lemma.  We observe that if $\phi(F^e_* \blank) = \Phi_S(F^e_* (z \cdot \blank))$ is $I$-compatible, then $z \cdot I \subseteq I^{[p^e]}$ by the claim, which proves that $z \in I^{[p^e]} : I$ and so the equality is proven.

Now we come to the bijection.  We certainly have a natural map
\[
\Lambda : (F^e_* (I^{[p^e]} : I)) \cdot \Phi_S \to \Hom_R(F^e_* R, R)
\]
induced by sending $F^e_* z$ first to $(F^e_* z) \cdot \Phi_S( \blank) = \Phi_S((F^e_* z) \cdot \blank))$ and then second, inducing a map in $\Hom_R(F^e_* R, R)$ as in \autoref{lem.RestrictNearSplitting}.  The kernel of $\Lambda$ is $(F^e_* I^{[p^e]}) \cdot \Phi_S$ by the claim, and so we only need to show that this map is surjective.

Given $\phi \in \Hom_R(F^e_* R, R) = \Hom_S(F^e_* R, R)$, consider the following diagram of $S$-linear maps where the horizontal maps are the canonical surjections:
\[
\xymatrix{
F^e_* S \ar@{.>}[d]_{\exists \psi} \ar@{->>}[r] & F^e_* (R/I) \ar[d]^{\phi}\\
S \ar@{->>}[r] & (R/I)
}
\]
Because $F^e_* S$ is a free (and so projective) $S$-module, the dotted map $\psi$ exists.  By construction, $\psi$ is compatible with $I$.  By the earlier parts of the theorem, $\psi$ corresponds to a $z \in I^{[p^e]} : I$ which restricts to $\phi$, completing the proof of the bijection.

The last part of the theorem is left as an exercise to the reader.
\end{proof}

\begin{remark}[Regular local rings are fine]
\label{rem.RegLocalRingsAreFineForFedder}
The proof given above goes through without change if one assumes that $S$ is a regular local\footnote{or even semilocal} ring \emph{instead} of assuming that $S$ is a polynomial ring.
\end{remark}

One of the most important corollaries of this is the following.

\begin{corollary}
\label{cor.HypersurfaceProof}
Given $f \in k[x_1, \dots, x_n] = S$, then $S/\langle f \rangle$ is $F$-split in a neighborhood of the origin if and only if $f^{p - 1} \notin \langle x_1^p, \dots, x_n^p \rangle$.
\end{corollary}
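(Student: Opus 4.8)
The plan is to read the statement off directly from Fedder's Lemma (\autoref{lem.FeddersLemma}) in the special case where the defining ideal is principal, $I = \langle f \rangle \subseteq S = k[x_1,\dots,x_n]$. Write $\m = \langle x_1,\dots,x_n\rangle$ for the maximal ideal at the origin. If $f \notin \m$, then $f^{p-1}$ has nonzero constant term, so both sides of the claimed equivalence hold trivially; hence assume $f \in \m$, so that $\m \in V(I)$. First I would localize: being $F$-split in a neighborhood of the origin means exactly that the regular local ring $S_{\m}/\langle f \rangle$ is $F$-split, and by \autoref{rem.RegLocalRingsAreFineForFedder} Fedder's Lemma applies verbatim over the regular local ring $S_{\m}$. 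By \autoref{lem.FrobeniusSplitOfIterate} it further suffices to work with $e = 1$.

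The only computation to carry out is the colon ideal $I^{[p]} : I$. Since $S$ (hence $S_{\m}$) is a domain, $xf \in \langle f^{p}\rangle$ forces $x \in \langle f^{p-1}\rangle$ after cancelling $f$, so
\[
I^{[p]} : I \;=\; \langle f^{p}\rangle : \langle f \rangle \;=\; \langle f^{p-1}\rangle
\]
(equivalently, in the UFD $S$ one has $\langle a\rangle : \langle b\rangle = \langle a/\gcd(a,b)\rangle$, applied with $a = f^{p}$ and $b = f$). Now the final assertion of \autoref{lem.FeddersLemma}, applied with $\bq = \m$, says that $S_{\m}/\langle f\rangle$ is $F$-split precisely when $I^{[p]} : I \nsubseteq \m^{[p]}$, that is, when $\langle f^{p-1}\rangle \nsubseteq \langle x_1^p,\dots,x_n^p\rangle$, which is exactly the condition $f^{p-1}\notin\langle x_1^p,\dots,x_n^p\rangle$.

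I expect no real obstacle here: the content is entirely in \autoref{lem.FeddersLemma}, and the corollary is just the principal case. The only step wanting a word of care is the passage between $S_{\m}$ and $S$: one should note that $f^{p-1}\in\langle x_1^p,\dots,x_n^p\rangle S_{\m}$ is equivalent to $f^{p-1}\in\langle x_1^p,\dots,x_n^p\rangle S$ because $\langle x_1^p,\dots,x_n^p\rangle$ is a monomial ideal primary to $\m$ and so unchanged under contraction from $S_{\m}$, and dually that an $F$-splitting of $S_{\m}/\langle f\rangle$ spreads out to an $F$-splitting over some basic open neighborhood of the origin. Both are routine.
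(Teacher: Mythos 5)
Your proof is correct and follows essentially the same route as the paper: apply Fedder's Lemma at the origin after noting that $\langle f^{p}\rangle : \langle f\rangle = \langle f^{p-1}\rangle$. The extra remarks about localizing at $\m$ and spreading a splitting to a neighborhood are fine but add nothing beyond what the paper's one-line argument already uses.
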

\begin{proof}
Note that $S/\langle f \rangle = R$ is $F$-split if and only if there exists a surjective map $\phi \in \Hom_R(F^e_* R, R)$ by \autoref{ex.SurjectiveMapImpliesLocalSplitting}.  The result then follows from Fedder's Lemma since $\langle f^{p} \rangle : \langle f \rangle = \langle f^{p - 1} \rangle$.
\end{proof}

We now apply Fedder's Lemma in a number of examples of hypersurface singularities:

\begin{example}
We consider $S$ to be a polynomial ring in the following examples.
\begin{description}
\item[Node]  Consider the ring $S = k[x,y]$ and $R = k[x,y]/\langle xy \rangle$.  Then $R$ is $F$-split near the origin since
\[
(xy)^{p-1} = x^{p-1}y^{p-1} \notin \langle x^p, y^p \rangle.
\]
\item[Cusp]  Consider the ring $S = k[x,y]$ and $R = k[x,y]/\langle x^3 - y^2 \rangle$.  Then we claim that $R$ is \emph{not} $F$-split near the origin since (for odd primes).  To see this observe that for some constant $c$
    \[
    (x^3 - y^2)^{p-1} = x^{3(p-1)} + \dots + c x^{3(p-1)/2} y^{p-1} + \dots + x^{2(p-1)} \in \langle x^p, y^p \rangle.
    \]
    The computation for $p = 2$ is similar (or follows from the work below).
\item[Pinch point]  Consider the ring $S = k[x,y,z]$ and $R = k[x,y,z]/\langle xy^2 - z^2 \rangle$.  If $p \neq 2$, this is $F$-split near the origin since
    \[
    \begin{array}{rl}
    & (xy^2 - z^2)^{p-1} \\
    = & x^{p-1} y^{2(p-1)} + \dots + {p-1 \choose (p-1)/2} (-1)^{(p-1)/2} x^{(p-1)/2} y^{p-1} z^{p-1} + \dots + z^{(p-1)/2} \\
    \notin & \langle x^p, y^p, z^p \rangle,
    \end{array}
    \]
    noting that $p$ does not divide ${p-1 \choose (p-1)/2}$.
\item[Characteristic $2$]  If $R = k[x_1, \dots, x_n]/\langle f \rangle$ and $\Char k = 2$, then $R$ is $F$-split near the origin if and only if $f \notin \langle x_1^2, \dots, x_n^{2} \rangle$.  In particular, it is immediate that the cusp and the pinch point are also not $F$-split near the origin in characteristic $2$.
\item[Characteristic $3$]  Just like characteristic $2$, if $R = k[x_1, \dots, x_n]/\langle f \rangle$ and $\Char k = 3$, then $R$ is $F$-split near the origin if and only if $f^2 \notin \langle x_1^3, \dots, x_n^{3} \rangle$.
\end{description}
\end{example}

Finally, we point out that complete intersection singularities are nearly as easy to compute as hypersurfaces.

\begin{proposition}
\label{prop.FedderCriterionForCI}
Suppose that $f_1, \dots, f_m \subseteq \langle x_1, \dots, x_n \rangle \subseteq k[x_1, \dots, x_n] = S$ is a regular sequence.\footnote{This means that $f_i$ is not a zero divisor in $S/\langle f_1, \dots, f_{i-1} \rangle$ for all $i > 0$.}  Set $I = \langle f_1, \dots, f_m \rangle$. Then
\[
(I^{[p^e]} : I) = \langle  f_1^{p^e-1} \cdots f_n^{p^e-1} \rangle + I^{[p^e]}
\]
In particular, $S/I$ is $F$-split near the origin $\bm = \langle x_1, \dots, x_n \rangle$ if and only if the product
\[
f_1^{p^e-1} \cdots f_n^{p^e-1} \notin \bm^{[p^e]}
\]
for some $e > 0$.
\end{proposition}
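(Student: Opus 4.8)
Set $q := p^e$. The proof has two parts: (i) the colon computation $I^{[q]}:I = \langle f_1^{q-1}\cdots f_m^{q-1}\rangle + I^{[q]}$, which is the real content, and (ii) deducing the $F$-purity criterion from it via Fedder's Lemma. For (ii): combining the surjectivity criterion of \autoref{lem.FeddersLemma} with \autoref{ex.SurjectiveMapImpliesLocalSplitting}, $S/I$ is $F$-split in a neighborhood of $\bm$ precisely when $I^{[q]}:I \nsubseteq \bm^{[q]}$ for some $e$. Writing $f_i = \sum_j a_{ij}x_j$ (possible since $f_i\in\bm$) and applying the Frobenius, $f_i^{q} = \sum_j a_{ij}^{q}x_j^{q}\in\bm^{[q]}$, so $I^{[q]}\subseteq\bm^{[q]}$; hence, granting the colon formula, $I^{[q]}:I\nsubseteq\bm^{[q]}$ is equivalent to $f_1^{q-1}\cdots f_m^{q-1}\notin\bm^{[q]}$. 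So everything reduces to the colon formula.

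The inclusion $\supseteq$ is immediate: $I^{[q]}\subseteq I^{[q]}:I$ because $I^{[q]}$ is an ideal, and $(f_1^{q-1}\cdots f_m^{q-1})f_j = f_j^{q}\prod_{i\ne j}f_i^{q-1}\in I^{[q]}$ for each $j$. For $\subseteq$ I would first localize at a prime $\bp$: both sides behave well under localization and agree (both equal the unit ideal) when $I\nsubseteq\bp$, so I may assume $S$ is regular local — legitimate by \autoref{rem.RegLocalRingsAreFineForFedder} — and then $f_1^{q},\dots,f_m^{q}$ is again a regular sequence (powers of a regular sequence form one).

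Now let $K_\bullet = K_\bullet(f_1,\dots,f_m)$ and $L_\bullet = K_\bullet(f_1^{q},\dots,f_m^{q})$ be the Koszul complexes; since both sequences are regular, these are free $S$-resolutions of $S/I$, resp.\ $S/I^{[q]}$. Lift the natural surjection $S/I^{[q]}\twoheadrightarrow S/I$ to a chain map $\Phi\colon L_\bullet\to K_\bullet$ with $\Phi_0 = \operatorname{id}_S$; one may take $\Phi_1 = \operatorname{diag}(f_1^{q-1},\dots,f_m^{q-1})$ (because $f_i^{q-1}f_i = f_i^{q}$), whence $\Phi_m = \wedge^m\Phi_1$ is multiplication by $f_1^{q-1}\cdots f_m^{q-1}$. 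Dualizing into $S$ and passing to cohomology identifies $\Ext^m_S(S/I,S)$ with $S/I$, $\Ext^m_S(S/I^{[q]},S)$ with $S/I^{[q]}$, and the map between them induced by $\Phi$ with multiplication by the class of $f_1^{q-1}\cdots f_m^{q-1}$; so its image is $\big(\langle f_1^{q-1}\cdots f_m^{q-1}\rangle + I^{[q]}\big)/I^{[q]}$.

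The remaining point — and the step I expect to be the main obstacle — is to identify that image with $(I^{[q]}:I)/I^{[q]} = \Hom_{S/I^{[q]}}(S/I,\,S/I^{[q]})$. Here I would use that in the regular local ring $S$ each of $S/I$, $S/I^{[q]}$, $I/I^{[q]}$ is Cohen--Macaulay of codimension exactly $m$ — the first two because they are complete intersections, the third by the depth lemma applied to $0\to I/I^{[q]}\to S/I^{[q]}\to S/I\to 0$ — so that $\Ext^i_S(-,S)$ is concentrated in degree $m$ on all three. Applying $\Ext^m_S(-,S)$ to that short exact sequence yields $0\to\Ext^m_S(S/I,S)\to\Ext^m_S(S/I^{[q]},S)\to\Ext^m_S(I/I^{[q]},S)\to 0$, and the adjunction $\operatorname{RHom}_S(S/I,S)\simeq\operatorname{RHom}_{S/I^{[q]}}\big(S/I,\operatorname{RHom}_S(S/I^{[q]},S)\big)$ together with $\operatorname{RHom}_S(S/I^{[q]},S)\simeq(S/I^{[q]})[-m]$ shows that the injection $\Ext^m_S(S/I,S)\hookrightarrow\Ext^m_S(S/I^{[q]},S)\cong S/I^{[q]}$ is exactly the inclusion $(I^{[q]}:I)/I^{[q]}\hookrightarrow S/I^{[q]}$. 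Comparing with the Koszul computation of the same image gives the formula. Alternatively one can bypass the duality by inducting on $m$ — the case $m=1$ being cancellation of the non-zerodivisor $f_1$, and the inductive step peeling off $f_m$ using that it is a non-zerodivisor modulo $\langle f_1^{q},\dots,f_{m-1}^{q}\rangle$ — which recovers the classical determinantal colon-ideal formula for regular sequences; \cf \cite[Appendix F]{KunzKahlerDifferentials}. I expect the elementary route and the homological route to be of comparable difficulty, the bookkeeping of the peeling-off step playing the role of the duality identification above.
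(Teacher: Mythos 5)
Your argument is correct, but it takes a different route from the one the paper intends. The paper dismisses $\supseteq$ as trivial and relegates $\subseteq$ to \autoref{ex.FedderCriterionForCI}, whose hint offers two options: either quote the classical determinantal colon formula for regular sequences (\cite[Corollary 1]{HartshorneAPropertyOfASequences}, applied to $f_i^{q}=f_i^{q-1}\cdot f_i$ with diagonal transition matrix), or induct on $m$ using that $\Hom_{S/I}(F^e_*(S/I),S/I)$ is a free $F^e_*(S/I)$-module of rank one (Gorensteinness of the complete intersection, \autoref{ex.GeneratingMapForGorensteinRings}) together with Fedder's identification of this module with $F^e_*\bigl((I^{[q]}:I)/I^{[q]}\bigr)$. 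You instead prove the colon formula from scratch: localize, resolve $S/I$ and $S/I^{[q]}$ by Koszul complexes, compute that the comparison map is multiplication by $f_1^{q-1}\cdots f_m^{q-1}$ in top degree, and identify the image of $\Ext^m_S(S/I,S)\to\Ext^m_S(S/I^{[q]},S)\cong S/I^{[q]}$ with $(I^{[q]}:I)/I^{[q]}$ via the change-of-rings duality $\Ext^m_S(S/I,S)\cong\Hom_{S/I^{[q]}}(S/I,S/I^{[q]})$ (your CM/depth bookkeeping for the injectivity and the concentration of $\Ext$ in degree $m$ is fine, and your reduction of the $F$-purity statement to the colon formula via Fedder's Lemma matches the paper's). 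What your route buys is self-containedness — you reprove the cited classical fact rather than invoking it — at the cost of heavier homological machinery; note also that your duality identification is essentially the same Gorenstein rank-one freeness that powers the paper's second suggested route, so the two arguments are cousins rather than strangers, and your closing "peeling off $f_m$" induction is precisely the elementary proof of the Hartshorne result the paper cites.
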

\begin{proof}
The containment $\supseteq$ is trivial.  The converse direction is left as \autoref{ex.FedderCriterionForCI}.
\end{proof}

\subsection{Exercises}

\begin{exercise}
\label{ex.RestrictNearSplitting}
Prove \autoref{lem.RestrictNearSplitting}.
\end{exercise}

\begin{exercise}
\label{ex.ImagesTraceForCurves}
Suppose that $C$ is a smooth curve and that $\sL$ is a line bundle of degree $\geq 2$.  Prove that the image of the map
\[
H^0\big(X, F^e_* (\omega_X \tensor \sL^{p^e} ) \big) \to H^0(X, \omega_X \tensor \sL)
\]
globally generates $\omega_X \tensor \sL$ for any $e \gg 0$.
\vskip 3pt
\emph{Hint: } Mimic the proof in \cite[Chapter IV, Proposition 3.1]{Hartshorne}.  For a solution, see \cite[Theorem 3.3]{SchwedeCanonicalLinearSystem}.
\end{exercise}

\begin{starexercise}
\label{ex.WhichPropertiesOfSplittingsExtendToSurjectiveMaps}
Consider a map $\phi : F^e_* \sL \to \O_X$ for some line bundle $\sL$ and $e > 0$.  Formulate analogs of the properties from \autoref{prop.PropertiesOfCompatiblySplitSubvarieties} and \autoref{cor.IntersectionsOfCompatiblySplitVarietiesAreReduced} for such a map (and $\phi$-compatible ideals / subvarieties).  Which of these properties
hold for all $\phi$?  Which hold for surjective $\phi$?  Prove those that do and give counterexamples to those that do not.  Some of the answers can be found in \cite{SchwedeCentersOfFPurity,SchwedeFAdjunction}.
\end{starexercise}

\begin{starexercise}
Suppose that $X$ is a Frobenius split normal variety.  Suppose that $X$ embeds into $\bP^n$ as a closed subvariety.  Prove that $X$ is compatibly $F$-split by a Frobenius splitting of $\bP^n$ if and only if the embedding $X \subseteq \bP^n$ is projectively normal, \cf \cite[Chapter II, Exercise 5.14]{Hartshorne}.
\vskip 3pt
\emph{Hint: } Projective normality can be detected by the difference between the affine cone and the section ring as in \autoref{ex.FSplitSectionRings}.  Develop then a ``graded variant'' of Fedder's Lemma that will allow you to prove the result.
\end{starexercise}

\begin{exercise}
\label{ex.NotionsOfFPureCoincide}
We can define $X$ to be $F$-pure if $(X, 0)$ is sharply $F$-pure in the sense of \autoref{def.FPurePair}.  Show that this coincides with the definition of $F$-pure given in \autoref{def.FSplitFpure}.
\end{exercise}

\begin{exercise}
Suppose that $\sL$ is an ample line bundle on a smooth variety $X$.  Prove that $H^0(X, F^e_* (\omega_X \tensor \sL^{mp^e})) \to H^0(X, \omega_X \tensor \sL^m)$ is surjective for all $m \gg 0$.  For one solution, see \cite[Lemma 3.1]{SchwedeCanonicalLinearSystem}.
\end{exercise}

\begin{exercise}
\label{ex.FujitaGlobalGen}
Use the method of \autoref{thm.GlobalGenerationOfImages} to prove \autoref{prop.FujitaGlobalGen}.
\vskip 3pt
{\emph{Hint: }} Dualize a local splitting $\O_U \to F_* \O_U \to \O_U$ to obtain a surjective map $T : F_* \omega_U \to \omega_U$.  Use $T$ instead of $\phi$ in the proof of \autoref{thm.GlobalGenerationOfImages}.
\end{exercise}

\begin{exercise}
Consider $\overline{\bF_5}[x,y,z] = S$ and $f = x^4 + y^4 + z^4$.  Consider the map $\Phi_S : F_* S \to S$ which sends $F_* x^4 y^4 z^4$ to $1$ and sends all the other monomials $x^i y^j z^k$ to $0$ for $0 \leq i,j,k \leq 4$ as in \autoref{ex.ZeroDivisorOnAn}.  Consider the map $\phi : F_* S \to S$ defined by
\[
\phi(F_* \blank) = \Phi_S(F_* (f^4 \cdot \blank)).
\]
\begin{itemize}
\item[(a)] Prove that $\langle f \rangle$ is $\phi$-compatible and let $\overline{\phi} : F_* R \to R$ be the induced map on $R = S/\langle f \rangle$ as in \autoref{lem.RestrictNearSplitting}.
\item[(b)*] Set $\bm = \langle x, y, z \rangle \in S$.  Fix $a,b,c \in \bF_{5^2} \setminus \bF_5$.  Show that $J = \bm^2 + \langle ax + by + cz \rangle$ is $\phi^2$-compatible.  However, show that $J$ is not $\phi$-compatible.
\end{itemize}
\end{exercise}

\begin{exercise}
With $\Phi_S$ as in \autoref{subsec.Fedder}, fix $f \in S$ and consider the map $\phi$ defined by the rule $\phi(F^e_* \blank) = \Phi_S(F^e_* (f \cdot \blank))$.  Show that $\phi$ is compatible with an ideal $J \subseteq S$ if and only if $f \in J^{[p^e]}: J$.
\end{exercise}

\begin{exercise}
\label{ex.FeddersLemma}
Complete the proof of Fedder's Lemma by proving the following.  For any point $\bq \in V(I) \subseteq \Spec S$, there exists a map $\phi \in \Hom_R(F^e_* R, R)$ which is surjective at $\bq/I \in \Spec R$ if and only if $I^{[p^e]} : I \nsubseteq \bq^{[p^e]}$.  In other words, $R$ is $F$-pure in a neighborhood of $\bq$ if and only if $I^{[p^e]} : I \nsubseteq \bq^{[p^e]}$.
\vskip 3pt
\emph{Hint: } Note that a map $\phi_{\bq} : F^e_* (R_{\bq}) \to R_{\bq}$ is surjective if and only if $\Image(\phi_{\bq}) \nsubseteq \bq R_{\bq}$.
\end{exercise}

\begin{exercise}
\label{ex.EasyFedderCheckForPairs}
Suppose that $X = \Spec R$ is a regular ring and $\Delta = {1 \over p^e-1} \Div_X(f)$ is a $\bQ$-divisor on $X$.  Show that $(X, \Delta)$ is sharply $F$-pure near a point $\bm \in \Spec R = X$ if and only if $f^{p^e - 1} \notin \bm^{[p^e]}$.
\vskip 3pt
\emph{Hint: } Use Fedder's lemma in the form of \autoref{rem.RegLocalRingsAreFineForFedder}.
\end{exercise}

\begin{exercise}
\label{ex.SimpleFSplitProof}
Suppose that $X$ is a proper variety and that $\phi : F^e_* \O_X \to \O_X$ is a map that is compatible with $\bm$, the ideal of a closed point $x \in X$.  Further suppose that $(X, \Delta_{\phi})$ is sharply $F$-pure in a neighborhood of $\bm$.  Prove that $0 \neq \phi(F^e_* 1) \in k$ and so in particular $X$ is $F$-split.  This generalizes \autoref{thm.MRFSplitProof} by the following argument.

Given a $D = (p^e - 1)D_1 + \dots + (p^e -1)D_d + G$ and $\phi$ as in \autoref{thm.MRFSplitProof}, set $\Delta = {1 \over p^e -1} D$.  Observe that $\bm_x$, the maximal ideal of $x$ is $\phi$-compatible since each $D_i$ is $\phi$-compatible, \cf \autoref{lem.NormalCrossingsAreFSplit}.  Use exercise \autoref{ex.EasyFedderCheckForPairs} to conclude that $\phi$ is surjective in a neighborhood of $x \in X$.

Obtain a new proof of \autoref{thm.MRFSplitProof} by combining the above.
\end{exercise}

\begin{exercise}
Suppose that $X = \Spec k\llbracket x,y \rrbracket$ where $k$ has characteristic $7$ and that $\Delta = {1 \over 2} \Div_X(y^2 - x^3) + {1 \over 3} \Div_X(x) + {1 \over 2} \Div_X(y)$.  Prove that $(X, \Delta)$ is sharply $F$-pure at the origin $\bm$ and also that if $\phi$ corresponds to $\Delta$, then $\bm$ is $\phi$-compatible.

Now suppose that $Y$ is a smooth projective variety with a $\bQ$-divisor $\Theta \geq 0$ such that
\begin{itemize}
\item{} $(p-1)(K_Y + \Theta) \sim 0$ and,
\item{} $(Y, \Theta)$ has a point $y \in Y$ analytically isomorphic to $(X, \Delta)$ above.
\end{itemize}
Show that $Y$ is Frobenius split using \autoref{ex.SimpleFSplitProof}.
\end{exercise}

\begin{exercise}
\label{ex.Normalization}
Suppose that $R$ is an integral domain with normalization $R^{\textnormal{N}}$ in $K(R)$, the field of fractions of $R$.  In this exercise, we will prove that every map $\phi:F^e_* R \to R$ induces an $R^{\textnormal{N}}$-linear map $\phi^{\textnormal{N}} : F^e_* R^{\textnormal{N}} \to R^{\textnormal{N}}$ which is compatible with the conductor ideal $\bc := \Ann_R(R^{\textnormal{N}})$, an ideal in both $R$ and $R^{\textnormal{N}} $.
We do this in two steps.

\begin{itemize}
\item[(a)]  Prove that $\phi$ is compatible with $\bc$ (when viewed as an ideal in $R$).
\item[(b)]  Notice that $\phi$ induces a map $\phi_0 : F^e_* K(R) \to K(R)$ by localization.  Prove that $\phi_0(R^{\textnormal{N}}) \subseteq R^{\textnormal{N}}$ which proves that we can take $\phi^{\textnormal{N}} = \phi_0|_{R^{\textnormal{N}} }$.
    \vskip 3pt
\emph{Hint: } Recall that $x \in K(R)$ is integral over $R$ if there exists a non-zero $c \in R$ such that $cx^n \in R$ for all $n \gg 0$, see \cite[Exercise 2.26]{HunekeSwansonIntegralClosure}.
\end{itemize}
\end{exercise}

\begin{exercise}
\label{ex.FedderCriterionForCI}
Prove \autoref{prop.FedderCriterionForCI}.
\vskip 3pt
\emph{Hint: } A very easy proof (pointed out to us by Alberto Fernandez Boix), follows from \cite[Corollary 1]{HartshorneAPropertyOfASequences}.
Alternately, the $\supseteq$ containment is easy.  For the reverse proceed by induction on the number of $f_i$.  Notice that $\Hom_{S/I}(F^e_* S/I, S/I)$ is a free $F^e_* S/I$-module of rank 1.  Thus a generator of that module corresponds to an element $h \in I^{[p^e]} : I$.

For a generalization to Gorenstein rings (instead of just complete intersections), see \cite[Corollary 7.5]{SchwedeFAdjunction}.
\end{exercise}

\begin{exercise}[Macaulay2 Fedder's criterion]
The following Macaulay2 code, written by Mordechai Katzman and available at
\begin{center}
{\tt http://katzman.staff.shef.ac.uk/FSplitting/}
\end{center}
can be quite useful.
\begin{quote}
{\tt
frobeniusPower=method();\\
\\
frobeniusPower(Ideal,ZZ) := (I,e) ->(\\
R:=ring I;\\
p:=char R;\\
local u;\\
local answer;\\
G:=first entries gens I;\\
if (\#G==0) then answer=ideal(0\_\,R) else answer=ideal(apply(G, u->u\^{ }(p\^{ }e)));\\
answer\\
);
}
\end{quote}
This takes an ideal $I$ and raises it to the $p^e$th Frobenius power, $I \mapsto I^{[p^e]}$.  Using this as a starting place, implement within Macaulay2 a method which determines whether a given ring is $F$-pure near the origin.  Check your method against the following examples:
\begin{itemize}
\item[(a)]  $R = k[x,y,z]/\langle xy, xz, yz \rangle$ in whatever characteristics you feel like.
\item[(b)]  $R = k[w,x,y,z]/\langle xy, z^2 + wx^2, yz \rangle$ in characteristic $2$ and $3$.
\item[(c)]  $R = k[x,y,z]/\langle x^3 + y^3 + z^3 \rangle$ in characteristics $7, 11$ and $13$.
\item[(d)]  $R = k[x,y,z]/\langle x^2 + y^3 + z^5 \rangle$ in characteristics $2,3,5,7$ and $11$.
\end{itemize}
\end{exercise}

\begin{starexercise}
Use Fedder's criterion to determine for which $p > 0$, the ring $k[x,y,z]/\langle x^3 + y^3 + z^3 \rangle$ is $F$-pure near the origin.  For some related computations, see \cite[Chapter V, Section 4]{SilvermanArithmeticOfEllipticCurves}.
\end{starexercise}

\begin{starexercise}  If $(R, \bm)$ is a regular local ring and $0 \neq f \in \bm$, then
the \emph{$F$-pure threshold $\fpt_{\bm}(f)$} of $f \in k[x_1, \dots, x_n]$, at the origin $\bm = \langle x_1, \dots, x_n \rangle$, is defined as follows:
\[
\lim_{e \to \infty} { \max \{ l \,|\, f^l \notin \bm^{[p^e]} \} \over p^e}.
\]
Prove that this limit exists in general and then show that $\fpt_{\bm}(x^3 - y^2) = {5 \over 6}$ if $p = 7$.  See \cite{MustataTakagiWatanabeFThresholdsAndBernsteinSato} for solutions, \cf \cite{TakagiWatanabeFPureThresh} .
\end{starexercise}

\section{Change of variety}
\label{sec.ChangeOfVariety}

In this section, we describe how $p^{-e}$-linear maps change under common change of variety operations.

\subsection{Closed subschemes}

We have already studied the behavior of $p^{-e}$-linear maps for subschemes extensively.  Indeed, suppose that $\phi : F^e_* R \to R$ is an $R$-linear map which is compatible with an ideal $I \subseteq R$.  Then we have an induced map $\phi_{R/I} : F^e_* (R/I) \to (R/I)$.  It is natural to ask what the divisor associated to $R/I$ is.

\begin{lemma}
\label{lem.InversionOfFadjunction}
Suppose that $R$ is a normal Gorenstein local ring, and that $D = V(f)$ is a normal Cartier divisor on $X = \Spec R$.  Fix $\Phi : F^e_* R \to R$ to be map generating $\Hom_R(F^e_* R, R)$ as an $F^e_* R$-module as in \autoref{ex.GeneratingMapForGorensteinRings}.  Set $\phi(F^e_* \blank) = \Phi(F^e_* (f^{p^e - 1} \cdot \blank))$.  Then $\phi$ is compatible with $D$ and furthermore, $\phi_{D}$ generates $\Hom_{R/\langle f \rangle}(F^e_* (R/\langle f \rangle), R/\langle f \rangle)$ as an $F^e_* R/\langle f \rangle$-module.
It follows that the $\bQ$-divisor $\Delta$ on $D$ associated to $\phi_{D}$, as in \autoref{eq.MapBijectionWithQDivisors1} is the zero divisor.
\end{lemma}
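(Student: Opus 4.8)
The plan is to prove the three assertions of the lemma in turn: that $\phi$ is compatible with $D$, that $\phi_D$ generates $\Hom_{R/\langle f\rangle}(F^e_*(R/\langle f\rangle), R/\langle f\rangle)$ as an $F^e_* R/\langle f\rangle$-module, and then to read off that the associated $\bQ$-divisor vanishes. Write $\bar R = R/\langle f\rangle$. Three preliminary observations: since $R$ is a normal local ring it is a domain, so $f$ is a nonzerodivisor; since $R$ is Gorenstein and local, $\omega_R\cong R$ and $\Phi$ is a \emph{free} generator of $\Hom_R(F^e_* R, R)$ over $F^e_* R$ by \autoref{ex.GeneratingMapForGorensteinRings}; and, because $D$ is a Cartier divisor on the Gorenstein scheme $X$, adjunction shows $\bar R$ is again Gorenstein, with $\omega_{\bar R}\cong(\omega_R\otimes_R\O_X(D))|_D\cong\bar R$ (using $\O_X(D)\cong\O_X$), so $\Hom_{\bar R}(F^e_*\bar R, \bar R)$ is likewise free of rank one over $F^e_*\bar R$.

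For compatibility I would compute directly: for $g\in R$,
\[
\phi(F^e_*(fg)) \;=\; \Phi\big(F^e_*(f^{p^e-1}\cdot fg)\big) \;=\; \Phi\big(F^e_*(f^{p^e}g)\big) \;=\; \Phi\big(f\cdot F^e_* g\big) \;=\; f\cdot\Phi(F^e_* g)\;\in\;\langle f\rangle,
\]
where the third equality is the definition of the $R$-module structure on $F^e_* R$ and the fourth is $R$-linearity of $\Phi$. Hence $\phi(F^e_*\langle f\rangle)\subseteq\langle f\rangle$, i.e. $\phi$ is compatible with $D$, and by \autoref{lem.RestrictNearSplitting} (with $\sL=\O_X$) it induces $\phi_D\colon F^e_*\bar R\to\bar R$.

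The substance is the second assertion. I would invoke the Gorenstein analogue of Fedder's Lemma — the argument of \autoref{lem.FeddersLemma} works verbatim with the Gorenstein local ring $R$ in place of the regular ring $S$ and the generator $\Phi$ in place of $\Phi_S$; see \cite[Corollary 7.5]{SchwedeFAdjunction} and the hint to \autoref{ex.FedderCriterionForCI}. Applied with $I=\langle f\rangle$, and using the identification $\Hom_R(F^e_* R, R)\cong F^e_* R$ that sends $\psi(F^e_*\blank)=\Phi(F^e_*(z\cdot\blank))$ to $F^e_* z$, it identifies $\Hom_{\bar R}(F^e_*\bar R, \bar R)$, via restriction of maps to $\bar R$ as in \autoref{lem.RestrictNearSplitting}, with
\[
\big(F^e_*(\langle f\rangle^{[p^e]}:_R\langle f\rangle)\big)\big/\big(F^e_*\langle f\rangle^{[p^e]}\big).
\]
Since $f$ is a nonzerodivisor, $\langle f\rangle^{[p^e]}:_R\langle f\rangle=\langle f^{p^e}\rangle:_R\langle f\rangle=\langle f^{p^e-1}\rangle$, so the right side is $(F^e_* f^{p^e-1})F^e_* R$ modulo $(F^e_* f^{p^e})F^e_* R$; dividing numerator and denominator by the nonzerodivisor $F^e_* f^{p^e-1}$ of the ring $F^e_* R$ gives $F^e_* R/(F^e_* f)F^e_* R=F^e_*\bar R$, a free module of rank one over $F^e_*\bar R$ on which the class of $F^e_* f^{p^e-1}$ — that is, of $\phi$ — maps to the class of $1$. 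Since $\phi_D$ is exactly this class, and a cyclic generator of a free rank-one module is a free generator, $\phi_D$ generates $\Hom_{\bar R}(F^e_*\bar R, \bar R)$ over $F^e_*\bar R$. (Conceptually this is the statement that the trace of Frobenius on the Cartier divisor $D$ is induced, via the adjunction/residue map, from the trace on $X$ twisted by $f^{p^e-1}$ — the form of $F$-adjunction that gives the lemma its name.)

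Granting this, the last assertion is immediate, just as in \autoref{ex.ZeroDivisorOnAn}: as $D$ is normal and Gorenstein local, $K_D\sim 0$, so the correspondence \autoref{eq.HomIsTwisted} (valid on normal varieties, and underlying \autoref{eq.MapBijectionWithQDivisors1}) reads $\Hom_{\bar R}(F^e_*\bar R, \bar R)\cong F^e_*\bar R((1-p^e)K_D)\cong F^e_*\bar R$; under it a free generator corresponds to a global section of $\O_D((1-p^e)K_D)\cong\bar R$ that is a unit and hence vanishes nowhere, so the effective divisor it cuts out, $D_{\phi_D}$, is zero and $\Delta_{\phi_D}=\tfrac{1}{p^e-1}D_{\phi_D}=0$. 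The one genuine obstacle is the second step: the usual proof of Fedder's Lemma uses that $F^e_*$ of the ambient ring is projective, which fails once that ring is merely Gorenstein rather than regular, so one must either quote the Gorenstein version from \cite{SchwedeFAdjunction} or supply the duality argument indicated above; everything else is bookkeeping with the module structures on Frobenius pushforwards.
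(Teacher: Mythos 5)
Your argument is correct in substance and lands essentially where the paper's intended solution does: the paper gives no written proof here but defers to \autoref{ex.DivisorAssociatedToCartierIsZero}, whose hint is to show that $\langle \phi \rangle_{F^e_* R} \to \Hom_{R/\langle f \rangle}(F^e_* (R/\langle f \rangle), R/\langle f \rangle)$ is surjective at the codimension-one points of $D$ and to consult \cite[Proposition 7.2]{SchwedeFAdjunction}. Your packaging via the principal-ideal case of Fedder's Lemma, the colon computation $\langle f\rangle^{[p^e]}:\langle f\rangle=\langle f^{p^e-1}\rangle$, and the observation that the class of $\phi$ becomes the class of $1$ in $F^e_*(R/\langle f\rangle)$ is exactly what that solution amounts to, and it yields global generation directly rather than only in codimension one (either suffices, since $R/\langle f\rangle$ is Gorenstein local, so the target is free of rank one over $F^e_*(R/\langle f\rangle)$ and an element outside every height-one prime of the domain $F^e_*(R/\langle f\rangle)$ is a unit). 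The one statement to repair is your claim that the proof of \autoref{lem.FeddersLemma} ``works verbatim'' with the Gorenstein ring $R$ in place of the polynomial ring: it does not, since both the Claim in that proof and the lifting step use that $F^e_*$ of the ambient ring is free/projective, which is exactly why \autoref{rem.RegLocalRingsAreFineForFedder} is restricted to regular (semi)local rings; you flag and retract this yourself at the end, and quoting \cite[Corollary 7.5]{SchwedeFAdjunction} is a legitimate fix. If you prefer a self-contained argument, the duality step is short in this principal case: apply $\Hom_R(\blank,R)$ to $0\to F^e_* R \xrightarrow{\cdot F^e_* f} F^e_* R \to F^e_*(R/\langle f\rangle)\to 0$; since $F^e_* R$ is maximal Cohen--Macaulay and $R$ is Gorenstein, $\Ext^1_R(F^e_* R,R)=0$, while $\Hom_{R/\langle f\rangle}(F^e_*(R/\langle f\rangle),R/\langle f\rangle)\cong \Ext^1_R(F^e_*(R/\langle f\rangle),R)$, so the latter is the cokernel of multiplication by $F^e_* f$ on the free rank-one module $\Hom_R(F^e_* R,R)=(F^e_* R)\cdot\Phi$, generated by the class of $\Phi$; unwinding the connecting map (a map $F^e_* R\to R/\langle f\rangle$ lifts to $R$ because $\Ext^1_R(F^e_* R,R)=0$, and one checks $\Phi(F^e_*(f^{p^e-1}\cdot\blank))$ is such a lift compatible with multiplication by $f$) identifies that class with $\phi_D$, which is therefore a free generator. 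Your compatibility computation and the final step, that $\Delta_{\phi_D}=0$ because a free generator corresponds to a nowhere-vanishing section of $\O_D((1-p^e)K_D)\cong\O_D$, are fine as written.
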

\begin{proof}
See \autoref{ex.DivisorAssociatedToCartierIsZero}.
\end{proof}

However, things are not always nearly so nice.  In particular the divisor associated to $\phi_D$ need not always be zero.

\begin{example}
\label{ex.ConeFDifferent}
Consider $S = k[x,y,z]$ with $p = \text{char} k \neq 2$, set $R := k[x,y,z]/\langle xy - z^2 \rangle$ and fix $D = V(\langle x, z\rangle)$.  Set $\Phi_S \in \Hom_S(F^e_* S, S)$ to be the $F^e_*S$-module generator as in \autoref{ex.ZeroDivisorOnAn}.  We notice that by Fedder's Lemma, \autoref{lem.FeddersLemma}, that $\Psi(F^e_* \blank) = \Phi(F^e_* ((xy-z^2)^{p^e - 1} \cdot \blank))$ induces the generator of $\Hom_R(F^e_* R, R)$ by restriction.  Notice that $\O_X(-2nD) = \langle x^n \rangle$ and consider the map
\[
\phi(F^e_* \blank) = \Psi(F^e_* (x^{p^e - 1 \over 2} \cdot \blank)) = \Phi(F^e_* (x^{p^e - 1 \over 2}(xy-z^2)^{p^e - 1} \cdot \blank)).
\]
If we set $X = \Spec R$, then the induced map $\phi_X \in \Hom_R(F^e_* R, R)$ corresponds to the divisor $(p^e - 1)D$.

However, it is easy to see that $\phi_X$ also is compatible with $D$.  Thus we obtain $\phi_D$.  To compute the divisor associated to $D$, we need only read off the term containing $x^{p^e - 1}z^{p^e - 1}$ in
\[
(x^{p^e - 1 \over 2})(xy-z^2)^{p^e - 1} = x^{3(p^e - 1) \over 2} y^{p^e -1} + \dots + {p^e - 1 \choose {p^e - 1 \over 2}} x^{p^e - 1} y^{p^e -1 \over 2} z^{p^e - 1}+ \dots + z^{2(p^e - 1)}
\]
Again, the reason this works is because the map $\Phi_S(F^e_* (x^{p^e - 1}z^{p^e-1} \cdot \blank))$ induces the generator on $\Hom_{\O_D}(F^e_* \O_D, \O_D)$.  But ${p^e - 1 \choose {p^e - 1 \over 2}} \neq 0 \text{ mod } p$ and so if $\Phi_D : F^e_* k[y] \to k[y]$ is the map generating $\Hom_{\O_D}(F^e_* \O_D, \O_D)$, then $\phi_D$ (which is just $\phi_X$ restricted to $D$) is defined by the rule
\[
\phi_{D}(F^e_* \blank) = \Phi_D(F^e_* y^{p^e -1 \over 2} \cdot \blank)
\]
at least up to multiplication by an element of $k$.  Thus, in the terminology of \autoref{eq.MapBijectionWithQDivisors1},
\[
\Delta_{\phi_D} = {1 \over p^e -1} \Div(y^{p^e - 1 \over 2}) = {1 \over 2} \Div(y).
\]
In particular, in contrast with \autoref{lem.InversionOfFadjunction}, $\Delta_{\phi_D} \neq 0$.
\end{example}

\begin{theorem}[$F$-adjunction]
If $X$ is a normal variety, $\Delta \geq 0$ is a $\bQ$-divisor on $X$ such that $K_X + \Delta$ is $\bQ$-Cartier with index not divisible by $p$.  Suppose that $Y$ is an $F$-pure center (see \autoref{def.FPureCenter}) of $(X, \Delta)$ and that $\phi$ corresponds to $\Delta$ as in \autoref{eq.MapBijectionWithQDivisors2}. Then there exists a canonically determined $\bQ$-divisor $\Delta_Y \geq 0$ such that:
\begin{itemize}
\item[(a)]  $(K_Y + \Delta)|_Y \sim_{\bQ} K_Y + \Delta_Y$
\item[(b)]  $(X, \Delta)$ is sharply $F$-pure near $Y$ if and only if $(Y, \Delta_Y)$ is sharply $F$-pure.
\end{itemize}
\end{theorem}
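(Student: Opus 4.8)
The plan is to build $\Delta_Y$ out of the restriction of $\phi$ to $Y$, and then to read off (a) and (b) by unwinding the map/divisor dictionary on $X$ and on $Y$. Since the index of $K_X+\Delta$ is prime to $p$, by \autoref{ex.BasicGroupTheory} I may choose $e>0$ with $(p^e-1)(K_X+\Delta)$ Cartier, and take $\phi\colon F^e_*\sL\to\O_X$ corresponding to $\Delta$ as in \autoref{eq.MapBijectionWithQDivisors2} (and \autoref{thm.GeneralBijectionOfSets}), so that $\O_X((p^e-1)(K_X+\Delta))\cong\sL^{-1}$. Because $Y$ is $\phi$-compatible, \autoref{lem.RestrictNearSplitting} gives a restricted map $\phi_Y\colon F^e_*(\sL|_Y)\to\O_Y$; and since $\phi_\eta$ is surjective, hence nonzero, at the generic point $\eta$ of $Y$, the map $\phi_Y$ is nonzero. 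One first checks that $Y$ is normal --- this is the step that genuinely uses surjectivity of $\phi$ at $\eta$, and it is what makes \autoref{thm.GeneralBijectionOfSets} applicable on $Y$ --- and then defines $\Delta_Y\ge 0$ to be the $\bQ$-divisor attached to $\phi_Y$ by that theorem, so that $\O_Y((p^e-1)(K_Y+\Delta_Y))\cong(\sL|_Y)^{-1}$; in particular $(p^e-1)(K_Y+\Delta_Y)$ is Cartier, so $(Y,\Delta_Y)$ again satisfies the index hypothesis of \autoref{def.FPurePair}. That $\Delta_Y$ is \emph{canonically} determined, independent of the representative $\phi$ and of $e$, follows from \autoref{lem.QDivisorAssociatedToCompositionTwisted} once one notes that restriction to $Y$ commutes with multiplication by units and with the self-composition \autoref{eq.ComposeMaps2}, i.e.\ $(\phi^n)_Y=(\phi_Y)^n$.

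For (a) I would simply restrict the isomorphism $\O_X((p^e-1)(K_X+\Delta))\cong\sL^{-1}$ along $Y\hookrightarrow X$. The left-hand side is a line bundle, so its restriction is the line bundle of $((p^e-1)(K_X+\Delta))|_Y$, while the right-hand side restricts to $(\sL|_Y)^{-1}\cong\O_Y((p^e-1)(K_Y+\Delta_Y))$ by the defining property of $\Delta_Y$. Hence $((p^e-1)(K_X+\Delta))|_Y\sim(p^e-1)(K_Y+\Delta_Y)$ as Cartier divisors on $Y$; dividing by $p^e-1$ yields $(K_X+\Delta)|_Y\sim_{\bQ}K_Y+\Delta_Y$, where $(K_X+\Delta)|_Y$ is understood as the $\bQ$-Cartier $\bQ$-divisor $\tfrac1m\big(m(K_X+\Delta)\big)|_Y$, $m$ being the index.

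For (b), the implication that sharp $F$-purity of $(X,\Delta)$ near $Y$ forces sharp $F$-purity of $(Y,\Delta_Y)$ is a diagram chase: in the square relating $\phi$ to $\phi_Y$, the arrows $\O_X\twoheadrightarrow\O_Y$ and $F^e_*\sL\twoheadrightarrow F^e_*(\sL|_Y)$ are surjective (the latter since $F^e_*$ is exact), so surjectivity of $\phi$ forces surjectivity of the composite $F^e_*\sL\to\O_Y$, and hence of $\phi_Y$. The converse is the substantive point, argued pointwise: fix a closed point $x\in Y$, trivialize $\sL$ near $x$, and write $R=\O_{X,x}$, $I=I_{Y,x}$, $\phi_x\colon F^e_*R\to R$. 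The induced map on $R/I$ has image $\big(\phi_x(F^e_*R)+I\big)/I$, so its surjectivity, which is exactly what sharp $F$-purity of $(Y,\Delta_Y)$ gives at $x$, says $\phi_x(F^e_*R)+I=R$; since $\phi_x(F^e_*R)$ is an ideal of the local ring $R$ and $I\subseteq\bm_x$, this forces $\phi_x(F^e_*R)=R$, i.e.\ $\phi$ is surjective at $x$. As surjectivity of a map of coherent sheaves is an open condition and has now been checked at every closed point of $Y$, the cokernel of $\phi$ has support disjoint from $Y$, so $(X,\Delta)$ is sharply $F$-pure on a neighbourhood of $Y$. Throughout one should also record that surjectivity is an invariant of the equivalence class of a map --- preserved by multiplication by units, by twisting by line bundles, and by the self-composition \autoref{eq.ComposeMaps2} (reversibly, since $\phi$ is the final arrow of that composite) --- so none of the above depends on the chosen representatives.

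The step I expect to be the real obstacle is the normality of $Y$: without it neither $K_Y$ nor the map/divisor correspondence on $Y$ makes sense, and showing that an $F$-pure center is normal --- as opposed to merely reduced, which one obtains just as for compatibly split subschemes --- requires a genuine local argument exploiting that $\phi$ is surjective at $\eta$, e.g.\ by localizing at $\eta$ to reduce to the case where $Y$ is a point and using that a surjective $p^{-e}$-linear self-map of that local ring splits off the residue field. After that, the remaining content is bookkeeping with the correspondences already established.
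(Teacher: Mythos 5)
Your main line is exactly the paper's own proof: restrict $\phi$ across the compatible ideal to get $\phi_Y$ (\autoref{lem.RestrictNearSplitting}), let $\Delta_Y$ be the $\bQ$-divisor attached to $\phi_Y$ by \autoref{eq.MapBijectionWithQDivisors2}, read off (a) by restricting the isomorphism $\O_X((p^e-1)(K_X+\Delta))\cong \sL^{-1}$ to $Y$, and get (b) from the equivalence ``$\phi$ surjective near $Y$ $\Leftrightarrow$ $\phi_Y$ surjective.''  Your local computation for the converse of (b) --- the image of $\phi_Y$ at $x$ is $(\phi_x(F^e_*R)+I)/I$, and since $\phi_x(F^e_*R)$ is an ideal of the local ring while $I\subseteq\bm_x$, equality with $R$ forces $\phi_x$ surjective --- is a correct filling-in of what the paper leaves unsaid, as is the independence-of-representative bookkeeping.

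The one place the proposal goes wrong is precisely the step you flag as the crux: normality of $Y$.  Surjectivity of $\phi_\eta$ is a condition at the generic point only, and normality is not a generic condition; localizing at $\eta$ reduces you to a field, where there is nothing left to check, and gives no control at the other points of $Y$.  In fact an $F$-pure center need not be normal: take $X=\bA^3$, $f=xy^2-z^2$, and $\phi(F_*\blank)=\Phi(F_*(f^{p-1}\cdot\blank))$ with $\Phi$ the standard generator, so $\Delta_\phi=\Div(f)$; then $Y=V(f)$ is $\phi$-compatible and $\phi_\eta$ is surjective, so $Y$ is an $F$-pure center of $(X,\Delta_\phi)$, yet $Y$ is the pinch-point surface, singular along a curve and hence not normal.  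What surjectivity along $Y$ buys is reducedness/weak-normality--type statements, not normality.  The theorem is implicitly read with $Y$ normal, so that $K_Y$ and the correspondence \autoref{eq.MapBijectionWithQDivisors2} make sense on $Y$ (this is how the paper's two-line proof uses it); for a non-normal center one instead extends $\phi_Y$ to the normalization $Y^{\mathrm{N}}$ --- it is automatically compatible with the conductor, \autoref{ex.Normalization} --- and defines the divisor there.  So delete your normality step (or add normality of $Y$ to the hypotheses, or pass to $Y^{\mathrm{N}}$); the remainder of your argument coincides with the paper's.
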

\begin{proof}
Set $\phi_Y$ to be the restriction of $\phi$ to $Y$ as in \autoref{lem.RestrictNearSplitting}.  Set $\Delta_Y$ to be the $\bQ$-divisor associated to $\phi_Y$ as in \autoref{eq.MapBijectionWithQDivisors2}.  The first result then follows easily.  The second follows since $\phi$ is surjective near $Y$ if and only if $\phi_Y$ is surjective.
\end{proof}

\begin{remark}
The previous result should be compared with subadjunction and inversion of adjunction in birational geometry.  See for example \cite{KawamataSubadjunction2,KawakitaInversion,HaconLogCanonicalInversionOfAdj} and \cite[Chapter 5, Section 4]{KollarMori}.
\end{remark}

\subsection{Birational maps}

Suppose that $X$ is a normal variety, $\sL$ is a line bundle on $X$ and $\phi : F^e_* \sL \to \O_X$ is an $\O_X$-linear map corresponding to the $\bQ$-divisor $\Delta$ as in \autoref{eq.MapBijectionWithQDivisors2}.  Suppose $\pi : \tld X \to X$ is a birational map with $\tld X$ normal.  Fix $K_{\tld X}$ and $K_X$ which agree wherever $\pi$ is an isomorphism.  We can write
\[
K_{\tld X} + \Delta_{\tld X}  = \pi^*(K_X + \Delta)
\]
where now $\Delta_{\tld X}$ is uniquely determined.  Notice that $\Delta_{\tld X}$ need not be effective.  The main result of this section is the following:

\begin{lemma}
\label{lem.PhiOnBirational}
The map $\phi : F^e_* \sL \to \O_X$ induces a map $\tld \phi: F^e_* (\pi^* \sL) \to \sK({\tld X})$ where $\sK(\tld X)$ is the fraction field sheaf of $\tld X$ (which we can also identify with the fraction field on $X$ since $\pi$ is birational).  Furthermore, $\tld \phi$ agrees with $\phi$ wherever $\pi$ is an isomorphism.

Even more, using the fact that maps to the fraction field correspond to possibly non-effective divisors via \autoref{ex.NonEffectiveDivisors}, we have that $\Delta_{\tld \phi} = \Delta_{\tld X}$.
\end{lemma}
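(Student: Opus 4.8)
The plan is to construct $\tld\phi$ by passing to the function field, and then to identify its associated divisor by tracking a single rational differential form through the correspondence of \autoref{ex.NonEffectiveDivisors}. Since $\pi$ is birational we identify $\sK(\tld X)=\sK(X)=:\sK$ and write $\eta$ for the common generic point. Because $\pi$ restricts to an isomorphism over a dense open $V\sub\tld X$ and $F^e_*$ commutes with restriction to opens, there is a canonical identification of generic stalks $(F^e_*(\pi^*\sL))_\eta\cong(F^e_*\sL)_\eta$. Define $\tld\phi$ as the composite
\[
F^e_*(\pi^*\sL)\longrightarrow(F^e_*(\pi^*\sL))_\eta\overset{\sim}{\longrightarrow}(F^e_*\sL)_\eta\overset{\phi\otimes_{\O_X}\sK}{\longrightarrow}\sK.
\]
Then $\tld\phi$ has the same generic stalk as $\phi$, hence agrees with $\phi$ over $V$, and since $\phi$ lands in $\O_X$ so does $\tld\phi$ over $V$; thus $\tld\phi$ agrees with $\phi$ wherever $\pi$ is an isomorphism. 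Being a nonzero element of $\Hom_{\O_{\tld X}}(F^e_*(\pi^*\sL),\sK(\tld X))$, it has by \autoref{ex.NonEffectiveDivisors} an associated (possibly non-effective) Weil divisor $D_{\tld\phi}$ with $\O_{\tld X}(D_{\tld\phi})\cong(\pi^*\sL)^{-1}((1-p^e)K_{\tld X})$ and $\Delta_{\tld\phi}=\tfrac{1}{p^e-1}D_{\tld\phi}$. So it remains to show $D_{\tld\phi}=(p^e-1)\Delta_{\tld X}$.

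Fix a nonzero rational differential form $\omega_0$ of top degree on $X$, viewed as a rational section of $\omega_X=\O_X(K_X)$, and set $K_X:=\Div_X(\omega_0)$ and $K_{\tld X}:=\Div_{\tld X}(\pi^*\omega_0)$; these agree wherever $\pi$ is an isomorphism, as in the statement, and $\Delta_{\tld X}$ is then determined by $K_{\tld X}+\Delta_{\tld X}=\pi^*(K_X+\Delta)$ — which makes sense because $(p^e-1)(K_X+\Delta)$ is Cartier, its associated sheaf being $\cong\sL^{-1}$. Fix also a nonzero rational section $\ell$ of $\sL$. Under the isomorphism $\Hom_{\O_X}(F^e_*\sL,\O_X)\cong F^e_*\bigl(\sL^{-1}((1-p^e)K_X)\bigr)$ coming from \autoref{eq.HomsAreSections}, the map $\phi$ becomes a rational section of $\sL^{-1}((1-p^e)K_X)$; trivializing $\sL^{-1}$ by $\ell^{-1}$ and $\O_X((1-p^e)K_X)$ by the rational section $\omega_0^{1-p^e}$ of divisor $(1-p^e)K_X$ attached to $\omega_0$, we may write it as $g\cdot\ell^{-1}\otimes\omega_0^{1-p^e}$ for a unique $g\in\sK^\times$. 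Unwinding the definition of $D_\phi$ and using $D_\phi=(p^e-1)\Delta$ yields
\[
\Div_X(g)=\Div_X(\ell)+(p^e-1)(K_X+\Delta).
\]

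Now I compute $D_{\tld\phi}$. The construction of $\tld\phi$ is local and matches that of $\phi$ over the isomorphism locus, so it carries $g$, $\ell$, $\omega_0$ to $g$ (the same element of $\sK$), $\pi^*\ell$, $\pi^*\omega_0$; hence under the analogous isomorphism on $\tld X$ the map $\tld\phi$ becomes the rational section $g\cdot(\pi^*\ell)^{-1}\otimes(\pi^*\omega_0)^{1-p^e}$ of $(\pi^*\sL)^{-1}((1-p^e)K_{\tld X})$, where $(\pi^*\omega_0)^{1-p^e}$ is understood to have divisor $(1-p^e)K_{\tld X}$. The divisor of the pullback of a rational function, or of a rational section of a line bundle, is the pullback of its divisor (these divisors are Cartier), and $\Div_{\tld X}(\pi^*\omega_0)=K_{\tld X}$ by choice; so, substituting $\Div_X(g)=\Div_X(\ell)+(p^e-1)(K_X+\Delta)$,
\[
D_{\tld\phi}=\pi^*\Div_X(g)-\pi^*\Div_X(\ell)+(1-p^e)K_{\tld X}=(p^e-1)\bigl(\pi^*(K_X+\Delta)-K_{\tld X}\bigr)=(p^e-1)\Delta_{\tld X},
\]
whence $\Delta_{\tld\phi}=\tfrac{1}{p^e-1}D_{\tld\phi}=\Delta_{\tld X}$, as claimed.

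The one genuinely delicate point is that $K_X$ need not be $\bQ$-Cartier, so $\pi^*K_X$ is not directly available; indeed the discrepancy between $K_{\tld X}$ and ``$\pi^*K_X$'' is exactly what $\Delta_{\tld X}$ records. Anchoring everything to a single form $\omega_0$, so that $K_{\tld X}$ is \emph{defined} as $\Div(\pi^*\omega_0)$, sidesteps this: the only pullbacks of divisors occurring are those of the Cartier divisors $\Div(\ell)$, $\Div(g)$, and $(p^e-1)(K_X+\Delta)$, and the non-Cartier contribution is absorbed into the definition of $K_{\tld X}$. Everything else is bookkeeping with the projection-formula/duality isomorphism of \autoref{eq.HomsAreSections} and its compatibility under restriction to the isomorphism locus.
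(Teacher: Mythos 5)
Your construction of $\tld \phi$ is essentially the same as the paper's: the paper also passes to the generic point, obtains $\phi_{\eta} : F^e_* K(X) \to K(X)$, identifies $\eta$ with $\tld\eta$, and restricts to sections of $\pi^*\sL$, so agreement over the isomorphism locus is built in for both of you. Where you genuinely diverge is in proving $\Delta_{\tld\phi}=\Delta_{\tld X}$. The paper argues indirectly: $\Delta_{\tld\phi}-\Delta_{\tld X}$ is $\pi$-exceptional (agreement off the exceptional locus) and, since $\O_{\tld X}((1-p^e)(K_{\tld X}+\Delta_{\tld\phi}))\cong \pi^*\sL\cong \pi^*\O_X((1-p^e)(K_X+\Delta))$, it is also $\sim_{\bQ}0$; one then invokes the standard fact that an exceptional divisor which is $\bQ$-linearly trivial must vanish (\cf \cite{KollarMori}). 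You instead compute $D_{\tld\phi}$ outright: anchoring $K_X=\Div_X(\omega_0)$ and $K_{\tld X}=\Div_{\tld X}(\pi^*\omega_0)$, writing the section corresponding to $\phi$ as $g\cdot\ell^{-1}\otimes\omega_0^{1-p^e}$ with $\Div_X(g)=\Div_X(\ell)+(p^e-1)(K_X+\Delta)$, and pulling back only the Cartier divisors $\Div(g)$, $\Div(\ell)$ and $(p^e-1)(K_X+\Delta)$, so that the non-$\bQ$-Cartier part of $K_X$ is absorbed into the definition of $K_{\tld X}$. This buys you independence from the negativity-type lemma and makes the exceptional coefficients of $\Delta_{\tld X}$ visible by direct calculation; the cost is that you must know which rational section of $(\pi^*\sL)^{-1}((1-p^e)K_{\tld X})$ the map $\tld\phi$ corresponds to, which you settle by observing that both the correspondence of \autoref{ex.NonEffectiveDivisors}/\autoref{eq.HomsAreSections} and maps into $\sK(\tld X)$ are determined by restriction to a dense open where they agree with the data on $X$ — a compatibility the paper itself uses (it is exactly what underlies its claim that $\Delta_{\tld\phi}$ and $\Delta_{\tld X}$ agree where $\pi$ is an isomorphism), so your proof is correct and, if anything, slightly more self-contained on this point.
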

\begin{proof}
We construct $\tld \phi$ as follows.  We note that $\sL = \O_X( (1 - p^e)(K_X + \Delta))$ by \autoref{eq.MapBijectionWithQDivisors2}, and so after fixing $K_X$, we obtain an embedding of $\sL \subseteq \sK(X)$.  In particular, for each affine open set $U$, we have an embedding $\Gamma(U, \sL) \subseteq K(X)$.  But then we also obtain for each affine open set $V \subseteq \tld X$, an embedding $\Gamma(V, \pi^* \sL) \subseteq K(\tld X) = K(X)$.

Now, by taking the map $F^e_* \sL \to \O_X$ at the generic point $\eta$ of $X$, we obtain $\phi_{\eta} : F^e_* K(X) \to K(X)$ (note that our embedding of $\sL \subseteq \sK(X)$ fixes the isomorphism $\sL_{\eta} \cong K(X)$).  But we identify $\eta$ with the generic point $\tld \eta$ of $\tld X$ (since they have isomorphic neighborhoods) and so we have a map $\phi_{\tld \eta} : F^e_* K(\tld X) \to K(\tld X)$.  By restricting $\phi_{\tld \eta}$ to $\Gamma(V, \pi^* \sL)$ for each open set $V$, we obtain a map $\tld \phi : F^e_* \pi^* \sL \to \sK(\tld X)$.

By construction, $\tld \phi$ agrees with $\phi$ wherever $\pi$ is an isomorphism.  For the statement $\Delta_{\tld \phi} = \Delta_{\tld X}$ we proceed as follows.  We notice that $\Delta_{\tld \phi}$ and $\Delta_{\tld X}$ already agree wherever $\pi$ is an isomorphism so that $\Delta_{\tld \phi} - \Delta_{\tld X}$ is $\pi$-exceptional.  Furthermore, by the construction done in \autoref{ex.NonEffectiveDivisors} $\O_{\tld X}( (1-p^e)(K_{\tld X} + \Delta_{\tld \phi})) \cong \pi^* \sL \cong \pi^* \O_X((1-p^e)(K_X + \Delta))$.  Thus $\Delta_{\tld \phi} \sim_{\bQ} \Delta_{\tld X}$ and so $$\Delta_{\tld \phi} - \Delta_{\tld X} \sim_{\bQ} 0$$ is $\pi$-exceptional.  Therefore $\Delta_{\tld \phi} = \Delta_{\tld X}$ as desired, \cf \cite{KollarMori}.
\end{proof}

We now come to the definition of log canonical singularities (in arbitrary characteristic).

\begin{definition}
Suppose that $X$ is a normal variety and that $\Delta$ is a $\bQ$-divisor such that $K_X + \Delta$ is $\bQ$-Cartier. Then we say that $(X, \Delta)$ is \emph{log canonical} if the following condition holds.  For every proper birational map $\pi : \tld X \to X$ with $\tld X$ normal, when we write
\[
\sum a_i E_i = K_{\tld X} - \pi^*(K_X + \Delta)
\]
each $a_i$ is $\geq -1$.
\end{definition}

\begin{theorem}\cite[Main Theorem]{HaraWatanabeFRegFPure}
\label{thm.FPureImpliesLC}
If $(X, \Delta)$ is sharply $F$-pure, then $(X, \Delta)$ is log canonical.
\end{theorem}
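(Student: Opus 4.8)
The plan is to reduce the statement, by means of \autoref{lem.PhiOnBirational}, to a computation on a discrete valuation ring, where surjectivity of the $p^{-e}$-linear map attached to $(X,\Delta)$ directly bounds the coefficient of the birational transform of $\Delta$. First I would make two harmless reductions. Since log canonicity is local on $X$, and since enlarging the boundary can only decrease discrepancies (if $\Delta' \geq \Delta$ with $K_X + \Delta'$ still $\bQ$-Cartier, then $(X,\Delta')$ log canonical forces $(X,\Delta)$ log canonical), \autoref{def.FPurePair} lets us assume that $K_X + \Delta$ is $\bQ$-Cartier with index $m$ not divisible by $p$. By \autoref{ex.BasicGroupTheory} we may then fix $e>0$ with $m \mid (p^e-1)$, so that $\sL := \O_X((1-p^e)(K_X+\Delta))$ is a line bundle; sharp $F$-purity says precisely that the associated map $\phi : F^e_* \sL \to \O_X$ of \autoref{eq.MapBijectionWithQDivisors2} is surjective as a map of $\O_X$-modules.

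Now I would fix a proper birational morphism $\pi : \tld X \to X$ with $\tld X$ normal and a prime divisor $E$ on $\tld X$, and write $K_{\tld X} + \Delta_{\tld X} = \pi^*(K_X+\Delta)$; the goal becomes $\ord_E(\Delta_{\tld X}) \leq 1$. By \autoref{lem.PhiOnBirational}, $\phi$ induces a map $\tld\phi : F^e_*(\pi^*\sL) \to \sK(\tld X)$ whose (possibly non-effective) associated $\bQ$-divisor, in the sense of \autoref{ex.NonEffectiveDivisors}, satisfies $\Delta_{\tld\phi} = \Delta_{\tld X}$; moreover, by its construction in that lemma, $\tld\phi$ is the restriction of the common generic stalk $\phi_\eta : F^e_* K \to K$, where $K = \sK(X) = \sK(\tld X)$. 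So the task reduces to bounding $\ord_E(\Delta_{\tld\phi})$.

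The core is a computation on the discrete valuation ring $V := \O_{\tld X, E}$. Let $\xi := \pi(\eta_E) \in X$ be its center, so $\O_{X,\xi} \subseteq V$ inside $K$ and we may work near $\xi$; since $(p^e-1)(K_X+\Delta)$ is Cartier near $\xi$, write $\sL_\xi = h^{-1}\O_{X,\xi}$ for a suitable $h \in K$, so that $(\pi^*\sL)_{\eta_E} = h^{-1}V$. Surjectivity of $\phi$ at $\xi$ yields $r_0 \in \O_{X,\xi}$ with $\phi(F^e_*(h^{-1}r_0)) = 1$, whence $\tld\phi(F^e_*(h^{-1}r_0)) = 1$ in $K$. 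Fix a generator $\Phi_V$ of the rank-one free $F^e_* V$-module $\Hom_V(F^e_* V, V)$ (it is free of rank one because $V$ is Gorenstein and local, by \autoref{ex.GeneratingMapForGorensteinRings}); any such $\Phi_V$ is surjective, because the coordinate projections relative to a $V$-basis of $F^e_* V$ are surjective and factor through $\Phi_V$, and consequently $\Phi_V(F^e_* w) \in \bm_V$ whenever $\ord_V(w) \geq p^e$. Identifying $F^e_*(\pi^*\sL)_{\eta_E} = F^e_*(h^{-1}V)$ with $F^e_* V$ via the trivialization $h^{-1}$, the map $\tld\phi$ acquires the form $F^e_* w \mapsto \Phi_V(F^e_*(\gamma w))$ for a unique $\gamma \in K$, and by the divisor--map correspondence $\ord_E(\Delta_{\tld\phi}) = \tfrac{1}{p^e-1}\,\ord_V(\gamma)$. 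Applying this to $w = r_0$ gives $\Phi_V(F^e_*(\gamma r_0)) = 1 \notin \bm_V$, which forces $\ord_V(\gamma r_0) \leq p^e-1$; since $r_0 \in \O_{X,\xi} \subseteq V$ gives $\ord_V(r_0) \geq 0$, we conclude $\ord_V(\gamma) \leq p^e-1$ and hence $\ord_E(\Delta_{\tld\phi}) \leq 1$. As $\pi$ and $E$ were arbitrary, $(X,\Delta)$ is log canonical.

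I expect the main obstacle to be exactly this last step: one must see that the global hypothesis --- surjectivity of $\phi$ --- reaches the center $\xi$ of $E$ as the existence of an element of $\O_{X,\xi}$, necessarily of nonnegative order along $E$, on which $\tld\phi$ takes the value $1$, and that this pins the order of vanishing of the local generator $\gamma$ of $\tld\phi$ along $E$ to be at most $p^e-1$. The bookkeeping that needs care is tracking the twisted $F^e_*$-module structures together with the fact that $\tld\phi$ lands in the function-field sheaf rather than in $\O_{\tld X}$ --- that is, that one is genuinely in the non-effective setting of \autoref{ex.NonEffectiveDivisors} --- but once that is set up, the inequality falls out immediately.
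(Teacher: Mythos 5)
Your proposal is correct and follows essentially the same route as the paper's proof: the same reduction to the case where $K_X+\Delta$ has index prime to $p$, the same use of \autoref{lem.PhiOnBirational} to transport $\phi$ to a map $\tld\phi$ on $\tld X$ with $\Delta_{\tld\phi}=\Delta_{\tld X}$, and the same localization at the DVR $V=\O_{\tld X,\eta_E}$, where the element supplied by surjectivity (of nonnegative order along $E$) together with $p^{-e}$-linearity bounds the coefficient of $\Delta_{\tld X}$ along $E$ by $1$. The only difference is presentational: the paper phrases the DVR step as a contradiction (a coefficient exceeding $1$ would force $\tld\phi$ to send $F^e_*(c/d^{p^e})$ to $1/d\notin V$), while you compute the coefficient directly as $\frac{1}{p^e-1}\ord_V(\gamma)$ relative to a generator of $\Hom_V(F^e_*V,V)$ and bound $\ord_V(\gamma)\leq p^e-1$.
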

\begin{proof}
The statement is local on $X$ and so we may assume that $\sL = \O_X$ and that $X = \Spec R$ is affine.
We only prove the case where the index of $K_X + \Delta$ is not divisible by $p$.  To reduce to this case, use \autoref{ex.FpureImpliesLogCanonicalGeneral} below.  Set $\phi : F^e_* R \to R$ to be a map corresponding to $\Delta$.  Thus there exists an element $c \in R = \Gamma(X, \sL)$ such that $\phi(F^e_* c) = 1$ since $(X, \Delta)$ is sharply $F$-pure.

Set $\pi : \tld X \to X$ a proper birational map with $\tld X$ normal and write $\sum a_i E_i = K_{\tld X} - \pi^*(K_X + \Delta)$.  Suppose that some $a_i < -1$ (with corresponding fixed $E_i$).  Then in particular $a_i \leq 0$.  Set $\eta_i$ to be the generic point of $E_i$.  It follows that $-a_i$, the $E_i$-coefficient of $\Delta_{\tld \phi}$, is positive and so we have a factorization:
\[
F^e_* \O_{\tld X, \eta_i} \subseteq F^e_* \O_{\tld X, \eta_i}( (1-p^e)a_i E_i) \xrightarrow{\tld\phi} \O_{\tld X, \eta_i},
\]
where $\tld \phi$ is as in \autoref{lem.PhiOnBirational}.
But now it is easy to see that if $a_i < -1$, then $(1-p^e)a_i \geq p^e$ so that we have the factorization
\[
F^e_* \O_{\tld X, \eta_i} \subseteq F^e_* \O_{\tld X, \eta_i}( p^e E_i) \xrightarrow{\tld \phi|_{F^e_* \O_{\tld X, \eta_i}( p^e E_i)}} \O_{\tld X, \eta_i}.
\]
which sends $F^e_* c \in F^e_* R \subseteq  F^e_* \O_{\tld X, \eta_i}$ to $1$.  But that is impossible since if $d \in \O_{\tld X, \eta_i}$ is the local parameter for $E_i$, then $\tld \phi$ sends $F^e_* (c/d^{p^e}) \in F^e_* \O_{\tld X, \eta_i}( p^e E_i)$ to $1/d \notin \O_{\tld X, \eta_i}$.
\end{proof}

\subsection{Finite maps}
\label{subsec.FiniteMaps}

Finally, suppose that $\pi : Y \to X$ is a finite surjective map of normal varieties.  Then there is an inclusion $\O_X \subseteq \pi_* \O_Y$.  Given a line bundle $\sL$ on $X$ and a map \mbox{$\phi : F^e_* \sL \to \O_X$,} it is natural to ask when $\phi$ can be extended to a map $F^e_* \pi_* (\pi^* \sL) \to \pi_* \O_Y$.  Since $\pi$ is finite, the $\pi_*$ is harmless and so we can ask when $\phi$ can be extended to a map $\phi_Y : F^e_* \pi^* \sL \to \O_Y$.

The local version of this statement is as follows.  Suppose that $R \subseteq S$ is a finite extension of semi-local normal rings and suppose that $\phi : F^e_* R \to R$ is a finite map.  Then when does there exist a commutative diagram as follows?
\[
\xymatrix{
F^e_* S \ar[r]^{\phi_S} & S\\
F^e_* R \ar@{^{(}->}[u] \ar[r]_{\phi} & R \ar@{^{(}->}[u]
}
\]
It is easy to see that the answer is not always.

\begin{example}
Consider $k[x^2] \subseteq k[x]$ with $p = \text{char} k \neq 2$.  Consider the map $\phi : F_* k[x^2] \to k[x^2]$ which sends $F_* x^{2(p-1)}$ to $1$ and other monomials $F_* x^{2i}$, for $0 \leq i < p-1$  to zero.  Note $\Delta_{\phi} = 0$.

Suppose this map extended to a map $\psi : F_* k[x] \to k[x]$.  Then $\phi(F_* x^{2(p-1)}) = 1$ and so since $\phi$ and $\psi$ are the same on $k[x^2]$, we have
\[
1 = \psi(F_* x^{2(p-1)}) = \psi(F_* x^p x^{p-2}) = x \psi(F_* x^{p-2})
\]
which implies that $x$ is a unit.  But that is a contradiction.

On the other hand, consider the map $\alpha : F_* k[x^2] \to k[x^2]$ which sends $F_* x^{2(p-1)/2} = F_* x^{p-1}$ to $1$ and the other monomials $F_* x^{2i}$ to $0$ for $0 \leq i \leq p-1$ to zero.  Note $\Delta_{\alpha} = {1 \over 2} \Div(x^2)$.

We will show that $\alpha$ extends to a map $\beta : F_* k[x] \to k[x]$.  It is in fact easy to show that $\alpha$ extends to a map on the fraction field $\beta : F_* k(x) \to k(x)$, see \autoref{ex.SeparableMapsExtendToFractionFields}.  Therefore, it is enough to show that $\beta(F_* x^j) \in k[x]$ for each $0 \leq j \leq p-1$.  Fix such a $j$.  If $j$ is even, then there is nothing to do since $\beta(F_* x^j) = \alpha(F_* x^j) \in k[x^2] \subseteq k[x]$.   Therefore, we may suppose that $j$ is odd. But then $j + p$ is even and $p \leq j+ p \leq 2(p-1)$.  Thus
\[
\beta(F_* x^j) = {1 \over x} \beta(F_* x^{j + p}) = {1 \over x} \alpha(F_* x^{j+p}) = {1 \over x} \cdot 0 = 0 \in k[x].
\]
This proves that $\beta$ exists and is well defined.
\end{example}

\begin{theorem} \cite{SchwedeTuckerTestIdealFiniteMaps}
\label{thm.ExtendingOverFiniteMaps}
Fix $\pi : Y \to X$ as above.  Fix a nonzero map $\phi : F^e_* \sL \to \O_X$ as above.  If $\pi$ is inseparable then $\phi$ never extends to $\phi_Y$.  If $\pi$ is separable, then there exists a map $\phi_Y : F^e_* \pi^* \sL \to \O_Y$ extending $\phi$ if and only if $\Delta_{\phi}$ is bigger than or equal to the ramification divisor of $\pi : Y \to X$.
\end{theorem}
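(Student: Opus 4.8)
The plan is to reduce everything to codimension one and then feed the result into the dictionary between $p^{-e}$-linear maps and divisors from \autoref{sec.ConnectionsWithDivisors}. First I fix compatible canonical divisors with $K_Y = \pi^*K_X + \Ram_\pi$ and $\Ram_\pi \geq 0$ the ramification divisor; this is legitimate because $\pi$ is finite, so $\pi^!\O_X \cong \O_Y(\Ram_\pi)$ by Grothendieck duality for a finite map (\autoref{eq.UpperShriekForFiniteMaps} and \autoref{ex.GRDualityFinite}), and because $\pi$ contracts nothing, every prime divisor $E$ on $Y$ dominates a prime divisor $\Gamma = \pi(E)$ on $X$. Since $X$ and $Y$ are normal, the sheaves $F^e_*\sL$, $\pi^*\sL$, $\O_X$, $\O_Y$ and the relevant $\sHom$'s are reflexive, so a map $F^e_*\pi^*\sL \to \O_Y$ exists and extends $\phi$ if and only if the corresponding statement holds after localizing at every codimension-one point of $Y$; thus it suffices to work with the DVRs $R = \O_{X,\Gamma} \subseteq S = \O_{Y,E}$.

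For the inseparable case, suppose $\pi$ is inseparable and, for contradiction, that $\phi_Y : F^e_*\pi^*\sL \to \O_Y$ extends $\phi$. Localizing at the generic point gives an $\O_Y$-linear ($=p^{-e}$-linear over $K(Y)$) map $\phi_\eta : F^e_*\sK(Y) \to \sK(Y)$ restricting to $\phi$ on $F^e_*\sK(X)$; in particular $\phi_\eta \neq 0$, so pick $v \in K(X)$ with $w := \phi(F^e_* v) \neq 0$. As $K(Y)/K(X)$ is not separable there is $t \in K(Y)\setminus K(X)$ with $t^p \in K(X)$, whence $t^{p^e} = (t^p)^{p^{e-1}} \in K(X)$ and $t^{p^e}v \in K(X)$. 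Then $p^{-e}$-linearity gives $\phi_\eta\big(F^e_*(t^{p^e}v)\big) = t\,\phi_\eta(F^e_* v) = tw$, while the fact that $t^{p^e}v \in K(X)$ gives $\phi_\eta\big(F^e_*(t^{p^e}v)\big) = \phi\big(F^e_*(t^{p^e}v)\big) \in K(X)$; so $t = (tw)w^{-1} \in K(X)$, a contradiction. (The same argument shows $\phi$ does not even extend to a map into $\sK(Y)$.)

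For the separable case, $K(Y)$ and $K(X)^{1/p^e}$ are linearly disjoint over $K(X)$, so $K(Y)\otimes_{K(X)}F^e_*K(X) \to F^e_*K(Y)$ is an isomorphism (compatibly with the Frobenius-twisted $K(Y)$-structures); applying $\phi$ on the second factor yields the \emph{unique} $p^{-e}$-linear extension $\phi_\eta : F^e_*\sK(Y) \to \sK(Y)$ of $\phi$. By \autoref{ex.NonEffectiveDivisors}, restricting $\phi_\eta$ to $F^e_*\pi^*\sL$ gives a (possibly non-effective) Weil divisor $D_{\phi_Y}$ with $\O_Y(D_{\phi_Y}) \cong (\pi^*\sL)^{-1}\big((1-p^e)K_Y\big)$, and $\phi$ extends to $F^e_*\pi^*\sL \to \O_Y$ exactly when $D_{\phi_Y}\geq 0$ — in which case the extension is forced to be the restriction of $\phi_\eta$, by uniqueness. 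It remains to identify $D_{\phi_Y}$, or equivalently $\Delta_{\phi_Y} := \tfrac{1}{p^e-1}D_{\phi_Y}$. Up to linear equivalence, combining $\sL \cong \O_X((1-p^e)(K_X+\Delta_\phi))$ from \autoref{eq.MapBijectionWithQDivisors2} with $K_Y = \pi^*K_X + \Ram_\pi$ gives $\Delta_{\phi_Y} \sim_\bQ \pi^*\Delta_\phi - \Ram_\pi$. For the honest equality of divisors I argue at each $E$: with $e_E$ the ramification index and $d_E$ the different exponent at $E$ (so $v_E(\Ram_\pi)=d_E$, $v_E(\pi^*\Gamma)=e_E$), Grothendieck duality identifies $\Hom_R(S,R)$ with the inverse different $\mathfrak d_{S/R}^{-1}$, free of rank one over $S$ on the trace-type generator $\tau$ of valuation $-d_E$; since $S$ is a DVR, $\Hom_S(F^e_*S,S)$ is free of rank one over $F^e_*S$ on the zero-divisor generator $\Phi_S$ (\autoref{ex.GeneratingMapForGorensteinRings}, \autoref{ex.ZeroDivisorOnAn}); and by \autoref{ex.CompositionOfGeneratingMaps} the composite $\tau\circ\Phi_S$ generates $\Hom_R(F^e_*S,R)$ over $F^e_*S$. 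Writing $\phi = \Phi_R\big(F^e_*(c\cdot\blank)\big)$ with $v_\Gamma(c) = v_\Gamma(D_\phi)$ and its unique extension as $\phi_\eta = \tau\big(\Phi_S(F^e_*(g\cdot\blank))\big)$, the condition $\phi_\eta|_{F^e_*K(X)}=\phi$ determines $v_E(g)$, and reading off the divisor attached to $\phi_\eta$ yields
\[
v_E(D_{\phi_Y}) = e_E\, v_\Gamma(D_\phi) - (p^e-1)\,d_E .
\]
Hence $D_{\phi_Y}$ is effective at $E$ iff $e_E\, v_\Gamma(D_\phi) \geq (p^e-1)d_E$, i.e. iff $\pi^*\Delta_\phi-\Ram_\pi$ is effective at $E$; ranging over all $E$, $D_{\phi_Y}\geq 0 \iff \pi^*\Delta_\phi \geq \Ram_\pi$, which is what is meant by "$\Delta_\phi$ is $\geq$ the ramification divisor of $\pi$". (The two maps in the $k[x^2]\subseteq k[x]$ example of \autoref{subsec.FiniteMaps} are exactly this criterion.)

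The only substantial step is the local identity $v_E(D_{\phi_Y}) = e_E\, v_\Gamma(D_\phi) - (p^e-1)\,d_E$: one must track precisely how the different of $S/R$ enters when the canonical generator of $\Hom_R(F^e_*R,R)$ is transported across the finite extension and matched against the canonical generator of $\Hom_R(F^e_*S,R)$. The reduction to codimension one, the base-change construction of $\phi_\eta$, the translation into divisors via \autoref{ex.NonEffectiveDivisors}, and the inseparable case are all formal.
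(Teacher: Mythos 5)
Your inseparable case contains a genuine gap. The argument rests on the claim that inseparability of $K(Y)/K(X)$ produces some $t \in K(Y)\setminus K(X)$ with $t^p \in K(X)$; this is false in general, because a finite inseparable extension need not contain any nontrivial purely inseparable subextension. Concretely, let $K=\bF_2(u,v)$ and $L=K[\theta]$ with $\theta^4+u\theta^2+v=0$ (irreducible over $K$ by Gauss's lemma, viewing the polynomial as linear in $v$). Since $X^4+uX^2+v=g(X^2)$ with $g(y)=y^2+uy+v$ separable, $L/K$ is inseparable of degree $4$ with separable degree $2$. Yet if $t=a+b\theta+c\theta^2+d\theta^3$ with $a,b,c,d\in K$, expanding $t^2$ against the basis $1,\theta^2$ shows $t^2\in K$ forces $b^2+uc^2+u^2d^2+vd^2=0$ in $K$; applying the derivations $\partial/\partial v$ and then $\partial/\partial u$ gives $d=0$, $c=0$, $b=0$, so $t\in K$. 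Hence no element of $L\setminus K$ satisfies $t^{2^e}\in K$ for any $e$ (take a minimal such exponent to reduce to the square case). Taking $X=\bA^2_{\bF_2}$ and $Y$ the normalization of $X$ in $L$ realizes this as a finite inseparable cover of normal varieties, so your contradiction cannot be run for such $\pi$. What your computation does prove is the purely inseparable case (the first half of \autoref{ex.InseparableMapExtensions}); the general inseparable case requires a further reduction, e.g. through the separable closure $K_s$ of $K(X)$ in $K(Y)$, and this reduction is not formal, since a hypothetical extension of $\phi$ to $K(Y)$ is not a priori compatible with the canonical extension of $\phi$ to $K_s$. That reduction is exactly the second half of \autoref{ex.InseparableMapExtensions}, i.e.\ \cite[Proposition 5.2]{SchwedeTuckerTestIdealFiniteMaps}.

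The separable half follows essentially the paper's route: reduce to DVRs in codimension one by reflexivity, build the unique generic extension $\phi_\eta$ by base change along the separable extension of fraction fields (\autoref{ex.SeparableMapsExtendToFractionFields}), translate into (possibly non-effective) divisors via \autoref{ex.NonEffectiveDivisors}, and compare generators of $\Hom$-modules through the trace so that the different yields $\Delta_{\phi_Y}=\pi^*\Delta_\phi-\Ram_\pi$; your reading of the statement as $\pi^*\Delta_\phi\geq\Ram_\pi$ is the intended one, and your target formula $v_E(D_{\phi_Y})=e_E\,v_\Gamma(D_\phi)-(p^e-1)d_E$ is correct. Two caveats, though. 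First, as written $\tau\bigl(\Phi_S(F^e_*(g\cdot\blank))\bigr)$ lies in $\Hom_R(F^e_*S,R)$, i.e.\ it takes values in $K(X)$, so it cannot equal $\phi_\eta$; the object you mean to analyze is the composite $\Tr\circ\phi_\eta=\phi\circ(F^e_*\Tr)$, which is precisely Steps 2 and 4 of the sketch in \autoref{subsec.FiniteMaps}. Second, the valuation identity itself --- which you correctly flag as the only substantial step --- is asserted rather than derived, so at its crux your write-up is at the same level of completeness as the paper's sketch rather than a full proof.
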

\begin{proof}
We won't prove this but we will sketch the main steps and leave the details as an exercise.  We first work in the separable case.

\begin{description}
\item[Step 1]  The statement is local on $X$ and so we may suppose that $X = \Spec R$, $Y = \Spec S$ and $\sL = \O_X$.  In fact, we may even assume that $R$ is a DVR and that $S$ is a Dedekind domain.
\item[Step 2]  There is a map $\phi_S$ and a commutative diagram:
\[
\xymatrix{
F^e_* S \ar[r]^{\phi_S} & S\\
F^e_* R \ar@{^{(}->}[u] \ar[r]_{\phi} & R \ar@{^{(}->}[u]
}
\]
if and only if there exists a map $\phi_S$ and a commutative diagram.
\[
\xymatrix{
F^e_* S \ar[d]_{F^e_* \Tr} \ar[r]^{\phi_S} & S \ar[d]^{\Tr} \\
F^e_* R \ar[r]_{\phi} & R
}
\]
where $\Tr : S \to R$ is simply the restriction of the field trace $\Tr : K(S) \to K(R)$ to $S$.
\item[Step 3]  $\Hom_R(S, R)$ is isomorphic to $S$ as an $S$-module.  The map $\Tr : S \to R$ is a section of this and so corresponds to a divisor $D$ on $\Spec R$.  This divisor is the ramification divisor $\text{Ram}_{\pi}$ of $\pi : \Spec S \to \Spec R$.
\item[Step 4]  Supposing $\phi_S$ exists, compute the divisor corresponding to $\Tr \circ \phi_S = \phi \circ (F^e_* \Tr)$.  This gives one direction of the if and only if. Working with the fraction fields, as in \autoref{ex.NonEffectiveDivisors}, yields the other direction.
\end{description}

For the inseparable case, it turns out that the only map that can extend is the zero map, see \autoref{ex.InseparableMapExtensions}.
\end{proof}

\subsection{Exercises}

\begin{exercise}
In the setting of \autoref{lem.InversionOfFadjunction}, prove that the divisor $D$ is $F$-pure (as a variety) if and only if $\phi$ is surjective.
\end{exercise}

\begin{starexercise}
\label{ex.DivisorAssociatedToCartierIsZero}  Prove \autoref{lem.InversionOfFadjunction}.\vskip 3pt
\emph{Hint: } Consider the map $\langle \phi \rangle_{F^e_* R} \to \Hom_{R/\langle f \rangle}(F^e_* {R/\langle f \rangle}, {R/\langle f \rangle})$ and prove it is surjective at the codimension $1$ points of ${R/\langle f \rangle}$.  For a solution, see \cite[Proposition 7.2]{SchwedeFAdjunction}.
\end{starexercise}

\begin{starstarexercise}[The $F$-different]
Suppose that $X$ is a normal variety and $D$ is an effective normal Weil divisor such that $K_X + D$ is $\bQ$-Cartier with index not divisible by $p$.  Thus there exists a map $\phi_D : F^e_* \sL \to \O_X$ as in \autoref{eq.MapBijectionWithQDivisors2} corresponding to $D$ for any $e$ such that $(p^e - 1)(K_X + D)$ is Cartier.  It is easy to see that this map is compatible with $D$ and so it induces a map:
\[
\phi_D : F^e_* \sL|_D \to \O_D.
\]
This map corresponds to a $\bQ$-divisor $\Delta_D$ on $D$, again by \autoref{eq.MapBijectionWithQDivisors2}, which is called the \emph{$F$-different}.   Verify all the statements made above.

It is an open question whether or not the $F$-different always coincides with the different, as described in \cite[Chapter 17]{KollarFlipsAndAbundance} or \cite[10.6]{ShokurovThreeDimensionalLogFlips}.  Prove that it either does or does not and write a paper about it, and then tell the authors of this survey paper what you found (this is why the problem gets $**$).  For more discussion see \cite[Remark 7.6]{SchwedeFAdjunction}. \end{starstarexercise}

\begin{starexercise}
Consider the family of cones over elliptic curves:
\[
X = \Spec k[x,y,z,t]/\langle y^2 - x(x-1)(x-t)\rangle \to \bA^1 = \Spec k[t]
\]
Set $\Phi \in \Hom_X(F_* \O_X, \O_X)$ to be the map generating $\Hom_X(F_* \O_X, \O_X)$ as an $F_* \O_X$-module.  Show that $\Phi$ is compatible with the ideal $J = \langle x,y, z\rangle$.  Consider $\Phi_J = \Phi/J$, the map obtained by restricting $\Phi$ to $V(J) \cong \bA^1$.  Show that $\Delta_{\Phi_J}$ is supported exactly at those points whose fibers correspond to supersingular elliptic curves.
\end{starexercise}

\begin{exercise}
Using the notation of \autoref{lem.PhiOnBirational}, suppose that $\Delta_{\phi}$ is the effective divisor associated to $\phi$.  Show that there is a map $$\phi' : F^e_* \big((\pi^* \sL)(\lceil K_{\tld X} - \pi^*(K_X + \Delta_{\phi})\rceil)\big) \to \O_{\tld X}(\lceil K_{\tld X} - \pi^*(K_X + \Delta_{\phi})\rceil )$$ that agrees with $\phi$ wherever $\pi$ is an isomorphism.
\vskip 3pt
{\emph Hint: } It is sufficient to show that there is a map $\phi'' : F^e_* \big((\pi^* \sL)(\lceil K_{\tld X} - \pi^*(K_X + \Delta_{\phi})\rceil - p^e \lceil K_{\tld X} - \pi^*(K_X + \Delta_{\phi})\rceil )\big) \to \O_{\tld X}$.  Now, use the roundings to your advantage and the fact that $\pi^* \sL = \O_{\tld X}( \pi^* (1 - p^e)(K_X + \Delta_{\phi}))$.
\end{exercise}

\begin{exercise}
\label{ex.FpureImpliesLogCanonicalGeneral}
Suppose that $(X, \Delta)$ is sharply $F$-pure.  Prove that for every point $x \in X$ there exists a divisor $\Delta_U$ on a neighborhood $U$ of $x$ such that $\Delta_U \geq \Delta|_U$, such that $(U, \Delta_U)$ is sharply $F$-pure \emph{and} such that $K_U + \Delta_U$ has index not divisible by $p$.  Conclude that \autoref{thm.FPureImpliesLC} holds in full generality.  For a solution, see \cite[Theorem 4.3(ii)]{SchwedeSmithLogFanoVsGloballyFRegular}.
\end{exercise}

\begin{exercise}
\label{ex.SeparableMapsExtendToFractionFields}  Suppose that $R \subseteq S$ is an extension of integral domains with induced \emph{separable} extension of fraction fields $K(R) \subseteq K(S)$.  Fix $\phi : F^e_* R \to R$ to be an $R$-linear map.  Prove that there is always a map $\psi : F^e_* K(S) \to K(S)$ such that $\psi|_R = \phi$.
\vskip 3pt
\emph{Hint: } First form $\phi_{\eta} : F^e_* K(R) \to K(R)$ by localization.  Then tensor this map with $K(S)$ and use the fact that $K(R) \subseteq K(S)$ is separable (unlike $K(R) \subseteq F^e_* K(R) \cong (K(R))^{1/p^e}$).
\end{exercise}

\begin{starexercise}
Prove the separable case of \autoref{thm.ExtendingOverFiniteMaps} by filling in the details of the Steps 1 through 4.  Step 3 is somewhat involved, see for example \cite{MoriyaTheorieDerDerivationen,SchejaStorchUberSpurfunktionen,deSmitTheDifferentAndDifferentials}. On the other hand, see \cite{SchwedeTuckerTestIdealFiniteMaps} for a complete proof.
\end{starexercise}

\begin{exercise}
\label{ex.InseparableMapExtensions}
Prove the inseparable case of \autoref{thm.ExtendingOverFiniteMaps} as follows.  First suppose that $K \subseteq L$ is a purely inseparable extension of fields.  Suppose that
$\phi : F_* K \to K$ is a $K$-linear map that extends to an $L$-linear map $\phi_L : F_* L \to L$.  Prove that $\phi = 0$.

Use the above to prove that now if $L \supseteq K$ is any inseparable map, the only map $\phi : F_* K \to K$ that extends to $\phi_L : F_* L \to L$ is the zero map.  For a complete solution, see \cite[Proposition 5.2]{SchwedeTuckerTestIdealFiniteMaps}.
\end{exercise}

\section{Cartier modules}
\label{sec.CartierModules}
Perhaps the most natural example of a $p^{-e}$-linear map is the trace of the Frobenius $F_*\omega_X \to \omega_X$ on the canonical sheaf of a normal variety as discussed in detail in \autoref{subsec.TraceOfFrob}. In generalizing one is lead to consider the category consisting of (coherent) $\O_X$-modules $\sF$ equipped with a $p^{-e}$-linear map $\kappa \colon F_*^e \sF \to \sF$. We will outline here the resulting theory in a slightly more general setting than considered in \cite{BlickleBoeckleCartierModulesFiniteness}.
\begin{definition}
If $\sL$ is a line bundle on $X$, then a \emph{$(\sL,p^e)$--Cartier module} is a coherent $\O_X$-module $\sF$ equipped with an $\O_X$-linear map
\[
\kappa \colon F^e_*(\sF \tensor \sL) \to \sF\, .
\]
(or equivalently, equipped with a $p^{-e}$ linear map $\sF \tensor \sL \to \sF$). If $\sL \cong \O_X$, we call these objects mostly just Cartier modules.
\end{definition}

\begin{remark}
Cartier modules as originally defined in the work of \cite{BlickleBoeckleCartierModulesFiniteness} were always defined with $\sL \cong \O_X$.  The addition of the $\sL$ adds little to the complication of the basic theory (which generally reduces to the local case where $\sL$ is trivialized).  Although admittedly, it does add some notational complications.  However, this generalization does show up naturally.  Regardless, little will be lost if the reader always assumes that $\sL =\O_X$.
\end{remark}

A morphism of $(\sL,p^e)$--Cartier modules $(\sF,\kappa_{\sF})$ and $(\sG,\kappa_{\sG})$ is an $\O_X$-linear map $\phi \colon \sF \to \sG$ such that the diagram
\[
\xymatrix{
F^e_*(\sF \tensor \sL) \ar[r]^-{\kappa_{\sF}} \ar[d]_{F^e_*(\phi \tensor \id)}& \sF\ \ar[d]^\phi \\
F^e_*(\sG \tensor \sL) \ar[r]_-{\kappa_{\sG}} & \sG
}
\]
commutes. If $(\sF,\kappa)$ is a $(\sL,p^e)$--Cartier module, then we can apply $F^e_*$ to $\kappa \tensor \sL$ to obtain -- using the projection formula -- a map
\[
   \kappa^2 \colon F^{2e}_*(\sF \tensor \sL \tensor \sL^{p^e}) \cong F^e_*(F^e_*(\sF \tensor \sL) \tensor \sL) \to[F^e_*(\kappa \tensor \sL)] F^e_*( \sF \tensor \sL) \to[\kappa] \sF
\]
which equips $\sF$ with the structure of a $(\sL^{1+p^e},p^{2e})$--Cartier module. Iterating this construction in the obvious way (similar to \autoref{eq.ComposeMaps2}) we obtain morphisms
\[
    \kappa^e \colon F^{ne}_* \left(\sF \tensor \sL^{1+p^e+p^{2e}+\cdots+p^{(n-1)e}}\right) \to \sF
\]
for all $n \geq 1$, making $\sF$ into a $(\sL^{\frac{p^{ne}-1}{p^e-1}},p^{ne})$-Cartier module.

\begin{proposition}
The category of (coherent) $(\sL,p^e)$-Cartier modules is an Abelian category. The kernel and cokernel of the underlying quasi-coherent sheaves carry an obvious Cartier module structure and are the kernel and cokernel in the category of Cartier modules.
\end{proposition}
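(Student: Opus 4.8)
The plan is to verify the usual package of axioms, the only non-formal ingredient being the exactness of a single functor. First I would record the additive structure: for Cartier modules $(\sF,\kappa_{\sF})$, $(\sG,\kappa_{\sG})$ the set of morphisms is a subgroup of $\Hom_{\O_X}(\sF,\sG)$, since the defining square is additive in the underlying $\O_X$-map; composition is bilinear; the zero sheaf with the zero map is a zero object; and the biproduct is $(\sF\oplus\sG,\ \kappa_{\sF}\oplus\kappa_{\sG})$, where one uses that both $F^e_*$ and $-\tensor\sL$ commute with finite direct sums so that $F^e_*((\sF\oplus\sG)\tensor\sL)\cong F^e_*(\sF\tensor\sL)\oplus F^e_*(\sG\tensor\sL)$.

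The crucial point is that $T(-) := F^e_*(-\tensor\sL)$ is an \emph{exact} functor on coherent $\O_X$-modules: $-\tensor\sL$ is exact because $\sL$ is locally free, and $F^e_*$ is exact because the Frobenius is finite (in particular affine, indeed a homeomorphism on underlying spaces). Given a morphism $\phi\colon(\sF,\kappa_{\sF})\to(\sG,\kappa_{\sG})$, I would take $\sK=\ker\phi$ and $\sC=\coker\phi$ formed in coherent $\O_X$-modules (coherent since $X$ is Noetherian). Exactness of $T$ gives $T(\sK)\hookrightarrow T(\sF)$ and $T(\sG)\twoheadrightarrow T(\sC)$. Using $\phi\circ\kappa_{\sF}=\kappa_{\sG}\circ F^e_*(\phi\tensor\id)$ one sees that $\kappa_{\sF}$ carries $T(\sK)$ into $\ker\phi=\sK$ (because $\phi\circ\kappa_{\sF}$ vanishes on $T(\sK)$), producing $\kappa_{\sK}\colon T(\sK)\to\sK$; dually $T(\im\phi)=\im(F^e_*(\phi\tensor\id))$ and $\kappa_{\sG}$ sends it into $\im\phi$ since $\kappa_{\sG}\circ F^e_*(\phi\tensor\id)=\phi\circ\kappa_{\sF}$, so $\kappa_{\sG}$ descends to $\kappa_{\sC}\colon T(\sC)\to\sC$. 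A routine diagram chase then shows $(\sK,\kappa_{\sK})$, $(\sC,\kappa_{\sC})$ satisfy the universal properties of kernel and cokernel in the category of Cartier modules: any Cartier morphism into $\sF$ composing to $0$ with $\phi$ factors through $\sK$ as $\O_X$-modules, and such an $\O_X$-factorization of a Cartier morphism is automatically a Cartier morphism; similarly for $\sC$.

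To conclude that the category is abelian I would check that the canonical map from the coimage to the image is an isomorphism of Cartier modules. Here $\im\phi\subseteq\sG$ is a Cartier submodule because $\kappa_{\sG}(T(\im\phi))=\phi(\kappa_{\sF}(T(\sF)))\subseteq\im\phi$, and the coimage $\sF/\sK$ carries the Cartier structure built above (as $\coker(\sK\hookrightarrow\sF)$). The canonical comparison $\sF/\sK\to\im\phi$ is an $\O_X$-isomorphism induced by the Cartier morphism $\phi$, hence is a Cartier morphism; and any Cartier morphism that happens to be an $\O_X$-isomorphism has a Cartier inverse (conjugate the commuting square by the inverse), so it is an isomorphism of Cartier modules, as required. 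I do not anticipate a real obstacle: the one place needing care is the well-definedness of $\kappa_{\sK}$ and $\kappa_{\sC}$, which is exactly where exactness of $T$ is used; everything else is formal. If a slicker route is desired, one can instead observe that $T$, together with the multiplication $T\circ T\Rightarrow T'$ coming from the projection formula, makes Cartier modules the modules over a monad whose underlying functor is exact, and invoke the general fact that such a category is abelian with kernels and cokernels computed on underlying objects; but the direct verification above is short enough to give in full.
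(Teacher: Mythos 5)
Your proof is correct and follows essentially the same route as the paper: the paper's argument is precisely that $\usc\tensor\sL$ and $F^e_*$ are exact, so kernels and cokernels of the underlying sheaves inherit Cartier structures, and your fleshed-out verification is just that argument in detail. Even your closing remark about modules over a monad mirrors the paper's alternative proof, which views $(\sL,p^e)$-Cartier modules as right modules over the noncommutative sheaf of rings $\O_X^\sL[F^e]$.
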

\begin{proof}
This is easy to verify since $\usc \tensor \sL$ as well as $F^e_*\usc$ are exact functors. Alternatively, we may view $(\sL,p^e)$-Cartier modules as the right module category over a certain (non-commutative) sheaf of rings, see \autoref{ex.RFdef} below, which immediately implies that the category is Abelian.
\end{proof}

Compared to a Frobenius splitting, which is nothing but a Cartier module structure on the coherent sheaf $\O_X$, the advantages of working in this larger category of Cartier modules are manifold. For one, there are a number of natural examples of Cartier modules, most prominently the canonical sheaf $\omega_X$ together with the trace of Frobenius as Cartier module structure. Furthermore one has in this category methods to construct new Cartier modules by functorial operations. Most notably there is the notion of a push-forward for proper maps (in the case that $\sL \cong \O_X$), localization and \'etale pullback, and even an extraordinary pullback $f^!$ can be defined \cite{BliBoe.CartierCrys,BlickleBoeckleCartierModulesFiniteness}. We conclude this subsection by illustrating some of these concepts in special cases.  First however, we state some examples.

\begin{examples}[Examples of Cartier modules]
$\text{ }$
\begin{enumerate}
\item The canonical sheaf $\omega_X$ is a Cartier module with structural map $\kappa \colon F_* \omega_X \to \omega_X$ given by the trace map.  More generally, if $\omega_X^{\mydot}$ is the dualizing complex of $X$, then the trace of Frobenius is a map (in the derived category) $F_*\omega_X^{\mydot} \to \omega_X^{\mydot}$. This induces for each $i$ the structure of a Cartier module on the cohomology $\myH^i \omega_X^{\mydot}$.
\item Suppose that $D$ is a Cartier divisor on $X$, then the map
\[
F^e_* (\omega_X(p^e D)) \xrightarrow{\Tr} \omega_X(D)
\]
equips $\omega_X(D)$ with the structure of an $\O_X((p^e - 1)D)$-Cartier module.
\item Suppose that $D$ is an effective integral divisor on $X$, then the composition
\[
F_* \omega_X(D) \hookrightarrow F_* \omega_X(p D) \xrightarrow{\Tr} \omega_X(D)
\]
equips $\omega_X(D)$ with the structure of a Cartier module as well.
\item Suppose that $\pi : Y \to X$ is a proper map of varieties.  Then $R^i \pi_* \omega_Y$ is a Cartier module for any $i \geq 0$.  This is because $F_* R^i \pi_* \omega_Y = R^i \pi_* F_* \omega_Y$.
\item  Set $X = \mathbb{A}^2$ and let $\pi : Y \to X$ be the blowup at the origin with exceptional divisor $E$.  Thus we have $\Tr_Y : F_* \omega_Y \to \omega_Y$ as the trace on $Y$.  Now, $\omega_Y \cong \O_Y(E)$.  Thus by twisting by $-E$ ,we have a $\O_Y( (1-p)E)$-Cartier module structure on $\O_Y$.  Namely, a map $\Tr : F_* (\O_Y( (1-p)E)) \to \O_Y$.
\end{enumerate}
\end{examples}
Since localization at any multiplicative set commutes with pushforward along the Frobenius (see \autoref{ex.LocalizationCompletionAndFeLowerStar} and \autoref{ex.localizationFrob}) we observe that localization preserves the Cartier module structure.
\begin{lemma}
Let $S \subseteq R$ be a multiplicative system and $\sF$ a $(\sL,p^e)$--Cartier module on $X=\Spec R$. Then the map
\[
    F^e_{S^{-1}R *}(S^{-1}\sF \tensor_{S^{-1}R} S^{-1}\sL) \cong S^{-1}F^e_*(\sF \tensor_R \sL) \to[S^{-1}\kappa_{\sF}] S^{-1}\sF
\]
is a $(S^{-1}\sL,p^e)$--Cartier module structure on $S^{-1}\sF$.
\end{lemma}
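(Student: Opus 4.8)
The plan is to produce the asserted Cartier structure by applying the exact localization functor $S^{-1}(\blank)$ to the structural map $\kappa_{\sF} \colon F^e_*(\sF \tensor_R \sL) \to \sF$ and then rewriting its source via the standard identifications. Concretely, first I would record the module-level version of \autoref{ex.LocalizationCompletionAndFeLowerStar}(a): for every $R$-module $M$ there is a canonical isomorphism of $S^{-1}R$-modules
\[
S^{-1}(F^e_* M) \;\cong\; F^e_{S^{-1}R *}(S^{-1}M),
\]
where on the right $S^{-1}M$ is localized first and then given its Frobenius-twisted $S^{-1}R$-module structure. The only point to check is that inverting $S$ on $F^e_* M$ — where $s \in S$ acts by multiplication by $s^{p^e}$ — agrees with inverting $S$ on $M$; this holds because $S$ is multiplicatively closed, so $s^{p^e} \in S$ for every $s \in S$, and conversely $s^{p^e}$ being a unit forces $s$ to be a unit, i.e. $S^{-1}R = (S^{[p^e]})^{-1}R$.

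Applying this with $M = \sF \tensor_R \sL$ and combining with the obvious isomorphism $S^{-1}(\sF \tensor_R \sL) \cong S^{-1}\sF \tensor_{S^{-1}R} S^{-1}\sL$ gives exactly the isomorphism $F^e_{S^{-1}R *}(S^{-1}\sF \tensor_{S^{-1}R} S^{-1}\sL) \cong S^{-1}(F^e_*(\sF \tensor_R \sL))$ displayed in the statement. Now $S^{-1}\kappa_{\sF}$ is, by functoriality of localization, an $S^{-1}R$-linear map $S^{-1}(F^e_*(\sF \tensor_R \sL)) \to S^{-1}\sF$; transporting it across that isomorphism produces an $S^{-1}R$-linear map $F^e_{S^{-1}R *}(S^{-1}\sF \tensor_{S^{-1}R} S^{-1}\sL) \to S^{-1}\sF$, which is precisely the datum of a $(S^{-1}\sL, p^e)$-Cartier module structure on $S^{-1}\sF$. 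No compatibility condition remains to be verified, since a Cartier module structure is nothing more than such a map.

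I do not expect any real obstacle: both $\blank \tensor_R \sL$ and $F^e_*$ are exact and commute with $S^{-1}(\blank)$, so the only thing that needs a moment's care is the twist bookkeeping in the isomorphism of the first paragraph, and that is entirely routine once one remembers $S^{-1}R = (S^{[p^e]})^{-1}R$. Alternatively, once the (non-commutative) sheaf of rings of \autoref{ex.RFdef} is available, the statement becomes the transparent remark that the localization of a right module over that sheaf of rings on $\Spec R$ is naturally a right module over the corresponding sheaf of rings on $\Spec S^{-1}R$.
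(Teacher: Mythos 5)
Your proposal is correct and is essentially the paper's own argument: the paper deduces the lemma from the identification $S^{-1}(F^e_*M)\cong F^e_{S^{-1}R*}(S^{-1}M)$ (the content of \autoref{ex.LocalizationCompletionAndFeLowerStar} and \autoref{ex.localizationFrob}, whose hint is exactly your observation that localizing at $S$ and at its $p^e$-th powers gives the same ring), combined with exactness of localization applied to $\kappa_{\sF}$. Nothing further is needed.
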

In particular, if $j \colon U \subseteq X = \Spec R$ is the inclusion of a basic open subset $U = \Spec R_f$ for some $f \in R$, then the pullback $j^*$ induces a functor from $\sL$--Cartier modules on $X$ to $j^*\sL$--Cartier modules on $U$. Using a \Cech-complex construction, this globalizes to an arbitrary open immersion $U \subseteq X$. Even more generally this holds for any essentially \'etale\footnote{essentially \'etale means essentially of finite type and formally \'etale, \ie~a morphism that can be factored as a localization followed by a finite type \'etale morphism} morphism $j \colon U \to X$, see \cite{BliBoe.CartierCrys}.

\begin{proposition}
Let $j \colon U \to X$ be essentially \'etale and let $\sF$ be a $\sL$-Cartier module on $X$. Then the pullback $j^*\sF$ carries a natural functorial structure of a $j^*\sL$-Cartier module on $U$. The structural map is given by
\[
    F_{U*}(j^*\sF \tensor j^*\sL) \cong F_{U*}j^*(\sF \tensor \sL) \cong j^*F_{X*}(\sF \tensor \sL) \to[j^*\kappa] j^*\sF\, .
\]
\end{proposition}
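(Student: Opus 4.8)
The plan is to reduce the whole statement to a single base-change isomorphism for Frobenius push-forward along $j$, and then to assemble the structural map and check functoriality by formal manipulations. The key input I would establish first is the natural isomorphism
\[
    j^* F_{X*} \sG \iso F_{U*} j^* \sG
\]
for every quasi-coherent $\O_X$-module $\sG$. Granting this, the structural map on $j^*\sF$ is exactly the composite displayed in the statement: the first arrow is $F_{U*}$ applied to the canonical isomorphism $j^*\sF \tensor j^*\sL \iso j^*(\sF \tensor \sL)$; the middle arrow is the base-change isomorphism above, specialized to $\sG = \sF \tensor \sL$; and the last arrow is $j^*\kappa$. Composing these gives the required map $F_{U*}(j^*\sF \tensor j^*\sL) \to j^*\sF$, \ie~a $j^*\sL$-Cartier module structure on $j^*\sF$.

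The main obstacle is the base-change isomorphism $j^* F_{X*} \iso F_{U*} j^*$ itself, and it genuinely uses the hypothesis on $j$: the analogous statement fails badly for closed immersions or for $\Proj$ (\cf~\autoref{ex.FPushforwardAndProj}), which is precisely why Cartier modules pull back along essentially \'etale maps but only \emph{push forward} along closed immersions. I would argue as follows. Since the Frobenius is functorial, the square with horizontal arrows $F_U$, $F_X$ and both vertical arrows equal to $j$ commutes; what has to be shown is that this square is \emph{Cartesian}, equivalently that the relative Frobenius $F_{U/X} \colon U \to U \times_{X,F_X} X$ is an isomorphism. For $j$ a localization this is immediate because forming $p^e$-th powers commutes with localization (\cf~\autoref{ex.LocalizationCompletionAndFeLowerStar}); for $j$ \'etale it is the standard fact that the relative Frobenius of an \'etale morphism is an isomorphism, see \cite{BliBoe.CartierCrys} and the references there; and for arbitrary essentially \'etale $j$ it follows by factoring $j$ as a localization followed by a finite-type \'etale morphism and composing Cartesian squares. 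Once the square is known to be Cartesian, flat base change for the affine morphism $F_X$ (applicable since $j$ is flat) yields the isomorphism $j^* F_{X*} \sG \cong F_{U*} j^* \sG$, natural in $\sG$.

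It remains to verify functoriality, which is then automatic. Given a morphism $\phi$ of $\sL$-Cartier modules, applying the exact functor $j^*$ to its defining square and inserting the monoidality and base-change isomorphisms — both natural in their arguments — exhibits $j^*\phi$ as a morphism of $j^*\sL$-Cartier modules; the same naturality, now with respect to $j$, shows compatibility with composition of essentially \'etale morphisms. Since the structural map in the statement is written as an intrinsic composite, no choice of factorization of $j$ enters its \emph{definition} (the factorization is used only to produce the base-change isomorphism), so there is nothing further to check. Overall, the only real content is the Cartesian property of the Frobenius square over an essentially \'etale base; everything else is formal.
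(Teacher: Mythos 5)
Your argument is correct and follows the paper's own proof: the whole content is that the Frobenius square over an essentially \'etale morphism is Cartesian, so flat base change along the affine morphism $F_X$ gives the natural isomorphism $j^*F_{X*} \cong F_{U*}j^*$, and the structural map is then the displayed composite. The extra details you supply (relative Frobenius of an \'etale map is an isomorphism, factorization of essentially \'etale maps, functoriality checks) simply flesh out what the paper leaves as ``the key point.''
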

\begin{proof}
The key point is the fact that for an essentially \'etale morphism $j \colon U \to X$ the diagram
\[
\xymatrix{
    U \ar[r]^j \ar[d]_{F_Y} & X \ar[d]^{F_X} \\
    U \ar[r]^j & X
}
\]
is Cartesian and that the base change morphism $j^* F_{X*} \cong F_{U_*}j^*$ is an isomorphism since $j$ is flat, see \cite{HochsterHunekeTC1}. This justifies the definition of the Cartier structure on $j^*\sF$.
\end{proof}

For a closed immersion $i \colon Y \to X$, the pullback $i^*$ does not give a functor on Cartier modules. The reason is precisely that the above diagram is not Cartesian in this case. However there is an exotic restriction functor one can define. For concreteness, let $X=\Spec R$ be affine and let $Y = \Spec R/I$ for some ideal $I \subseteq R$. Then, for an $R$-module $M$, the $R/I$ submodule $i^\flat(M) \colonequals \Hom_R(R/I,M)= \{ m \in M | Im=0 \}$ is just the $I$-torsion submodule $M[I] \subseteq M$. Note that $F_*(M[I]) \subseteq F_*(M[I^{[p]}]) = (F_*M)[I]$ which shows that $F_*(M[I])$, is contained in the $I$-torsion submodule $(F_* M)[I]$ of $F_* M$. Hence, if $\kappa \colon F_*M \to M$ is a Cartier module structure on $M$, then we have that this restricts to a map
\[
    \kappa \colon F_*(M[I]) \to M[I]
\]
giving $M[I]$ a natural Cartier module structure. The same construction works globally and more generally for $(\sL,p^e)$-Cartier modules:
\begin{proposition}
Let $i \colon Y \into X$ be a closed immersion given by a sheaf of ideals $I$ of $\O_X$, and let $\sF$ be a $(\sL,p^e)$-Cartier module on $X$. Then the $\O_Y$-module (via action on the first argument) $i^\flat(\sF) = \Hom_{\O_X}(i_*\O_Y,\sF)=\sF[I]$ carries a natural functorial structure of a $(\sL|_Y,p^e)$-Cartier module on $Y$. The structural map is given by
\[
    F^e_*(\sF[I] \tensor_{\O_Y} \sL|_Y)
    \subseteq F^e_*((\sF \tensor_{\O_X} \sL)[I^{[p^e]}])
    = (F_*^e(\sF \tensor_{\O_X} \sL))[I] \to[\kappa_{\sF}] \sF[I] = i^\flat \sF\, .
\]
\end{proposition}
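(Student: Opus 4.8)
The plan is to build the structural map by simply restricting $\kappa_{\sF}$ to suitable torsion submodules, exactly as indicated in the displayed formula in the statement, and then to check $\O_Y$-linearity and functoriality and finally to glue. Since $F^e_*$, the functor $\usc\tensor\sL$, and $i^\flat=\sHom_{\O_X}(i_*\O_Y,\usc)$ all commute with restriction to affine opens, I would first reduce to the affine situation $X=\Spec R$, $Y=\Spec R/I$, with $\sF$ corresponding to an $R$-module $M$, in which case $i^\flat\sF$ is the $I$-torsion submodule $M[I]$; one may even localize further so that $\sL$ is free of rank one, though this is not essential.

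The technical core I would isolate first is the identity
\[
F^e_*\bigl(N[I^{[p^e]}]\bigr)=(F^e_*N)[I]
\]
of submodules of $F^e_*N$, valid for any $R$-module $N$ (here $N[J]$ denotes the $J$-torsion submodule): as abelian groups $F^e_*N=N$, and $r\cdot m=0$ in $F^e_*N$ for all $r\in I$ says exactly $r^{p^e}m=0$ in $N$ for all $r\in I$, i.e. $I^{[p^e]}m=0$, using that $I^{[p^e]}$ is generated by the $p^e$-th powers of any generators of $I$ (\autoref{ex.FrobeniusPowerChoiceOfGenerators}, whose content is the Frobenius additivity $(\sum a_i f_i)^{p^e}=\sum a_i^{p^e}f_i^{p^e}$). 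Applying this with $N=\sF\tensor_{\O_X}\sL$, and using that $\sL$ is locally free so that $\usc\tensor\sL$ is exact and preserves torsion submodules, I obtain the chain of $\O_X$-submodules
\[
i^\flat\sF\tensor_{\O_Y}\sL|_Y=\sF[I]\tensor_{\O_X}\sL\subseteq(\sF\tensor_{\O_X}\sL)[I]\subseteq(\sF\tensor_{\O_X}\sL)[I^{[p^e]}],
\]
and after applying $F^e_*$ the inclusion $F^e_*(i^\flat\sF\tensor_{\O_Y}\sL|_Y)\subseteq(F^e_*(\sF\tensor_{\O_X}\sL))[I]$ appearing in the statement. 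Since $\kappa_{\sF}$ is $\O_X$-linear it sends $I$-torsion to $I$-torsion, hence restricts to a map $(F^e_*(\sF\tensor_{\O_X}\sL))[I]\to\sF[I]=i^\flat\sF$; composing with the inclusion gives the claimed map $\kappa_{i^\flat\sF}$.

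It then remains to verify three formal points. First, $\kappa_{i^\flat\sF}$ is genuinely $\O_Y$-linear: here I would note that $F_X\circ i=i\circ F_Y$ (on rings both composites send $r$ to $r^{p^e}\bmod I$), so pushforward along the Frobenius of a closed subscheme is compatible with $i_*$, whence $F^e_*(i^\flat\sF\tensor\sL|_Y)$ is in fact an $\O_Y$-module, and $\kappa_{i^\flat\sF}$, being the restriction of the $\O_X$-linear $\kappa_{\sF}$ to sub-$\O_X$-modules annihilated by $I$, is automatically $\O_Y$-linear. Second, functoriality: a morphism $\phi\colon(\sF,\kappa_{\sF})\to(\sG,\kappa_{\sG})$ of $(\sL,p^e)$-Cartier modules is $\O_X$-linear, so restricts to $i^\flat\phi\colon i^\flat\sF\to i^\flat\sG$, and compatibility of $i^\flat\phi$ with the structural maps follows by restricting the defining square of $\phi$ to the torsion submodules and invoking naturality of $F^e_*$ and of $\usc\tensor\sL$. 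Third, globalization: since $F^e_*$, $\usc\tensor\sL$ and $i^\flat$ commute with localization (\autoref{ex.LocalizationCompletionAndFeLowerStar}), the affine-local maps agree on overlaps and glue; alternatively one obtains the whole structural map in one stroke by applying $\sHom_{\O_X}(i_*\O_Y,\usc)$ to $\kappa_{\sF}$ and identifying its source using finiteness of $F^e$ together with the identity above. I expect no serious obstacle here: essentially the entire content is the careful bookkeeping of torsion submodules under the twist-by-$p^e$-th-powers inherent in $F^e_*$, concentrated in the identity $F^e_*(N[I^{[p^e]}])=(F^e_*N)[I]$ and the commutation $F_X\circ i=i\circ F_Y$, after which everything is routine diagram chasing.
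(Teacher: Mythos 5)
Your proposal is correct and follows essentially the same route as the paper: the whole content is the affine-local identity $F^e_*\big(M[I^{[p^e]}]\big)=(F^e_*M)[I]$ together with the observation that the $\O_X$-linear map $\kappa_{\sF}$ preserves $I$-torsion, after which the structural map on $i^\flat\sF=\sF[I]$ is just the restriction of $\kappa_{\sF}$. Your additional checks of $\O_Y$-linearity, functoriality, and gluing are exactly the routine verifications the paper leaves implicit when it says the same construction works globally.
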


Finally, let us consider a proper morphism of varieties $\pi \colon Y \to X$. Since the Frobenius commutes with any morphism one has a natural isomorphism of functors $F_{X*}^e \circ \pi_* \cong \pi_* \circ F_{Y*}^e$ which implies that the pushforward induces a functor on Cartier modules as well.
\begin{proposition}
Let $\pi \colon Y \to X$ be a proper morphism and $\kappa \colon F^e_* \sF \to \sF$ a Cartier module on $Y$. Then the map
\[
    F^e_*(\pi_*(\sF)) \cong \pi_*(F^e_* \sF) \to[\pi_*(\kappa)] \pi_*\sF
\]
is a Cartier module structure on $\pi_*\sF$. The same construction also holds for the higher derived images $R^i\pi_*\sF$.
\end{proposition}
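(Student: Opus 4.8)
The plan is to deduce everything from the single structural fact that the absolute Frobenius is natural: for the proper morphism $\pi \colon Y \to X$ one has $F_X \circ \pi = \pi \circ F_Y$, and iterating, $F_X^e \circ \pi = \pi \circ F_Y^e$. Functoriality of pushforward then gives an \emph{equality} (under the canonical identifications) of functors $\pi_* \circ F_{Y*}^e = F_{X*}^e \circ \pi_*$, and since $F_Y$ and $F_X$ are finite --- hence affine, so $RF_{Y*} = F_{Y*}$ and $RF_{X*} = F_{X*}$ --- passing to derived pushforwards and taking $i$-th cohomology yields
\[
    R^i\pi_* \circ F_{Y*}^e \;=\; F_{X*}^e \circ R^i\pi_*
\]
for every $i \geq 0$ (the case $i=0$ being the assertion for $\pi_*$ itself). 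No Cartesian-square hypothesis is needed here: unlike the essentially \'etale case, I am composing pushforwards rather than comparing a pushforward with a pullback, so mere commutativity of the Frobenius square suffices.

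Granting this, I would simply \emph{define} the structural map on $R^i\pi_*\sF$ as the composite
\[
    F^e_*\big(R^i\pi_*\sF\big) \;=\; R^i\pi_*\big(F^e_{Y*}\sF\big) \to[R^i\pi_*(\kappa)] R^i\pi_*\sF,
\]
where the first identification is the one above and the second map is obtained by applying $R^i\pi_*$ to the $\O_X$-linear (indeed $\O_Y$-linear) structural map $\kappa \colon F^e_{Y*}\sF \to \sF$. This composite is $\O_X$-linear, hence a $p^{-e}$-linear endomorphism of $R^i\pi_*\sF$. The only point that is not pure formalism is that $R^i\pi_*\sF$ is again \emph{coherent}; this is precisely where properness of $\pi$ is used, via the classical finiteness theorem for higher direct images of coherent sheaves under a proper morphism (our $X$ being Noetherian, of finite type over $k$ as throughout). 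Thus $\big(R^i\pi_*\sF,\ R^i\pi_*(\kappa)\big)$ is a Cartier module on $X$. The $(\sL,p^e)$-graded variant is handled verbatim with $\sF \tensor \sL$ in place of $\sF$, using the projection formula for the affine map $F_Y$ to rewrite $F^e_{Y*}(\sF \tensor \sL)$; this is cleanest, and stated, for $\sL \cong \O_X$.

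Finally I would check that $R^i\pi_*$ is a \emph{functor} on Cartier modules: given a morphism $\phi \colon (\sF,\kappa_\sF) \to (\sG,\kappa_\sG)$ on $Y$, naturality of the identification $R^i\pi_* \circ F_{Y*}^e = F_{X*}^e \circ R^i\pi_*$ in the sheaf argument gives one commuting square, and applying $R^i\pi_*$ to the defining diagram of $\phi$ gives the other; pasting them shows $R^i\pi_*\phi$ intertwines the two structural maps. (One may also bypass this bookkeeping by viewing $(\sL,p^e)$-Cartier modules as right modules over the sheaf of non-commutative rings of \autoref{ex.RFdef}, and observing that $R^i\pi_*$ of such a module carries a natural action of the pushed-forward algebra, which via $F_X^e \circ \pi = \pi \circ F_Y^e$ restricts to the desired Cartier structure.) I expect no serious obstacle: the entire content is the commuting Frobenius square together with affineness of $F$, and the single non-diagrammatic ingredient is the coherence of $R^i\pi_*$ for $\pi$ proper --- the step most worth flagging explicitly.
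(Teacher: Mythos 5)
Your proposal is correct and follows essentially the same route as the paper, which simply observes that $F_X^e \circ \pi = \pi \circ F_Y^e$ gives a natural identification $F_{X*}^e \circ \pi_* \cong \pi_* \circ F_{Y*}^e$ (and likewise for $R^i\pi_*$, since $F_*$ is exact) and then applies $\pi_*(\kappa)$. You merely spell out the details the paper leaves implicit --- exactness/affineness of Frobenius for the derived identification, coherence of $R^i\pi_*\sF$ via properness, and functoriality --- all of which are correct.
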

Note, however, that if $\sF$ is a $(\sL,p^e)$--Cartier module there is no obvious way to equip $\pi_*\sF$ with such a structure unless $\sL$ is of the form $\pi^*\sL'$ for some invertible sheaf on $X$. In this case, using the projection formula, one obtains
\[
    F^e_*(\pi_*\sF \tensor \sL') \cong F^e_*(\pi_*(\sF \tensor \pi^*\sL')  \cong \pi_*(F^e_*(\sF \tensor \sL) \to[\pi_*(\kappa)] \pi_*\sF
\]
as a Cartier structure on $\pi_*\sF$.

\begin{example}
Let $\kappa \colon F^e_*\sF  \to \sF$ be a Cartier module, then the pushforward along the Frobenius (which is an affine map) equips $F_*\sF$ with the Cartier module structure
\[
    F^e_*\kappa \colon F^e_* (F^e_*(\sF)) \to F^e_*\sF
\]
making $\kappa \colon F^e_*\sF \to \sF$ into a map of Cartier modules.
\end{example}

\subsection{Finiteness results for Cartier modules.}\label{sec.AlgebrasAndCrystals}
In this section we state, and outline the proofs of two key structural results which make the category of Cartier modules interesting. But first we introduce the basic concept of nilpotence of a Cartier module and recall some elementary constructions, starting with the following simple Lemma whose verification we leave to the reader in \autoref{ex.ProofOfDescendingChain}.
\begin{lemma}\label{lem.ImagesKappaAreCartier}
Let $\kappa \colon F^e_*(\sF \tensor \sL) \to \sF$ be a Cartier module. Then the images $\sF_n \colonequals \kappa^n(F^{ne}_*(\sF \tensor \sL^{1+p^e+\cdots+p^{(n-1)e}}) \subseteq \sF$ are Cartier submodules of $\sF$, and satisfy the properties:
\begin{enumerate}
\item $\sF_n \supseteq \sF_{n+1}$.
\item $\kappa (F^e_*(\sF_n \tensor \sL)) = \sF_{n+1}$.
\item If $S \subseteq \O_X$ is a multiplicative set, then $S^{-1}\sF_n = (S^{-1}\sF)_n$.
\item The sequence of closed subsets $Y_n \colonequals \Supp \sF_n/\sF_{n+1}$ is descending.
\end{enumerate}
\end{lemma}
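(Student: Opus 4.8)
The plan is to get everything out of part~(2), since (1), the Cartier-submodule claim, (3) and (4) are then quick formal consequences. Throughout I would use that $F^e_*(\blank)$ is an exact functor (the Frobenius is finite, hence affine) and that $\blank\tensor\sL$ is exact, so both commute with forming images, kernels and cokernels; I would also use that $F^e$ is the identity on the underlying topological space, so $\Supp F^e_*\sG=\Supp\sG$, and that $\Supp(\sG\tensor\sL)=\Supp\sG$ for the line bundle $\sL$. For notational convenience set $\sF_0\colonequals\sF$.

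First I would prove~(2). Unpacking the ``obvious iteration'' defining $\kappa^n$ gives the recursion $\kappa^{n+1}=\kappa\circ F^e_*(\kappa^n\tensor\id_\sL)$, where the source of the right-hand side is identified through the projection formula \autoref{eq.FrobeniusProjectionFormula} as
\[
F^{(n+1)e}_*\big(\sF\tensor\sL^{1+p^e+\cdots+p^{ne}}\big)\;\cong\;F^e_*\Big(F^{ne}_*\big(\sF\tensor\sL^{1+p^e+\cdots+p^{(n-1)e}}\big)\tensor\sL\Big),
\]
using $1+p^e+\cdots+p^{(n-1)e}+p^{ne}=1+p^e+\cdots+p^{ne}$. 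Since $\im(\kappa\circ g)=\kappa(\im g)$ for any $g$, and since exactness gives $\im\!\big(F^e_*(\kappa^n\tensor\id_\sL)\big)=F^e_*\big(\im(\kappa^n)\tensor\sL\big)=F^e_*(\sF_n\tensor\sL)$, we obtain $\sF_{n+1}=\im(\kappa^{n+1})=\kappa\big(F^e_*(\sF_n\tensor\sL)\big)$, which is~(2). I expect the only genuinely fiddly point of the whole lemma to be precisely this bookkeeping — carrying the twists $\sL^{1+p^e+\cdots}$ correctly through the projection formula and checking that the recursion is well-posed; everything afterwards is formal.

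Part~(1) then follows by induction on $n$: by~(2) with $n=0$ one has $\sF_1=\kappa(F^e_*(\sF\tensor\sL))\subseteq\sF=\sF_0$ trivially, and if $\sF_n\subseteq\sF_{n-1}$ then applying the exact functor $F^e_*((\blank)\tensor\sL)$ followed by $\kappa$ gives $\sF_{n+1}=\kappa(F^e_*(\sF_n\tensor\sL))\subseteq\kappa(F^e_*(\sF_{n-1}\tensor\sL))=\sF_n$. For the Cartier-submodule statement: $\sF_n$ is coherent ($F^{ne}_*$ preserves coherence because $F$ is finite, and $\sF_n$ is the image of a morphism of coherent sheaves on a Noetherian scheme), it is a subsheaf of $\sF$, and by~(2) together with~(1) we have $\kappa(F^e_*(\sF_n\tensor\sL))=\sF_{n+1}\subseteq\sF_n$; hence $\kappa$ restricts to a structure map $F^e_*(\sF_n\tensor\sL)\to\sF_n$ for which the inclusion $\sF_n\hookrightarrow\sF$ is a morphism of Cartier modules.

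For~(3): each of $F^{ne}_*$, $\blank\tensor\sL^{(\cdot)}$ and ``take the image of a morphism'' commutes with the exact localization functor $S^{-1}(\blank)$ — the first by \autoref{ex.LocalizationCompletionAndFeLowerStar} and \autoref{ex.localizationFrob}, the others being formal — and, by naturality of the construction, $S^{-1}(\kappa^n)$ is the $n$-th iterate of the structure map $S^{-1}\kappa$ of the localized Cartier module. Hence $S^{-1}\sF_n=S^{-1}\im(\kappa^n)=\im\big((S^{-1}\kappa)^n\big)=(S^{-1}\sF)_n$. Finally, for~(4), restricting the surjection $F^e_*(\sF_n\tensor\sL)\twoheadrightarrow\sF_{n+1}$ of~(2) to the subsheaf $F^e_*(\sF_{n+1}\tensor\sL)$ yields the surjection $F^e_*(\sF_{n+1}\tensor\sL)\twoheadrightarrow\sF_{n+2}$, so passing to quotients gives a surjection $F^e_*\big((\sF_n/\sF_{n+1})\tensor\sL\big)\twoheadrightarrow\sF_{n+1}/\sF_{n+2}$; therefore
\[
Y_{n+1}=\Supp(\sF_{n+1}/\sF_{n+2})\subseteq\Supp F^e_*\big((\sF_n/\sF_{n+1})\tensor\sL\big)=\Supp(\sF_n/\sF_{n+1})=Y_n,
\]
so the closed sets $Y_n$ descend. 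Alternatively one checks~(4) stalk-locally: if $(\sF_n)_x=(\sF_{n+1})_x$ then, applying~(2) over $\O_{X,x}$ and using~(3), $(\sF_{n+2})_x=\kappa_x\big(F^e_*((\sF_{n+1})_x\tensor\sL_x)\big)=\kappa_x\big(F^e_*((\sF_n)_x\tensor\sL_x)\big)=(\sF_{n+1})_x$, i.e.\ $x\notin Y_{n+1}$.
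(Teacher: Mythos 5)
Your proof is correct, and it is exactly the intended argument: the paper gives no proof of this lemma, leaving the verification to the reader as \autoref{ex.ProofOfDescendingChain}, so there is no alternative route to compare against. Deriving the recursion $\sF_{n+1}=\kappa\big(F^e_*(\sF_n\tensor\sL)\big)$ from exactness of $F^e_*$ and $\blank\tensor\sL$ and then reading off (1), the submodule claim, (3) and (4) formally (including the localization compatibility via \autoref{ex.LocalizationCompletionAndFeLowerStar}/\autoref{ex.localizationFrob} and the support argument via $\Supp F^e_*\sG=\Supp\sG$) is precisely the bookkeeping the authors had in mind.
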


An important notion in the theory of Cartier modules, and in particular, for its applications to finiteness results for local cohomology for local rings, is the notion of nilpotence.
\begin{definition}
Let $\sF$ be a coherent Cartier module on $X$. We say that $\sF$ is \emph{nilpotent} if for some $n \geq 0$ the $n$th power $\kappa^n$ of the structural map $\kappa$ is zero.
\end{definition}
Some basic properties of this notion are collected in the following Lemma:
\begin{lemma}
Let $\kappa \colon F^e_*(\sF \tensor \sL) \to \sF$ be a Cartier module. Denote by $\sF^n \subseteq \sF$ the Cartier submodule of $\sF$ consisting of all local sections $s$ such that $\kappa^n( F^{ne}_*(\O_C \cdot s \tensor \sL^{1+p^e+\cdots +p^{(n-1)e}}) = 0$. Then
\begin{enumerate}
\item $\sF^n \subseteq \sF^{n+1}$ for all $n \geq 0$.
\item $\kappa (F_*^e(\sF^{n+1} \tensor \sL)) \subseteq \sF^n$.
\item If $S \subseteq \O_X$ is a multiplicative set, then $S^{-1}\sF^n = (S^{-1}\sF)^n$.
\item If $\sF$ is coherent, then the ascending sequence stabilizes and the stable member $\sF_{\nil}=\bigcup_n \sF^n$ is the maximal nilpotent Cartier submodule of $\sF$.
\end{enumerate}
\end{lemma}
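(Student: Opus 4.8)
The plan is to treat the four assertions in order, reducing everything to an affine chart $X=\Spec R$ with $\sL$ trivialized (the sheaf-theoretic statements then follow by gluing, exactly as for \autoref{lem.ImagesKappaAreCartier}). Write $\sL_{n}\colonequals \sL^{1+p^{e}+\cdots+p^{(n-1)e}}$, so that the iterated structural map reads $\kappa^{n}\colon F^{ne}_*(\sF\tensor\sL_{n})\to\sF$. The one structural fact I would isolate first is that for all $n,j\geq 0$ the map $\kappa^{n+j}$ admits the two factorizations
\[
\kappa^{n+j}\;=\;\kappa^{j}\circ F^{je}_*\!\big(\kappa^{n}\tensor\sL_{j}\big)\;=\;\kappa^{n}\circ F^{ne}_*\!\big(\kappa^{j}\tensor\sL_{n}\big),
\]
which follow from the definition of $\kappa^{2}$ given in the text by repeated use of the functoriality of $F^{e}_*$, the projection formula, and the identity $\sL_{n}\cdot\sL_{j}^{\,p^{ne}}=\sL_{n+j}$. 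All four parts then reduce to bookkeeping against these identities and the $\O_{X}$-linearity of $\kappa$.

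For (a), given a section $s$ with $\kappa^{n}\big(F^{ne}_*(\O_{X}s\tensor\sL_{n})\big)=0$, I would use that $F^{(n+1)e}_*(\O_{X}s\tensor\sL_{n+1})\cong F^{e}_*\!\big(F^{ne}_*(\O_{X}s\tensor\sL_{n})\tensor\sL\big)$ by the projection formula; then the factorization $\kappa^{n+1}=\kappa\circ F^{e}_*(\kappa^{n}\tensor\sL)$ shows that $F^{e}_*(\kappa^{n}\tensor\sL)$ already annihilates this submodule, hence so does $\kappa^{n+1}$, so $s\in\sF^{n+1}$. For (b), by additivity it suffices to show that every local section $t=\kappa\big(F^{e}_*(s\tensor\ell)\big)$, with $s$ a section of $\sF^{n+1}$ and $\ell$ a section of $\sL$, lies in $\sF^{n}$. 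By $\O_{X}$-linearity of $\kappa$ one has $\O_{X}\!\cdot\! t\subseteq\kappa\big(F^{e}_*(\O_{X}s\tensor\sL)\big)$, hence $F^{ne}_*(\O_{X}t\tensor\sL_{n})\subseteq F^{ne}_*\!\big(\kappa(F^{e}_*(\O_{X}s\tensor\sL))\tensor\sL_{n}\big)$, and the second factorization $\kappa^{n+1}=\kappa^{n}\circ F^{ne}_*(\kappa\tensor\sL_{n})$ identifies $\kappa^{n}$ of the right-hand module with $\kappa^{n+1}\big(F^{(n+1)e}_*(\O_{X}s\tensor\sL_{n+1})\big)=0$. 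So $t\in\sF^{n}$; together with (a), this also shows each $\sF^{n}$ is a Cartier submodule.

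For (c), first note $\kappa^{n}$ localizes to $S^{-1}\kappa^{n}$ because localization commutes with $F^{e}_*$ (\autoref{ex.LocalizationCompletionAndFeLowerStar}) and with $\kappa$, hence with any composite of these; this gives $S^{-1}\sF^{n}\subseteq(S^{-1}\sF)^{n}$ at once. Conversely, if $s/g\in(S^{-1}\sF)^{n}$, then because $R$ is $F$-finite the $\O_{X}$-module $F^{ne}_*(\O_{X}s\tensor\sL_{n})$ has finitely many generators; applying $S^{-1}\kappa^{n}$ to them and clearing one common denominator $g'\in S$ gives $g'\cdot\kappa^{n}\big(F^{ne}_*(\O_{X}s\tensor\sL_{n})\big)=0$, and pushing $g'$ through the pushforward via $g'\cdot F^{ne}_*(m)=F^{ne}_*((g')^{p^{ne}}m)$ shows $(g')^{p^{ne}}s\in\sF^{n}$, so $s/g\in S^{-1}\sF^{n}$. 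For (d), each $\sF^{n}$ is an $\O_{X}$-submodule (closed under sums since $F^{ne}_*$ is additive, and under scaling since $\O_{X}\!\cdot\!(rs)\subseteq\O_{X}\!\cdot\! s$) and a Cartier submodule by (b); since $\sF$ is coherent over the Noetherian scheme $X$, the ascending chain $\sF^{0}\subseteq\sF^{1}\subseteq\cdots$ stabilizes at some $\sF^{N}$. A local section of $F^{Ne}_*(\sF^{N}\tensor\sL_{N})$ is a sum of terms $F^{Ne}_*(s\tensor\ell)$ with $s$ a section of $\sF^{N}$, so $\kappa^{N}$ kills it; thus $\sF_{\nil}=\bigcup_{n}\sF^{n}=\sF^{N}$ is nilpotent. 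Maximality is immediate: for a nilpotent Cartier submodule $\sG\subseteq\sF$ with $\kappa^{m}|_{\sG}=0$, any local section $s$ of $\sG$ satisfies $\kappa^{m}\big(F^{me}_*(\O_{X}s\tensor\sL_{m})\big)\subseteq\kappa^{m}\big(F^{me}_*(\sG\tensor\sL_{m})\big)=0$, so $\sG\subseteq\sF^{m}\subseteq\sF_{\nil}$.

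The step I expect to be the main obstacle is the nontrivial inclusion in (c): one has to justify that a single common denominator suffices, which is exactly where $F$-finiteness of the rings enters (it guarantees that $F^{ne}_*$ of a finitely generated module is again finitely generated, so only finitely many local conditions need to be cleared). Everything else is a mechanical chase through the two factorizations of $\kappa^{n}$ and the exactness of $F^{e}_*$ and $\usc\tensor\sL$.
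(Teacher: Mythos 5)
Your proof is correct; note that the paper itself states this lemma without proof (it is treated as a routine verification, parallel to \autoref{lem.ImagesKappaAreCartier}, whose verification is left to \autoref{ex.ProofOfDescendingChain}), and your argument is precisely the intended one, resting on the two projection-formula factorizations of $\kappa^{n+j}$ together with the exactness of $F^{e}_*$ and $\usc\tensor\sL$. The only step requiring real input is the inclusion $(S^{-1}\sF)^{n}\subseteq S^{-1}\sF^{n}$ in (c), and you handle it correctly by clearing a single denominator through $g'\cdot F^{ne}_*(m)=F^{ne}_*\big((g')^{p^{ne}}m\big)$, with the needed finite generation supplied either by the standing $F$-finiteness assumption or, when $\sF$ is coherent, simply by Noetherianity, since $\kappa^{n}\big(F^{ne}_*(\O_X s\tensor\sL^{1+p^e+\cdots+p^{(n-1)e}})\big)$ is a submodule of $\sF$.
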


Nilpotent Cartier modules form a Serre subcategory of all coherent Cartier modules, \ie~they form an Abelian subcategory which is closed under extension. The only non-trivial part here is the non-closedness under extensions, see \autoref{ex.NilpotentExtensions}.

The first structural result for Cartier modules we will show is that the descending sequence of iterated images stabilizes.  This result was first proved in \cite[Lemma 13.1]{Gabber.tStruc}.  In fact, this result is essentially Matlis dual to a famous result of Hartshorne and Speiser \cite[Proposition 1.11]{HartshorneSpeiserLocalCohomologyInCharacteristicP} and generalized by G.~Lyubeznik \cite{LyubeznikFModulesApplicationsToLocalCohomology}, also \cf \cite{SharpTightClosureTestExponentsForCertain,SharpOnTheHSLThm} and \cite{Blickle.MinimalGamma}.

\begin{proposition}
\label{prop.CartierImagesStabilize}
Let $(\sF,\kappa)$ be a coherent $(\sL,p^e)$--Cartier module. Then the descending sequence of images \[
    \sF_n \colonequals \kappa^n(F^{ne}_*(\sF \tensor \sL^{1+p^e+\cdots + p^{(n-1)e}}) \subseteq \sF
\]
stabilizes. In particular, the stable image $\sigma(\sF) \subseteq \sF$ is the largest $(\sL,p^e)$-Cartier submodule with the property that the structural map $\kappa$ is surjective.
\end{proposition}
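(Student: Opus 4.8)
The plan is to reduce the statement to a well-known theorem about Frobenius actions on Artinian modules --- the stabilization result of Hartshorne and Speiser --- via Matlis duality, and then to deduce the ``in particular'' clause formally.

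First I would carry out a sequence of standard reductions. The assertion is local, and the formation of the $\sF_n$ commutes with restriction to open subsets (\autoref{lem.ImagesKappaAreCartier}); since $X$ is Noetherian, a descending chain of coherent subsheaves stabilizes as soon as it does on each member of a finite affine cover, so after refining a cover we may take $X = \Spec R$ affine with $\sL$ trivialized. Then $\kappa$ is a $p^{-e}$-linear map $F^e_* M \to M$ on a finite $R$-module $M$, the modules $M_n = \kappa^n(F^{ne}_* M)$ are genuine $R$-submodules of $M$ (images of $R$-linear maps), and it is enough to produce one $n$ with $M_n = M_{n+1}$, since then $M_{n+1} = \kappa(F^e_* M_n) = \kappa(F^e_* M_{n+1}) = M_{n+2}$, and so on. Because the closed sets $\Supp(M_n/M_{n+1})$ form a descending chain in the Noetherian space $\Spec R$ (\autoref{lem.ImagesKappaAreCartier}), their stable value is empty exactly when every point eventually leaves them, so it suffices to show that the localized chain $(M_n)_{\bm}$ stabilizes for each maximal ideal $\bm$; and by faithful flatness of completion (together with the fact that $F^e_*$ commutes with localization and completion, \autoref{ex.LocalizationCompletionAndFeLowerStar}) this reduces to the case of a complete local $F$-finite ring $R$ and a $p^{-e}$-linear $\kappa\colon F^e_* M \to M$ with $M$ finite.

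Next I would pass to the Matlis dual $A \colonequals \Hom_R(M,E)$, $E = E_R(R/\bm)$, an Artinian $R$-module. Using the natural isomorphism $\Hom_R(F^e_* N, E) \cong F^e_* \Hom_R(N,E)$ valid over an $F$-finite ring, the map $\kappa$ dualizes to a $p^e$-linear Frobenius operator $\vartheta\colon A \to F^e_* A$; this is the standard Matlis-duality dictionary between Cartier modules and modules with Frobenius action, cf.\ \cite{BlickleBoeckleCartierModulesFiniteness}. Dualizing the factorization $F^{ne}_* M \twoheadrightarrow M_n \hookrightarrow M$ of $\kappa^n$ identifies $M_n^\vee$ with the image of $\vartheta^n\colon A \to F^{ne}_* A$, hence identifies the submodule $(M/M_n)^\vee \subseteq A$ with $\ker(\vartheta^n)$, the elements annihilated by the $n$-fold Frobenius. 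Thus, under the anti-equivalence $\Hom_R(-,E)$, the descending image chain $M \supseteq M_1 \supseteq M_2 \supseteq \cdots$ corresponds to the ascending chain of Frobenius-nilpotence submodules $\ker\vartheta \subseteq \ker\vartheta^2 \subseteq \cdots \subseteq A$, and the two stabilize simultaneously. Stabilization of the latter is precisely the theorem of Hartshorne and Speiser \cite[Proposition 1.11]{HartshorneSpeiserLocalCohomologyInCharacteristicP} (see also \cite{LyubeznikFModulesApplicationsToLocalCohomology}), which is the sense in which the proposition is ``Matlis dual'' to that classical result, cf.\ also \cite{Gabber.tStruc}. I expect the only genuinely delicate point to be the careful verification of the duality bookkeeping --- the isomorphism $\Hom_R(F^e_*(-),E) \cong F^e_*\Hom_R(-,E)$ and the matching of the image filtration of $\kappa$ with the kernel filtration of $\vartheta$ --- together with the mild uniformity argument needed to pass from stabilization at each $\bm$ to a single global $n$. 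An alternative avoiding duality is a Noetherian induction on $\Supp\sF$: localizing at the generic points one has a finite-length Cartier module, where the descending chain condition forces local stabilization, so that $\sF_{n_0}/\sF_{n_0+1}$ is supported on a proper closed subset for $n_0 \gg 0$; but feeding this back into the inductive hypothesis cleanly is itself the subtle step, which is why I would favour the duality route.

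Finally, for the second assertion I would argue directly from the definition of $\sigma(\sF)$. Once the chain stabilizes, $\sigma(\sF) = \sF_N = \bigcap_n \sF_n$ for $N \gg 0$, and by \autoref{lem.ImagesKappaAreCartier} the structural map of the Cartier submodule $\sF_N$ is the surjection $F^e_*(\sF_N \tensor \sL) \twoheadrightarrow \sF_{N+1} = \sF_N$, so $\kappa$ is indeed surjective on $\sigma(\sF)$. Conversely, if $(\sG,\kappa|_{\sG})$ is any $(\sL,p^e)$-Cartier submodule of $\sF$ on which $\kappa$ is surjective, then iterating surjectivity gives
\[
\sG \;=\; \kappa^n\bigl(F^{ne}_*(\sG \tensor \sL^{1+p^e+\cdots+p^{(n-1)e}})\bigr) \;\subseteq\; \kappa^n\bigl(F^{ne}_*(\sF \tensor \sL^{1+p^e+\cdots+p^{(n-1)e}})\bigr) \;=\; \sF_n
\]
for every $n$, whence $\sG \subseteq \bigcap_n \sF_n = \sigma(\sF)$; so $\sigma(\sF)$ is the largest such submodule.
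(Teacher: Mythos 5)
Your proposal is correct, but it takes a genuinely different route from the paper. The paper's own proof is the direct, self-contained argument: after the same reduction to $X=\Spec R$ affine with $\sL$ trivial, it looks at the descending chain of closed sets $Y_n=\Supp(\kappa^n(M)/\kappa^{n+1}(M))$, passes to the generic point $\bp$ of a component of the stable support, and there produces a \emph{fixed} submodule contained in every $\kappa^n(M)$ (from $\bp^k M\subseteq \kappa(M)$ one gets $x^2M\subseteq \kappa(x^{p^e}M)\subseteq\kappa(x^2M)$ for $x\in\bp^k$, hence $x^2M\subseteq\kappa^n(M)$ for all $n$), which reduces the chain to one inside a finite-length module — no completion, no injective hulls, no external input. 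Your route is exactly the one the paper alludes to in the sentence preceding the proposition ("essentially Matlis dual to a famous result of Hartshorne and Speiser") but deliberately does not take: reduce to the complete local $F$-finite case, use $\Hom_R(F^e_*(-),E)\cong F^e_*\Hom_R(-,E)$ (the paper's \autoref{lem.FrobeniusCommutesMatlis}, proved only later in \autoref{sec.LocalProperties}) to turn the image filtration of $\kappa$ into the kernel filtration of a $p^e$-linear action on an Artinian module, and invoke \cite[Proposition 1.11]{HartshorneSpeiserLocalCohomologyInCharacteristicP}. Your reductions are sound: the stable-support argument does legitimately convert pointwise (localized/completed) stabilization into global stabilization — since a nonempty stable support would contain a maximal ideal at which the localized chain cannot stabilize — so the "uniformity" worry you flag is already handled by your own setup; and the duality bookkeeping ($(M/M_n)^\vee=\ker\vartheta^n$, stabilization of one chain equivalent to the other) is routine over a complete local ring. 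What the duality route buys is brevity by importing a classical theorem; what the paper's argument buys is that it is elementary, stays in the Cartier-module setting without Matlis duality or $F$-finite dualizing machinery, and works directly after localization without completing. Your treatment of the "in particular" clause (surjectivity of $\kappa$ on the stable image via $\sF_{N+1}=\kappa(F^e_*(\sF_N\tensor\sL))$, and maximality by iterating surjectivity of any competitor $\sG$ to get $\sG\subseteq\sF_n$ for all $n$) matches the intended argument and is correct.
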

\begin{proof}
To show the stabilization of a sequence of subsheaves on a Noetherian scheme $X$ can be done on an affine open cover. Choosing the open sets of the cover sufficiently small we may assume that $\sL$ is trivial. Hence we may assume that $X = \Spec R$ and $M$ is a finitely generated $R$ module equipped with a $p^{-e}$-linear map $\kappa \colon M \to M$. And we have to show that the descending sequence of Cartier submodules of $M$
\[
    M \supseteq \kappa(M) \supseteq \kappa^2(M) \supseteq \cdots
\]
stabilizes. The sets
\[
    Y_n \colonequals \Supp (\kappa^n(M)/\kappa(\kappa^n(M)))
\]
form a descending sequence of closed subsets of $X$, by \autoref{lem.ImagesKappaAreCartier}. Since $X$ is Noetherian, the descending sequence must stabilize. After truncating we may assume that $Y=Y_n=Y_{n+1}$ for all $n$. We have to show that $Y$ is empty. Assuming otherwise, let $\bp$ be the generic point of a component of $Y$. Localizing at $\bp$ we may assume that $R$ is local with maximal ideal $\bp$ and that $Y = \{ \bp \} = \Supp (\kappa^n(M)/\kappa(\kappa^n(M)))$ for all $n$. In particular, for $e=0$ we obtain that there is an integer $k$ such that $\bp^k M \subseteq \kappa(M)$. Hence, for any $x \in \bp^k$
\[
    x^2M \subseteq x\bp^kM \subseteq x\kappa(M) = \kappa(x^{p^e}M) \subseteq \kappa(x^2M)
\]
and iterating we get $x^2M \subseteq \kappa^n(M)$ for all $n$. Hence $\bp^k(b-1) \subseteq \kappa^n(M)$ for all $e$ where $b$ is the number of generators of $\bp^k$. Hence the original chain stabilizes if and only if the chain $\kappa^n(M)/\bp^{k(b-1)}M$ does. But the latter is a chain in the finite length module $M/\bp^{k(b-1)}M$.
\end{proof}
A characterization of this stable image is as follows. $\sigma(\sF) \subseteq \sF$ is the smallest Cartier submodule of $\sF$ such that on the quotient $\sF/\sigma(\sF)$ some power of the structural map is zero. If this property is satisfied for some Cartier submodule $\sN \subseteq \sF$, then it is also satisfied for its image. The minimality now implies that for $\sigma(\sF)$ the structural map
\[
    F^e_*(\sigma(\sF) \tensor \sL) \to \sigma(\sF)
\]
is surjective. The Cartier modules with surjective structural map play an important role in the theory. For example one can see immediately (\autoref{ex.KappaSurjImpliesReduced}), that for such Cartier module $\kappa \colon F^e_*(\sF \tensor \sL) \onto \sF$ its annihilator $\Ann \sF$ is a sheaf of radical ideals, \ie~$\sF$ has reduced support. This may be viewed as a generalization of the reduced-ness of Frobenius split varieties alluded to earlier. It is also a key ingredient in the following Kashiwara-type equivalence which will be used repeatedly below (the easy but rewarding proof is left to the reader as \autoref{ex.BasicKashiwaraCartier}, see also \cite[Proposition 2.6 and Section 3.3]{BlickleBoeckleCartierModulesFiniteness}):

\begin{proposition}\label{prop.BasicKashiwaraCartier}
Let $\sF$ be a coherent Cartier module on $X$ with surjective structural map $\kappa_{\sF}$ (\ie~$\sigma(\sF) = \sF$). Then $I=\Ann_{\O_X} \sF$ is a sheaf of radical ideals and hence $\sF=\sF[I]=i^\flat(\sF)$ is a Cartier module on $Y=\Supp \sF$, the closed reduced subset of $X$ given by $I$.

More precisely, if $i \colon Y \to X$ denotes a closed immersion, then the functors $i^\flat$ and $i_*$ induce a (inclusion preserving) bijection between
\[
\left\{ \begin{array}{c}\text{coherent $\sL$--Cartier modules on $X$} \\ \text{with surjective structural map}  \\ \text{and $\Supp \sF \subseteq Y$}\end{array} \right\}  \longleftrightarrow \left\{ \begin{array}{c}\text{coherent $\sL|_Y$-Cartier modules on $Y$} \\ \text{with surjective structural map} \end{array} \right\}
\]
\end{proposition}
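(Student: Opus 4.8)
The plan is to first use the surjectivity hypothesis to pin $\sF$ down completely in terms of its reduced support, and then to verify that the adjoint pair $(i_*,i^\flat)$ restricts to a pair of mutually inverse functors between the two subcategories. Everything in sight is local and compatible with shrinking $X$, so throughout one may assume $X=\Spec R$ affine, $\sL$ trivial, and $\sF=\widetilde M$ for a finitely generated $R$-module $M$ equipped with a surjective $p^{-e}$-linear structural map $\kappa$.

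\textbf{Step 1 (rigidity).} First I would recall the argument of \autoref{ex.KappaSurjImpliesReduced}: from the identity $\kappa^n\big(\sum_i F^{ne}_*m_i\big)=\kappa^n\big(F^{ne}_*(\sum_i m_i)\big)$ and surjectivity of $\kappa$ (which passes to $\kappa^n$ since $F^e_*$ and $\tensor\sL$ are exact) one gets $M=\kappa^n(F^{ne}_*M)$ for all $n\geq 1$. Hence if $r^N M=0$, choosing $n$ with $p^{ne}\geq N$ and writing $m=\kappa^n(F^{ne}_*m')$ yields $rm=\kappa^n(F^{ne}_*(r^{p^{ne}}m'))=0$, so $r\in\Ann_R M$. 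Therefore $I\colonequals\Ann_{\O_X}\sF$ is a sheaf of radical ideals — this is the first assertion. Setting $Y\colonequals\Supp\sF=V(I)$, a reduced closed subscheme, $\sF$ is annihilated by $I$, so $\sF=\sF[I]=i^\flat\sF$; unwinding the definition of the Cartier structure on $i^\flat\sF$ from the earlier proposition on closed immersions, one sees that under the equality $\sF=\sF[I]$ this structure is literally $\kappa$. In particular it is again surjective, and $i_*i^\flat\sF\cong\sF$ as Cartier modules.

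\textbf{Step 2 (the equivalence).} Now fix a closed immersion $i\colon Y\to X$; we may assume $Y$ reduced, since by Step 1 applied on $Y$ every $\sL|_Y$-Cartier module with surjective structural map lives on $Y_{\mathrm{red}}$. If $\sF$ is a coherent $\sL$-Cartier module on $X$ with surjective structural map and $\Supp\sF\subseteq Y$, then $\Ann_{\O_X}\sF$ is radical and contains $I_Y$ (reducedness of $Y$), so $\sF=\sF[I_Y]$, and $i^\flat\sF$ is a coherent $\sL|_Y$-Cartier module on $Y$ whose structural map is again $\kappa$, hence surjective. Conversely, for a coherent $\sL|_Y$-Cartier module $\sG$ on $Y$ with surjective structural map, $i_*\sG$ is coherent ($i$ is finite), supported in $Y$, and carries the $\sL$-Cartier structure
\[
F^e_*(i_*\sG\tensor_{\O_X}\sL)\cong i_*F^e_*(\sG\tensor_{\O_Y}i^*\sL)\to[i_*\kappa_{\sG}] i_*\sG
\]
coming from the pushforward construction for the finite (hence proper) morphism $i$ together with the projection formula; since $i_*$ is exact, this structural map is surjective as well. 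The counit $i_*i^\flat\sF\cong\sF$ is Step 1; for the unit, $i^\flat i_*\sG=\sHom_{\O_X}(i_*\O_Y,i_*\sG)$, and since $i_*\O_Y$ and $i_*\sG$ are both annihilated by $I_Y$, an $\O_X$-linear map between them is the same as an $\O_Y$-linear map, giving $i^\flat i_*\sG\cong\sHom_{\O_Y}(\O_Y,\sG)\cong\sG$; a direct check against the two definitions of the structural maps shows this respects the Cartier structures. Finally, $i^\flat$ and $i_*$ are exact on the relevant subcategories ($i^\flat$ being, on these objects, essentially the identity functor) and send Cartier submodules to Cartier submodules, so the bijection of objects is inclusion-preserving. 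This is then exactly \autoref{ex.BasicKashiwaraCartier}.

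\textbf{Expected main obstacle.} The crux is the rigidity of Step 1: for a general Cartier module supported on $Y$ one only has $i_*i^\flat\sF=\sF[I_Y]\subsetneq\sF$, so $i^\flat$ loses information and is merely right adjoint to $i_*$; surjectivity of $\kappa$ is exactly what forces $I_Y\sF=0$ and thereby upgrades this adjunction to an equivalence. After that, the only remaining work is the purely formal bookkeeping that the identifications $i_*i^\flat\cong\id$ and $i^\flat i_*\cong\id$ are compatible with the Cartier-module structures, once the structural maps of $i^\flat\sF$ and $i_*\sG$ from the earlier propositions are unwound.
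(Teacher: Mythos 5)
Your proof is correct and follows exactly the route the paper intends for this result (which it leaves as \autoref{ex.BasicKashiwaraCartier}): surjectivity of the structural map forces the annihilator to be radical as in \autoref{ex.KappaSurjImpliesReduced}, and this rigidity is precisely what upgrades the adjunction between $i_*$ and $i^\flat$ to the stated inclusion-preserving bijection. No gaps; the verification that the identifications respect the Cartier structures is the routine unwinding you indicate.
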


The most important structural result for Cartier modules is the following theorem which asserts that for a coherent Cartier module $\sF$, the lattice of Cartier submodules with surjective structural map satisfies the ascending and descending chain conditions.

\begin{theorem}\label{thm.CartierFiniteLength}
Let $X$ be a scheme and $\kappa \colon F^e_*(\sF \tensor \sL) \to \sF$ a coherent Cartier module. Then any chain of Cartier submodules
\[
    \cdots \sF_i \supseteq \sF_{i+1} \supseteq \sF_{i+2} \supseteq \cdots
\]
each of whose structural map $\kappa_{\sF_i}$ is surjective, is eventually constant (in both directions).
\end{theorem}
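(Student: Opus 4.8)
The assertion is two chain conditions at once — ascending and descending — for Cartier submodules of $\sF$ with surjective structural map. The ascending one is free: each $\sF_i$ is a coherent submodule of the coherent module $\sF$ on the Noetherian scheme $X$, so an increasing chain of such submodules stabilizes regardless of the $\kappa_{\sF_i}$. So the real content is the descending chain condition, and that is what the plan addresses.

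First I would reduce: stabilization of a chain of subsheaves can be tested on a finite affine cover, and after refining we may trivialize $\sL$, so take $X=\Spec R$ and $\sL\cong\O_X$. By \autoref{prop.CartierImagesStabilize} every $\sF_i$ with surjective structural map lies in $\sigma(\sF)$, which is again a coherent Cartier module with surjective structural map; so replace $\sF$ by $\sigma(\sF)$ and assume $\kappa$ is surjective on $\sF$. Then $\Ann\sF$ is radical, and the Kashiwara-type equivalence \autoref{prop.BasicKashiwaraCartier} lets us further assume $X$ is reduced with $\Supp\sF=X$ (that equivalence matches Cartier submodules with surjective structural map on $X$ with those on the reduced support). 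Now I would run a Noetherian induction on the closed subscheme $\Supp\sF$, the inductive hypothesis being the full statement of \autoref{thm.CartierFiniteLength} for coherent Cartier modules supported on a proper closed subset of $X$.

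Given a descending chain $\sF_0\supseteq\sF_1\supseteq\cdots$ with every $\kappa_{\sF_i}$ surjective: at each generic point $\eta$ of the reduced $X$, $(\sF_i)_\eta$ is a finite-dimensional $\O_{X,\eta}$-vector space with a surjective $p^{-e}$-linear endomorphism, and its dimension is non-increasing in $i$; since there are finitely many components, after truncating we may assume $(\sF_i)_\eta=(\sF_0)_\eta$ for every generic $\eta$ and all $i$. Put $\sF_\infty:=\bigcap_i\sF_i$: it is a coherent ($\sF_0$ is Noetherian) Cartier submodule of $\sF_0$. The \emph{key claim} is that $\Supp(\sF_0/\sF_\infty)$ is a proper closed subset $Z\subsetneq X$. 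Granting it, $\sF_0/\sF_\infty$ is a coherent Cartier module with surjective structural map (being a quotient of $\sF_0$ by a Cartier submodule), so by \autoref{prop.BasicKashiwaraCartier} it lives on $Z_{\mathrm{red}}$, and $\{\sF_i/\sF_\infty\}_{i\ge0}$ is a descending chain there of Cartier submodules with surjective structural maps; by the inductive hypothesis it is eventually constant, hence so is $\{\sF_i\}$.

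The key claim is the crux: equivalently, there is a single $c\in R\setminus\bq_\eta$ with $c\sF_0\subseteq\sF_i$ \emph{for all $i$ at once}. This is false for a general descending chain of coherent submodules (e.g. $t^iR\subseteq R$ over a DVR), so the surjectivity of every $\kappa_{\sF_i}$ must be used — via the exponent-lowering property of a surjective $p^{-e}$-linear map. The model case is at a codimension-one point where $R$ is a DVR with uniformizer $t$: if $t^a\sF_0\subseteq\sF_i$, write $a=p^e q+r$ with $0\le r<p^e$, so $t^a\sF_0=(t^q)^{p^e}\!\cdot t^r\sF_0$, and using $\kappa((t^q)^{p^e}x)=t^q\kappa(x)$ together with surjectivity of $\kappa$ on both $\sF_0$ and $\sF_i$,
\[
t^{q+1}\sF_0\ \subseteq\ t^q\,\kappa\big(F^e_*(t^r\sF_0\tensor\sL)\big)\ =\ \kappa\big(F^e_*(t^a\sF_0\tensor\sL)\big)\ \subseteq\ \kappa\big(F^e_*(\sF_i\tensor\sL)\big)\ =\ \sF_i ,
\]
so the least such $a$ satisfies $a\le\lfloor a/p^e\rfloor+1$, whence $a\le2$; thus $t^2\sF_0\subseteq\sF_i$ with a bound independent of $i$ (and since $\Ann(\sF_0/\sF_i)$ is radical, even $t\sF_0\subseteq\sF_i$). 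I would then derive the key claim by localizing $X$ at the codimension-one points of the component through $\eta$ and normalizing where $X$ is not regular there; controlling how these local bounds glue over the Noetherian scheme $X$ — equivalently, recognizing the key claim as the global, Matlis-dual form of the Hartshorne--Speiser--Lyubeznik stabilization already behind \autoref{prop.CartierImagesStabilize} — is the main obstacle. Once it is settled the induction above closes.
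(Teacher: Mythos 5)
Your reductions and your inductive skeleton coincide with the paper's proof: the ascending chain condition is free by coherence, each $\sF_i$ lies in $\sigma(\sF)$, the Kashiwara-type equivalence \autoref{prop.BasicKashiwaraCartier} reduces to the reduced support, one truncates so that all $\sF_i$ agree with $\sF_0$ at the generic points, and one concludes by exhibiting a single Cartier submodule sitting below the entire chain and agreeing with $\sF_0$ generically, so that the quotient chain has strictly smaller support and induction applies. The difference is which submodule plays that role, and that is exactly where the gap sits. The paper uses the test module $\tau(\sF_0)$, the smallest Cartier submodule agreeing generically with $\sigma(\sF_0)=\sF_0$; its existence is the one nontrivial input, and it is supplied separately (by the Hochster--Huneke-type element as in \autoref{prop.TauExistsForRings}, by the gauge estimates \autoref{prop.gaugebound} and \autoref{lem.GaugeBoundForImage}, or by the references), after which the uniform-in-$i$ containment $\tau(\sF_0)\subseteq\sF_i$ is automatic from minimality. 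You instead take $\sF_\infty=\bigcap_i\sF_i$ and must prove your ``key claim'' that $\sF_\infty$ still agrees with $\sF_0$ at the generic points. That claim is not an auxiliary step: it amounts to finding one element $c$ outside the relevant minimal prime with $c\sF_0\subseteq\sF_i$ for \emph{all} $i$ at once, i.e., precisely the uniform boundedness that the test module (equivalently a test element, equivalently the gauge bound) encodes. Indeed, granting the theorem the claim is trivial ($\sF_\infty=\sF_N$ for $N\gg0$), and granting the claim the theorem follows by your induction; so the proposal defers essentially the whole difficulty into the unproved claim, as you yourself acknowledge.

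The DVR computation is correct and is a genuine local ingredient (it gives the uniform bound $t^2(\sF_0)_{\bp}\subseteq(\sF_i)_{\bp}$ at a regular codimension-one point $\bp$, hence the theorem over a DVR), but it does not globalize as needed. First, localization does not commute with infinite intersections: bounding $\bigcap_i(\sF_i)_{\bp}$ from below says nothing directly about $\bigl(\bigcap_i\sF_i\bigr)_{\bp}$, still less about $(\sF_\infty)_\eta$; clearing denominators from $t^2(\sF_0)_{\bp}\subseteq(\sF_i)_{\bp}$ only produces elements $s_i t^2$ with $s_i\notin\bp$ \emph{depending on $i$}, which is merely a restatement of $(\sF_i)_\eta=(\sF_0)_\eta$ and carries no uniformity. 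Second, the proposed normalization at non-regular codimension-one points is itself delicate (a Cartier structure on a general module does not automatically extend to the normalization; \autoref{ex.Normalization} treats only $M=R$), though this is secondary: even at regular codimension-one points the passage from the pointwise bounds to a single global containment is exactly the content of test-element existence. The clean repairs are either to import the existence of $\tau(\sF_0)$ (at which point your argument becomes the paper's proof verbatim, with $\tau(\sF_0)$ in place of $\sF_\infty$), or to bypass the induction entirely via the contraction/gauge argument of \autoref{lem.GaugeBoundForImage}, which shows every Cartier submodule with surjective structural map is generated by elements of a fixed finite-dimensional space and yields the descending chain condition directly.
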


\begin{proof}
The ascending chain stabilizes simply because the underlying $\O_X$-module is coherent and our schemes are Noetherian. So it remains to show the descending chain condition. One way to proof this result is to show that there is a unique smallest Cartier submodule $\tau(\sF) \subseteq \sF$ which agrees with $\sigma(\sF)$ on each generic point of $X$, \ie~$\tau(\sF)_\eta = \sigma(\sF)_\eta$ for each $\eta$ the generic point of an irreducible component of $X$. This is a generalization of the notion of a \emph{test ideal} which will be discussed in some detail below \autoref{subsec.AlgebrasOfMaps}.

Assuming the existence of $\tau(\sF)$ for now, the proof can be outlined as follows: We show that the chain
\[
    \sF_0 \supseteq \sF_1 \supseteq \sF_2 \supseteq \cdots
\]
stabilizes by induction on $\dim X$, the case $\dim X = 0$ being clear. Since a chain stabilizes if it stabilizes after restriction to each of the finitely many irreducible components of $X$, we may assume that $X$ is irreducible. Since $X$ is Noetherian, the descending sequence of supports $\Supp \sF_i$ stabilizes. After truncating we may assume that $Y = \Supp \sF_i$ for all $i$. Since the structural map of each $\sF_i$ is surjective, we have by \autoref{prop.BasicKashiwaraCartier} that $\sF_i$ is annihilated by the ideal sheaf defining the reduced structure of $Y$. Hence we may view the $\sF_i$ as $(\sL|_Y,p^e)$ Cartier modules on $Y$. If $\dim Y < \dim X$ then we are done by induction. So let us assume otherwise, that $\dim X = \dim Y$. Further truncating the sequence $\sF_i$ we may assume that all $\sF_i$'s have the same generic rank. Now, by definition, $\tau(\sF_0)$ is contained in $\sF_i$ for all $i$ (in fact $\tau(\sF_0)=\tau(\sF_i)$) such that it is enough to show the stabilization of the sequence
\[
    \sF_0/\tau(\sF_0) \supseteq \sF_1/\tau(\sF_0) \supseteq \sF_2/\tau(\sF_0) \supseteq \cdots\, .
\]
But since $\tau(\sF_0)$ generically agrees with each $\sF_i$ this is a sequence of Cartier modules $\sF_i/\tau(\sF_0)$ whose entries have strictly smaller support than $X$. As above, we are done by induction.
\end{proof}
A corollary of the proof is the following result.
\begin{proposition}
Let $\sF$ be a coherent Cartier module on $X$ with surjective structural map. Then the set
\[
    \{ \supp \sF/\sG \,|\, \sG \subseteq \sF \text{ a Cartier submodule} \}
\]
is a finite set of reduced subschemes that is closed under finite unions and taking irreducible components.
\end{proposition}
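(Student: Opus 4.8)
Write $\mathcal{S}$ for the set $\{\supp\sF/\sG \mid \sG\subseteq\sF \text{ a Cartier submodule}\}$ in the statement. The plan is to establish the four assertions (reducedness, closure under finite unions, closure under taking components, finiteness) more or less in that order, using the facts proved in and around \autoref{thm.CartierFiniteLength}. The starting observation is that for every Cartier submodule $\sG\subseteq\sF$ the quotient $\sF/\sG$ is again a coherent Cartier module (cokernels exist in the category) and its structural map is surjective, being a quotient of the surjective $\kappa_{\sF}$. Hence, by \autoref{ex.KappaSurjImpliesReduced} (or \autoref{prop.BasicKashiwaraCartier}), $\Ann_{\O_X}(\sF/\sG)$ is a sheaf of radical ideals, so $\supp\sF/\sG$ equipped with the scheme structure $V(\Ann_{\O_X}(\sF/\sG))$ is reduced; this is the first claim, and it also lets us regard $\sF$ itself and all its Cartier submodules as living on $Y_0\colonequals\supp\sF$ with its reduced structure (again via \autoref{prop.BasicKashiwaraCartier}).

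Next I would treat the two closure properties. For finite unions: if $\sG_1,\sG_2\subseteq\sF$ are Cartier submodules then so is $\sG_1\cap\sG_2$, and $\sF/(\sG_1\cap\sG_2)$ embeds into $\sF/\sG_1\oplus\sF/\sG_2$ while surjecting onto each summand, so $\supp\sF/(\sG_1\cap\sG_2)=\supp\sF/\sG_1\cup\supp\sF/\sG_2$. For irreducible components, I would use the torsion functor. Let $\sG$ be given, set $\sN\colonequals\sF/\sG$, $Y\colonequals\supp\sN$, and write $Y=W\cup Z$ with $W$ one irreducible component and $Z$ the union of the remaining ones. For any sheaf of ideals $J$ one has $F^e_*(\sN[J])\subseteq (F^e_*\sN)[J]$ (since $J^{[p^e]}\subseteq J$) and $\kappa$ carries $J$-torsion to $J$-torsion, so $\sN[J]$ is a Cartier submodule; thus $\Gamma_Z(\sN)=\sN[I_Z^{k}]$ for $k\gg 0$ (coherence) is a Cartier submodule of $\sN$. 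Letting $\sG'\subseteq\sF$ be the preimage of $\Gamma_Z(\sN)$, which is a Cartier submodule, one has $\sF/\sG'\cong\sN/\Gamma_Z(\sN)$, and I claim this is supported exactly on $W$: away from $Z$ it coincides with $\sN$, so its support contains the nonempty open dense subset $W\setminus Z$ of $W$, hence contains $W$; and at a point $y\notin W$ some neighbourhood meets $Y$ only in $Z$, so $\sN$ is $I_Z$-power torsion there and equals $\Gamma_Z(\sN)$, forcing the stalk of $\sN/\Gamma_Z(\sN)$ at $y$ to vanish. Therefore $W=\supp\sF/\sG'\in\mathcal{S}$.

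For finiteness I would run Noetherian induction on the closed set $Y_0=\supp\sF$, using the \emph{test submodule} $\tau(\sF)\subseteq\sF$ constructed in the proof of \autoref{thm.CartierFiniteLength}: applied to $\sF$ regarded as a Cartier module on $Y_0$, it is the smallest Cartier submodule of $\sF$ with $\tau(\sF)_\eta=\sigma(\sF)_\eta=\sF_\eta$ at every generic point $\eta$ of $Y_0$. Since the previous paragraph shows every member of $\mathcal{S}$ is the (finite) union of its components and those components again lie in $\mathcal{S}$, it suffices to bound the irreducible members $W\in\mathcal{S}$. Fix $W=\supp\sF/\sG$ with $W$ irreducible. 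If $W$ is one of the finitely many irreducible components of $Y_0$ we are done; otherwise $W$ contains no generic point of $Y_0$ (an irreducible closed subset of $Y_0$ through such a point must equal that whole component), hence $\sG_\eta=\sF_\eta$ at every generic point $\eta$ of $Y_0$, and minimality of $\tau(\sF)$ gives $\tau(\sF)\subseteq\sG$. Thus $\sF/\sG$ is a quotient Cartier module of $\sF/\tau(\sF)$, so $W$ lies in the corresponding set $\mathcal{S}(\sF/\tau(\sF))$. But $\sF/\tau(\sF)$ is a coherent Cartier module with surjective structural map whose support omits every generic point of $Y_0$, hence is a proper closed subset of $Y_0$; by the inductive hypothesis $\mathcal{S}(\sF/\tau(\sF))$ is finite, so there are only finitely many such $W$, and therefore $\mathcal{S}$ is finite.

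I expect the main obstacle to be the bookkeeping in the component step — verifying that $\Gamma_Z(\sN)$ is honestly Cartier-stable and that the resulting quotient is supported precisely on the chosen component — together with a careful invocation of the minimality property of $\tau(\sF)$ relative to $\supp\sF$ rather than $X$; the rest is a straightforward Noetherian induction.
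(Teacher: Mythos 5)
Your proposal is correct, and its finiteness core is essentially the paper's argument: minimality of the test submodule forces $\tau(\sF) \subseteq \sG$ whenever $\supp \sF/\sG$ misses the generic points of $\supp\sF$, so such supports lie in the corresponding set for $\sF/\tau(\sF)$, which has strictly smaller support, and induction finishes (you run Noetherian induction on the support and reduce to irreducible members via your component-closure step, while the paper inducts on dimension after reducing to $X$ irreducible — a minor reorganization). The reducedness, finite-union, and irreducible-component claims, which you verify via surjectivity of the quotient's structural map, intersections of Cartier submodules, and the $I_Z$-torsion submodule $\Gamma_Z(\sN)$, are precisely the parts the paper delegates to \autoref{ex.SupportsCartier}, and your arguments for them are sound.
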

\begin{proof}
We only prove the finiteness and leave the rest as an exercise \autoref{ex.SupportsCartier}. We proceed by induction on $\dim X$. By \autoref{prop.BasicKashiwaraCartier} we may view $\sF$ as a Cartier module on $\supp \sF$, hence we may assume that $\supp \sF=X$. Since $X$ is Noetherian it has only finitely many irreducible components so we may assume that $X$ itself is irreducible. If $\supp \sF/\sG \neq X$ then $\sF$ and $\sG$ agree on the generic point of $X$. Hence the test module $\tau(\sF) \subseteq \sG$. Therefore
\[
    \supp \sF/\sG \subseteq \supp \sF/\tau(\sF) =: Y
\]
and $Y$ is a proper closed subset of $X$. Again using \autoref{prop.BasicKashiwaraCartier} we can apply the induction hypothesis to the Cartier module $\sF/\tau(\sF)$ on $Y$ whose dimension is strictly less than $\dim X$.
\end{proof}
This yields the following corollary which was obtained in \cite{KumarMehtaFiniteness} and also independently obtained by the second author in \cite{SchwedeFAdjunction}.  In the case that $X = \Spec R$ and $R$ is local, proofs of this fact were first obtained in \cite{SharpGradedAnnihilatorsOfModulesOverTheFrobeniusSkewPolynomialRing} and \cite{EnescuHochsterTheFrobeniusStructureOfLocalCohomology}.
\begin{corollary}
\label{cor.FinitelyManyCompatiblySplitIdeals}
Let $X$ be Frobenius split, then $\O_X$ has only finitely many ideals with are compatible with the splitting.
\end{corollary}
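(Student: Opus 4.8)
The plan is to identify the compatibly split ideals of $\O_X$ with the Cartier submodules of $\O_X$ equipped with its Frobenius splitting, and then to quote the finiteness of supports established just above.

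First I would observe that a Frobenius splitting $\phi \colon F^e_* \O_X \to \O_X$ makes $\O_X$ into a coherent $\O_X$-Cartier module whose structural map is \emph{surjective}, precisely because $\phi(F^e_* 1) = 1$. By definition an ideal sheaf $J \subseteq \O_X$ is compatibly $\phi$-split exactly when $\phi(F^e_* J) \subseteq J$, that is, exactly when $J$ is a Cartier submodule of $(\O_X, \phi)$. Moreover, by \autoref{prop.PropertiesOfCompatiblySplitSubvarieties}(1), every such $J$ is a radical ideal.

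Next I would apply the Proposition immediately preceding this corollary to $\sF = \O_X$: the set
\[
\{\, \supp(\O_X/\sG) \;|\; \sG \subseteq \O_X \text{ a Cartier submodule} \,\}
\]
is finite. In particular, as $J$ ranges over the compatibly $\phi$-split ideals, the closed subset $V(J) = \supp(\O_X/J)$ takes only finitely many values. Since each such $J$ is radical, it is recovered from its vanishing locus by $J = I(V(J))$, so the assignment $J \mapsto V(J)$ is injective on compatibly split ideals; as its image is finite, there are only finitely many compatibly split $J$. (Alternatively one could first reduce to compatibly split \emph{primes} using parts (1), (3), (4) of \autoref{prop.PropertiesOfCompatiblySplitSubvarieties} and then apply the same support-finiteness input, but this extra reduction is not needed.)

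There is no genuine obstacle left at this level: the real content—finiteness of the set of supports of Cartier quotients—has already been supplied via \autoref{thm.CartierFiniteLength} together with the existence of the minimal Cartier submodule $\tau$. The only points requiring a word of care are the bookkeeping identification of compatibly split ideals with Cartier submodules of $(\O_X,\phi)$, and the use of radicality to conclude that $J \mapsto V(J)$ is injective; both are immediate from the definitions and \autoref{prop.PropertiesOfCompatiblySplitSubvarieties}(1).
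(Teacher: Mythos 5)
Your proof is correct and follows essentially the same route as the paper: identify the compatibly split ideals with the Cartier submodules of $(\O_X,\phi)$ (noting the structural map is surjective), and then invoke the finiteness of the set of supports $\supp(\O_X/\sG)$ from the preceding proposition, using radicality of compatibly split ideals to see that $J \mapsto \supp(\O_X/J)$ is injective. The only difference is that you spell out the radicality/injectivity step a bit more explicitly than the paper does, which is fine.
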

\begin{proof}
If $\phi \colon F^e_* \O_X \to \O_X$ is the splitting of Frobenius, note that $\phi$ is surjective. The Cartier submodules of $\O_X$ are just the ideals which are $\phi$-compatible. Since $\Ann(\O_X/I)=I$ there is a one-to-one correspondence between the set of $\phi$-compatible ideals, and the set $\supp \O_X/I$ for $I$ a Cartier submodule of $\O_X$. The latter set is finite by the preceding proposition.
\end{proof}

\subsection{Cartier Crystals}
The finiteness results for Cartier modules of the preceding section receive a more natural formulation if one deals with the notion of nilpotence in a more systematic manner. This is done by localizing the category of coherent Cartier modules at its Serre subcategory\footnote{\ie~a full Abelian subcategory which is closed under extensions, see \cite{BliBoe.CartierCrys}.} of nilpotent Cartier modules. That is, we invert morphisms which are \emph{nil-isomorphisms}, \ie~maps of Cartier modules $\phi \colon \sF \to \sG$ whose kernel and cokernel are nilpotent. For the formal definition, see \cite{Miyachi,Gabriel-AbelianCat}, but roughly speaking the localization is defined as follows:

\begin{definition}\label{def.CartCrys}
Let $X$ be a scheme. The \emph{category of $\sL$--Cartier crystals} has as objects the coherent Cartier modules on $X$. A morphism $\phi \colon \sF \to \sG$ of Cartier crystals is an equivalence class (left fraction) of diagrams of morphisms of the underlying Cartier modules
\[
    \phi \colon \sF \leftarrow \sF' \to[\phi'] \sG
\]
where $\sF'$ is some Cartier module and $\sF \leftarrow \sF'$ is a nil-isomorphism. More precisely, $$\displaystyle{\Hom_{Crys}(\sF,\sG) = \operatorname{colim}_{\sF' \to \sF} \Hom_{Cart}(\sF',\sF)}$$ where $\sF' \to \sG$ ranges over all nil-isomorphisms.
\end{definition}
It follows from general principles that the category of Cartier crystals on $X$ is again Abelian. Using this point of view the preceding result can be phrased (and extended) as follows, see \cite[Theorem 4.17 and Corollay 4.7]{BlickleBoeckleCartierModulesFiniteness}:

\begin{theorem}\label{thm.CrystalsFiniteness}
Let $X$ be a scheme.
\begin{enumerate}
\item Each Cartier crystal $\sF$ has finite length in the category of Cartier crystals.
\item Hom-sets in the category of Cartier crystals are finite sets (finite dimensional $\mathbb{F}_{p^e}$ vector spaces).
\item Each Cartier crystal $\sF$ has only finitely many Cartier sub-crystals.
\end{enumerate}
\end{theorem}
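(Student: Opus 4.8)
The plan is to deduce all three assertions from the finiteness theorem \autoref{thm.CartierFiniteLength}, together with the stable image $\sigma$ of \autoref{prop.CartierImagesStabilize} and the test module $\tau$ constructed in its proof, by translating statements about Cartier crystals into statements about Cartier submodules with surjective structural map. Write $Q$ for the localization functor from coherent $(\sL,p^e)$-Cartier modules to $(\sL,p^e)$-Cartier crystals (\autoref{def.CartCrys}). Two elementary observations drive everything: (i) a coherent Cartier module whose structural map is at once nilpotent and surjective must vanish, since surjectivity gives $\sF_n=\sF$ for all $n$ (notation of \autoref{lem.ImagesKappaAreCartier}) while nilpotence gives $\sF_n=0$ for $n\gg 0$; and (ii) for any coherent Cartier module $\sF$ the inclusion $\sigma(\sF)\hookrightarrow\sF$ is a nil-isomorphism, so $Q\sF\cong Q\sigma(\sF)$, and $\sigma(\sN)\subseteq\sigma(\sF)$ for every Cartier submodule $\sN\subseteq\sF$. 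From these one gets the basic dictionary: \emph{the assignment $\sN\mapsto Q\sN$ is an order isomorphism between the Cartier submodules of $\sigma(\sF)$ with surjective structural map and the subobjects of $Q\sF$}. Indeed, every subobject of $Q\sF$ is represented by a Cartier submodule of $\sF$, hence, after applying $\sigma$, by a surjective Cartier submodule of $\sigma(\sF)$; and if $\sN_1,\sN_2\subseteq\sigma(\sF)$ are surjective with $Q\sN_1\subseteq Q\sN_2$, then $Q(\sN_1\cap\sN_2)=Q\sN_1$, so $\sN_1/(\sN_1\cap\sN_2)$, being nilpotent and a quotient of the surjective module $\sN_1$, is zero by (i); thus $\sN_1\subseteq\sN_2$, and in particular $Q\sN_1=Q\sN_2$ forces $\sN_1=\sN_2$.

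Assertion (1) is then immediate: a chain of subobjects of $Q\sF$ corresponds under the dictionary to a chain of surjective Cartier submodules of $\sigma(\sF)$, with strictness preserved in both directions, and by \autoref{thm.CartierFiniteLength} every such chain is eventually constant. Hence $Q\sF$ satisfies the ascending and descending chain conditions on subobjects and so has finite length in the Abelian category of Cartier crystals.

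For assertion (3), by the dictionary it suffices to show that a coherent Cartier module $\sF$ with surjective structural map has only finitely many Cartier submodules whose own structural map is surjective. I would prove this by induction on $\dim X$, running parallel to the proof of \autoref{thm.CartierFiniteLength}. Using the Kashiwara equivalence \autoref{prop.BasicKashiwaraCartier} one reduces to $X=\Supp\sF$ reduced and, arguing componentwise as in the proofs above, to $X$ irreducible with generic point $\eta$. Let $\sN\subseteq\sF$ be a surjective Cartier submodule. Its generic stalk $\sN_\eta$ is a $\kappa_\eta$-stable $\O_{X,\eta}$-subspace of the finite-dimensional module $\sF_\eta$, and there are only finitely many such: after base change to an algebraic closure of $\O_{X,\eta}$, a surjective $p^{-e}$-linear endomorphism of a finite-dimensional vector space is, by Lang's theorem, a direct sum of standard ones and hence has only finitely many stable subspaces, and $\kappa_\eta$-stable subspaces over $\O_{X,\eta}$ inject into those. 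For a fixed subspace $V$ occurring this way, coherence of $\sF$ and Noetherianity of $X$ yield the largest Cartier submodule $\sF_V\subseteq\sF$ with $(\sF_V)_\eta\subseteq V$; one checks $(\sF_V)_\eta=V$, that every surjective $\sN$ with $\sN_\eta=V$ lies in $\sigma(\sF_V)$ and agrees with it at $\eta$, hence $\tau(\sigma(\sF_V))\subseteq\sN$ (when $\Supp\sigma(\sF_V)=X$; otherwise induction applies to $\sigma(\sF_V)$ at once). Then $\sN/\tau(\sigma(\sF_V))$ is a surjective Cartier submodule of $\sigma(\sF_V)/\tau(\sigma(\sF_V))$, whose support is a proper closed subset of $X$ and so of strictly smaller dimension, and induction bounds the number of such; summing over the finitely many $V$ gives finitely many $\sN$. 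The delicate points here — the finiteness of $\kappa_\eta$-stable subspaces, and organizing the reductions so that structural maps stay surjective (one also has available the finiteness of $\{\Supp\sF/\sG\}$ from the proposition preceding \autoref{cor.FinitelyManyCompatiblySplitIdeals}) — are the main obstacle; the rest is bookkeeping with $\sigma$, $\tau$ and \autoref{prop.BasicKashiwaraCartier}.

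Finally, assertion (2) follows from (3). Every Cartier module carries a natural $\bF_{p^e}$-action: for $\lambda\in\bF_{p^e}$ one has $\lambda^{p^e}=\lambda$, so $\kappa(\lambda m)=\lambda\kappa(m)$ and multiplication by $\lambda$ is an endomorphism of the Cartier module; consequently the category of Cartier crystals is $\bF_{p^e}$-linear and each $\Hom$-set is an $\bF_{p^e}$-vector space. To see it is finite, send a crystal morphism $\varphi\colon\sF\to\sG$ to the subobject of $Q(\sF\oplus\sG)$ cut out by the monomorphism $(\mathrm{id}_{\sF},\varphi)\colon Q\sF\to Q(\sF\oplus\sG)$; since $\varphi$ is recovered from this graph by composing with the two projections, this assignment is injective, and its target is a finite set by (3). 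Hence $\Hom_{\mathrm{Crys}}(\sF,\sG)$ is a finite $\bF_{p^e}$-vector space, i.e.\ finite-dimensional. Combining (1), (2) and (3) proves the theorem.
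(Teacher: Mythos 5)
Your dictionary between subobjects of the crystal $Q\sF$ and Cartier submodules of $\sigma(\sF)$ with surjective structural map is correct, and with it part (1) is exactly the paper's argument (the paper simply says $\sF$ and $\sigma(\sF)$ are nil-isomorphic and invokes \autoref{thm.CartierFiniteLength}); your graph trick deducing finiteness of Hom-sets from (3) inside $Q(\sF\oplus\sG)$ is also a valid formal reduction. Note, though, that your architecture is inverted relative to the paper: the paper cites (2) to \cite[Theorem 4.17]{BlickleBoeckleCartierModulesFiniteness} and then obtains (3) \emph{formally} from (1) and (2), whereas you try to prove (3) directly and recover (2) from it. That would be a genuinely different (and nice) route --- if the direct proof of (3) worked.

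It does not, as written: the gap is exactly the step you flag as delicate, the finiteness of the possible generic stalks $\sN_\eta$. For $\dim X\geq 1$ the field $K=\O_{X,\eta}$ is imperfect, and a $p^{-e}$-linear operator does \emph{not} extend along an inseparable extension $K\subseteq \overline{K}$ by naive tensoring: the would-be formula $v\otimes\lambda\mapsto \kappa(v)\otimes\lambda^{1/p^e}$ is not $K$-balanced unless $K$ is perfect (one needs $c^{1/p^e}\in K$ to move it across the tensor sign). So there is no base-changed operator on $\sF_\eta\otimes_K\overline{K}$ for which your subspaces $W\otimes_K\overline{K}$ would be stable, and the claimed injection of $\kappa_\eta$-stable $K$-subspaces into stable $\overline{K}$-subspaces has no target. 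Moreover ``Lang's theorem'' is not the relevant tool; what you need is the Dieudonn\'e--Jacobson structure theory for $p^e$-\emph{semilinear} (left Frobenius) operators, which does base change along arbitrary field extensions, and to get there from a Cartier module one must first dualize over the $F$-finite field $K$ using duality for the finite flat Frobenius --- in other words, precisely the nontrivial input that the survey outsources to \cite[Theorem 4.17]{BlickleBoeckleCartierModulesFiniteness} (see \autoref{ex.FiniteHoms} for the flavor). Until that field-level finiteness is actually established, neither your (3) nor your (2) is proved. Two smaller points: your $\bF_{p^e}$-vector-space structure on Hom-sets presupposes $\bF_{p^e}\subseteq\Gamma(X,\O_X)$, which need not hold for an arbitrary perfect base field; and in the induction you should check (as you implicitly do via $\sigma$ and \autoref{prop.BasicKashiwaraCartier}) that all quotients you pass to retain surjective structural maps --- that part of the bookkeeping is fine.
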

\begin{proof}
The first statement follows from \autoref{thm.CartierFiniteLength} above by noting that $\sF$ and $\sigma(\sF)$ are isomorphic as Cartier crystals (\ie~nil-isomorphic as Cartier modules). The second statement is shown in \cite[Theorem 4.17]{BlickleBoeckleCartierModulesFiniteness} (but see \autoref{ex.FiniteHoms} below for an idea why such a statement may hold), and the last one follows formally from the other two.
\end{proof}

In \cite{BliBoe.CartierCrys} the category of Cartier crystals (for $\sL \cong \O_X$) is thoroughly studied on an arbitrary Noetherian scheme such that $F : X \to X$ is finite. In particular it is shown that half of Grothendieck's six operations, namely $f^!, Rf_*$ and an exotic tensor product, can be defined on a suitable derived category of Cartier crystals. In particular the construction of the functors $f^!$ and $Rf_*$ is rather subtle and bears some interesting insights. This greatly extends the examples of the pullback for open and closed immersions and the proper push-forward that was discussed in the preceding section.

If $f \colon Y \to X$ is a proper morphism, then $R^if_*$ induces a functor on (coherent) Cartier modules, which can be shown to preserve nilpotence. Hence it descends to a functor on Cartier crystals. However, if $f$ is not proper, then already $f_*\sF$ of a coherent sheaf is no longer coherent. It is a crucial observation in \cite{BliBoe.CartierCrys} that if $\sF$ is a coherent Cartier crystal on $Y$, then $R^if_*\sF$ is a \emph{locally nil-coherent} Cartier crystal on $X$. Nil-coherent for a Cartier module $\sF$ means that $\sF$ has a coherent Cartier submodule $ \sE \subseteq \sF$ such that the quotient $\sF/\sE$ is locally nilpotent, \ie~is the union of nilpotent Cartier submodules. This implies the following result:
\begin{theorem}
For an arbitrary finite type morphism $f \colon Y \to X$, the usual push-forward functor $Rf_*$ on quasi-coherent sheaves induces an exact functor
\[
    Rf_* \colon D_\crys^b(\op{QCrys}(Y)) \to D_\crys^b(\op{QCrys}(X))\, ,
\]
where $D_\crys^b(\op{QCrys}(\usc))$ denotes the bounded derived category of quasi-coherent Cartier crystals whose cohomology is locally nil-coherent.
\end{theorem}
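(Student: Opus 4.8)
The plan is to prove the statement in two stages: first, that $Rf_*$ transports Cartier (and hence Cartier-crystal) structures for purely formal reasons, and second --- this being the actual content --- that it preserves both boundedness of complexes and local nil-coherence of their cohomology. For the first stage I would use that the absolute Frobenius commutes with every morphism of schemes: the base-change map attached to the (commuting) square with horizontal arrows $F^e_Y$, $F^e_X$ and vertical arrows $f$ is an isomorphism, so there is a natural isomorphism $F^e_{X*}\circ Rf_*\cong Rf_*\circ F^e_{Y*}$ of functors on $D(\op{QCoh})$. Consequently a complex $\sF^{\mydot}$ of quasi-coherent sheaves on $Y$ carrying a structural morphism $F^e_{Y*}\sF^{\mydot}\rightarrow\sF^{\mydot}$ is carried by the (evidently triangulated) functor $Rf_*$ to a complex with a structural morphism $F^e_{X*}Rf_*\sF^{\mydot}\rightarrow Rf_*\sF^{\mydot}$, so $Rf_*$ acts on the derived categories of quasi-coherent Cartier modules. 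Since $f$ is of finite type over a Noetherian scheme it is quasi-compact and quasi-separated, so $Rf_*$ commutes with filtered colimits of quasi-coherent sheaves; this shows it carries locally nilpotent Cartier modules to locally nilpotent ones, hence nil-isomorphisms of complexes to nil-isomorphisms, and therefore descends to the crystal level.

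For the second stage, boundedness is easy: a finite type morphism has finite cohomological dimension on quasi-coherent sheaves, so $Rf_*$ has finite cohomological amplitude and preserves bounded complexes. The local nil-coherence of the cohomology is local on $X$, so I would reduce to $X=\Spec R$. Picking a finite affine open cover of $Y$ (possible since $Y$ is Noetherian) and running the associated \Cech/Mayer--Vietoris spectral sequence --- or, equivalently, invoking a Nagata compactification $Y\xrightarrow{j}\overline Y\xrightarrow{\overline f}X$ with $j$ an open immersion and $\overline f$ proper --- reduces the problem to two basic cases, because locally nil-coherent Cartier modules are stable under kernels, cokernels, extensions and filtered unions, so the cohomology of any finite complex built from such is again locally nil-coherent. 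The two cases are: (i) $f$ proper (or finite, or a closed immersion, or a general affine morphism of finite type), and (ii) $f$ a basic open immersion $\Spec R_g\hookrightarrow\Spec R$.

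Case (i) is handled by the classical finiteness of higher direct images --- $R^if_*$ of a coherent Cartier module is coherent, and for proper $f$ it preserves nilpotence (as already recorded in the excerpt), so by the Serre-subcategory bookkeeping above it sends nil-coherent modules to nil-coherent ones; a general affine morphism of finite type is factored through a closed immersion into some $\bA^n_X$ followed by the projection $\bA^n_X\rightarrow X$, which one reduces by induction on $n$ to $\bA^1_X\rightarrow X$. In case (ii) (and likewise for $\bA^1_X\rightarrow X$) the functor $Rf_*=f_*$ is exact, and one checks directly that $f_*$ of a coherent Cartier module $N$ is nil-coherent: if $M_0\subseteq N$ denotes the $R$-submodule spanned by a finite generating set of $N$ over $R_g$, then the $p^{-e}$-linearity identity
\[
\kappa\big(F^e_*(g^{-k}m)\big)=g^{-\lceil k/p^e\rceil}\,\kappa\big(F^e_*(g^{\,p^e\lceil k/p^e\rceil-k}m)\big)
\]
shows that for $s\gg 0$ the \emph{coherent} $R$-submodule $g^{-s}M_0$ is $\kappa$-stable, while on the quotient $N/g^{-s}M_0$ the structural map strictly decreases the order of the pole along $V(g)$ until, after finitely many iterations, it drops below $s$, so that every local section is annihilated by a power of $\kappa$; hence $N/g^{-s}M_0$ is locally nilpotent and $f_*N$ is nil-coherent. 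This is essentially the ``crucial observation'' of \cite{BliBoe.CartierCrys}, to which I would refer for the complete reduction and for the (somewhat delicate) verification that the various natural identifications respect the Cartier structures.

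The main obstacle, and the point where the survey must lean on \cite{BliBoe.CartierCrys}, is case (ii): although $f_*$ of a coherent Cartier module is essentially never coherent, its failure to be coherent is ``nilpotent'' and hence invisible after passing to crystals; pinning this down cleanly --- and, along with it, controlling both the cohomological dimension and the nil-coherence through the iterated \Cech pushforwards that appear in the reduction --- is the substantive part of the argument, the remainder being formal manipulation with the projection formula and with the commutation of Frobenius with $f$.
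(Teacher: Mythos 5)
Your proposal is correct and follows essentially the route the paper itself indicates: the paper omits the proof, deferring to \cite{BliBoe.CartierCrys}, and singles out the basic-open-immersion computation (its exercise showing $j_*$ of a coherent Cartier module is nil-coherent) as the essential idea. Your explicit $p^{-e}$-linearity estimate making $g^{-s}M_0$ a $\kappa$-stable coherent submodule for $s\gg 0$ with locally nilpotent quotient is exactly that exercise, and the remaining formal steps (Frobenius commuting with $Rf_*$, finite cohomological dimension for boundedness, reduction to the proper and affine/open cases with the Serre-subcategory bookkeeping) agree with the paper's surrounding discussion and with the cited source.
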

The proof of this result, though not difficult, is somewhat subtle, so we won't attempt it here but instead refer to \cite{BliBoe.CartierCrys}. However the basic idea is already present in \autoref{ex.CartierOpenImmersion}

The situation with the functor $f^!$ is similar but more subtle. As we have already seen in \autoref{subsec.TheTraceMapForSingular}, on quasi-coherent sheaves the construction of the functor $f^!$ is generally quite involved.  Already in the finite case, in particular for a closed immersion $Y \subseteq X$ with $X$ not smooth, one sees that $f^!$ does not have bounded cohomological dimension, hence does not preserve the bounded derived category. However, in \cite{BliBoe.CartierCrys} it is shown quite generally that $f^!$ preserves local nilpotence, and hence induces a functor on quasi-coherent Cartier crystals. The induced functor on Cartier crystals preserves boundedness up to local nilpotence.
\begin{theorem}
If $f \colon Y \to X$ is essentially of finite type, then the twisted inverse image functor $f^!$ on quasi-coherent sheaves induces an exact functor
\[
    f^! \colon D^{b}_\crys(\op{QCrys}(X)) \to D^{b}_\crys(\op{QCrys}(Y))\, .
\]
of bounded cohomological dimension.
\end{theorem}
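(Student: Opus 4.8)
The plan is to build $f^!$ on Cartier crystals by factoring $f$ and then reducing every assertion---exactness, preservation of local nil-coherence, and bounded cohomological dimension---to a short list of model morphisms, in the same spirit as the construction of $Rf_*$. First I would recall that a morphism essentially of finite type factors, locally on source and target, as a composite of an essentially \'etale morphism (in particular an open immersion), the smooth projection $\bA^n_X \to X$, and a closed immersion. On Cartier crystals the \'etale case is the pullback $f^*=f^!$ already constructed in \autoref{sec.CartierModules}; for the smooth projection the only content is a shift and a twist by the (trivial, hence) relative canonical bundle, so $f^!\sF \cong (\myL f^*\sF)[n]$ is exact up to the shift, visibly of bounded cohomological dimension, and carries the evident crystal structure; and the closed-immersion case uses $i^\flat\sF = \RHom_{\O_X}(i_*\O_Y,\sF)$, whose crystal structure extends, degree by degree, the $I$-torsion functor $\sF \mapsto \sF[I]$ with its Cartier structure recalled in \autoref{sec.CartierModules}.

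Next I would check that the functor so defined is independent of the chosen factorization and patches to a global functor $f^!$ between the bounded-below derived categories of quasi-coherent Cartier crystals. This is the bookkeeping heart of the construction: it follows from the pseudofunctoriality $(g\circ f)^! \cong f^!\circ g^!$ of the twisted inverse image together with the compatibility of these coherence isomorphisms with the Frobenius trace---ultimately because $F_X$ commutes with every morphism---and it is carried out in \cite{BliBoe.CartierCrys}. Exactness in the triangulated sense is then automatic, $f^!$ being a derived functor. What genuinely remains is to prove that $f^!$ maps a complex of quasi-coherent Cartier crystals with locally nil-coherent cohomology to one of the same kind, and that it takes $D^b$ into $D^b$ at all---i.e., has bounded cohomological dimension as a functor on crystals.

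This last point is the main obstacle, and the one place where passing to crystals is essential. On ordinary quasi-coherent sheaves $f^!$ has infinite cohomological dimension as soon as $X$ is singular along the image: for a closed immersion $i\colon Y\hookrightarrow X$ with $X$ not regular near $Y$ the sheaves $\mathcal{E}xt^j_{\O_X}(i_*\O_Y,\sF)$ are in general nonzero for arbitrarily large $j$. The resolution---and the technical core of \cite{BliBoe.CartierCrys}---is that for $j$ outside a bounded range these $\mathcal{E}xt$-sheaves, equipped with their induced Cartier structure, are \emph{nilpotent}, hence vanish in the crystal category. I would obtain this by reducing via the factorization to the closed-immersion case and there invoking a Hartshorne--Speiser--Lyubeznik-type nilpotence for the Frobenius action on the higher $\mathcal{E}xt$-sheaves---morally the Matlis dual, applied degree by degree, of the stabilization statement \autoref{prop.CartierImagesStabilize}---together with the finiteness theorem for Cartier crystals (\autoref{thm.CrystalsFiniteness}, via \autoref{thm.CartierFiniteLength}) to extract a coherent representative and to propagate local nil-coherence through the factorization. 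The delicate step, and the one I expect to cost the most work, is making the nilpotence \emph{uniform in the cohomological degree}: that each individual higher $\mathcal{E}xt$ is eventually nilpotent does not suffice---one needs a degree-independent bound, so that $\mathcal{H}^j(f^!\sF)$ is genuinely zero as a crystal for all $j$ outside a fixed interval. Granting this, the remaining verifications are formal, and one arrives at the stated exact functor $f^! \colon D^b_\crys(\op{QCrys}(X)) \to D^b_\crys(\op{QCrys}(Y))$ of bounded cohomological dimension.
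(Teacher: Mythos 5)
First, some context: the paper does not actually prove this theorem --- it is quoted verbatim from \cite{BliBoe.CartierCrys}, and the surrounding text only records the two key points (that $f^!$ preserves local nilpotence, hence descends to quasi-coherent Cartier crystals, and that the induced functor preserves boundedness ``up to local nilpotence''). Your plan reproduces exactly this skeleton --- factor $f$ into essentially \'etale, smooth, and closed-immersion pieces, check the crystal structure on each model, and isolate the closed immersion into a singular ambient scheme as the only place where bounded cohomological dimension can fail --- so in outline you are aligned with what the paper indicates.

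The genuine gap is in the one step that carries all the content: your claim that for a closed immersion $i\colon Y\hookrightarrow X$ with $X$ singular, the Cartier structures on $\mathcal{E}xt^j_{\O_X}(i_*\O_Y,\sF)$ are nilpotent for all $j$ outside a fixed range, which you propose to get from ``a Hartshorne--Speiser--Lyubeznik-type nilpotence, morally the Matlis dual of \autoref{prop.CartierImagesStabilize}.'' That stabilization statement concerns the descending chain of images inside a \emph{single} coherent Cartier module; it neither produces nilpotence of a given Cartier module nor, as you yourself note, any bound that is uniform in the cohomological degree $j$, and \autoref{thm.CrystalsFiniteness} does not supply such a bound either. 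As stated, the crucial assertion is simply not proved, and the phenomenon is genuinely nontrivial: already for $\sF = i_*k$ the sheaves $\mathcal{E}xt^j_{\O_X}(i_*\O_Y,i_*k)$ are nonzero in arbitrarily high degrees, and their nilpotence as Cartier modules is exactly what has to be established. The way this is actually handled (following \cite{BliBoe.CartierCrys}) is not degree-by-degree: since the question is local, one embeds $X$ into a regular scheme $P$ by a closed immersion $j$, transports $\sF$ to $P$ via the Kashiwara equivalence (\autoref{prop.BasicKashiwaraCartier} and its derived form), and computes $i^!\sF \cong (j\circ i)^!(j_*\sF)$ there; over the regular $P$ the functor $\myR\sHom_{\O_P}(\O_Y,\usc)$ has honest cohomological dimension bounded by $\dim P$ because $P$ has finite global dimension, and the uniform bound together with the nilpotence of the discarded high-degree cohomology on $X$ falls out as a consequence rather than being the input. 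If you adopt this route you must also be careful about the order of development, since the derived Kashiwara equivalence and the boundedness of $i^!$ are intertwined --- which is precisely the subtlety the paper alludes to and the reason it defers the proof to \cite{BliBoe.CartierCrys}.
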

Besides a number of obvious compatibilities between these functors which are induced from the corresponding ones of the underlying quasi-coherent sheaves, there are two adjointness statements which are important in the theory.
\begin{proposition}
\begin{enumerate}
\item Let $f \colon Y \to X$ be a proper morphism. Then as functors on categories $D^b_\crys(\op{QCrys}(\usc))$ the functor $Rf_*$ is naturally left adjoint to $f^!$.
\item If $j \colon Y \to X$ is an open immersion, then $j_*$ is naturally right adjoint to $j^!=j^*$.
\end{enumerate}
\end{proposition}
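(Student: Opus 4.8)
The plan is to obtain both adjunctions from classical Grothendieck--Lipman duality for the underlying quasi-coherent complexes. Recall that for a proper morphism $f$ the derived push-forward $Rf_*$ on quasi-coherent complexes is left adjoint to the twisted inverse image $f^!$, and that for an open immersion $j$ one has $j^! = j^*$ with the derived push-forward $Rj_*$ (the functor denoted $j_*$ in the statement) right adjoint to $j^*$; in both cases the adjunction is realized by genuine unit and counit natural transformations. By the preceding two theorems the functors $Rf_*$ and $f^!$, respectively $Rj_*$ and $j^! = j^*$, already descend to the derived categories $D^b_\crys(\op{QCrys}(\usc))$ of Cartier crystals. Hence it is enough to check that the classical units and counits are compatible with the Cartier structures, i.e. define (functorially in the argument) morphisms of Cartier modules. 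Once this is known, the triangle identities --- which at the level of the underlying complexes are exactly the classical ones --- hold in the category of Cartier modules as well, and an adjunction on a category compatible with a multiplicative system of morphisms descends to the localization by the usual calculus of fractions, as carried out in \cite{BliBoe.CartierCrys}.

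For part (1), recall that the Cartier structure on $f^!\sG$ is built from the identification $(F^e_X)^!\omega_X^{\mydot} \cong \omega_X^{\mydot}$ of \autoref{eq.FUpperShriekOfOmega} together with the (tautologically commuting) square of Frobenius morphisms over $f$, which gives $f^! F^e_{X*} \cong F^e_{Y*} f^!$; concretely $\kappa_{f^!\sG}$ is the composite $F^e_{Y*} f^!\sG \cong f^! F^e_{X*}\sG \to[f^!\kappa_{\sG}] f^!\sG$. Since the Frobenius commutes with $f$, one also has $F^e_{X*} Rf_* \cong Rf_* F^e_{Y*}$, and applying $Rf_*$ to the compatibility of the duality trace with the composition of $(-)^!$ along that square of Frobenii --- equivalently, to the coherence isomorphism $(f \circ F^e_Y)^! \cong (F^e_X \circ f)^!$ --- shows that the counit $\epsilon \colon Rf_* f^! \to \id$ fits into a commuting square $F^e_* Rf_* f^!\sG \to Rf_* f^!\sG$ over $F^e_* \sG \to \sG$; that is, $\epsilon$ is a morphism of Cartier modules. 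The same argument applies to the unit $\eta \colon \id \to f^! Rf_*$. By the first paragraph this yields the adjunction $Rf_* \dashv f^!$ on $D^b_\crys(\op{QCrys}(\usc))$.

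Part (2) is the same argument, and easier. For an open immersion $j$ the pullback $j^! = j^*$ is exact and, being in particular essentially \'etale, already carries Cartier modules (hence Cartier crystals) to Cartier modules; moreover the base-change morphism $j^* F_{X*} \cong F_{Y*} j^*$ is a strict isomorphism since $j$ is flat (this is exactly the input used in the earlier proposition on essentially \'etale pullback). Consequently the counit $j^* Rj_*\sG \to \sG$ --- which is an isomorphism --- and the unit $\sF \to Rj_* j^*\sF$ are immediately morphisms of Cartier modules, and $Rj_*$ is defined on $D^b_\crys(\op{QCrys}(\usc))$ by the preceding theorem. As in the first paragraph, the classical adjunction $j^* \dashv Rj_*$ descends, giving (2).

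The crux --- and the only genuinely technical point --- is the verification alluded to in the first paragraph, namely that Grothendieck's duality morphisms commute with the trace of Frobenius in the derived category. This is a coherence statement about the interaction of $(-)^!$, the duality counit, and the trace maps along the two edges of the square of Frobenii; it is not deep, but it requires care, and it is precisely the kind of bookkeeping already undertaken in \cite{BliBoe.CartierCrys, BlickleBoeckleCartierModulesFiniteness}. Granting that, together with the standard fact that an adjoint pair compatible with a multiplicative system passes to the localization, everything else is formal.
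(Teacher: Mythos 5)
The paper itself does not prove this proposition; it is imported from \cite{BliBoe.CartierCrys}, so your sketch has to stand on its own. Your overall architecture --- take the classical adjunctions $Rf_* \dashv f^!$ (proper case) and $j^* \dashv Rj_*$ (open case) on the underlying quasi-coherent derived categories, check that the units and counits are compatible with the Cartier structures, and then descend along the localization at nil-isomorphisms using that both functors preserve (local) nilpotence --- is indeed the intended route, and your part (2) is essentially complete, because for a flat (essentially \'etale) $j$ the base-change map $j^* F_{X*} \cong F_{Y*} j^*$ genuinely is an isomorphism, so Cartier-linearity of the unit and counit is immediate.

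The genuine gap is in part (1), at exactly the point you call the crux and then defer. You assert that the commuting square of absolute Frobenii ``gives'' an isomorphism $f^! F^e_{X*} \cong F^e_{Y*} f^!$, and you build both the Cartier structure on $f^!\sG$ and the Frobenius-linearity of the counit on it. That isomorphism is false in general: the Frobenius square is commutative but not Cartesian, so no base-change theorem applies. Already for a closed immersion $i \colon V(I) \into X$ one has $i^!\sG = \sG[I]$ by \autoref{eq.UpperShriekForFiniteMaps}, and $F_{Y*}(\sG[I])$ sits inside $(F_{X*}\sG)[I] = \sG[I^{[p]}]$ without being equal to it --- which is precisely why the paper defines the Cartier structure on $i^\flat\sG$ by composing this \emph{inclusion} with $\kappa$. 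What exists in general is only a natural transformation $F^e_{Y*} f^! \to f^! F^e_{X*}$; equivalently one defines the structure in the adjoint form $f^!\sG \to f^!F_X^!\sG \cong F_Y^! f^!\sG$, using $(f\circ F_Y)^! \cong (F_X\circ f)^!$ and duality for the finite Frobenius, \cf \autoref{eq.FUpperShriekOfOmega}. With only a transformation rather than an isomorphism, the claim that the duality counit $Rf_* f^!\to \id$ is a morphism of Cartier modules does not follow from the square you draw; it requires the compatibility of the Grothendieck trace for $f$ with the trace of Frobenius along the two factorizations $f\circ F_Y = F_X\circ f$. That coherence verification is the actual mathematical content of the proposition --- it is what \cite{BliBoe.CartierCrys} establishes --- and since your sketch both mis-states the mechanism and then outsources the needed verification to that same reference, it does not yet prove (1). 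The formal step of descending an adjunction to the localization, by contrast, is fine once the module-level compatibility is in hand.
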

For an open immersion $j\colon U\into X$ and a closed complement $i\colon Z\into X$ the above adjunction yields natural isomorphisms $i_*i^!\to\id$ and $\id\to j_*j^*$. This yields the following technically important result regarding their combination:
\begin{theorem}
In $D^b_\crys(\op{QCrys}(X))$, there is a natural exact triangle
\[
    i_*i^!\to \id \to Rj_*j^* \to[+1]
\]
\end{theorem}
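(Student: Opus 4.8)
The plan is to obtain the triangle formally from the two adjunctions recorded in the preceding proposition — the counit $\varepsilon\colon i_*i^!\to\id$ of $i_*\dashv i^!$ (a closed immersion is proper) and the unit $\eta\colon\id\to Rj_*j^*$ of $j^*\dashv Rj_*$ — once the expected formal identities among these functors on $D^b_\crys(\op{QCrys}(X))$ are in place. Morally this is just the classical local cohomology triangle $R\underline{\Gamma}_Z\to\id\to Rj_*j^*\to[+1]$ for quasi-coherent sheaves; the real content is that it lifts to Cartier crystals and is induced by $\varepsilon$ and $\eta$.

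First I would show that the composite $i_*i^!\sF\to\sF\to Rj_*j^*\sF$ of $\varepsilon_\sF$ with $\eta_\sF$ vanishes for every $\sF$. Adjunction gives $\Hom(i_*i^!\sF,Rj_*j^*\sF)\cong\Hom(j^*i_*i^!\sF,j^*\sF)$, and this is $0$ because $j^*i_*=0$: anything in the image of $i_*$ has underlying quasi-coherent complex supported on $Z$, hence restricts to $0$ on $U$, and $j^*$ of the zero crystal is zero. The identical computation gives $\Hom\big(i_*i^!\sF,(Rj_*j^*\sF)[-1]\big)=0$. Then I would complete $\varepsilon_\sF$ to a distinguished triangle $i_*i^!\sF\xrightarrow{\varepsilon_\sF}\sF\xrightarrow{q_\sF}C(\sF)\to[+1]$; applying $\Hom(\usc,Rj_*j^*\sF)$ to it, the two vanishings make $q_\sF^*$ a bijection, so $\eta_\sF$ lifts uniquely to $\psi_\sF\colon C(\sF)\to Rj_*j^*\sF$, and the same vanishing makes the cone functorial up to unique isomorphism, so $C(\usc)$, $q$, $\psi$ may be taken natural in $\sF$.

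Next I would check that $\psi_\sF$ is an isomorphism, using that $(j^*,i^!)$ is jointly conservative on $D^b_\crys(\op{QCrys}(X))$. Applying $j^*$: since $j^*i_*=0$, $q_\sF$ becomes an isomorphism $j^*\sF\xrightarrow{\sim}j^*C(\sF)$, while the counit $j^*Rj_*\to\id$ is an isomorphism (open immersion), so a triangle identity forces $j^*\eta_\sF$ to be an isomorphism; hence so is $j^*\psi_\sF$. Applying $i^!$: the unit $\id\to i^!i_*$ is an isomorphism ($i_*$ is fully faithful), so a triangle identity makes $i^!\varepsilon_\sF\colon i^!i_*i^!\sF\to i^!\sF$ an isomorphism, whence $i^!C(\sF)=0$; and $i^!Rj_*=0$ since $\Hom(i_*\sG,Rj_*\sH)\cong\Hom(j^*i_*\sG,\sH)=0$ and $i_*$ reflects the zero object, so $i^!Rj_*j^*\sF=0$ and $i^!\psi_\sF$ is a map of zero objects. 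By conservativity $\psi_\sF$ is an isomorphism; transporting the triangle above along it yields the natural distinguished triangle $i_*i^!\sF\xrightarrow{\varepsilon_\sF}\sF\xrightarrow{\eta_\sF}Rj_*j^*\sF\to[+1]$, which is the claim (note the first map is the counit by construction, so no extra identification is needed).

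The triangulated bookkeeping above is routine; what requires genuine input is the package of formal identities invoked as black boxes — $j^*i_*=0$, $i^!i_*\cong\id$, $j^*Rj_*\cong\id$, $i^!Rj_*=0$, and especially the joint conservativity of $(j^*,i^!)$ — in the derived category of (nil-coherent) Cartier crystals. Each of the first four should reduce to its quasi-coherent counterpart once one knows, as established in \cite{BliBoe.CartierCrys}, that $j^*$, $Rj_*$, $i_*$, $i^!$ are computed on underlying complexes by the usual functors, compatibly with the Cartier structure. The hard part will be conservativity, which is where the theory genuinely enters: if $j^*M=0$ then the cohomology of the underlying complex of $M$ is supported on $Z$, and the Kashiwara-type equivalence \autoref{prop.BasicKashiwaraCartier}, once propagated to the relevant bounded derived categories of nil-coherent crystals, identifies $M$ with $i_*N$ for a Cartier crystal $N$ on $Z$, so that $i^!M\cong N$ and $i^!M=0$ forces $M=0$. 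Verifying that this equivalence and the compatibilities above pass from abelian Cartier crystals to $D^b_\crys(\op{QCrys}(\usc))$ is the delicate point, and is exactly where \cite{BliBoe.CartierCrys} does the heavy lifting.
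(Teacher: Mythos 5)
Your argument is essentially correct as a piece of formal recollement yoga, but note that the survey itself gives no proof of this statement: it is quoted from \cite{BliBoe.CartierCrys}, with the preceding proposition (the adjunctions $Rf_*\dashv f^!$ for proper $f$ and $j^*\dashv j_*$ for open $j$) offered as the source of the two maps. So there is nothing in the paper to match step by step; what you have reconstructed is the standard derivation: the composite $i_*i^!\sF\to\sF\to Rj_*j^*\sF$ and its shift vanish because $j^*i_*=0$, the counit is completed to a triangle, the unit lifts uniquely to the cone, and the lift is an isomorphism because $(j^*,i^!)$ is jointly conservative, the latter being exactly the derived Kashiwara equivalence stated at the end of that subsection. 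You correctly identify that all the content sits in the black-boxed identities ($j^*i_*=0$, $i^!i_*\cong\id$, $j^*Rj_*\cong\id$, $i^!Rj_*=0$, conservativity) for $D^b_\crys(\op{QCrys}(\usc))$, and that these are established in \cite{BliBoe.CartierCrys}. A route closer to how the source actually sees the triangle is to prove instead that $i_*i^!\simeq R\Gamma_Z$ up to nil-isomorphism: for a Cartier module $\sF$ the inclusion $\sF[I]\subseteq \Gamma_Z\sF$ has locally nilpotent cokernel, since for $m$ with $I^n m=0$ one has $I\cdot\kappa^e(F^e_*m)=\kappa^e(F^e_*(I^{[p^e]}m))=0$ once $p^e\geq n$; given this identification, the crystal triangle is just the classical local cohomology triangle $R\Gamma_Z\to\id\to Rj_*j^*$ transported to crystals. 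That route makes transparent why the statement is a theorem about crystals and fails on the nose for coherent Cartier modules, whereas your route hides this in the conservativity statement.

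Two small repairs. First, your justification of $i^!Rj_*=0$ (``$\Hom(i_*\sG,Rj_*\sH)=0$ and $i_*$ reflects the zero object'') should instead run through the adjunction: $\Hom(\sG,i^!Rj_*\sH)\cong\Hom(i_*\sG,Rj_*\sH)\cong\Hom(j^*i_*\sG,\sH)=0$ for all $\sG$, and then take $\sG=i^!Rj_*\sH$. Second, in the conservativity step, $j^*M=0$ in crystals does \emph{not} mean the underlying quasi-coherent cohomology is supported on $Z$, only that its restriction to $U$ is locally nilpotent; so the Kashiwara equivalence must be invoked in the crystal form (cohomology supported on $Z$ up to nil-isomorphism), which is the form proved in \cite{BliBoe.CartierCrys}. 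You flag this as the delicate point, which is the right instinct; just be aware the reduction is not literally ``support of the underlying complex.''
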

This in turn yields a very general form of the Kashiwara equivalence that was alluded to in \autoref{prop.BasicKashiwaraCartier} above.
\begin{theorem}
Let $i \colon Y \to X$ be a closed immersion. Then $i^!$ and $i_*$ define natural isomorphisms
\[
\xymatrix{  D^b_\crys(\op{QCrys}(Y))   \ar@<.5ex>[r]^-{i_*} &   D^{b}_{\crys,Y}(\op{QCrys}(X)) \ar@<.5ex>[l]^-{i^!} }
\]
where the right hand category consists of bounded complexes of quasi-coherent Cartier crystals on $X$ whose cohomology is coherent and supported in $Y$.
\end{theorem}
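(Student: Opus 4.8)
The plan is to deduce the statement formally from the recollement package assembled just above: the exact triangle $i_*i^!\to\id\to Rj_*j^*\to[+1]$ for the open complement $j\colon U=X\setminus Y\into X$, the adjunctions $(i_*,i^!)$ and $(j^*,Rj_*)$, and the fact — contained in the theorem that $f^!$ induces an exact functor of bounded cohomological dimension on Cartier crystals — that $i^!$ sends $D^b_\crys(\op{QCrys}(X))$ to $D^b_\crys(\op{QCrys}(Y))$. Granting these inputs, the only genuinely Cartier-theoretic point to verify is how the localization at nilpotent modules interacts with reflection of isomorphisms by $i_*$.

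First I would check that the two functors land where claimed. Since $i$ is a closed immersion, $i_*$ is exact, hence preserves boundedness; since $i$ is finite it preserves coherence of the cohomology sheaves; and its image visibly has cohomology supported in $Y$. So $i_*$ restricts to a functor $D^b_\crys(\op{QCrys}(Y))\to D^b_{\crys,Y}(\op{QCrys}(X))$. Dually, $i^!$ lands in $D^b_\crys(\op{QCrys}(Y))$ by the cited boundedness theorem and preserves coherence because $i$ is finite, so it restricts to a functor out of $D^b_{\crys,Y}(\op{QCrys}(X))$. (The coherence bookkeeping is harmless precisely because $i$ is finite, so one may pass freely between the nil-coherent and the honestly coherent subcategories.)

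Next I would show $i_*$ is fully faithful, i.e.\ that the unit $\eta_{\sG}\colon\sG\to i^!i_*\sG$ is an isomorphism of Cartier crystals. Apply the recollement triangle to $i_*\sG$: since $U$ is disjoint from $Y$ we have $j^*i_*\sG=0$, so $Rj_*j^*i_*\sG=0$ and the counit $\varepsilon_{i_*\sG}\colon i_*i^!i_*\sG\to i_*\sG$ is an isomorphism. The triangle identity $\varepsilon_{i_*\sG}\circ i_*(\eta_\sG)=\id$ then forces $i_*(\eta_\sG)$ to be an isomorphism. It remains to know that $i_*$ reflects isomorphisms of Cartier crystals; this is the subtle step. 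A morphism of crystals is an isomorphism exactly when it is a nil-isomorphism of underlying Cartier modules, i.e.\ has a cone with nilpotent cohomology; $i_*$ is exact, so it commutes with cones; and a Cartier module of the form $i_*\sH$ is nilpotent iff $\sH$ is, because its structural map is simply $i_*$ applied to that of $\sH$. Hence $i_*$ reflects nil-isomorphisms, $\eta_\sG$ is an isomorphism, and $i_*$ is fully faithful.

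For essential surjectivity onto $D^b_{\crys,Y}(\op{QCrys}(X))$, take $\sF$ in that category; its cohomology is supported in $Y$, so $j^*\sF=0$ and the recollement triangle collapses to an isomorphism $i_*(i^!\sF)\xrightarrow{\sim}\sF$. By the first step $i^!\sF$ lies in $D^b_\crys(\op{QCrys}(Y))$, so this exhibits $\sF$ as $i_*$ of an object of the source. Thus $i_*$ is an equivalence with quasi-inverse the restriction of $i^!$, the structural natural transformations being the unit and counit. I expect the real difficulty of this circle of ideas to lie entirely in the black-boxed inputs from \cite{BliBoe.CartierCrys} — chiefly the boundedness of $i^!$ on crystals (the underlying derived functor on quasi-coherent sheaves is unbounded once $X$ is singular) and the construction of the recollement triangle — rather than in the formal deduction above, whose only delicate point is the reflection-of-isomorphisms argument.
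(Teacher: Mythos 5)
Your argument is correct and takes essentially the same route the paper does: the paper presents this Kashiwara equivalence as a direct consequence of the recollement package it has just assembled — the exact triangle $i_*i^!\to \id \to Rj_*j^*$, the adjunctions $(i_*,i^!)$ and $(j^*,Rj_*)$, and the boundedness of $i^!$ on crystals, all imported from \cite{BliBoe.CartierCrys}. Your write-up simply makes that formal deduction explicit, correctly isolating the one genuinely Cartier-theoretic point (that $i_*$ reflects nil-isomorphisms, since $i_*\sH$ is nilpotent iff $\sH$ is) on which the rest hinges.
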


\subsection{Arithmetic aspects of $p^{-e}$-linear maps}
We conclude with a brief discussion of connections between Cartier crystals and more arithmetic constructions.  What follows is much less explicit than previous sections of the paper, so if the terms used are not familiar to you, we suggest the reader use this as a place jump off for further reading.

The finite length result for Cartier crystals in \autoref{thm.CrystalsFiniteness} suggests -- in analogy with the Riemann-Hilbert correspondence for $D$-modules (\ie modules of the ring of differential operators) on smooth complex manifolds -- a connection of Cartier crystals with a category of constructible sheaves. Indeed, in \cite{Gabber.tStruc} Gabber introduces a family of $t$-structures on the derived category of bounded complexes of constructible $\mathbb{F}_p$-vector spaces on the \'etale site of $X$. He shows that for the middle perversity the heart of this $t$-structure (\ie~the perverse sheaves with respect to this $t$-structure) form an Abelian category which also is Noetherian and Artinian. The connection between Cartier crystals and constructible $\mathbb{F}_p$-vector spaces is a combination of \cite{BoPi.CohomCrys} and \cite{BliBoe.CartierCrys} and yields an equivalence of derived categories:
\[
    D^b_\crys(\op{QCrys}(X)) \to[\cong] D^b_c(X_{et},\mathbb{F}_p)
\]
where the right hand side is the category of constructible sheaves of $\mathbb{F}_p$-vector spaces on $X_{et}$. This correspondence is a two step procedure: First is a Grothendieck-Serre duality between Cartier crystals (coherent $\O_X$-modules with a \emph{right} action of Frobenius) with the category of $\tau$-crystals (coherent $\O_X$-modules with a \emph{left} Frobenius action) of \cite{BoPi.CohomCrys} and was largely motivated by our desire to understand the precise connection of the theory in \cite{BoPi.CohomCrys} with the work of Emerton and Kisin \cite{EmKis.Fcrys} and Lyubeznik \cite{LyubeznikFModulesApplicationsToLocalCohomology}. This Grothendieck-Serre duality is the main result of \cite{BliBoe.CartierCrys}. The step from $\tau$-crystals to constructible sheaves is just by taking Frobenius fix-points, \ie~the Artin-Schreier sequence, see \cite{BoPi.CohomCrys}.

The first author's PhD student Tobias Schedlmeier has shown in his upcoming thesis that the equivalence is given directly by the functor $\Sol(\usc) \colonequals \RHom_{\crys}(\usc,\omega^\mydot_X)$ and proved that the image of the Abelian subcategory of Cartier crystals under $\Sol$ is precisely Gabbers category of perverse sheaves $\op{Perv}(X_{et},\mathbb{F}_p)$ for the middle perversity.

\subsection{Exercises}
\begin{exercise}
\label{ex.ProofOfDescendingChain}
Prove \autoref{lem.ImagesKappaAreCartier}.
\end{exercise}

\begin{exercise}\label{ex.KappaSurjImpliesReduced}
Show that the annihilator of any coherent Cartier module $\sF$ on $X$ with surjective structural map is a sheaf of radical ideals, \ie~its support is reduced.
\end{exercise}

\begin{exercise}\label{ex.localizationFrob}
Let $R$ be a ring and $S \subseteq R$ a multiplicative set. Then for any module $M$ show that $S^{-1}(F_R)_*M \cong (F_{S^{-1}R})_* S^{-1}M$. \\ Hint: Localize with respect to the multiplicative set $S^p$ is the as with respect to $S$.  This generalizes \autoref{ex.LocalizationCompletionAndFeLowerStar}.
\end{exercise}
\begin{exercise}\label{ex.RFdef}
Let $X$ be a scheme and $\sL$ a line bundle. We define a sheaf of rings $\O_X^\sL[F^e]$ as
\[
    \O_X \oplus (\sL \cdot F^e) \oplus (\sL^{1+p^e} \cdot F^{2e}) \oplus (\sL^{1+p^e+p^{2e}} \cdot F^{3e}) \oplus \cdots
\]
where $F^{ne}$ are formal symbols and the multiplication of homogeneous elements $lF^{ne}$ and $l'F^{ne'}$ is defined as $lF^{ne}l'F^{n'e} = l(l')^{p^{n'e}}F^{(n+n')e}$.
\begin{enumerate}
\item Show that this defines the structure of a sheaf of rings on $\O_X^\sL[F^e]$.
\item Show that the category of $(\sL,p^e)$--Cartier modules is equivalent to the category of (sheaves of) right $\O_X^\sL[F^e]$-modules.
\end{enumerate}
Hint: Do the case of $\sL \cong \O_X$ first and then attempt the general case.
\end{exercise}

\begin{exercise}\label{ex.NilpotentExtensions}
If $0 \to \sF' \to \sF \to \sF'' \to 0$ is an exact sequence of coherent Cartier modules. Show that $\sF',\sF''$ are nilpotent (of order $\leq e,e'$) if and only if $\sF$ is nilpotent (of order $\leq e+e'$).
\end{exercise}

\begin{starexercise}\label{ex.SupportsCartier}
Let $\sF$ be a quasi-coherent Cartier module with surjective structural map. Show that the collection
\[
\{ \supp (\sF/\sG)\, |\, \sG \subseteq \sF \text{ a Cartier submodule} \}
\]
is a collection of reduced subschemes that is closed under finite unions and taking irreducible components.
\end{starexercise}

\begin{starexercise}\label{ex.CartierOpenImmersion}
Let $X =\Spec R$ be an affine scheme and $U = \Spec R_f$ a basic open subset with $f \in R$, and denote the open inclusion $U \subseteq X$ by $j$. Let $\sF$ be a coherent Cartier module on $U$. Show that $j_*\sF$ has a coherent Cartier submodule $F$ such that the quotient $j_*\sF/F$ is locally nilpotent, \ie~the union of its nilpotent Cartier submodules.
\end{starexercise}

\begin{exercise}\label{ex.FiniteLengthImpliesTest}
Let $\sF$ be a coherent Cartier module on $X$. The \emph{test submodule} $\tau(\sF)$ is defined as the smallest Cartier submodule $\sG \subseteq \sF$ which agrees with $\sigma(\sF)$ for each generic point of $X$. Show that \autoref{thm.CartierFiniteLength} implies the existence and uniqueness of $\tau(\sF)$.
\end{exercise}

\begin{exercise}
Suppose that $R$ is a ring and $(M, \phi)$ is a Cartier module on $M$.  Suppose further that $R \to S$ is a finite ring homomorphism.  Prove that $\Hom_R(S, M)$ has the structure of a Cartier module induced by $\phi$ and by the Frobenius map $S \to F_* S$.
\end{exercise}

\begin{exercise}
\label{ex.BasicKashiwaraCartier}
Prove \autoref{prop.BasicKashiwaraCartier}.
\end{exercise}

\begin{exercise}
\label{ex.FiniteHoms}
Let $R$ be a regular $F$-finite ring with dualizing sheaf $\omega_R$ with its standard Cartier structure $T : F_* \omega_R \to \omega_R$ (see \autoref{subsec.TraceOfFrob}). Show that the homomorphisms of Cartier modules $\Hom_{\operatorname{Cart}}(\omega_R,\omega_R)=R^F=\mathbb{F}_p$
is just the Frobenius fixed points of the action of $F$ on $R$. In particular, this Hom-set is finite.
\end{exercise}

\begin{starexercise}
\label{ex.FinitelyManyCompatiblySplitIdeals}
Suppose that $R = k[x_1, \dots, x_4]_{\langle x_1, \dots, x_4 \rangle}$ and that $\phi : F^e_* R \to R$ is a Frobenius splitting.  In \autoref{cor.FinitelyManyCompatiblySplitIdeals}, it was shown that there are at most finitely many $\phi$-compatible ideals.

Prove that there at most ${4 \choose d}$ prime ideals $Q$ which are compatibly split by $\phi$ such that $\dim (R/Q) = d$.
\vskip 3pt
\emph{Hint: } Prove it for $d = 0$ first (very easy), then $d = 1$ (use the fact that compatibly split subvarieties must intersect normally, \autoref{cor.IntersectionsOfCompatiblySplitVarietiesAreReduced}, but we only have 4 ``directions'' in $\Spec R$, which is just the origin in $\bA^4$).  For $d = 2, 3$, simply consider all possibilities exhaustively (keeping in mind the normal intersections).  For a complete proof for any $\bA^n$ (not just $n = 4$), see \cite{SchwedeTuckerNumberOfFSplit}.
\end{starexercise}

\begin{exercise}
Suppose that $(\sF, \kappa)$ is an $(\sL, p^e)$-Cartier module on a projective variety $X$ such that the structural map $\kappa : F^e_* (\sL \tensor \sF) \to \sF$ is surjective.  Further suppose that $\sA$ is a globally generated ample line bundle and that $\sN$ is another line bundle such that $\sN^{p^e - 1} \tensor \sL$ is ample.  Prove that
\[
\sF \tensor \sA^{\dim X} \tensor \sN
\]
is a globally generated sheaf.
\vskip 3pt
\emph{Hint: } Use the same strategy as in \autoref{thm.GlobalGenerationOfImages}.
\end{exercise}

\section{Applications to local cohomology and test ideals}
\label{sec.CartierModLocalCohom}

In this section we discuss in detail the relation of the theory of Cartier modules to other theories of modules with a Frobenius action, with an emphasize on applications to local cohomology. Then we discuss a simple but interesting degree-reducing property of Cartier linear maps, which allows an elementary treatment of the theory of Cartier modules in the case that $X$ is of finite type over a perfect field. We use this approach to study the test ideals and show the discreteness of their jumping numbers.

\subsection{Cartier modules and local cohomology}\label{sec.LocalProperties}

The category of Cartier modules, besides enjoying some extraordinary finiteness conditions, is useful due to its connection to other categories which are studied, in particular in connection with local cohomology. Besides the connection to constructible $p$-torsion sheaves that we hinted at above, we show the relation to two further categories which are particularly important in the study of the local cohomology of rings in positive characteristic. Our goal is to explain the following diagram of categories and to derive a number of finiteness results for local cohomology from the above finiteness result for Cartier modules.
\[
\scriptsize
\left\{\begin{array}{c} \text{cofinite $R$-modules} \\ \text{with left Frobenius action} \\ \text{($R$ complete local ring)} \end{array} \right\} \leftrightarrow
\left\{\begin{array}{c} \text{coherent Cartier modules on $X$} \\ \text{($X$ Noetherian and $F$-finite)} \end{array} \right\}
\to
\left\{\begin{array}{c} \text{Lyubeznik's $F$-modules over $R$} \\ \text{($R$ regular, Noetherian ring)} \end{array} \right\}
\]

The parenthetical parts indicate in what generality the categories are defined and the arrows are defined when both assumption holds, for example the first double arrow holds for complete local and $F$-finite rings. The left double arrow is an equivalence of categories given by Matlis duality $\Hom_R(\usc,E_{R/\bm})$ where $E_{R/\bm}$ is an injective hull of the perfect residue field of $R$. The right arrow is a functor which gives an equivalence after inverting Cartier modules  at nil-isomorphisms, that is it induces an equivalence of categories from Cartier crystals to $F$-finite modules. Lyubeznik's $F$-finite modules and this equivalence will be explained in detail below.

Let us begin with Matlis duality. Let $(R,\bm)$ be complete and local and denote by $E=E_R$ an injective hull of the prefect residue field of $R$. Since $R$ is $F$-finite one has that $F^e_* F^!E_R \colonequals \Hom_R(F_*R,E_R)\cong E_{F_*R}$ which we identify with $E_R$ since $R$ and $F_*R$ are isomorphic as rings. We fix hence an isomorphism $F^!E \cong E$. If we denote by $(\usc)^\vee = \Hom_R(\usc,E_R)$ the Matlis duality functor, we have the following lemma whose proof we leave as \autoref{ex.FrobeniusCommutesMatlis}.
\begin{lemma}\label{lem.FrobeniusCommutesMatlis}
For $(R,\bm)$ local and $F$-finite there is a (functorial) isomorphism $F_*(\usc)^\vee \cong (F_*\usc)^\vee$.
\end{lemma}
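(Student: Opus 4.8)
The statement to prove is \autoref{lem.FrobeniusCommutesMatlis}: for $(R,\bm)$ local and $F$-finite there is a functorial isomorphism $F_*(M)^\vee \cong (F_* M)^\vee$, where $(\usc)^\vee = \Hom_R(\usc, E_R)$. The key structural input is the fixed isomorphism $F^! E_R \cong E_R$ coming from $F$-finiteness, i.e. $\Hom_R(F_* R, E_R) \cong E_{F_* R}$, together with the identification of $R$ and $F_* R$ as rings (but not as $R$-modules). The plan is to unwind both sides using the adjunction/Hom-tensor machinery and the defining property of $F^!$ for the finite map $F$.

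\textbf{Key steps.} First I would recall \autoref{eq.UpperShriekForFiniteMaps} specialized to the finite morphism $F \colon \Spec R \to \Spec R$: for any $R$-module $M$ one has $F_* F^! M \cong \Hom_R(F_* R, M)$ as $F_* R$-modules. Second, I would compute $(F_* M)^\vee = \Hom_R(F_* M, E_R)$ and rewrite it using the ``adjunction'' for the finite map: $\Hom_R(F_* M, E_R) \cong F_* \Hom_R(M, F^! E_R)$, which is the standard sheafy form of the statement that $F_*$ (restriction of scalars along $F$) has right adjoint $F^!(\usc) = \Hom_R(F_* R, \usc)$ when viewed appropriately — concretely, $\Hom_R(F_* M, E_R) \cong F_*\Hom_R(M, \Hom_R(F_* R, E_R))$, which is just Hom-tensor adjunction combined with the identification $F_* M \cong F_* R \otimes_R M$ (where the tensor is formed using the $R$-module structure on $F_* R$). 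Third, I would invoke the fixed isomorphism $F^! E_R \cong E_R$ to replace $\Hom_R(M, F^! E_R)$ by $\Hom_R(M, E_R) = M^\vee$, yielding $(F_* M)^\vee \cong F_* \Hom_R(M, E_R) = F_*(M^\vee)$. Finally I would check that all the isomorphisms used are natural in $M$ (each is either Hom-tensor adjunction, the projection-formula-type identity $F_* M \cong F_* R \otimes_R M$, or the fixed isomorphism $F^! E_R \cong E_R$, all of which are functorial), which gives the asserted functorial isomorphism; this is precisely the content to be verified in \autoref{ex.FrobeniusCommutesMatlis}, so I may cite that exercise.

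\textbf{Main obstacle.} The only real subtlety is bookkeeping of the two $R$-module structures on $F_* R$ and keeping straight which Hom's are $R$-linear versus $F_* R$-linear; in particular one must be careful that the isomorphism $F^! E_R \cong E_R$ is an isomorphism of $R$-modules after the identification $R \cong F_* R$ of rings, and that applying $F_*$ to it gives what is needed. Once the adjunction $\Hom_R(F_* M, N) \cong F_*\Hom_R(M, F^! N)$ is set up correctly (which is just \autoref{eq.UpperShriekForFiniteMaps} for the affine finite map $F$ together with the derived-category collapse to modules, since $E_R$ is injective so no derived functors appear), the proof is a two-line chain of canonical isomorphisms. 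I would therefore keep the write-up short, presenting the chain
\[
(F_* M)^\vee = \Hom_R(F_* M, E_R) \cong F_*\Hom_R(M, \Hom_R(F_* R, E_R)) \cong F_*\Hom_R(M, E_R) = F_*(M^\vee),
\]
and remark that functoriality in $M$ is clear since each step is.
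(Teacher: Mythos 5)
Your proposal is correct and is exactly the argument the paper intends: the paper relegates the proof to \autoref{ex.FrobeniusCommutesMatlis} with the hint ``use the duality for finite morphisms,'' which is precisely your chain $\Hom_R(F_*M,E_R)\cong F_*\Hom_R(M,\Hom_R(F_*R,E_R))\cong F_*\Hom_R(M,E_R)$ via the adjunction \autoref{eq.UpperShriekForFiniteMaps} for the finite map $F$ (no derived functors since $E_R$ is injective) together with the fixed isomorphism $F^!E_R\cong E_R$. Your bookkeeping of the two $R$-structures and the naturality remarks are fine, so nothing further is needed.
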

This immediately implies the first of the equivalences above, also \cf \cite{SharpYoshinoRightLeftModulesOverTheFrobeniusSkewPolynomialRing}.
\begin{proposition}
Let $(R,\bm)$ be complete, local and $F$-finite. Then Matlis duality induces an equivalence between the categories of
\[
\left\{\begin{array}{c} \text{co-finite $R$-modules} \\ \text{with left Frobenius action} \end{array} \right\} \leftrightarrow
\left\{\begin{array}{c} \text{finitely generated $R$-modules} \\ \text{with right Frobenius action} \end{array} \right\}
\]
Of course the $R$-modules with right Frobenius action are just the coherent Cartier modules on $X=\Spec R$. The equivalence preserves nilpotence.
\end{proposition}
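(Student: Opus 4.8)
The plan is to deduce the statement from the basic Matlis duality anti-equivalence together with \autoref{lem.FrobeniusCommutesMatlis}. Since $R$ is complete and local, the functor $(\usc)^\vee := \Hom_R(\usc, E)$ is exact and contravariant, it restricts to an anti-equivalence between finitely generated (Noetherian) $R$-modules and co-finite (equivalently, Artinian) $R$-modules, and the biduality map $M \to M^{\vee\vee}$ is a natural isomorphism on each of these. It therefore suffices to check that this anti-equivalence carries a right Frobenius action to a left Frobenius action and conversely, compatibly, and that it preserves nilpotence.

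\textbf{The functors.} I would present a left Frobenius action on a module $M$ as an $R$-linear map $\vartheta \colon M \to F_* M$ (equivalently, via adjunction, an $R$-linear map $F^* M \to M$), and a right Frobenius action — that is, a Cartier structure — on a module $N$ as $\kappa \colon F_* N \to N$. Given $\vartheta$, applying $(\usc)^\vee$ and then the isomorphism of \autoref{lem.FrobeniusCommutesMatlis} yields
\[
    F_*(M^\vee) \cong (F_* M)^\vee \to[\vartheta^\vee] M^\vee,
\]
a Cartier structure on $M^\vee$; and $M^\vee$ is finitely generated when $M$ is co-finite. Symmetrically, dualizing $\kappa$ produces $N^\vee \to (F_* N)^\vee \cong F_*(N^\vee)$, a left Frobenius action on $N^\vee$, which is co-finite when $N$ is finitely generated. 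A morphism in either category is an $R$-linear map intertwining the structural maps, and since $(\usc)^\vee$ is functorial and the isomorphism of \autoref{lem.FrobeniusCommutesMatlis} is natural, its Matlis dual intertwines the dual structural maps. This defines the two functors.

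\textbf{Equivalence and full faithfulness.} The two functors are mutually quasi-inverse because the Matlis biduality isomorphism $M \to[\sim] M^{\vee\vee}$ is an isomorphism of modules with Frobenius action; this is a diagram chase combining naturality of biduality with the naturality of \autoref{lem.FrobeniusCommutesMatlis} (and its compatibility with double dualization). Full faithfulness is then immediate: $(\usc)^\vee$ is already fully faithful on the underlying module categories, and the compatibility just recorded shows that the resulting bijection on $\Hom$-sets restricts to a bijection on the structure-preserving maps.

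\textbf{Nilpotence and the main obstacle.} It remains to see that the $n$-th iterate of the Cartier structure obtained from $\vartheta$ is the Matlis dual of the $n$-th iterate of $\vartheta$ after the canonical identifications; then, since $(\usc)^\vee$ is exact and faithful, $(\vartheta^\vee)^n = (\vartheta^n)^\vee$ vanishes precisely when $\vartheta^n$ does, so nilpotent objects correspond in both directions. The cleanest way to organize this bookkeeping is to regard left, resp. right, Frobenius modules as left, resp. right, modules over the relevant skew-polynomial sheaf of rings (compare \autoref{ex.RFdef}) and to note that $\Hom_R(\usc, E)$ simply interchanges the two sides, reducing the iterate identity to $F^! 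E \cong E$ and associativity. This last compatibility is the only real content beyond formal nonsense: $F_*$ does not commute with $\Hom_R(\usc, E)$ on the nose, only up to the chosen isomorphism $F^! E \cong E$, so the identifications $F^{n}_* F^{n!} E \cong E$ must be fixed coherently for all $n$ in order to make ``dualize then iterate equals iterate then dualize'' literally true rather than merely true up to unwinding isomorphisms.
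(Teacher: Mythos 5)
Your proof is correct and follows essentially the same route as the paper: apply Matlis duality together with the isomorphism $F_*(\usc)^\vee \cong (F_*\usc)^\vee$ of \autoref{lem.FrobeniusCommutesMatlis} to convert a left Frobenius action into a Cartier structure on the dual and vice versa, with biduality giving the quasi-inverse. The extra care you take with iterates and the coherent choice of $F^!E \cong E$ is exactly the bookkeeping the paper leaves to the reader when checking that nilpotence is preserved.
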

\begin{proof}
A left action of Frobenius on $M$ is an $R$-linear map $\phi \colon M \to F_*M$. Applying Matlis duality and the preceding lemma this yields a map
\[
    F_*(M^\vee) \cong (F_*M)^\vee \to[\phi^\vee] M^\vee
\]
which is the desired Cartier structure (=right Frobenius action) on the dual $M^\vee$. The same construction works in the opposite direction and one immediately checks that this induces an equivalence of categories.
\end{proof}
With this result we can translate the finiteness theorems for Cartier modules obtained above to the setting of cofinite $R$-modules with a left Frobenius action. In particular the results hold for local cohomology modules $H^i_{\bm}(R)$ with support in the maximal ideal $\bm$ of $R$.
\begin{theorem}
Let $N$ be a cofinite $R$ module equipped with a $p$-linear map $F \colon N \to N$ (\ie~$F$ is additive and $F(rm)=r^pF(m)$).
\begin{enumerate}
\item The ascending chain of submodules $\ker F \subseteq \ker F^2 \subseteq \ker F^3 \subseteq \cdots$ stabilizes (\cite[Proposition 1.1]{HartshorneSpeiserLocalCohomologyInCharacteristicP}).
\item Any chain $ \cdots \subseteq N_i \subseteq N_{i+1} \subseteq N_{i+2} \subseteq \cdots $ of submodules $N_i \subseteq N$ which are stable under $F$ (\ie~$F(N_i) \subseteq N_i$) has eventually $F$-nilpotent quotients (\cite[Theorem 4.7]{LyubeznikFModulesApplicationsToLocalCohomology})
\item $N$ has up to nilpotent action of $F$, only finitely many $F$-stable submodules. Concretely, there are only finitely many $F$ stable submodules $N'$ for which the action of $F$ on the quotient $N/N'$ is injective.
\end{enumerate}
\end{theorem}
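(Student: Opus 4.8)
The plan is to apply Matlis duality to transport everything into the category of coherent Cartier modules, where all three assertions become formal consequences of the finiteness theorems proved above. We may harmlessly assume that $R$ is complete, local and $F$-finite: completion changes neither the category of cofinite modules with Frobenius action nor the validity of the three statements. Write $(-)^\vee = \Hom_R(-,E_R)$ for the Matlis duality functor and set $M := N^\vee$, a finitely generated $R$-module. By \autoref{lem.FrobeniusCommutesMatlis} the left Frobenius action $F$ on $N$ dualizes to an $\O_X$-linear map $\kappa \colon F_*M \to M$, that is, a Cartier module structure, and the Matlis duality equivalence established above identifies $(N,F)$ with the Cartier module $(M,\kappa)$, preserving nilpotence.

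\textbf{The duality dictionary.} The first step is to record how the objects occurring in the statement transform under $(-)^\vee$, using only that it is an exact contravariant equivalence. The $F$-stable submodules $N'\subseteq N$ correspond, inclusion-reversingly, to the Cartier submodules $M'\subseteq M$, via $N'\mapsto (N/N')^\vee$ and $M'\mapsto \ker(N\twoheadrightarrow(M')^\vee)$; dualizing $0\to \ker F^n\to N\xrightarrow{\phi^n}F^n_*N$ identifies $(\ker F^n)^\vee$ with $M/\kappa^n(F^n_*M)$; for a quotient, $(N_{i+1}/N_i)^\vee\cong M_i/M_{i+1}$ where $M_j:=(N/N_j)^\vee$; the module $F$ acts injectively on a quotient $N/N'$ if and only if the structural map of $M'=(N/N')^\vee$ is surjective (Matlis duality interchanges injections and surjections, via \autoref{lem.FrobeniusCommutesMatlis}); and $N_{i+1}/N_i$ is $F$-nilpotent if and only if $M_i/M_{i+1}$ is a nilpotent Cartier module. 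These verifications are routine, and doing them carefully is essentially the only bookkeeping the argument requires.

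\textbf{Deducing the three parts.} Granting the dictionary: (1) the ascending chain $\ker F\subseteq\ker F^2\subseteq\cdots$ dualizes to the chain of quotients $M/\kappa^n(F^n_*M)$, which is eventually constant if and only if the descending chain of images $\kappa^n(F^n_*M)$ is, and that is precisely \autoref{prop.CartierImagesStabilize} (with $\sL\cong\O_X$, $e=1$). (2) An ascending chain of $F$-stable submodules $N_i$ dualizes to a descending chain of Cartier submodules $M_i\subseteq M$; the stable images $\sigma(M_i)$ form a descending chain of Cartier submodules, each with surjective structural map, so by \autoref{thm.CartierFiniteLength} they are eventually equal to a fixed $\Sigma$. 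For $i\gg0$ the quotient $M_i/\Sigma$ is then nilpotent (the defining property of $\sigma$), hence so is its further quotient $M_i/M_{i+1}$, and this dualizes to $N_{i+1}/N_i$ being $F$-nilpotent. (3) The $F$-stable $N'$ on which $F$ acts injectively on $N/N'$ correspond bijectively to the Cartier submodules $M'\subseteq M$ with surjective structural map; passage to the associated Cartier crystal is injective on such submodules — if $M'_1,M'_2$ represent the same sub-crystal then $(M'_1+M'_2)/M'_1$ and $(M'_1+M'_2)/M'_2$ are nilpotent, so $\sigma(M'_1+M'_2)$ equals each of $M'_1$ and $M'_2$, forcing $M'_1=M'_2$ — and by \autoref{thm.CrystalsFiniteness}(3) there are only finitely many Cartier sub-crystals of $M$, which gives (3).

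\textbf{Main obstacle.} Nothing here is deep once \autoref{prop.CartierImagesStabilize}, \autoref{thm.CartierFiniteLength} and \autoref{thm.CrystalsFiniteness} are at hand, which we are free to quote; the content is entirely in the Matlis duality dictionary. The two points that most repay care are the equivalence ``$F$ injective on $N/N'$'' $\Leftrightarrow$ ``structural map of $(N/N')^\vee$ surjective'', and, for (3), the observation that among Cartier submodules of $M$ with surjective structural map the stable-image functor $\sigma$ singles out a unique representative in each nil-isomorphism class — this is exactly what upgrades ``finitely many Cartier sub-crystals'' to ``finitely many honest $F$-stable submodules $N'$''.
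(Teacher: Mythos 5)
Your proposal is correct and is essentially the paper's own argument: the paper proves the theorem in one line, observing that the three parts are exactly the Matlis dual statements of \autoref{prop.CartierImagesStabilize}, \autoref{thm.CartierFiniteLength}, and \autoref{thm.CrystalsFiniteness}(c), using the duality equivalence (and \autoref{lem.FrobeniusCommutesMatlis}) set up just before. Your write-up simply makes the duality dictionary explicit, including the two points (injectivity of $F$ on $N/N'$ versus surjectivity of the dual structural map, and $\sigma$ picking a canonical representative in each nil-isomorphism class) that the paper leaves implicit.
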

\begin{proof}
These are just the Matlis dual statements of \autoref{prop.CartierImagesStabilize}, \autoref{thm.CartierFiniteLength}, and \autoref{thm.CrystalsFiniteness} part (c).
\end{proof}
An immediate consequence of these observations is the following result originally obtained by Enescu and Hochster \cite{EnescuHochsterTheFrobeniusStructureOfLocalCohomology}, see \cite{MaFiniteLocalCohom} for a recent extension showing that $F$-split alone is sufficient in the assumptions below.
\begin{proposition}
If $R$ is quasi-Gorenstein (\ie~$H^d_\bm(R)\cong E_R$) and $F$-split, then the top local cohomology module $H^d_\bm(R)$ with its left action of the Frobenius has only finitely many $F$-stable submodules.
\end{proposition}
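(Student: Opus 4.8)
The plan is to transport the statement, via Matlis duality, to a finiteness statement about Cartier submodules of a Cartier module on $\Spec R$, and then to compare with a splitting using the quasi-Gorenstein hypothesis. First I would reduce to the case that $R$ is complete local (still $F$-finite). The module $H^d_\bm(R)$ with its natural Frobenius action, the condition $H^d_\bm(R)\cong E_R$, and $F$-splitness are all insensitive to completion (for $F$-splitness use \autoref{lem.RFrobeniusSplitIfAndOnlyIfEvaluationAt1Surjects} together with faithful flatness of $R\to\widehat R$ and \autoref{ex.LocalizationCompletionAndFeLowerStar}), and since $H^d_\bm(R)$ is Artinian its $R$-submodules coincide with its $\widehat R$-submodules, so the set of $F$-stable submodules is unchanged. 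Write $N:=H^d_\bm(R)$ and $(-)^\vee:=\Hom_R(-,E_R)$; then $\omega_R:=N^\vee\cong E_R^\vee\cong R$ by quasi-Gorensteinness.

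Next I would invoke \autoref{lem.FrobeniusCommutesMatlis} and the Matlis-duality equivalence following it: $(-)^\vee$ is an exact anti-equivalence between cofinite $R$-modules carrying a left Frobenius action and coherent Cartier modules on $X=\Spec R$, and under it $F$-stable submodules of $N$ correspond bijectively and order-reversingly to Cartier submodules of $N^\vee$. The compatibility I need here is that the Cartier structure transported onto $N^\vee\cong R$ is the trace of Frobenius $T\colon F_*\omega_R\to\omega_R$ — equivalently, that the natural Frobenius action on $H^d_\bm(R)$ is Matlis dual to $T$. This I would extract from local duality, spelled out using \autoref{eq.FUpperShriekOfOmega} and \autoref{eq.UpperShriekForFiniteMaps} exactly as in \autoref{prop.TraceForGeneralMaps}. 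Granting this, the proposition becomes: $(R,T)$ has only finitely many Cartier submodules, i.e.\ only finitely many ideals $I$ with $T(F_*I)\subseteq I$.

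To finish I would use quasi-Gorensteinness once more. Grothendieck duality for the finite map $F$ (\autoref{ex.GRDualityFinite}, or \autoref{eq.FUpperShriekOfOmega}) gives an isomorphism of $F_*R$-modules $F_*\omega_R\xrightarrow{\ \cong\ }\Hom_R(F_*R,\omega_R)$ sending $F_*w$ to the map $F_*r\mapsto T(F_*(wr))$; since $\omega_R\cong R$, the generator $F_*1$ maps to $T$, so $T$ generates $\Hom_R(F_*R,R)$ as an $F_*R$-module. Choosing a Frobenius splitting $\phi\colon F_*R\to R$ (one exists at level $e=1$ by \autoref{lem.FrobeniusSplitOfIterate}) we may therefore write $\phi(F_*r)=T(F_*(cr))$ for some $c\in R$ with $T(F_*c)=\phi(F_*1)=1$. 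Then for any $T$-compatible ideal $I$ we get $\phi(F_*I)=T(F_*(cI))\subseteq T(F_*I)\subseteq I$, so $I$ is also compatible with the splitting $\phi$. By \autoref{cor.FinitelyManyCompatiblySplitIdeals} there are only finitely many ideals compatible with $\phi$, hence only finitely many $T$-compatible ideals, hence — dualizing back — only finitely many $F$-stable submodules of $H^d_\bm(R)$.

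The completion reduction and the short piece of algebra at the end are routine. The step I expect to be the real obstacle is the middle one: identifying the Matlis dual of the natural Frobenius action on $H^d_\bm(R)$ with the trace map on $\omega_R$. The underlying $R$-modules match by local duality, but one must track the Frobenius-equivariance carefully to see that the two $p^{-1}$-linear structures actually correspond under the duality, and this bookkeeping — rather than any conceptual difficulty — is where the care is needed.
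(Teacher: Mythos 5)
Your proposal is correct, and it runs on the same engine as the paper's proof -- Matlis duality plus the finiteness of compatibly split ideals (\autoref{cor.FinitelyManyCompatiblySplitIdeals}) -- but the comparison step is performed on the opposite side of the duality, and this is worth noting. The paper dualizes the splitting itself: $\phi^\vee$ is \emph{some} left Frobenius action on $R^\vee \cong E_R \cong H^d_\bm(R)$ (quasi-Gorensteinness enters only here), and by \autoref{ex.FrobOnLocalCohom} every such action has the form $r\cdot F_H$; hence every $F_H$-stable submodule is $\phi^\vee$-stable, and these correspond to the finitely many $\phi$-compatible ideals. You instead dualize the natural action $F_H$ to the trace $T$ on $\omega_R\cong R$, observe that $T$ generates $\Hom_R(F_*R,R)$ as an $F_*R$-module, factor the splitting as $\phi(F_*\usc)=T(F_*(c\cdot\usc))$, and conclude that $T$-compatible ideals are $\phi$-compatible. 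The two comparison facts are precisely Matlis dual to one another, so both arguments are sound; the advantage of the paper's ordering is that it never needs to identify the dual of $F_H$ with the trace map -- exactly the bookkeeping step you flag as the real obstacle -- since it suffices to know that $\phi^\vee$ is \emph{a} Frobenius action, whereas your route genuinely requires that the transported Cartier structure is a generator of $\Hom_R(F_*R,R)$ (equivalently, $T$ up to a unit), which is true but is the one point in your sketch still owed a proof. Your reduction to the complete local case and the final factorization argument are fine.
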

\begin{proof}
The existence of a splitting $\phi \colon R \to S$ implies that the Cartier module $(R,\phi)$ has only finitely many Cartier submodules. Hence, by the above duality result its dual $(r^\vee=H^d_\bm(R),\phi^\vee)$ has only finitely many submodules stable under the action of $\phi^\vee$. But $H^d_\bm(R)$ also has a natural Frobenius action $F_H$ induced by the Frobenius on $R$ by functoriality of $H^d_\bm(\usc)$. One can show (\autoref{ex.FrobOnLocalCohom}) that there is a $r \in R$ such that $\phi^\vee = r F_H$. Hence all submodules which are stable under $F_H$ are also stable under $\phi^\vee$, but of the latter there are only finitely many as just argued.
\end{proof}

The connection of Cartier modules with Lyubeznik's $F$-finite modules also relies on a certain commutation of functors which we recall first. Lyubeznik's theory \cite{LyubeznikFModulesApplicationsToLocalCohomology} is phrased for a regular ring $R$, and even though there is an extension to schemes by Emerton and Kisin \cite{EmKis.Fcrys}, we will stick to this setting and assume from now on that $X=\Spec R$, with $R$ regular (and such that the Frobenius morphism $F: R \to R$ is finite).
\begin{lemma}\label{lem.ShreickCommutesTensor}
Let $f \colon Y \to X$ be a finite flat morphism and $M$ a $\O_X$ module, then there is a functorial isomorphism
\[
    f^!\O_X \tensor_{\O_Y} f^*M  \cong f^!M\, ,
\]
where $f^!(\usc) = \Hom_{\O_X}(f_*\O_X,\usc)$.
\end{lemma}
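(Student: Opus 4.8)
The plan is to reduce the statement to a module-theoretic projection-formula identity. Since $f$ is finite it is affine, so both sides are determined by their pushforwards to $X$ and we may work locally: write $X = \Spec R$, $Y = \Spec S$ with $R \rightarrow S$ a finite flat ring map (recall $R$ is Noetherian, so $S$ is in fact a finitely generated projective $R$-module). Under the equivalence between quasi-coherent $\O_Y$-modules and $S$-modules, the functor $f^!(\usc) = \sHom_{\O_X}(f_*\O_Y, \usc)$ becomes $\Hom_R(S, \usc)$ with its $S$-module structure coming from the source, while $f^*M$ becomes $S \tensor_R M$. Thus the claimed isomorphism translates into
\[
\Hom_R(S, R) \tensor_S (S \tensor_R M) \;\cong\; \Hom_R(S, M),
\]
and since $\Hom_R(S,R) \tensor_S (S \tensor_R M) \cong \Hom_R(S,R) \tensor_R M$ canonically, it suffices to produce a functorial isomorphism $\Hom_R(S, R) \tensor_R M \cong \Hom_R(S, M)$.

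First I would write down the obvious natural transformation
\[
\theta_M \colon \Hom_R(S, R) \tensor_R M \rightarrow \Hom_R(S, M), \qquad \varphi \tensor m \mapsto \big(s \mapsto \varphi(s)\, m\big),
\]
which is $R$-linear, compatible with the $S$-module structures (both given by precomposition with multiplication on $S$), and manifestly natural in $M$. Then I would check $\theta_M$ is an isomorphism by dévissage in the first variable: both $P \mapsto \Hom_R(P,R)\tensor_R M$ and $P \mapsto \Hom_R(P,M)$ are additive functors on finitely generated $R$-modules carrying finite direct sums to finite direct sums, and $\theta$ is compatible with these decompositions; for $P = R$ the map $\theta_M$ is visibly the identity of $M$, hence it is an isomorphism for $P = R^{\oplus n}$, and therefore for every direct summand of a finite free module, i.e.\ for every finitely generated projective $R$-module. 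This is precisely where flatness of $f$ is used: it guarantees that $S$ is such a module.

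Finally I would globalize. Because the maps $\theta$ are natural they glue to a morphism of quasi-coherent $f_*\O_Y$-modules $\sHom_{\O_X}(f_*\O_Y, \O_X) \tensor_{\O_X} M \rightarrow \sHom_{\O_X}(f_*\O_Y, M)$; rewriting the left-hand side via the projection formula as $f_*\big(f^!\O_X \tensor_{\O_Y} f^*M\big)$ and the right-hand side as $f_*(f^!M)$, whether this is an isomorphism can be tested on an affine open cover of $X$, where it is the isomorphism just established. Since $f$ is affine, $f_*$ is fully faithful on quasi-coherent sheaves, so this yields the desired functorial isomorphism $f^!\O_X \tensor_{\O_Y} f^*M \cong f^!M$ on $Y$. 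The only genuinely delicate point is the bookkeeping of the various $S$- and $\O_Y$-module structures together with the projection-formula identifications; the mathematical content is just the projectivity dévissage, which is routine. (Alternatively one could extract the statement from Grothendieck duality for the finite flat map $f$, \cf \autoref{ex.GRDualityFinite}, but the elementary argument above is self-contained.)
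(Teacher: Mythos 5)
Your proposal is correct and follows essentially the same route as the paper: the paper's proof is precisely the content of \autoref{ex.ShrieckCommutesTensorRegular}, namely that for a module-finite flat (hence finitely generated projective, as $R$ is Noetherian) extension $R \to S$ the natural map $\Hom_R(S,R) \tensor_R M \to \Hom_R(S,M)$, $\phi \tensor m \mapsto (s \mapsto \phi(s)m)$, is an isomorphism, from which the sheaf statement follows by working affine-locally. Your dévissage through finite free modules and the bookkeeping of $S$-module structures and globalization are exactly the intended filling-in of that exercise.
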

\begin{proof}
See \autoref{ex.ShrieckCommutesTensorRegular}.
\end{proof}
Applying this to the case of the Frobenius on the regular scheme $X$ and $M = \omega_X$ the dualizing sheaf (which is invertible!) we obtain an isomorphism
\[
    F^!\O_X \cong F^!\omega_X \tensor F^*\omega_X^{-1}\, .
\]
Further using that the adjoint of the map $F_*\omega_X \to \omega_X$ coming from the Cartier isomorphism in \autoref{defprop.CartierIsomorphism}, is an isomorphism $\omega_X \to F^!\omega_X$ we obtain
\[
    F^!M \tensor \omega_X^{-1} \cong F^*M \tensor F^! \omega_X \tensor \omega_X^{-1} \tensor F^* \omega \cong F^*(M \tensor \omega_X^{-1})
\]
which allows us to describe the functor from Cartier modules to Lyubeznik's $F$-finite modules. Starting with a Cartier module $M$ with structural map $\kappa \colon F_*M \to M$ we first consider its adjoint $\kappa' \colon M \to F^!M$ and tensor it with $\omega_X^{-1}$ to obtain
\[
    \gamma \colon M \tensor \omega_X^{-1} \to[\kappa' \tensor \id] F^!(M) \tensor \omega_X^{-1} \cong F^*(M \tensor \omega_X^{-1})
\]
where the final isomorphism is the one derived above. Let us pause for a moment to recall the definition of Lyubeznik's $F$-finite modules, which we phrase in a way convenient for our purpose:
\begin{definition}
Let $R$ be regular. Given a finitely generated $R$-module $N$ together with a map $\gamma \colon N \to F^*N$, then an \emph{$F$-finite module} is the limit $\sN$ of the directed system
\[
    N \to[\gamma] F^*N \to[F^*\gamma] F^{2*}N \to[F^{2*}\gamma] F^{3*}N \to \cdots
\]
together with the induced map $\theta \colon \sN \to[\cong] F^*\sN$ which is immediately verified to be an isomorphism.

Phrased differently, an \emph{$F$-finite module} is a (not necessarily finitely generated) $R$-module $\sN$ together with an isomorphism $\theta \colon \sN \to[\cong] F^*\sN$ which arises in the above described manner from a \emph{finitely generated} $R$-module $N$.
\end{definition}
It is shown in \cite{LyubeznikFModulesApplicationsToLocalCohomology} that $F$-finite modules are an Abelian category which is closed under extensions, that local cohomology modules $H^i_I(R)$ are $F$-finite modules, and that $F$-finite modules enjoy a number of important finiteness results. For example they have only finitely many associated primes, and all Bass numbers are finite.

From this definition it is immediate how to connect the Cartier modules with $F$-finite modules. The $F$-finite module attached to a Cartier module $M$ is just the limit of
\[
    M \tensor \omega_X^{-1} \to F^*(M \tensor \omega_X^{-1}) \to F^{2*}(M \tensor \omega_X^{-1}) \to \cdots
\]
One obtains the following Proposition \cite{BlickleBoeckleCartierModulesFiniteness}.
\begin{proposition}
For a regular ring $R$, the just described construction assigning to a coherent Cartier module $M$ on $R$ an $F$-finite module is an essentially surjective functor
\[
    \left\{ \text{coherent Cartier modules} \right\} \to \left\{ \text{$F$-finite modules} \right\}
\]
which sends nilpotent Cartier modules to zero. The induced functor
\[
    \left\{ \text{coherent Cartier \emph{crystals}} \right\} \to[\cong] \left\{ \text{$F$-finite modules} \right\}
\]
is an equivalence of categories.
\end{proposition}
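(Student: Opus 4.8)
The plan is to verify that the construction $\mathcal{F}$, sending a coherent Cartier module $(M,\kappa)$ to the $F$-finite module $\varinjlim\left(M\otimes\omega_X^{-1}\to[\gamma]F^*(M\otimes\omega_X^{-1})\to\cdots\right)$, is a well-defined \emph{exact} functor that annihilates nilpotent Cartier modules; then to obtain essential surjectivity by running the construction backwards on objects; and finally to upgrade it to an equivalence on crystals by exhibiting both the category of Cartier crystals and Lyubeznik's category of $F$-finite modules as localizations of one and the same underlying category at one and the same class of morphisms.

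For the first part, functoriality is formal: $(\usc)\otimes\omega_X^{-1}$, the adjunction $F_*\dashv F^!$ (which turns $\kappa$ into $\kappa'\colon M\to F^!M$), the natural isomorphism $F^!M\otimes\omega_X^{-1}\cong F^*(M\otimes\omega_X^{-1})$ derived in the text, the functor $F^*$, and $\varinjlim$ are all functorial, and the directed system is natural in $M$; that $\mathcal{F}(M)$ is an $F$-finite module is the very definition, with root $M\otimes\omega_X^{-1}$. Exactness holds because $(\usc)\otimes\omega_X^{-1}$ is exact ($\omega_X$ invertible), $F^*$ is exact since $R$ regular forces $F$ flat by Kunz' criterion (\autoref{ex.KunzRegularityCriterion}), and filtered colimits are exact; passing from $\kappa$ to $\kappa'$ to $\gamma$ never alters the underlying module. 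Next, $\mathcal{F}$ kills nilpotence: unwinding the adjunctions, the $n$-fold transition map $F^{(n-1)*}\gamma\circ\cdots\circ F^*\gamma\circ\gamma$ of the directed system corresponds, up to the twist by $\omega_X^{-1}$, to $\kappa^n$, so $\kappa^n=0$ forces the eventual transition maps of the system to vanish, whence $\mathcal{F}(M)=0$. An exact functor killing all nilpotent Cartier modules sends every nil-isomorphism to an isomorphism — apply exactness to the short exact sequences $0\to\ker\to\sF\to\im\to 0$ and $0\to\im\to\sG\to\coker\to 0$ attached to the morphism, together with the vanishing on the nilpotent $\ker$ and $\coker$ — so, by the universal property of the localization defining Cartier crystals (\autoref{def.CartCrys}), $\mathcal{F}$ factors through an exact functor $\bar{\mathcal{F}}$ on crystals.

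Essential surjectivity I would get by reversing the dictionary: a coherent Cartier module on $R$ is the same datum as a pair $(N,\beta)$ with $N$ a finitely generated $R$-module and $\beta\colon N\to F^*N$, via $M=N\otimes\omega_X$ and $\kappa$ the adjoint of the map obtained from $\beta$ by tensoring with $\omega_X$ through $F^!M\cong F^*(M\otimes\omega_X^{-1})\otimes\omega_X$; every $F$-finite module is by definition $\varinjlim(N\to[\beta]F^*N\to\cdots)=\mathcal{F}(N\otimes\omega_X)$ for such a pair, so $\mathcal{F}$ — hence $\bar{\mathcal{F}}$ — is surjective on isomorphism classes of objects. For fullness and faithfulness of $\bar{\mathcal{F}}$ I would argue that under the dictionary $(N,\beta)\leftrightarrow(N\otimes\omega_X,\kappa)$, using once more that $F^*$ is exact, the kernel and cokernel of a morphism $\psi$ of pairs inherit pair structures, and $\psi$ becomes an isomorphism of $F$-finite modules (i.e. induces an isomorphism on the colimits) if and only if those kernel and cokernel die in the colimit, which for finitely generated modules means they are nilpotent as Cartier modules, i.e. $\psi\otimes\omega_X$ is a nil-isomorphism. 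Since Lyubeznik's category of $F$-finite modules is the localization of the category of pairs $(N,\beta)$ at the morphisms inducing colimit-isomorphisms, and Cartier crystals is the localization of coherent Cartier modules at nil-isomorphisms, this matching of localizing classes shows $\bar{\mathcal{F}}$ is an equivalence; concretely, the inverse construction on objects sends an $F$-finite module presented by a root $(N,\beta)$ to the crystal class of $(N\otimes\omega_X,\kappa)$, which is independent of the root by root comparison and recovers $M$ on the canonical root $N=M\otimes\omega_X^{-1}$.

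The step I expect to be the main obstacle is the last one: carefully reconciling Lyubeznik's root/generator formalism with the nilpotence calculus for Cartier modules — in particular proving that a morphism of $F$-finite modules is represented, after passing to suitable roots, by an honest morphism of pairs, so that ``localization at colimit-isomorphisms'' really is the right description — and checking that the two classes of inverted morphisms coincide. This is exactly where the hypothesis that $R$ is regular is indispensable, via exactness of $F^*$ and the good behaviour of $F$-finite modules, and in a write-up one would either reproduce the relevant root-comparison lemmas of \cite{LyubeznikFModulesApplicationsToLocalCohomology} or invoke \cite{BlickleBoeckleCartierModulesFiniteness}, where this equivalence is proved in detail.
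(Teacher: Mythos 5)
Your outline is correct, and it in fact supplies more detail than the paper does: the paper's entire proof of this proposition is the remark that all statements are shown in \cite{BlickleBoeckleCartierModulesFiniteness} and that, with the preparations already made (the isomorphism $F^!M \tensor \omega_X^{-1} \cong F^*(M \tensor \omega_X^{-1})$ and the adjunction turning $\kappa$ into $\gamma$), none of them is difficult. Your steps --- functoriality, exactness of the construction via flatness of Frobenius over a regular ring (Kunz), the identification of the $n$-fold transition map of the directed system with the twisted adjoint of $\kappa^n$ so that nilpotent Cartier modules go to zero, the resulting factorization through Cartier crystals by the universal property of localization, and essential surjectivity by reading the dictionary $(N,\beta) \leftrightarrow (N\tensor\omega_X,\kappa)$ backwards --- are exactly the ``preparations'' the paper alludes to, and they are sound.

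The only place where your argument remains a sketch is the one you flag yourself: full faithfulness. The statement that Lyubeznik's category of $F$-finite modules is the localization of the category of pairs $(N,\beta)$ at the morphisms inducing isomorphisms on colimits is not a definition but something to be proved --- morphisms of $F$-finite modules are merely $R$-linear maps compatible with the structural isomorphisms $\theta$, so one must show that every such morphism is, after passing to suitable roots, induced by an honest morphism of pairs (root comparison), and only then does your matching of the two classes of inverted morphisms (colimit-isomorphisms versus nil-isomorphisms, via the finite-generation argument you indicate) close the loop. That step is exactly what is delegated to \cite{LyubeznikFModulesApplicationsToLocalCohomology} and \cite{BlickleBoeckleCartierModulesFiniteness}; since the paper itself delegates the entire proposition to the latter, your proposal should be read as a more explicit version of the same argument rather than a genuinely different route, with the root-comparison lemma as the one ingredient still to be imported or reproved.
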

\begin{proof}
All statements are shown in \cite{BlickleBoeckleCartierModulesFiniteness} but with the above preparations none of them is particularly difficult.
\end{proof}
Hence we obtain as an immediate consequence of \autoref{thm.CrystalsFiniteness} the following finiteness result for $F$-finite modules, which partially extends one of the main results of \cite{LyubeznikFModulesApplicationsToLocalCohomology}:
\begin{theorem}
Let $R$ be regular and $F$-finite, then
\begin{enumerate}
\item $F$-finite modules over $R$-have finite length.
\item The $\Hom$-sets in the category of $F$-finite modules are finite.
\item An $F$-finite module has only finitely many $F$-finite submodules.
\end{enumerate}
\end{theorem}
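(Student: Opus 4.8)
The plan is to deduce all three statements formally from the equivalence of categories
\[
\left\{ \text{coherent Cartier crystals on } \Spec R \right\} \to[\cong] \left\{ \text{$F$-finite modules over } R \right\}
\]
established in the preceding Proposition (following \cite{BlickleBoeckleCartierModulesFiniteness}), together with the three finiteness assertions for Cartier crystals recorded in \autoref{thm.CrystalsFiniteness}. The observation that makes this work is that each of the three properties in question is \emph{categorical}, i.e.\ it is preserved by any equivalence of Abelian categories; so once it is known on the Cartier-crystal side it transfers immediately.

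First I would recall that an additive equivalence between Abelian categories is automatically exact and induces, for every object, an inclusion-preserving bijection between its lattice of subobjects and the lattice of subobjects of the corresponding object on the other side; in particular it carries composition series to composition series and simple objects to simple objects, and it is a bijection on $\Hom$-sets by definition. Applying this to our equivalence: part (1) holds because a coherent Cartier crystal has finite length by \autoref{thm.CrystalsFiniteness}(1), hence so does the corresponding $F$-finite module, and, by essential surjectivity, every $F$-finite module arises this way. Part (2) is immediate, since the equivalence identifies the $\Hom$-set between two $F$-finite modules with the $\Hom$-set between the corresponding Cartier crystals, and the latter are finite sets (finite-dimensional $\mathbb{F}_{p^e}$-vector spaces) by \autoref{thm.CrystalsFiniteness}(2). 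Part (3) follows from the bijection of subobject lattices together with \autoref{thm.CrystalsFiniteness}(3): a coherent Cartier crystal has only finitely many Cartier sub-crystals, hence an $F$-finite module has only finitely many $F$-finite submodules.

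The only point requiring genuine care — and the step I would actually write out — is the bookkeeping identifying ``subobjects in the category of $F$-finite modules'' with what one naively wants to call \emph{$F$-finite submodules}: one must check that a monomorphism of $F$-finite modules is the same datum as an $R$-submodule $\sN' \subseteq \sN$ which is itself $F$-finite and for which the structural isomorphism $\theta \colon \sN \to[\cong] F^*\sN$ restricts to an isomorphism $\sN' \to[\cong] F^*\sN'$, and likewise that a categorical composition series computes length in the expected sense. Here one invokes the structure theory of \cite{LyubeznikFModulesApplicationsToLocalCohomology}: $F$-finite modules form an Abelian category closed under sub, quotient and extension inside the ambient category of $R$-modules with an $F$-action, so that subobjects in the categorical sense are exactly the $F$-finite submodules. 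Once this identification is in place, statements (1)–(3) are purely formal consequences of \autoref{thm.CrystalsFiniteness}, and no further computation is needed; in spirit this is the exact analogue of the Matlis-dual argument used to transport the same finiteness results to cofinite modules with a left Frobenius action earlier in this section.
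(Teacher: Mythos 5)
Your proposal is correct and is exactly the argument the paper intends: the theorem is stated there as an immediate consequence of \autoref{thm.CrystalsFiniteness} transported through the equivalence between coherent Cartier crystals and $F$-finite modules, which is precisely what you carry out (your extra care in identifying categorical subobjects with $F$-finite submodules just makes explicit what the paper leaves implicit).
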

Part (a) of the theorem has been proven for $R$ regular and of finite type over a regular local ring in \cite{LyubeznikFModulesApplicationsToLocalCohomology}, and for arbitrary $F$-finite schemes $X$ in \cite{BliBoe.CartierCrys}.  The latter results also are shown for regular rings (part (b) even without the $F$-finiteness assumption) in \cite{HochsterSomeFinitenessPropertiesOfLyubeznik}.  Finally, let us state the aforementioned finiteness result for local cohomology modules.
\begin{theorem}
Let $M$ be an $F$-finite module, $I \subseteq R$ an ideal in a regular ring, then $H^j_I(M)$ is an $F$-finite $R$-module and hence has only finitely many associated primes. \end{theorem}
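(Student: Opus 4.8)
The plan is to transport the problem into the derived category of Cartier crystals, where local cohomology is just the composite $i_*i^!$, and then read the answer off the finiteness theorems already proved. Since $R$ is regular — and, under the standing conventions of this paper, automatically $F$-finite — the equivalence $\{\text{coherent Cartier crystals on }X\}\cong\{F\text{-finite }R\text{-modules}\}$ (with $X=\Spec R$) lets me replace $M$ by a coherent Cartier crystal $\sM$. I will use throughout that the forgetful functor from Cartier modules to $R$-modules is exact and commutes with $F^e_*$, with localization, and with (derived) push-forward along affine maps; this, together with the compatibility of the above equivalence with the six operations established in \cite{BlickleBoeckleCartierModulesFiniteness,BliBoe.CartierCrys}, is what will identify the $R$-modules underlying the Cartier-crystal computation below with the honest local cohomology modules $H^j_I(M)$.

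Next I would run the computation. Let $j\colon U=X\setminus V(I)\hookrightarrow X$ be the open immersion and $i\colon Z=V(I)\hookrightarrow X$ the closed complement. By the theorems recalled above, $j^*=j^!$ carries $D^b_\crys(\op{QCrys}(X))$ to $D^b_\crys(\op{QCrys}(U))$, $Rj_*$ carries $D^b_\crys(\op{QCrys}(U))$ back to $D^b_\crys(\op{QCrys}(X))$, and there is a natural exact triangle $i_*i^!\to\id\to Rj_*j^*\to[+1]$. Evaluating at $\sM$ exhibits $R\Gamma_I(\sM):=i_*i^!\sM$ as the fibre of $\sM\to Rj_*j^*\sM$, hence as an object of $D^b_\crys(\op{QCrys}(X))$; so every cohomology object $\myH^j(R\Gamma_I(\sM))$ is a locally nil-coherent quasi-coherent Cartier module. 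But a locally nil-coherent Cartier module $\sF$ contains a coherent Cartier submodule $\sE$ with $\sF/\sE$ locally nilpotent, so $\sE\hookrightarrow\sF$ is an isomorphism of Cartier crystals and $\myH^j(R\Gamma_I(\sM))$ is, as a crystal, coherent. (Equivalently, one may invoke the Kashiwara equivalence: $i^!\sM\in D^b_\crys(\op{QCrys}(Z))$ and $i_*$ lands it in $D^b_{\crys,Z}(\op{QCrys}(X))$, complexes with coherent cohomology supported on $Z$.) Feeding $\myH^j(R\Gamma_I(\sM))$ back through the equivalence with $F$-finite modules, and using the compatibilities recorded in the first paragraph to identify the underlying $R$-module with $H^j_I(M)$, I conclude that $H^j_I(M)$ is an $F$-finite module. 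A more hands-on variant, avoiding the full six-operation package, would instead compute $R\Gamma_I(\sM)$ by the \Cech{} complex on a generating set of $I$ and bound each term by the global form of \autoref{ex.CartierOpenImmersion}, then argue that kernels and cokernels of maps of nil-coherent Cartier modules are again nil-coherent.

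Finally, an $F$-finite module over a regular ring has only finitely many associated primes — this is one of Lyubeznik's finiteness results recalled above (alternatively it can be extracted from the finite length of $H^j_I(M)$ as a Cartier crystal, \autoref{thm.CrystalsFiniteness}(1), by inspecting the generic points of the supports of its composition factors) — so the last clause follows at once. The real weight of the argument is not internal to this paper: it is the assertion that $Rj_*$ and $i^!$ genuinely preserve the pertinent subcategories of the derived category of Cartier crystals, which is the substance of \cite{BliBoe.CartierCrys}. Granting that, the only steps still requiring care are the passage from ``locally nil-coherent'' to ``coherent'' at the level of crystals, and the verification that the Frobenius action the Cartier machinery puts on $H^j_I(M)$ is the standard one; both are routine but should be stated rather than assumed.
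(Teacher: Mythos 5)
Your argument is correct in substance, but it takes a genuinely heavier route than the paper does: the paper's proof is exactly your ``hands-on variant.'' All the paper establishes is that a localization $M_f$ of an $F$-finite module is again $F$-finite --- this is where \autoref{ex.CartierOpenImmersion} (that $j_*$ of a coherent Cartier module admits a coherent Cartier submodule with locally nilpotent quotient) enters --- and then the \Cech{} complex on a generating set of $I$, together with the fact that $F$-finite modules form an abelian category closed under the relevant kernels and cokernels, finishes the proof. Your main route instead invokes the full six-operation package of \cite{BliBoe.CartierCrys}: the exact triangle $i_*i^!\to\id\to Rj_*j^*$ and the preservation of $D^b_\crys$ under $Rj_*$ and $i^!$ give at once that the crystal-level local cohomology has coherent cohomology up to nilpotence, which is conceptually cleaner and in fact works over singular base schemes as well. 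The price is the compatibility you yourself flag at the end: the equivalence between Cartier crystals and $F$-finite modules does \emph{not} preserve underlying $R$-modules (it sends a Cartier module to the colimit of its $\omega_X^{-1}$-twisted Frobenius pullbacks), so identifying the $F$-finite module you produce with $H^j_I(M)$, for $M$ the original $F$-finite module, really does use that $H^j_I$ commutes with $F^*$ (flatness of Frobenius over regular $R$, \ie{} Kunz) and with filtered colimits; that verification is routine but is precisely the content the paper's \Cech{} argument sidesteps, and it should be written out if you keep the crystal route. Your last step --- finitely many associated primes --- correctly quotes Lyubeznik's finiteness results for $F$-finite modules as recalled in the text.
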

\begin{proof}
We only have to show that $H^j_I(M)$ is an $F$-finite module. The crucial step is to show  that for $f \in R$ we have that the localization $M_f$ is also $F$-finite (\cf~\autoref{ex.CartierOpenImmersion}). Once this is established, the \Cech-complex finishes the proof.
\end{proof}

\subsection{Contracting property of $p^{-e}$-linear maps}
In this section we point out a simple fact about $p^{-e}$-linear map which has a number of interesting consequences. In particular we give an elementary proof of the finite length result for Cartier modules.  The idea goes back at least to a paper of Anderson \cite{AndersonElementaryLFunctions} and says that a $p^{-e}$-linear endomorphism reduces the degree in a graded context. For this we consider $X = \Spec S$ with $S = k[x_1,\ldots,x_n]$ a polynomial ring over a perfect field. Then we consider the filtration of $S$ given by the finite-dimensional vector spaces
\[
    S_d \colonequals k \langle x_1^{i_1}\cdots x_n^{i_n}\, |\, 0 \leq i_j \leq d \text{ for } j = 1, \ldots, n \rangle\, .
\]
Hence $S_d$ is the $k$--subspace of $S$ freely generated by the monomials with degree $\leq d$ \emph{in each variable}. One immediately verifies that
\[
    S_{-\infty} \colonequals 0, \quad S_0 = k, \quad S_dS_{d'} \subseteq S_{d+d'},\text{ and} \quad S_d+S_d' \subseteq S_{\max{d+d'}}\, .
\]
For each choice of a set of generators $m_1,\ldots,m_k$ of an $S$ module $M$ we define the induced filtration on $M$ given by
\[
    M_{-\infty} \colonequals 0 \text{ and } M_d = S_d \langle m_1,\ldots,m_k \rangle.
\]
For $m \in M$ we write $\delta(m)=d$ if and only if $m \in M_d\setminus M_{d-1}$ and call $\delta=\delta_M$ a \emph{gauge} for $M$. One should think of the gauge $\delta$ as a substitute for a degree on $M$, and the contracting property of $p^{-e}$-linear maps on $M$ is measured in terms of the gauge $\delta$. Spelling out the definition we see that $\delta(m)\leq d$ if $m$ can be written as a $S$-linear combination of the $m_i$ such that all coefficients are in $S_d$. $S$ itself has a gauge, induced by the generator $1$.
We summarize the immediate properties of a gauge (the proof is left to the reader in \autoref{ex.PropertiesOfGauge}):
\begin{lemma}
\label{lem.PropertiesOfGauge}
Let $M$ be finitely generated over $S=k[x_1,\ldots,x_n]$, and $\delta$ a gauge corresponding to some generators $m_1,\ldots,m_k$ of $M$. Then
\begin{enumerate}
\item $\delta(m)=-\infty$ if and only if $m=0$.
\item Each $M_d$ is finite dimensional over $k$ (since $S_{d}$ is).
\item $\bigcup_d M_d = M$ (since the $m_i$ generate $M$).
\item $\delta(m+m') \leq \max\{\delta(m),\delta(m')\}$
\item $\delta_M(fm) \leq \delta_S(f)+\delta_M(m)$
\end{enumerate}
\end{lemma}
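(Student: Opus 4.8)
The plan is to derive all five items directly from the definitions of the filtrations $S_\bullet$ and $M_\bullet$ together with the three formal properties recorded just above the Lemma, namely $S_{-\infty}=0$, $S_dS_{d'}\subseteq S_{d+d'}$, and the (evident) monotonicity $S_d\subseteq S_{d'}$ for $d\le d'$. The first thing I would establish is that monotonicity of $S_\bullet$ propagates to $M_\bullet$: since $M_d=S_d\langle m_1,\dots,m_k\rangle$, one has $M_d\subseteq M_{d'}$ whenever $d\le d'$, so $\delta_M(m)$ is genuinely the least $d$ (or $-\infty$) with $m\in M_d$. With this preliminary in hand each of (1)--(5) is a one-line unwinding.

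For (1), $\delta_M(m)=-\infty$ means $m\in M_{-\infty}=S_{-\infty}\langle m_i\rangle=0$, and conversely $0\in M_{-\infty}$. For (2), $S_d$ is spanned over $k$ by the $(d+1)^n$ monomials $x_1^{i_1}\cdots x_n^{i_n}$ with $0\le i_j\le d$, so $M_d$ is spanned by the $k(d+1)^n$ elements $x_1^{i_1}\cdots x_n^{i_n}m_\ell$, whence $\dim_k M_d<\infty$. For (3), any $m\in M$ can be written $m=\sum_\ell f_\ell m_\ell$ with $f_\ell\in S=\bigcup_d S_d$, so choosing $d$ with all $f_\ell\in S_d$ gives $m\in M_d$; the reverse inclusion $\bigcup_d M_d\subseteq M$ is immediate.

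For (4), set $d=\max\{\delta_M(m),\delta_M(m')\}$; by monotonicity $m,m'\in M_d$, hence $m+m'\in M_d$ and $\delta_M(m+m')\le d$. For (5), write $f\in S_{\delta_S(f)}$ and $m=\sum_\ell g_\ell m_\ell$ with $g_\ell\in S_{\delta_M(m)}$; then $fm=\sum_\ell (fg_\ell)m_\ell$ with $fg_\ell\in S_{\delta_S(f)}S_{\delta_M(m)}\subseteq S_{\delta_S(f)+\delta_M(m)}$, so $fm\in M_{\delta_S(f)+\delta_M(m)}$ and the asserted inequality follows.

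The only point that needs any care --- the ``hard part,'' purely in the sense of bookkeeping --- is consistent handling of the $-\infty$ convention in (4) and (5): one should check that the inequalities remain valid (and in fact degenerate to $-\infty\le-\infty$) when one of $m$, $m'$, $f$ vanishes, using $-\infty+a=-\infty$ and $\max\{-\infty,a\}=a$. No deeper input than the three listed properties of $S_\bullet$ is required.
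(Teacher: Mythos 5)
Your proof is correct, and since the paper leaves this lemma as an exercise (its "proof is left to the reader"), your direct unwinding of the definitions of $S_d$ and $M_d$ — including the preliminary observation that monotonicity of $S_\bullet$ passes to $M_\bullet$ and the care with the $-\infty$ conventions — is exactly the intended argument. Nothing is missing.
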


\begin{proposition}[\protect\cite{AndersonElementaryLFunctions}, Proposition 3]\label{prop.gaugebound}
Let $M$ be a finitely generated $S$-module, and $\delta=\delta_M$ a gauge corresponding to some generators $m_1,\ldots,m_k$ of $M$ and let $\phi:M \to M$ be a $p^{-e}$-linear map. Then there is a constant $K$ such that for all $m \in M$:
\[
    \delta(\phi(m)) \leq \frac{\delta(m)}{p^e}+\frac{K}{p^e}
\]
Furthermore, for all $n \geq 0$ we have
\[
    \delta(\phi^n(m)) \leq \frac{\delta(m)}{p^{ne}}+\frac{K}{p^e-1}\, .
\]
\end{proposition}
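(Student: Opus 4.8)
The plan is to reduce the whole estimate to the behaviour of $\phi$ on a \emph{finite} list of elements of $M$, extracted by a base-$p^e$ decomposition of exponents. First I would unwind the gauge: if $\delta(m)=d$ then $m\in M_d = S_d\langle m_1,\dots,m_k\rangle$, so $m=\sum_{i,\alpha}c_{i,\alpha}\,x^{\alpha}m_i$ with $c_{i,\alpha}\in k$ and every multi-index $\alpha$ satisfying $0\le\alpha_j\le d$ for all $j$. Because $k$ is perfect and $\phi$ is $p^{-e}$-linear, $\phi(c\,x^{\alpha}m_i)=c^{1/p^e}\phi(x^{\alpha}m_i)$, and multiplying by a nonzero constant leaves $\delta$ unchanged; hence by \autoref{lem.PropertiesOfGauge}, parts (1) and (4),
\[
\delta(\phi(m))\;\le\;\max\{\,\delta(\phi(x^{\alpha}m_i))\;:\;1\le i\le k,\ 0\le\alpha_j\le d\text{ for all }j\,\}.
\]

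The key step is then to decompose each such exponent as $\alpha_j=p^eq_j+r_j$ with $0\le r_j\le p^e-1$, so that $x^{\alpha}=(x^{q})^{p^e}x^{r}$ and, by $p^{-e}$-linearity, $\phi(x^{\alpha}m_i)=x^{q}\,\phi(x^{r}m_i)$. There are only finitely many residues $r\in\{0,\dots,p^e-1\}^n$ and finitely many $i$, so $C:=\max_{r,i}\delta(\phi(x^{r}m_i))$ is a well-defined integer or $-\infty$; set $K:=\max(p^eC,0)\ge 0$ (when $C=-\infty$ one checks $\phi\equiv 0$ and there is nothing to prove). Since $q_j=\lfloor\alpha_j/p^e\rfloor\le d/p^e$ we have $x^{q}\in S_{\lfloor d/p^e\rfloor}$, so $\delta_S(x^{q})\le d/p^e$, and \autoref{lem.PropertiesOfGauge}(5) gives
\[
\delta(\phi(x^{\alpha}m_i))=\delta\big(x^{q}\,\phi(x^{r}m_i)\big)\le\delta_S(x^{q})+\delta(\phi(x^{r}m_i))\le\frac{d}{p^e}+C\le\frac{d}{p^e}+\frac{K}{p^e}.
\]
Combining with the previous display yields $\delta(\phi(m))\le\dfrac{\delta(m)}{p^e}+\dfrac{K}{p^e}$ for all $m\in M$, which is the first assertion.

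For the iterated bound I would simply feed this inequality into itself. Since $K\ge 0$, for $n\ge 1$
\[
\delta(\phi^{n}(m))\le\frac{1}{p^e}\big(\delta(\phi^{n-1}(m))+K\big),
\]
and an induction on $n$ gives
\[
\delta(\phi^{n}(m))\le\frac{\delta(m)}{p^{ne}}+K\sum_{j=1}^{n}\frac{1}{p^{je}}\le\frac{\delta(m)}{p^{ne}}+K\sum_{j=1}^{\infty}\frac{1}{p^{je}}=\frac{\delta(m)}{p^{ne}}+\frac{K}{p^e-1}.
\]
The only genuine content is the identity $x^{\alpha}=(x^{q})^{p^e}x^{r}$ in the middle step: pulling the $p^e$-th power outside $\phi$ confines all the ``noise'' to the bounded finite set of gauge values $\{\delta(\phi(x^{r}m_i))\}$, after which everything is bookkeeping with \autoref{lem.PropertiesOfGauge}. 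I do not anticipate a serious obstacle; the only care needed is to keep the constant in the normalised shape $K/p^e$ (resp.\ $K/(p^e-1)$) rather than $K$, which is why one absorbs the extra factor of $p^e$ into the definition of $K$.
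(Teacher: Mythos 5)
Your argument is correct and is essentially the paper's proof: both rest on the base-$p^e$ decomposition of exponents (equivalently, writing coefficients as $\sum r^{p^e}x^{\mathbf r}$ with residues bounded by $p^e-1$), pulling the $p^e$-th powers out through $\phi$ so that only the finitely many values $\delta(\phi(x^{\mathbf r}m_i))$ enter the constant $K$, followed by iteration and the geometric series. The only cosmetic differences are that you expand $m$ monomial-by-monomial (thereby re-deriving \autoref{ex.GaugeOnCoeff} inline instead of citing it) and that you prudently normalize $K\ge 0$, which the iteration step indeed requires.
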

\begin{proof}
By definition, we may write $m = \sum_{l=1}^k f_l m_l$ with $\delta_S(f_l) \leq \delta(m)$. For each $l$ write uniquely $f_l = \sum_{{\bf x^i} \in S_{\delta(m)}} r_{l,i}^{p^e}{\bf x^i}$ with ${\bf x^i}=x_1^{i_1}\cdots x_n^{i_n}$. Then \autoref{ex.GaugeOnCoeff}  shows that $\delta_S (r_{l,i}) \leq \lfloor \delta(m)/{p^e} \rfloor$. Writing this out
\[
    \phi(m)=\sum_{l=1}^k \sum_{x^i \in S_{\delta(m)}} r_{l,i}\phi({\bf x}^i m_l)
\]
we consequently obtain
\[
    \delta(\phi(m)) \leq \max_{l,i} \{\delta_S(r_{l,i})+\delta(\phi(x^i m_l))\} \leq \lfloor \frac{\delta(m)}{p^e} \rfloor +\frac{K}{p^e}
\]
taking for $K = {p^e} \cdot \max_{l,i} \{\delta(x^i m_l)\}$ we obtain the claimed inequality.

The final inequality follows by applying the first inequality iteratively and then to use the geometric series (exercise!).
\end{proof}

This proposition has an important consequence about the generators of the images of a submodule under a $p^{-e}$-linear map.

\begin{lemma}\label{lem.GaugeBoundForImage}
Let $\phi \colon M \to M$ be a $p^{-e}$-linear map on the finitely generated $S$-module $M$ with gauge $\delta$ and bound $K$ as in \autoref{prop.gaugebound}. Suppose that the $S$-submodule $N \subseteq M$ is generated by elements with gauge $\leq d$. Then $\phi^n(N) \subseteq M$ is generated by elements of gauge at most $d/p^{ne}+K/(p^e-1)+1$.
\end{lemma}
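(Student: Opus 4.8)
The plan is to leverage \autoref{prop.gaugebound} directly, together with the fact that $\phi^n$ is itself $S$-linear (not just additive) in the following weak sense: if $N$ is generated over $S$ by $n_1, \dots, n_t$, then any element of $\phi^n(N)$ is an $S$-linear combination of the images $\phi^n(\mathbf{x}^{\mathbf{j}} n_s)$, but since $\phi^n$ is $p^{-ne}$-linear we can extract $p^{ne}$th powers of coefficients. Concretely, for $m \in N$, write $m = \sum_s f_s n_s$ with $\delta_S(f_s) \le d$; expanding each $f_s = \sum_{\mathbf{i}} r_{s,\mathbf{i}}^{p^{ne}} \mathbf{x}^{\mathbf{i}}$ over monomials $\mathbf{x}^{\mathbf{i}} \in S_d$ and using $p^{-ne}$-linearity of $\phi^n$ gives $\phi^n(m) = \sum_{s, \mathbf{i}} r_{s,\mathbf{i}} \phi^n(\mathbf{x}^{\mathbf{i}} n_s)$. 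Thus $\phi^n(N)$ is generated as an $S$-module by the finite set $\{\phi^n(\mathbf{x}^{\mathbf{i}} n_s)\}$ where $\mathbf{x}^{\mathbf{i}}$ ranges over monomials in $S_d$ and $s$ ranges over the generators of $N$.

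The second step is simply to bound the gauge of each such generator $\phi^n(\mathbf{x}^{\mathbf{i}} n_s)$. Since $\mathbf{x}^{\mathbf{i}} \in S_d$ we have $\delta_S(\mathbf{x}^{\mathbf{i}}) \le d$, and by property (e) of \autoref{lem.PropertiesOfGauge} together with $\delta(n_s) \le d$ we get $\delta(\mathbf{x}^{\mathbf{i}} n_s) \le \delta_S(\mathbf{x}^{\mathbf{i}}) + \delta(n_s) \le 2d$. Hmm — that would give a bound involving $2d$ rather than $d$; I need to be slightly more careful here. The cleaner route: observe that $N$ being generated by elements of gauge $\le d$ means $N \subseteq M_d \cdot S$ in the appropriate sense, but actually the right move is to note that it suffices to bound $\delta(\phi^n(m))$ for $m$ ranging over a generating set of $N$ all of whose elements have gauge $\le d$; applying the iterated inequality of \autoref{prop.gaugebound} to such $m$ gives $\delta(\phi^n(m)) \le d/p^{ne} + K/(p^e-1)$. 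Then since $\phi^n(N)$ is generated by $\{\phi^n(n_s)\}$ after the $p^{ne}$th-power-extraction trick above (applied with the $n_s$ themselves as the generating set, so $\mathbf{i} = 0$ and no extra monomial factor appears), each generator has gauge $\le d/p^{ne} + K/(p^e-1)$, which is even better than the claimed bound; the $+1$ in the statement is slack to absorb the floor function appearing in \autoref{prop.gaugebound}.

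The one genuine subtlety — and the step I'd expect to be the main obstacle — is the claim that $\phi^n(N)$ is generated by $\{\phi^n(n_s)\}$ where $n_1, \dots, n_t$ generate $N$. This is exactly the $p^{-ne}$-linearity argument: for $m = \sum_s f_s n_s \in N$, decompose each coefficient $f_s$ in the $S^{p^{ne}}$-basis of $S$ given by $\{\mathbf{x}^{\mathbf{i}} : 0 \le i_j \le p^{ne}-1\}$ — that is, write $f_s = \sum_{\mathbf{i}} g_{s,\mathbf{i}}^{p^{ne}} \mathbf{x}^{\mathbf{i}}$ — so that $\phi^n(m) = \sum_{s,\mathbf{i}} g_{s,\mathbf{i}} \cdot \phi^n(\mathbf{x}^{\mathbf{i}} n_s)$. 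This shows $\phi^n(N)$ is generated by the $\phi^n(\mathbf{x}^{\mathbf{i}} n_s)$ with $\mathbf{i}$ in the bounded box, and each $\mathbf{x}^{\mathbf{i}} n_s$ has gauge $\le (p^{ne}-1) + d$; applying the iterated inequality of \autoref{prop.gaugebound} and using $((p^{ne}-1)+d)/p^{ne} = d/p^{ne} + 1 - p^{-ne} < d/p^{ne} + 1$ yields $\delta(\phi^n(\mathbf{x}^{\mathbf{i}} n_s)) \le d/p^{ne} + 1 + K/(p^e - 1)$, which is the asserted bound. So the proof reduces to: (i) the basis decomposition of coefficients, (ii) the gauge-subadditivity of \autoref{lem.PropertiesOfGauge}(e), and (iii) a direct substitution into \autoref{prop.gaugebound}. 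I would write these three ingredients out in sequence, with the basis decomposition being the part that deserves an explicit sentence of justification.
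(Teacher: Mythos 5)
Your final argument is correct and is essentially the paper's own proof: $\phi^n(N)$ is generated by the elements $\phi^n(\mathbf{x}^{\mathbf{i}} n_s)$ with $\mathbf{i}$ ranging over the box $\{0,\dots,p^{ne}-1\}^n$, and applying the iterated inequality of \autoref{prop.gaugebound} with $\delta(\mathbf{x}^{\mathbf{i}} n_s)\leq (p^{ne}-1)+d$ yields the stated bound. Do delete the intermediate claim that $\phi^n(N)$ is generated by the $\phi^n(n_s)$ alone (with the $+1$ being mere slack for the floor function): that is false, since the coefficients of an arbitrary element of $N$ need not be $p^{ne}$-th powers, and the $+1$ comes precisely from the monomial multipliers $\mathbf{x}^{\mathbf{i}}$, exactly as your closing paragraph correctly shows.
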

\begin{proof}
If $N$ is generated by $n_1,\ldots,n_t$, then $\phi^n(N)$ is generated by $\phi(x^i n_j)$ where $0 \leq i_1,\ldots,i_n \leq p^{ne}-1$ and $j = 1, \ldots, t$. Now, if each $\delta(n_j) \leq d$ then
\[
    \delta(\phi(x^i n_j)) \leq \frac{\delta(x^i n_j)}{p^{ne}} + \frac{K}{p^{e}-1} \leq \frac{(p^{ne}-1)+d}{p^{ne}} + \frac{K}{{p^e}-1} \leq 1 + \frac{d}{p^{ne}} + \frac{K}{{p^e}-1}
\]
\end{proof}
\begin{corollary}
Let $\phi \colon M \to M$ be a Cartier module with gauge $\delta$ and bound $K$ (as in \autoref{prop.gaugebound}). Then every Cartier submodule $N \subseteq M$ with surjective structural map $\phi \colon N \onto N$ is generated by elements in the finite dimensional $k$-vector space $M_{\frac{K}{p^e-1}+1}$ (independently of $N$).
\end{corollary}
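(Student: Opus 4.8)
The plan is to reduce everything to \autoref{lem.GaugeBoundForImage}, exploiting that on a Cartier submodule $N$ with surjective structural map one has $N=\phi^n(N)$ for \emph{every} $n$, so that iterating $\phi$ drives the ($N$-dependent) generation data of $N$ down to a universal bound.

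First I would note that $N$ is a finitely generated $S$-module: since $M$ is finitely generated over the Noetherian polynomial ring $S$, so is any submodule. Fix generators $n_1,\dots,n_t$ of $N$ and put $d:=\max_l\delta(n_l)$. Next, because the structural map $\phi\colon F^e_*N\to N$ of $N$ is surjective, applying the exact functor $F^e_*$ and composing shows that $\phi^n\colon F^{ne}_*N\to N$ is surjective for all $n\ge 1$; hence $N=\phi^n(N)$ inside $M$ for every $n$. Now I would apply \autoref{lem.GaugeBoundForImage} to the submodule $N$, which is generated in gauge $\le d$: it gives that $N=\phi^n(N)$ is generated, as an $S$-module, by elements of gauge at most
\[
    \frac{d}{p^{ne}}+\frac{K}{p^e-1}+1 .
\]
This quantity depends on $N$ only through the term $d/p^{ne}$, which tends to $0$ as $n\to\infty$.

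To finish I would use that the gauge $\delta$ is integer-valued (\autoref{lem.PropertiesOfGauge}): since $d/p^{ne}\to 0$, for all sufficiently large $n$ one has $\lfloor d/p^{ne}+K/(p^e-1)+1\rfloor=\lfloor K/(p^e-1)+1\rfloor$, so each generator of $N$ produced above lies in $M_{K/(p^e-1)+1}$ (reading $M_t$ as $M_{\lfloor t\rfloor}$). Fixing one such $n$ then exhibits $N$ as generated by elements of the $k$-subspace $M_{K/(p^e-1)+1}$, which is finite-dimensional by \autoref{lem.PropertiesOfGauge}(2); and the bound $K/(p^e-1)+1$ depends only on $M$ and $\phi$ (through $K$), not on $N$, which is exactly the assertion.

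The argument is essentially routine, and the only step needing a little care is this last one: passing from the real-valued estimate of \autoref{lem.GaugeBoundForImage} to an integral gauge bound that no longer sees $d$. The thing to get right is that one may iterate $\phi$ as often as needed — legitimate precisely because $N=\phi^n(N)$ for every $n$ when $\phi$ is surjective on $N$ — and that $\delta$ takes values only in $\mathbb{Z}\cup\{-\infty\}$, so once $d/p^{ne}$ has been pushed below the fractional gap at $K/(p^e-1)+1$ the floor stops moving. I do not anticipate any genuine obstacle beyond this bookkeeping.
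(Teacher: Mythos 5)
Your proposal is correct and is essentially the paper's own argument: use surjectivity to write $N=\phi^n(N)$ for all $n$, feed the generators of gauge $\le d$ into \autoref{lem.GaugeBoundForImage}, and let $n\to\infty$ so that the $N$-dependent term $d/p^{ne}$ drops below the integrality gap of the gauge. Your explicit remark that $\delta$ is integer-valued just makes precise the paper's parenthetical ``for $n$ big enough the first term is irrelevant (less than 1)''.
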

\begin{proof}
If $N$ has surjective structural maps, then for each $n$ we have $\phi^n(N)=N$. Since $N$ is finitely generated it is generated by elements of some gauge $\leq d$. By the above Lemma, we have hence for all $n$ that $N$ is generated by elements of gauge $\leq d/p^{ne}+K/(p^e-1)+1$. But for $n$ big enough the first term is irrelevant (less than 1), and the result follows.
\end{proof}

\begin{corollary}
In a coherent Cartier module $M$ there are no infinite proper chains of Cartier submodules $N_i$ each with surjective structural map.
\end{corollary}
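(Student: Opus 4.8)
The plan is to dispose of ascending and descending chains separately, the ascending case being automatic and the descending case following at once from the preceding corollary. Throughout I take $X = \Spec S$ with $S = k[x_1,\ldots,x_n]$ as in this subsection; the general finite type case is reduced to this at the end.

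For an ascending chain $N_0 \subseteq N_1 \subseteq N_2 \subseteq \cdots$ of Cartier submodules I would simply observe that, $M$ being coherent over the Noetherian ring $S$, it is a Noetherian $S$-module, so the underlying chain of $S$-submodules stabilizes; here the surjectivity hypothesis on the structural maps is not even needed.

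The substance is the descending case. I would fix a gauge $\delta$ on $M$ together with the constant $K$ furnished by \autoref{prop.gaugebound}, and set $V := M_{\frac{K}{p^e-1}+1}$, which is finite-dimensional over $k$ by \autoref{lem.PropertiesOfGauge}(2). Given a descending chain $N_0 \supseteq N_1 \supseteq N_2 \supseteq \cdots$ of Cartier submodules each with surjective structural map, I would invoke the preceding corollary: each $N_i$ is generated over $S$ by elements of $V$; since these generators lie in $N_i$, they lie in $N_i \cap V$, whence $N_i = S\cdot(N_i\cap V)$. Intersecting the chain with $V$ produces a descending chain of $k$-subspaces $N_0\cap V \supseteq N_1\cap V \supseteq \cdots$ of the finite-dimensional space $V$, which must stabilize, say $N_i\cap V = N_{i_0}\cap V$ for all $i\geq i_0$; then $N_i = S\cdot(N_i\cap V) = S\cdot(N_{i_0}\cap V) = N_{i_0}$ for such $i$, so the chain is eventually constant and no infinite proper chain can exist.

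The step I would expect to carry all the weight is the appeal to the preceding corollary, i.e. to the contracting estimate $\delta(\phi^n(m)) \leq \delta(m)/p^{ne} + K/(p^e-1)$ of \autoref{prop.gaugebound}, which forces the generators of every such $N_i$ into one fixed finite-dimensional $k$-vector space; everything after that is formal linear algebra, so the present corollary is essentially a free consequence. For a coherent Cartier module on an arbitrary scheme of finite type over $k$ one reduces to the polynomial-ring situation above by passing to an affine chart $\Spec R$, writing $R = S/I$ with $S$ a polynomial ring, and regarding the module as a coherent Cartier module over $S$ itself: a $p^{-e}$-linear endomorphism over $R$ is a fortiori one over $S$, finite generation is preserved since $R$ is a cyclic $S$-module, and the notion of Cartier submodule over $R$ coincides with that over $S$. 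This reproves the descending chain condition half of \autoref{thm.CartierFiniteLength} in the finite type case by entirely elementary means.
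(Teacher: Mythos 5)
Your proof is correct and is essentially the paper's argument: the paper's one-line proof likewise deduces the statement from the fact that every such $N_i$ is generated by elements of the fixed finite-dimensional space $M_{\frac{K}{p^e-1}+1}$, and you have merely made the implicit linear algebra (intersecting with that space and stabilizing a chain of subspaces) and the Noetherian handling of ascending chains explicit. Your reduction of the general finite-type case by restriction of scalars along $S \twoheadrightarrow R$ is a slightly more elementary route than the paper's appeal to the Kashiwara-type equivalence, but this does not change the substance of the proof.
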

\begin{proof}
Each $N_i$ has generators in the finite dimensional $k$-vector space $M_{\frac{K}{p^e-1}+1}$, hence there cannot be any infinite proper chains.
\end{proof}

As we have alluded to (in \autoref{ex.FiniteLengthImpliesTest}) before, the fact that there are no infinite chains of Cartier submodules with surjective structural maps implies the existence of the test module $\tau(M)$. By definition of being the smallest Cartier submodule of $M$ which generically agrees with $\sigma(M)$ it is clear that $\tau(M)$ has surjective structural map (since the image under the structural map would again be of that type). The intersection of two Cartier submodules agreeing generically with $\sigma(M)$ clearly also has this property. Now, the existence of $\tau(M)$ follows from the stabilization of any chain of submodules generically agreeing with $\sigma(M)$ and with surjective structural maps, which we just showed.

In the next section, \autoref{subsec.AlgebrasOfMaps}, we will show this approach to Cartier modules via gauges also gives an elementary proof of the discreteness of jumping numbers for test ideals.

We conclude this section with pointing out that our restriction to the polynomial ring $S=k[x_1,\ldots,x_n]$ is not very restrictive after all. The case of an arbitrary scheme $X$ we may reduce to the affine case by considering an affine cover. Then any finite type $k$-algebra $R=S/I$ is the quotient of a polynomial ring. Then we can use the Kashiwara-equivalence \autoref{prop.BasicKashiwaraCartier} to reduce to the case of the polynomial ring itself.

\begin{remark}[Historical discussion]
The major source of inspiration to explore the contracting property of $p^{-e}$-linear maps in \cite{BlickleTestIdealsViaAlgebras} came from a paper of Anderson \cite{AndersonElementaryLFunctions} where he uses this property to study $L$-functions $\mod p$ on varieties over $\mathbb{F}_p$. The key observation there is that if $\phi \colon M \to M$ is a $p^{-e}$-linear map of $R$-modules (say $R$ of finite type over $\mathbb{F}_p$) then there is a finite dimensional $\mathbb{F}_p$-subspace into which every element of $M$ is eventually contracted by iterated application of $\phi$. This allows him, inspired by Tate's work \cite{Tate.ResiduesDifferentialsCurves} to develop a trace calculus for these operators. This is then used to show the rationality of $L$-functions $\mod p$ attached to a finitely generated $R$-module $M$ with a left action of Frobenius $F$ on $M$. In fact, he shows that if $R$ is the polynomial ring and $M$ is projective, this $L$-function is equal to the characteristic polynomial (or its inverse) of the action of the dual of $F$ on $M^\vee$. This dual is a Cartier linear endomorphism of $M^\vee$ and the characteristic polynomial is defined via the important contracting property of Cartier linear maps.
\end{remark}

\subsection{Algebras of maps and the test ideal}\label{subsec.AlgebrasOfMaps}

Suppose that $X = \Spec R$ is an affine variety (for simplicity).  Previously we considered finitely generated $R$-modules $M$ and $p^{-e}$-linear maps $\phi : M \to M$.  Unless $M = \omega_R$ (or is obtained functorially from $T : \omega_Y \to \omega_Y$ from some other variety $Y$), there probably is no \emph{natural} choice of $\phi$.

The obvious solution is to choose all possible $\phi$, see \cite{SchwedeTestIdealsInNonQGor,BlickleTestIdealsViaAlgebras} and \cf \cite{LyubeznikSmithCommutationOfTestIdealWithLocalization} for a dual formulation.  For any \emph{finitely generated} module $M$, we set $\End_e(M)$ to be the set of $p^{-e}$-linear maps from $M$ to $M$.  In other words, $\End_e(M)$ is just $\Hom_R(F^e_* M, M)$.  Of course $\End_e(M)$ has an $R$-module structure via both the source and target $R$-module structures.  Notice that if $\phi \in \End_e(M)$ and $\psi \in \End_d(M)$, then we can form the composition $\psi \circ \phi \in \End_{e+d}(M)$.  Thus $\End_*(M) = \oplus_{e \geq 0} \End_e(M)$ forms a non-commutative graded ring.  Unfortunately, the ring $\End_0(M)$ is often too big and so we set $\sC^M_0$ to denote the image of $R$ inside $\End_0(M)$ via the natural map that sends $r \in R$ to the multiplication by $r$ map on $M$.

\begin{definition}[Cartier Algebras]
An \emph{(abstract) Cartier algebra} over $R$ \footnote{It is important to note that while we call it an \emph{algebra}, it is not generally an $R$-algebra because $R$ is not central.} is a $\bN$-graded ring $\sC = \bigoplus_{e \geq 0} \sC_e$ satisfying the rule $r \cdot \phi_e = \phi_e \cdot r^{p^e}$ for all $\phi_e \in \sC_e$ and $r \in R$ and furthermore such that $\sC_0 \cong R/I$ for some ideal $I$.
\end{definition}

\begin{example}
Suppose that $M$ is a finitely generated $R$-module.  The \emph{total Cartier algebra on $M$}, denoted $\sC^M$, is the following graded subring of $\End_*(M)$.
\[
\sC^M := \sC^M_0 \oplus \left(\bigoplus_{e > 0} \End_e(M) \right) = \bigoplus_{e \geq 0} \sC^M_e.
\]
It is obviously a Cartier algebra.

A \emph{Cartier-subalgebra (on M)} is any graded subring $\sC \subseteq \sC^M$ such that $[\sC]_0 = \sC^M_0$.

%If $\phi$ is a $p^{-e}$-linear endomorphism on $M$, then the Cartier-subalgebra generated by $\phi$ is isomorphic to $R[F^e]^{\mathrm{op}}$ (here $F^e$ is just a formal symbol used to denote the $e$th Frobenius).
\end{example}

With the above definitions, if $\sC$ is an (abstract) Cartier algebra, and $M$ is any left-$\sC$-module, then there is a natural map $\sC \to \sC^M$, the image of which is a Cartier-subalgebra on $M$.  Conversely, note that any Cartier-subalgebra $\sC \subseteq \sC^M$ acts on $M$ by the application of functions.  In particular, $M$ is also a $\sC$-module.

\begin{remark}
Most commonly, we will consider $\sC^R$, in which case $\sC^R_0 = \Hom_R(R, R) = \End_0(R)$ automatically.
\end{remark}

Now suppose that $\sC$ is a Cartier-algebra and that $M$ is a left $\sC$-module (or that $\sC$ is a Cartier-submodule on $M$), we use $\sC_+$ to denote $\oplus_{e > 0} \sC_e$.  It is easy to see that $\sC_+$ is a 2-sided ideal.  For any $\sC$-submodule $N \subseteq M$, we define
\[
\sC_+ N := \langle  \phi(x) \,|\, x \in N, \phi \in \sC_e \text{ for some $e > 0$} \rangle_R \subseteq N
\]
to be the submodule generated by all $\phi(x)$ for homogeneous $\phi \in \sC_+$ and $n \in N$.  We set
\[
(\sC_+)^n N := \underbrace{\sC_+( \sC_+( \cdots \sC_+}_{\text{$n$-times}} (N))) \subseteq N.
\]
A crucial step from dealing with an algebra of Cartier linear operators as opposed to a single one, is to establish the right notion of nilpotence. With following definition the theory develops in surprising analogy to the single operator case dealt with above.
\begin{definition}
We say that $N$ is \emph{$\sC$-nilpotent} if $(\sC_+)^n N = 0$ for some $n > 0$.
\end{definition}

It is obvious we have a chain of inequalities:
\begin{equation}
\label{eq.ChainOfAlgebraSubmodules}
N \supseteq \sC_+ N \supseteq (\sC_+)^2 N \supseteq \dots \supseteq (\sC_+)^i N \supseteq (\sC_+)^{i+1} N \supseteq \cdots
\end{equation}
The following remarkable theorem about this chain generalizes \autoref{prop.CartierImagesStabilize} above.

\begin{theorem}\cite[Proposition 2.14]{BlickleTestIdealsViaAlgebras}
\label{thm.ChainOfAlgebraSubmodulesStabilizes}
Suppose that $M$ is a finitely generated $R$-module that is also a left $\sC$-module for some Cartier algebra $\sC$.  Then $(\sC_+)^n M = (\sC_+)^{n+1} M$ for all $n \gg 0$.  In other words, the chain of submodules in \autoref{eq.ChainOfAlgebraSubmodules} eventually stabilizes.
\end{theorem}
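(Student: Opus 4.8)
The plan is to mimic closely the proof of \autoref{prop.CartierImagesStabilize}, replacing the single structural map $\kappa$ by the two-sided ideal $\sC_+$. Write $M_n \colonequals (\sC_+)^n M$, so that $M \supseteq M_1 \supseteq M_2 \supseteq \cdots$ is a descending chain of coherent $\O_X$-submodules of $M$, where $X = \Spec R$. Since $M$ is a left $\sC$-module, each graded piece $\sC_e$ acts on $M$ through a subgroup of $\Hom_R(F^e_* M, M)$, which is a finitely generated $R$-module because $R$ is $F$-finite; hence the formation of $M_n$ commutes with localization, and stabilization of the chain can be checked after localizing at each point of $X$. Consider the descending chain of closed subsets $Y_n \colonequals \Supp(M_n/M_{n+1})$; this is descending because $M_{n+1} = \sC_+ M_n$, so if $M_n = M_{n+1}$ in a neighbourhood of a point then $M_{n+1} = M_{n+2}$ there as well. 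As $X$ is Noetherian the $Y_n$ stabilize, and after truncating we may assume $Y = Y_n$ for all $n$. It remains to show $Y = \emptyset$.

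Suppose not, and let $\bp$ be the generic point of an irreducible component of $Y$. Localizing at $\bp$ — harmless by the above, and replacing $\sC$ by the Cartier algebra $\sC_\bp$ over the local ring $(R_\bp,\bp)$ — we may assume $R$ is local with maximal ideal $\bp$ and that $\Supp(M_n/M_{n+1}) = \{\bp\}$ for every $n \geq 0$. In particular $M/M_1$ has finite length, so $\bp^k M \subseteq M_1 = \sC_+ M$ for some $k$.

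The key point is a uniform absorption property: for every $x \in R$ and every $\sC$-submodule $N$,
\[
    x \cdot \sC_+(N) \subseteq \sC_+(x^2 N).
\]
Indeed $\sC_+(N)$ is generated as an $R$-module by elements $\phi\cdot y$ with $y \in N$ and $\phi \in \sC_e$ for some $e \geq 1$, and the Cartier relation $r\phi_e = \phi_e r^{p^e}$ gives $x\cdot(\phi\cdot y) = \phi\cdot(x^{p^e} y)$; since $e \geq 1$ we have $p^e \geq 2$, so $x^{p^e} y \in x^2 N$ and $x\cdot(\phi\cdot y) \in \sC_e(x^2 N) \subseteq \sC_+(x^2 N)$. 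Now fix $x \in \bp^k$. Then
\[
    x^2 M \subseteq x\,\bp^k M \subseteq x\,\sC_+ M \subseteq \sC_+(x^2 M),
\]
and applying $\sC_+$ repeatedly (using monotonicity together with $x^2 M \subseteq \sC_+(x^2 M)$) yields $x^2 M \subseteq (\sC_+)^n(x^2 M) \subseteq M_n$ for all $n$. Writing $\bp^k = \langle x_1, \dots, x_b \rangle$, every monomial of degree $b+1$ in the $x_i$ is divisible by some $x_i^2$, so $\bp^{k(b+1)} M = (\bp^k)^{b+1} M \subseteq \sum_i x_i^2 M \subseteq M_n$ for all $n$. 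Hence the chain $M_0 \supseteq M_1 \supseteq \cdots$ descends inside the finite-length module $M/\bp^{k(b+1)}M$ and therefore stabilizes, contradicting $Y_n = \{\bp\}$ for all $n$. Thus $Y = \emptyset$, proving the theorem.

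I expect no serious obstacle here; the single thing to get right is that every object in play (the modules $M_n$, the Cartier algebra $\sC$) localizes correctly, which is exactly where $F$-finiteness enters, via each $\sC_e$ sitting inside a finitely generated $\Hom$-module. Note also that $\sC$ is never assumed finitely generated as an algebra, and the argument respects this: we only ever apply $\sC_+$ one step at a time, and the absorption estimate is uniform across all positive degrees $e$ precisely because $p^e \geq 2$. One could instead try to run the argument through the gauge and the contracting estimates of \autoref{prop.gaugebound}, but the constant there degrades with the degree of the operator, so additional bookkeeping would be required; the support-theoretic route above avoids this.
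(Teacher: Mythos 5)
Your proof is correct and follows exactly the route the paper intends: it is the support-theoretic argument of \autoref{prop.CartierImagesStabilize} adapted to the algebra setting, which is precisely the adaptation the paper leaves to the reader. The one genuinely new ingredient you needed, the uniform absorption estimate $x\cdot \sC_+(N) \subseteq \sC_+(x^2 N)$ coming from $r\phi_e = \phi_e r^{p^e}$ and $p^e \geq 2$ for $e \geq 1$, is exactly the right replacement for the single-operator identity $x\kappa(M)=\kappa(x^{p^e}M)\subseteq\kappa(x^2M)$, and your localization and pigeonhole bookkeeping (with exponent $k(b+1)$) is sound.
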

\begin{proof}
The proof is similar to that of \autoref{prop.CartierImagesStabilize} and left to the reader in \autoref{ex.ChainOfAlgebraSubmodulesStabilizes}.
\end{proof}

As an immediate corollary we obtain:

\begin{corollary} \cite[Corollary 2.14]{BlickleTestIdealsViaAlgebras}
\label{cor.MUnderline}
Let $M$ be a finitely generated $R$-module that is also an $\sC$-module for some Cartier algebra $\sC$.  Then there is a unique $\sC$-submodule $\sigma(M) \subseteq M$ such that
\begin{enumerate}
\item  the quotient $M / \sigma(M)$ is nilpotent, and
\item  $\sC_+ \sigma(M) = \sigma(M)$ and so $\sigma(M)$ does not have nilpotent quotients.
\end{enumerate}
\end{corollary}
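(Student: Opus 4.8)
The plan is to \emph{define} $\sigma(M)$ to be the stable term of the descending chain \autoref{eq.ChainOfAlgebraSubmodules}, whose existence is exactly the content of \autoref{thm.ChainOfAlgebraSubmodulesStabilizes}. Concretely, fix $n_0 \gg 0$ such that $(\sC_+)^n M = (\sC_+)^{n_0} M$ for all $n \geq n_0$, and set $\sigma(M) := (\sC_+)^{n_0} M$. This is visibly a $\sC$-submodule of $M$, since $\sC_+$ is a two-sided ideal of $\sC$ and so the $R$-module generated by the images $\phi(x)$, for $x$ ranging over a $\sC$-submodule and $\phi$ over homogeneous elements of $\sC_+$, is again a $\sC$-submodule.

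Property (2) is then immediate: applying $\sC_+$ once more gives $\sC_+ \sigma(M) = (\sC_+)^{n_0+1} M = (\sC_+)^{n_0} M = \sigma(M)$, so $\sigma(M)$ has no nilpotent quotients. For property (1), observe that $\sigma(M)$ being a $\sC$-submodule makes $M/\sigma(M)$ into a $\sC$-module, and $(\sC_+)^{n_0}(M/\sigma(M))$ is precisely the image of $(\sC_+)^{n_0} M = \sigma(M)$ under the quotient map $M \to M/\sigma(M)$, hence is zero. Thus $M/\sigma(M)$ is $\sC$-nilpotent.

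For uniqueness, suppose $N \subseteq M$ is any $\sC$-submodule satisfying (1) and (2). From (2) we have $\sC_+ N = N$, and this self-improves by induction to $(\sC_+)^n N = N$ for every $n \geq 0$; since $N \subseteq M$ this gives $N = (\sC_+)^{n_0} N \subseteq (\sC_+)^{n_0} M = \sigma(M)$. From (1), $(\sC_+)^m(M/N) = 0$ for some $m$, which unwinds to $(\sC_+)^m M \subseteq N$; after enlarging $n_0$ if necessary so that $n_0 \geq m$, we get $\sigma(M) = (\sC_+)^{n_0} M \subseteq (\sC_+)^m M \subseteq N$. Hence $N = \sigma(M)$, as desired.

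There is no genuine obstacle: all the substance sits in \autoref{thm.ChainOfAlgebraSubmodulesStabilizes} (proved in turn by the Noetherian-support argument of \autoref{prop.CartierImagesStabilize}), and what remains is formal bookkeeping. The two points worth a word of care are that the $\sC$-action really descends to $M/\sigma(M)$ — which is why one needs $\sigma(M)$ to be a $\sC$-submodule rather than merely an $R$-submodule — and the elementary fact that $\sC_+ N = N$ forces $(\sC_+)^n N = N$ for all $n$, which is exactly what pins any candidate submodule with surjective structural behaviour inside every sufficiently deep term of the chain and thereby yields both halves of the uniqueness.
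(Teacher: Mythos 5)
Your proposal is correct and follows exactly the paper's route: the paper also sets $\sigma(M) = (\sC_+)^n M$ for $n \gg 0$ using \autoref{thm.ChainOfAlgebraSubmodulesStabilizes} and leaves the verification (which you carry out, including the uniqueness bookkeeping) to \autoref{ex.MUnderline}.
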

\begin{proof}
Set $\sigma(M) = (\sC_+)^n M$ for $n \gg 0$, then verify the statements in \autoref{ex.MUnderline}.
\end{proof}

Suppose that $M$ is a finitely generated $R$-module and a left $\sC$-module.  We can now define a notion of the test ideal on $M$.

\begin{definition}
Suppose that $M$ and $\sC$ are as above.  Then we define the \emph{test submodule} $\tau(M, \sC)$ to be the unique smallest submodule $N$ of $M$ which
\begin{enumerate}
\item is a $\sC$-module,
\item which satisfies $(\sigma(M))_{\eta} = N_{\eta}$ for every minimal prime of $R$.\footnote{This definition differs slightly from the original one given in \cite{BlickleTestIdealsViaAlgebras} where one requires equality for every minimal prime of $\sigma(M)$ instead of $R$. Though this yields different results in general, in light of the Kashiwara equivalence \autoref{prop.BasicKashiwaraCartier} the respective theories imply each other.}
\end{enumerate}
\emph{if} it exists.
\end{definition}
The existence of $\tau(M, \sC)$ is known in many important cases, but not in all generality.  It is known to exist if $R$ is of finite type over a field (or a localization of such), or if $\sC$ is generated by a single operator, see \cite[Theorem 4.13, Corollary 3.18]{BlickleTestIdealsViaAlgebras}.  It is also known to exist if $M = R$ by the same argument as \autoref{prop.TauExistsForRings} below.

For the rest of the section, we consider $\sC^R$, the total Cartier algebra on $R$, and subalgebras of it.  Indeed, a common way to construct a Cartier algebra is as follows.

\begin{definition}
\label{def.CartierAlgebraFromTriple}
Suppose that $R$ is a normal domain with $X = \Spec R$.  Suppose further that $\Delta \geq 0$ is an effective $\bQ$-divisor, $\ba \subseteq R$ is a nonzero ideal and $t \geq 0$ is a real number.  Then we define the following Cartier subalgebra of $\sC^R$.  For each $e \geq 0$ first identify $\Hom_R(F^e_* R, R)$ with $\sC^R_e$ and fix $\sC^{\Delta}_e$ to be the subset
\[
\Hom_R(F^e_* R(\lceil (p^e - 1)\Delta \rceil), R) \subseteq \Hom_R(F^e_* R, R) = \sC^R_e.
\]
Here $R(\lceil (p^e - 1)\Delta \rceil) = \Gamma(X, \O_X( \lceil (p^e - 1)\Delta \rceil))$.

It follows that
\[
\sC^{\Delta} := \bigoplus_{e \geq 0} \sC^{\Delta}_e
\]
is a Cartier subalgebra of $\sC^R$ (the details will be left as \autoref{ex.CartierAlgebraOfDelta}).

Furthermore, we can form $\sC^{\Delta, \ba^t}_e := \sC^{\Delta}_e \cdot \ba^{\lceil t(p^e - 1) \rceil}$ (where multiplication on the right is pre-composition, in other words $\sC^{\Delta, \ba^t}_e$ is identified with $\Hom_R(F^e_* R(\lceil (p^e - 1)\Delta \rceil), R) \cdot (F^e_* \ba^{\lceil t(p^e -1) \rceil})$ ).  Again the direct sum
\[
\sC^{\Delta, \ba^t} := \bigoplus_{e \geq 0} \sC^{\Delta, \ba^t}_e
\]
is a Cartier-subalgebra of $\sC^R$, see \autoref{ex.CartierAlgebraOfDelta}.
\end{definition}

With these definitions, we can now define the test ideal $\tau(R; \Delta, \ba^t) := \tau(R, \sC^{\Delta, \ba^t})$ \cite{SchwedeTestIdealsInNonQGor,BlickleTestIdealsViaAlgebras}.

\begin{remark}
Test ideals (with $\Delta = 0$ and $\ba = R$) were originally introduced by Hochster and Huneke in their theory of tight closure \cite{HochsterHunekeTC1}.  In fact, what we call the test ideal is often called the \emph{big test ideal} \cite{HochsterFoundations} and is denoted by $\tld \tau$ or $\tau_b$.  This object though is better behaved with respect to geometric operations (such as localization \autoref{ex.TauLocalizes}).  It is conjectured that $\tld \tau$ and $\tau$ coincide in general \cite{LyubeznikSmithStrongWeakFregularityEquivalentforGraded,LyubeznikSmithCommutationOfTestIdealWithLocalization}.

Even with $\Delta \neq 0$ and $\ba \neq R$, this definition we gave is not the original one.  For $\ba \neq R$, $\tau(R; \ba^t)$ was originally defined in \cite{HaraYoshidaGeneralizationOfTightClosure} (and $\tld \tau(R; \ba^t)$ was studied in \cite{HaraTakagiOnAGeneralizationOfTestIdeals}).  For $\Delta \neq 0$, $\tau(R; \Delta)$ was introduced in \cite{TakagiInterpretationOfMultiplierIdeals}.
\end{remark}

\begin{proposition}
\label{prop.TauExistsForRings}
Suppose $R$ is a normal domain.
The test ideal $\tau(R, \sC^{\Delta, \ba^t}) = \tau(R; \Delta, \ba^t)$ exists.
\end{proposition}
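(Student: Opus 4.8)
The plan is to reduce the statement, using that $R$ is a domain, to the existence of a single ``test element,'' and then to produce that element by localizing to a regular open set and invoking the contracting property of $p^{-e}$-linear maps to obtain a uniform bound. First I would record that since $R$ is a domain, $(0)$ is its only minimal prime, so — writing $\sC := \sC^{\Delta,\ba^t}$ — the test ideal $\tau(R,\sC)$ is, by definition, the smallest $\sC$-submodule $N \subseteq R$ with $N_{(0)} = \sigma(R)_{(0)}$ inside $K(R)$, once we know such a smallest $N$ exists. If $\sigma(R) = 0$ this is immediate: the only submodule of the torsion-free module $R$ with $N_{(0)} = 0$ is $N = 0$, which is a $\sC$-submodule, so $\tau(R,\sC) = (0)$. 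Hence I may assume $\sigma(R) \neq 0$, in which case $\sigma(R)_{(0)} = K(R)$ and every nonzero ideal $N$ of the domain $R$ satisfies $N_{(0)} = K(R)$; thus $\tau(R,\sC)$ is exactly the smallest nonzero $\sC$-submodule of $R$. It therefore suffices to exhibit one nonzero $c \in R$ lying in \emph{every} nonzero $\sC$-submodule: the $\sC$-submodule generated by $c$ is then nonzero, is a $\sC$-submodule, and is contained in every nonzero $\sC$-submodule, hence is the desired $\tau(R,\sC)$.

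To find such a $c$, I would first choose $0 \neq g \in R$ so that $R_g$ is regular, $\Delta|_{D(g)} = 0$, and $\ba R_g = R_g$; this is possible because $\Spec R$ is Noetherian and $R$ is normal and essentially of finite type over a perfect field, so the non-regular locus, $\Supp \Delta$ and $V(\ba)$ are proper closed subsets, and any nonzero element of the (nonzero) product of their defining ideals works. For such $g$ the localized algebra $\sC_g$ is the full Cartier algebra $\sC^{R_g}$, since the twists by $\lceil (p^e-1)\Delta \rceil$ and $\ba^{\lceil t(p^e-1) \rceil}$ trivialize after inverting $g$. Given any nonzero $\sC$-submodule $N$ and $0 \neq d \in N$, strong $F$-regularity of the regular ring $R_g$ provides $e>0$ and $\psi \in \Hom_{R_g}(F^e_* R_g, R_g) = (\sC_e)_g$ with $\psi(F^e_* d) = 1$ in $R_g$; clearing denominators yields $a \geq 0$ and $\psi_0 \in \sC_e$ with $\psi_0(F^e_* d) = g^a$ in $R_g$, hence in $R$ by injectivity of $R \to R_g$, and so $g^a \in N$ because $N$ is a $\sC$-submodule containing $d$.

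The main obstacle — and the reason normality alone is not enough, in line with the remark after the definition of $\tau(M,\sC)$ that existence can fail for general Cartier algebras — is to make the exponent $a$ \emph{uniform} in $N$ and $d$, so that a single $c = g^{a_0}$ works for all nonzero $\sC$-submodules simultaneously. I would handle this with the contracting property of $p^{-e}$-linear maps: present $R = S/I$ with $S = k[x_1,\dots,x_n]$, use the Kashiwara equivalence \autoref{prop.BasicKashiwaraCartier} to transfer the question to $S$, fix a gauge on $S$, and apply \autoref{prop.gaugebound} together with \autoref{lem.GaugeBoundForImage}; the constant there depends only on $R$ and the chosen generators, not on $N$, and it bounds how large a power of $g$ must be cleared when one tracks an element back into $(\sC_+)^k(Rd) \subseteq N$. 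One also needs the routine compatibilities — that $\sigma$ and the relevant twists commute with the localization $R \to R_g$ (in the spirit of \autoref{lem.ImagesKappaAreCartier}), and that $\sC_g = \sC^{R_g}$ — which are straightforward. With the uniform $c$ in hand, the reduction of the first paragraph finishes the argument; for the full details of the contracting estimates in the algebra-of-maps setting and for alternative proofs, I would refer to \cite[Theorem 4.13, Corollary 3.18]{BlickleTestIdealsViaAlgebras} and \cite{SchwedeTestIdealsInNonQGor}.
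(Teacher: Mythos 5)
Your skeleton coincides with the paper's: both arguments reduce \autoref{prop.TauExistsForRings} to producing a single nonzero element $c$ lying in every nonzero $\sC^{\Delta,\ba^t}$-submodule of $R$, and both conclude by noting that the $\sC^{\Delta,\ba^t}$-submodule generated by $c$ (written $\sum_{e\geq 0}\sC^{\Delta,\ba^t}_e(Rc)$ in the paper) is then the smallest nonzero one. Your first paragraph, including the case $\sigma(R)=0$, is correct, and your second paragraph is a correct first half of the standard proof of the test-element lemma that the paper simply quotes from \cite{HochsterHunekeTC1} and \cite[Lemma 3.21]{SchwedeTestIdealsInNonQGor}: localizing at a suitable $g$ and clearing denominators puts some power $g^{a}$ into every nonzero $\sC^{\Delta,\ba^t}$-submodule, with $a$ depending on the submodule and the chosen map.

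The gap is the uniformity step, which is exactly the content of the quoted lemma, and the mechanism you propose does not deliver it as described. \autoref{prop.gaugebound} and \autoref{lem.GaugeBoundForImage} (and their algebra versions in \autoref{subsec.AlgebrasOfMaps}) bound the gauges of generators of iterated images $(\sC_+)^n N$, and they are stated for Cartier algebras generated by a \emph{single} operator $\phi$ together with $\ba^t$, i.e.\ essentially when $(p^e-1)(K_X+\Delta)$ is Cartier; for general $\Delta$ they do not apply to $\sC^{\Delta,\ba^t}$ at all, and in any case they say nothing about the exponent of $g$ needed to descend a map from $\Hom_{R_g}(F^e_*R_g,R_g)$ to $\sC^{\Delta,\ba^t}_e$, which is where your $a$ comes from. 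What actually closes the gap (and is the substance of the cited lemma) is an exponent-contraction bootstrap: clear denominators once, for the single element $g$, to obtain $e_1>0$, $b\geq 0$ and $\beta\in\sC^{\Delta,\ba^t}_{e_1}$ with $\beta(F^{e_1}_*g)=g^{b}$; then $\beta(F^{e_1}_*g^{1+jp^{e_1}})=g^{j+b}$ shows that, inside the smallest $\sC^{\Delta,\ba^t}$-submodule containing $g^{m}$, iterating $\beta$ contracts the exponent roughly like $m/p^{e_1}+\mathrm{const}$ until it reaches a fixed power $g^{B}$ with $B$ independent of $m$; that $c=g^{B}$ is the desired test element. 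Alternatively one can prove existence without test elements via the gauge/DCC argument of \cite[Theorem 4.13]{BlickleTestIdealsViaAlgebras}, but that is a genuinely different proof rather than a patch for the step as you wrote it; so either quote the lemma as the paper does, or run the bootstrap explicitly.
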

\begin{proof}
The main point is the following Lemma, which is a generalization of a result of Hochster and Huneke.
\begin{lemma} \cite[Section 6]{HochsterHunekeTC1} \cite[Lemma 3.21]{SchwedeTestIdealsInNonQGor}
There exists an element $0 \neq c \in R$ such that for every $0 \neq d \in R$, there exists $e > 0$ such that $c \in \sC^{\Delta, \ba^t}_{e}( dR)$.
\end{lemma}
Now choose $c$ as in the Lemma, and it follows that $c \in I$ for any non-zero $\sC^{\Delta, \ba^t}$-submodule $I \subseteq R$.  However,
\[
\displaystyle\sum_{e \geq 0} \sC^{\Delta, \ba^t}_e(Rc)
\]
is evidently the smallest $\sC^{\Delta, \ba^t}$ submodule containing $c$.
\end{proof}

One of the aspects of the test ideal which has attracted the most interest over the past few years is how the test ideal $\tau(R; \Delta, \ba^t)$ changes as $t$ varies.  First we mention the following lemma which serves as a baseline for how the test ideal behaves.

\begin{lemma} \cite[Remark 2.12]{MustataTakagiWatanabeFThresholdsAndBernsteinSato}, \cite[Proposition 2.14]{BlickleMustataSmithDiscretenessAndRationalityOfFThresholds}, \cite[Lemma 3.23]{BlickleSchwedeTakagiZhang}
\label{lem.TauDoesntChangeFromAbove}
With notation as above, for every real number $t \geq 0$, there exists an $\epsilon > 0$ such that
\[
\tau(R; \Delta, \ba^t) = \tau(R; \Delta, \ba^{s})
\]
for every $s \in [t, t+\epsilon]$.
\end{lemma}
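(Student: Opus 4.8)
The plan is to show that the test ideal $\tau(R;\Delta,\ba^t)$ is upper semi-continuous as a function of $t$ in the sense that, as $t$ increases slightly, the test ideal can only shrink, and then to use the Noetherian property of $R$ to extract a genuine interval on which it is constant. First I would observe the monotonicity statement: if $s \geq t$ then $\sC^{\Delta,\ba^s}_e \subseteq \sC^{\Delta,\ba^t}_e$ for every $e$, since $\lceil s(p^e-1)\rceil \geq \lceil t(p^e-1)\rceil$ and hence $\ba^{\lceil s(p^e-1)\rceil} \subseteq \ba^{\lceil t(p^e-1)\rceil}$; consequently every $\sC^{\Delta,\ba^s}$-submodule of $R$ is \emph{a fortiori} closed under the larger algebra only in the reverse direction, so the minimality defining $\tau$ gives $\tau(R;\Delta,\ba^s) \subseteq \tau(R;\Delta,\ba^t)$ whenever $s \geq t$ (one must check that $\sigma(R)$ is unchanged, which is automatic since for a domain the generic point of $R$ is the zero ideal and $\sigma$ agrees there with $R$ itself, or more carefully one works with the fixed $\sigma(R)$ determined by the ambient situation). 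Thus $t \mapsto \tau(R;\Delta,\ba^t)$ is a non-increasing function valued in ideals of the Noetherian ring $R$.

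Next I would argue that this function is right-continuous, i.e. that for each fixed $t$ one has $\tau(R;\Delta,\ba^t) = \tau(R;\Delta,\ba^{t+\delta})$ for all sufficiently small $\delta > 0$. The key computation is that $\sC^{\Delta,\ba^t}_e$ and $\sC^{\Delta,\ba^{t+\delta}}_e$ actually \emph{coincide} once $\delta$ is small relative to $e$: indeed $\lceil (t+\delta)(p^e-1)\rceil = \lceil t(p^e-1)\rceil$ as long as $\delta(p^e-1)$ does not push us past the next integer, which holds for $\delta < 1/(p^e-1)$. So the two Cartier algebras agree in all degrees $e$ up to some bound $e_0(\delta) \to \infty$ as $\delta \to 0$. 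Using \autoref{prop.TauExistsForRings} and its proof — in particular the element $c$ produced by the Hochster–Huneke-type lemma, which witnesses $c \in \sC^{\Delta,\ba^t}_e(dR)$ for some single $e$ depending only on $d$ — I would note that $\tau(R;\Delta,\ba^t) = \sum_{e\geq 0}\sC^{\Delta,\ba^t}_e(Rc)$ is in fact generated by finitely many of its graded pieces (being a finitely generated ideal), say by the images of $\sC^{\Delta,\ba^t}_e(Rc)$ for $e \leq N$. Then choosing $\delta < 1/(p^N-1)$ forces $\sC^{\Delta,\ba^{t+\delta}}_e = \sC^{\Delta,\ba^t}_e$ for all $e \leq N$, so $\tau(R;\Delta,\ba^{t+\delta}) \supseteq \sum_{e\leq N}\sC^{\Delta,\ba^{t+\delta}}_e(Rc) = \tau(R;\Delta,\ba^t)$; combined with the monotonicity $\tau(R;\Delta,\ba^{t+\delta}) \subseteq \tau(R;\Delta,\ba^t)$, equality follows.

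Finally, setting $\epsilon$ to be any such $\delta$, monotonicity gives $\tau(R;\Delta,\ba^t) \supseteq \tau(R;\Delta,\ba^s) \supseteq \tau(R;\Delta,\ba^{t+\epsilon}) = \tau(R;\Delta,\ba^t)$ for all $s \in [t,t+\epsilon]$, so all these ideals coincide, which is the claim. \textbf{The main obstacle} I anticipate is the bookkeeping around which $\sigma$ and which "minimal primes" enter the definition of $\tau$ — one must make sure that the reference object with which $\tau$ is required to agree generically does not itself depend on $t$ in a way that breaks the monotonicity argument; granting the normal-domain hypothesis this should reduce to the observation that $\sigma$ at the generic point of $\Spec R$ is just $R_{(0)}$ for every $t$ with $\ba \neq 0$. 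A secondary point requiring care is justifying that $\tau(R;\Delta,\ba^t)$, being a finitely generated ideal, is generated by its graded pieces up to a finite degree $N$ — this follows from $\tau = \sum_e \sC_e^{\Delta,\ba^t}(Rc)$ together with Noetherianity, but the argument should be spelled out. Neither of these is genuinely hard, so the proof as a whole is short, which matches the fact that the excerpt cites it as a "baseline" lemma.
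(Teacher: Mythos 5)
Your overall architecture — monotonicity in $t$ plus the description $\tau(R;\Delta,\ba^t)=\sum_{e=0}^{m}\sC^{\Delta,\ba^t}_e(Rc)$ coming from the proof of \autoref{prop.TauExistsForRings}, with the sum finite by Noetherianity — is exactly the intended route, but the pivotal computational step is false. It is not true that $\lceil (t+\delta)(p^e-1)\rceil=\lceil t(p^e-1)\rceil$ whenever $\delta<1/(p^e-1)$: that identity holds only if $t(p^e-1)$ is not an integer, and if $t(p^e-1)\in\bZ$ (for instance $t=0$, any integer $t$, and many rational $t$ — precisely the values one cares about as candidate jumping numbers) then $\lceil (t+\delta)(p^e-1)\rceil=\lceil t(p^e-1)\rceil+1$ for \emph{every} $\delta>0$. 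Hence there need not exist any $\delta>0$ making $\sC^{\Delta,\ba^{t+\delta}}_e=\sC^{\Delta,\ba^{t}}_e$ for all $e\le N$, and your chain $\tau(R;\Delta,\ba^{t+\delta})\supseteq\sum_{e\le N}\sC^{\Delta,\ba^{t+\delta}}_e(Rc)=\sum_{e\le N}\sC^{\Delta,\ba^{t}}_e(Rc)$ breaks at the middle equality: in general you only get $\sC^{\Delta,\ba^{t+\delta}}_e(Rc)\subseteq\sC^{\Delta,\ba^{t}}_e(Rc)$, which points the wrong way.

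The missing idea (and the content of the hint in \autoref{ex.TauDoesntChangeFromAbove}) is to absorb the possible extra single power of $\ba$ rather than trying to make the algebras agree. For $s\in[t,t+1/p^m]$ and $e\le m$ one has the inequality $\lceil s(p^e-1)\rceil\le\lceil t(p^e-1)\rceil+1$, and one replaces the test element $c$ by $bc$ for some $0\neq b\in\ba$; note $bc$ still satisfies the defining property of $c$, since $b\cdot\phi(x)=\phi(b^{p^e}x)$ shows $b\cdot\sC_e(dR)\subseteq\sC_e(dR)$. Then $\sC^{\Delta,\ba^t}_e(R\,bc)\subseteq\sC^{\Delta,\ba^{s}}_e(Rc)$ for all $e\le m$, so $\tau(R;\Delta,\ba^t)=\sum_{e=0}^m\sC^{\Delta,\ba^t}_e(R\,bc)\subseteq\tau(R;\Delta,\ba^s)$, and together with your (correct, though loosely phrased) monotonicity this gives the lemma with $\epsilon=1/p^m$. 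A secondary point you pass over silently: to conclude $\sum_e\sC^{\Delta,\ba^{s}}_e(Rc)\subseteq\tau(R;\Delta,\ba^{s})$ you need $c$ to be a test element for the perturbed exponent $s$ as well, not just for $t$; since $\sC^{\Delta,\ba^{s}}\subseteq\sC^{\Delta,\ba^{t}}$ this does not follow formally from the property at $t$, but it is easily arranged, e.g.\ by choosing $c$ to work for the smaller algebra attached to some fixed exponent $T>t$, whence it works for all $s\le T$ simultaneously.
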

\begin{proof}
The containment $\supseteq$ is obvious.  A substantial hint is given in \autoref{ex.TauDoesntChangeFromAbove}.
\end{proof}

Because of this, we make the following definition:

\begin{definition}[$F$-jumping numbers]
Suppose that $(R, \Delta, \ba^t)$ are as above.  Then a number $t > 0$ is called an \emph{$F$-jumping number} if
\[
\tau(R; \Delta, \ba^t) \neq \tau(R; \Delta, \ba^{t - \varepsilon})
\]
for all $1 \gg \varepsilon > 0$.
\end{definition}

Based on the above Lemma, and a connection between test ideals and multiplier ideals \cite{HaraYoshidaGeneralizationOfTightClosure,TakagiInterpretationOfMultiplierIdeals} it is natural to expect that the set of jumping numbers for the test ideal is discrete. In the case that $X$ is smooth this was shown to be the case in \cite{BlickleMustataSmithDiscretenessAndRationalityOfFThresholds} and \cite{BlickleMustataSmithFThresholdsOfHypersurfaces}. The singular case was obtained in \cite{BlickleSchwedeTakagiZhang}, see also \cite{HaraMonskyFPureThresholdsAndFJumpingExponents,KatzmanLyubeznikZhangOnDiscretenessAndRationality,SchwedeTakagiRationalPairs,TakagiTakahashiDModulesOverRingsWithFFRT,SchwedeTuckerZhangTestIdealsSingleAlteration,AlvarezBoixZarzuelaCartierAlgebrasStanleyReisner}. We will outline here an elementary proof based on the contracting property of $p^{-e}$ linear maps that was investigated in the preceding section. In order to be able to handle not only a single $p^{-e}$-linear map but a whole Cartier algebra, we need to generalize the results on gauge bounds obtained above slightly. To keep things simple we will consider a Cartier algebra of the type
\[
    \sC = \bigoplus R \cdot \phi^n \ba^{\lceil t(p^{ne}-1) \rceil}
\]
where $\ba$ is an ideal in $R$, $t \geq 0$ is a real number and $\phi$ is a single $p^{-e}$-linear operator on $R$. This is essentially the case $\sC = \sC^{\Delta,\ba^t}$ for $(p^e-1)(K_R +\Delta)$ is a Cartier divisor (\ie~the pair $(R,\Delta)$ is $\mathbb{Q}$-Cartier with index not divisible by $p$). We first state a generalization of \autoref{lem.GaugeBoundForImage} to this context.
%\todo{\textbf{mnl:} should we explain this better of exercise it?\\ {\bf Karl: } Maybe exercise it with a hint?}
\begin{lemma}
Let $\sC$ be the Cartier subalgebra of $\sC^R$ generated by $\phi$, a $p^{-e}$-linear map on $R=k[x_1,\ldots,x_n]/I$. Let $M$ be a coherent $\sC$ module and suppose that for all $m \in M$ and $n > 0$ one has
\[
    \delta(\phi^n(m)) \leq  \frac{\delta(m)}{p^{ne}}+\frac{K}{p^e-1}
\]
for some bound $K \geq 0$ as in \autoref{prop.gaugebound}. Then, if $\ba \subseteq R$ is an ideal generated by element of gauge $\leq d$, and $N \subseteq M$ is a $R$-submodule generated by elements of gauge $\leq D$, then $(\sC^{\ba^t}_+)^n(N) \subseteq N$ is generated by elements of gauge $\leq \frac{D}{p^{ne}}+\frac{K}{p^e-1}+td+1$.
\end{lemma}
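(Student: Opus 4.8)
The plan is to reduce the statement for the Cartier algebra $\sC^{\ba^t}$ to the single-operator gauge bound of \autoref{lem.GaugeBoundForImage}, tracking the extra contribution coming from the ideal $\ba$. First I would unwind the definition: $(\sC^{\ba^t}_+)(N)$ is the $R$-submodule of $M$ generated by all elements $\phi(a \cdot m)$ where $m$ runs over generators of $N$ and $a$ runs over (products of) generators of $\ba^{\lceil t(p^e-1)\rceil}$, together with their images under the $R$-action. Since a generator of $\ba^{\lceil t(p^e-1)\rceil}$ is a product of $\lceil t(p^e-1)\rceil$ generators of $\ba$, each of gauge $\leq d$, \autoref{lem.PropertiesOfGauge}(e) (submultiplicativity of $\delta$ under the $\delta_S$-gauge on $R$) gives $\delta(a) \leq \lceil t(p^e-1)\rceil \cdot d \leq (t(p^e-1)+1)d$. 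So the generators of $\ba^{\lceil t(p^e-1)\rceil}\cdot N$ have gauge at most $(t(p^e-1)+1)d + D$, at least for one application.

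The cleanest route is then induction on $n$. For the inductive step I would apply the single-operator bound from \autoref{lem.GaugeBoundForImage} (or directly \autoref{prop.gaugebound}) to the submodule $\ba^{\lceil t(p^e-1)\rceil}\cdot N'$, where $N'$ is the submodule produced by $n-1$ iterations. The point is that $(\sC^{\ba^t}_+)^n(N) = \phi\bigl(\ba^{\lceil t(p^e-1)\rceil} \cdot (\sC^{\ba^t}_+)^{n-1}(N)\bigr) \cdot R$, so if by induction $(\sC^{\ba^t}_+)^{n-1}(N)$ is generated by elements of gauge $\leq D_{n-1}$, then $\ba^{\lceil t(p^e-1)\rceil}\cdot(\sC^{\ba^t}_+)^{n-1}(N)$ is generated by elements of gauge $\leq D_{n-1} + (t(p^e-1)+1)d$, and one application of $\phi$ (using the first inequality of \autoref{prop.gaugebound}, with monomial twists $x^i$ of gauge $\leq p^e-1$ absorbed as in \autoref{lem.GaugeBoundForImage}) produces generators of gauge
\[
D_n \;\leq\; \frac{D_{n-1} + (t(p^e-1)+1)d + (p^e-1)}{p^e} + \frac{K}{p^e}.
\]
Unrolling this recursion with $D_0 = D$ and summing the resulting geometric series in $p^{-e}$ should yield $D_n \leq \frac{D}{p^{ne}} + \frac{K}{p^e-1} + \frac{(t(p^e-1)+1)d + (p^e-1)}{p^e-1}$, and then I would check that the last term is bounded by $td + 1$ after the elementary estimate $\frac{(p^e-1)}{p^e-1} = 1$ and $\frac{(t(p^e-1)+1)d}{p^e-1} \leq td + \frac{d}{p^e-1}$ — here I would need to be slightly more careful, possibly absorbing the small leftover $\frac{d}{p^e-1}$ into the ``$+1$'' or noting that $d$ can be taken so that $\frac{d}{p^e-1} \le$ an integer shift, which is exactly the kind of rounding slack that \autoref{lem.GaugeBoundForImage} already exploits.

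The main obstacle I anticipate is bookkeeping with the ceilings and the constant $+1$: matching the claimed bound $\frac{D}{p^{ne}}+\frac{K}{p^e-1}+td+1$ exactly (rather than with a slightly worse constant) requires being careful that $\lceil t(p^{ne}-1)\rceil$ at each stage telescopes correctly, since the Cartier algebra twist at level $n$ is $\ba^{\lceil t(p^{ne}-1)\rceil}$ rather than an $n$-fold product of $\ba^{\lceil t(p^e-1)\rceil}$ — these differ by a bounded amount but one must verify the difference is absorbed. An alternative, cleaner organization avoiding the recursion entirely is to observe directly that $(\sC^{\ba^t}_+)^n(N) = \phi^n\bigl(\ba^{\lceil t(p^{ne}-1)\rceil} \cdot S_{p^{ne}-1}\cdot N\bigr)\cdot R$ (collecting all the monomial twists from the $n$ compositions of $\phi$ into one factor $S_{p^{ne}-1}$), so its generators have the form $\phi^n(b \cdot x^i \cdot n_j)$ with $\delta(b) \le (t(p^{ne}-1)+1)d$, $\delta(x^i)\le p^{ne}-1$, $\delta(n_j)\le D$; then the single application of the $\phi^n$-bound from \autoref{prop.gaugebound} gives gauge
\[
\le \frac{D + (p^{ne}-1) + (t(p^{ne}-1)+1)d}{p^{ne}} + \frac{K}{p^e-1} \;\le\; \frac{D}{p^{ne}} + \frac{K}{p^e-1} + td + 1,
\]
which is the desired inequality once one checks $\frac{p^{ne}-1}{p^{ne}} < 1$, $\frac{t(p^{ne}-1)d}{p^{ne}} < td$, and $\frac{d}{p^{ne}} < 1$ are jointly absorbed by the single ``$+1$'' (valid for $d\ge 0$ since $\frac{(p^{ne}-1) + td(p^{ne}-1) + d}{p^{ne}}= (1+td)\frac{p^{ne}-1}{p^{ne}} + \frac{d}{p^{ne}} < 1 + td$). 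This non-inductive version is the one I would actually write up.
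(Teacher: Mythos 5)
Your non-inductive write-up is essentially the paper's own proof: the paper likewise presents the generators of $(\sC^{\ba^t}_+)^n(N)$ as $\phi^l(b\cdot a\cdot m)$ with $l\ge n$, $b$ a monomial generator of $R$ over $R^{p^{le}}$ of gauge $\le p^{le}-1$, $a$ a generator of $\ba^{\lceil t(p^{le}-1)\rceil}$, and $m$ an $R$-generator of $N$, and then applies \autoref{prop.gaugebound} once. One small caveat: your parenthetical claim that $(1+td)\tfrac{p^{ne}-1}{p^{ne}}+\tfrac{d}{p^{ne}}<1+td$ for all $d\ge 0$ is false as stated (it needs $d\le 1+td$), so the honest bound carries an extra $d/p^{ne}$, namely $\tfrac{D+d}{p^{ne}}+\tfrac{K}{p^e-1}+td+1$; but this is the same rounding slack the paper itself hides in its unjustified estimate that the generators of $\ba^{\lceil t(p^{ne}-1)\rceil}$ have gauge $\le tdp^{ne}+1$, and it is immaterial for the corollaries, which only need a bound independent of $N$ (and of $t<T$) for $n\gg 0$.
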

\begin{proof}
Note that $R$ has a set of generators over $R^{p^{ne}-1}$ each of gauge $\leq p^{ne}-1$ (the images of the relevant monomials of $k[x_1,\ldots,x_n]$ in $R$ will do fine). Next, it is easy to check that $\ba^{\lceil t(p^{ne}-1) \rceil}$ is generated by element with gauge $\leq tdp^{ne}+1$. Hence, as in the proof of \autoref{lem.GaugeBoundForImage} the ideal $(\sC_+^{\ba^t})^n$ is generated as a left $R$-modules by elements $\psi$ of the form $\psi = \phi^{l}\cdot b \cdot a$ where $l \geq n$ and $b$ (resp. $a$) is one of the just described generators of $R$ over $R^{p^{ln}}$ (resp. of $\ba^{\lceil t(p^{ne}-1) \rceil}$). Ranging over all such $\psi$ and a set of $R$-generators $m$ of $N$ we see that $(\sC^{\ba^t}_+)^n(N)$ is generated by elements of the form $\psi(m)$. Now we just compute
\begin{align*}
    \delta(\psi(m))=\delta(\phi^l\cdot b \cdot a \cdot m) &\leq \frac{\delta(bam)}{p^{le}} + \frac{K}{p^e-1} \leq \frac{\delta(m)}{p^{le}} +\frac{(p^{le}-1)+(tdp^{le}+1)}{p^{le}} + \frac{K}{p^e-1} \\ &\leq \frac{\delta(m)}{p^{ne}-1} +\frac{K}{p^e-1} + td + 1
\end{align*}
This shows the claim.
\end{proof}
\begin{corollary}
With notation as in the Lemma, let $N$ be an $R$-submodule of $M$ such that $\sC^{\ba^t}_+(N)=N$. Then $N$ is generated by elements of gauge $\leq \frac{K}{p^e-1}+td+1$.

For $T \geq 0$ there no infinite chains of $R$-submodules $N$ of $M$ for which $\sC^{\ba^t}_+(N)=N$ for some $t < T$.
\end{corollary}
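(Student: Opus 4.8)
The plan is to extract both statements from the preceding Lemma's gauge estimate, exactly as the single-operator corollaries were extracted from \autoref{lem.GaugeBoundForImage}. First I would prove the first assertion: suppose $N \subseteq M$ is an $R$-submodule with $\sC^{\ba^t}_+(N) = N$. Then for every $n > 0$ we have $(\sC^{\ba^t}_+)^n(N) = N$ by iterating. Since $M$ is coherent, $N$ is finitely generated, hence generated by elements of gauge $\leq D$ for some $D$. Applying the Lemma, for each $n$ the module $N = (\sC^{\ba^t}_+)^n(N)$ is generated by elements of gauge $\leq \frac{D}{p^{ne}} + \frac{K}{p^e-1} + td + 1$. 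Letting $n \to \infty$, the term $\frac{D}{p^{ne}}$ eventually drops below $1$, so $N$ is generated by elements of gauge $\leq \frac{K}{p^e-1} + td + 1$; since gauges are integer-valued (or $-\infty$), this means $N$ is generated by elements of the finite-dimensional $k$-vector space $M_{\lfloor K/(p^e-1) + td + 1\rfloor}$, a subspace that depends only on $t$ (through $td$) and not on $N$.

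For the second assertion, fix $T \geq 0$. For any $t < T$ and any $R$-submodule $N$ with $\sC^{\ba^t}_+(N) = N$, the first part shows $N$ is generated inside $M_{\lfloor K/(p^e-1)+td+1 \rfloor} \subseteq M_{\lfloor K/(p^e-1)+Td+1 \rfloor}$, since $t < T$ and $d \geq 0$ force $td \leq Td$. Thus \emph{every} such $N$ — uniformly over all $t < T$ — is generated by elements of one fixed finite-dimensional $k$-vector space $V := M_{\lceil K/(p^e-1)+Td+1 \rceil}$. An infinite proper ascending or descending chain of distinct such submodules would give an infinite proper chain of $R$-submodules of $M$ each generated inside $V$; but the subspaces $N \cap V$ would then form an infinite proper chain of $k$-subspaces of the finite-dimensional space $V$, which is impossible. (Here one should note that $N$ is determined by the submodule it generates, namely by $N \cap V$, since $N$ is generated by $N \cap V$ — more precisely, if $N \subseteq N'$ are two members of the chain then $N$ generated inside $V$ and $N'$ generated inside $V$ with $N \subsetneq N'$ forces $N \cap V \subsetneq N' \cap V$.)

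The only mildly delicate point — and the one I would be most careful about — is the last parenthetical: one must make sure that two distinct submodules $N \subsetneq N'$ in the chain really are distinguished by their intersections with $V$. This is immediate because each is the $R$-span of its intersection with $V$: if $N \cap V = N' \cap V$ then $N = R\cdot(N\cap V) = R\cdot(N'\cap V) = N'$. So the map $N \mapsto N \cap V$ is injective and order-preserving on the collection of such submodules, and a finite-dimensional vector space has no infinite proper chains of subspaces. This completes the argument; no further computation is needed beyond invoking the displayed gauge bound of the preceding Lemma.
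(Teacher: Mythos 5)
Your proposal is correct and follows essentially the same route as the paper: iterate $\sC^{\ba^t}_+(N)=N$ and apply the Lemma with $n\to\infty$ to get the gauge bound, then observe that all such $N$ (uniformly for $t<T$) are generated inside the fixed finite-dimensional space $M_{\leq K/(p^e-1)+Td+1}$, which forbids infinite proper chains. Your explicit verification that $N\mapsto N\cap V$ is injective and order-preserving merely spells out the step the paper leaves implicit, so no further comment is needed.
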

\begin{proof}
Clearly, $\sC^{\ba^t}_+(N)=N$ implies that $(\sC^{\ba^t}_+)^n(N)=N$ for all $n$ and hence the first claim follows from the preceding Lemma. The second claim follows from the first one since each such $N$ is generated by elements in the finite dimensional vector space $M_{\leq \frac{K}{p^e-1}+Td+1}$, hence there cannot be infinite chains.
\end{proof}
Now, the discreteness of the jumping numbers for the test ideal is an immediate consequence.
\begin{theorem}
For $(R, \Delta, \ba^t)$ as above, the $F$-jumping numbers form a discrete subset of $\mathbb{Q}$.
\end{theorem}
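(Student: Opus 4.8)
The plan is to prove the sharper claim that for every real $T>0$ only finitely many $F$-jumping numbers lie in the interval $[0,T]$; since $T$ is arbitrary this yields discreteness at once (that the numbers are in fact \emph{rational} is a separate matter, handled in the references cited above). I work in the setting of \autoref{def.CartierAlgebraFromTriple} together with the Lemma and Corollary immediately preceding the theorem: $R$ is a normal domain of finite type over the perfect field $k$, $\Delta\ge 0$ is a $\mathbb{Q}$-divisor with $(p^e-1)(K_R+\Delta)$ Cartier for some fixed $e$, so that $\sC^{\Delta,\ba^t}$ has the shape $\bigoplus_{n\ge 0}R\cdot\phi^n\,\ba^{\lceil t(p^{ne}-1)\rceil}$ for a single $p^{-e}$-linear operator $\phi$ on $R$, $\ba\subseteq R$ is a nonzero ideal, and gauges $\delta$ are taken with respect to a fixed presentation of $R$ as a quotient of a polynomial ring. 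The test ideal $\tau(R;\Delta,\ba^t)=\tau(R,\sC^{\Delta,\ba^t})$ exists by \autoref{prop.TauExistsForRings}. (The case where $K_R+\Delta$ is merely $\mathbb{Q}$-Cartier, or not even that, requires the additional arguments of the cited papers and I set it aside.)

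Next I would record three routine facts. \textbf{(Monotonicity.)} For $s\le t$ one has $\ba^{\lceil t(p^e-1)\rceil}\subseteq\ba^{\lceil s(p^e-1)\rceil}$, hence $\sC^{\Delta,\ba^t}\subseteq\sC^{\Delta,\ba^s}$ as graded subrings of $\sC^R$; since $\ba$ becomes the unit ideal at the generic point $\eta$ of $\Spec R$, the two algebras coincide at $\eta$ and therefore define the same $\sigma$ there. As every $\sC^{\Delta,\ba^s}$-submodule is in particular a $\sC^{\Delta,\ba^t}$-submodule, the minimality defining the test ideal gives $\tau(R;\Delta,\ba^t)\subseteq\tau(R;\Delta,\ba^s)$. \textbf{(Right continuity.)} This is \autoref{lem.TauDoesntChangeFromAbove}. \textbf{(Surjectivity.)} For each $t$ one has $\sC^{\Delta,\ba^t}_+\cdot\tau(R;\Delta,\ba^t)=\tau(R;\Delta,\ba^t)$: the left-hand side is a $\sC^{\Delta,\ba^t}$-submodule contained in $\tau(R;\Delta,\ba^t)$ which still agrees with $\sigma$ at $\eta$, so by minimality it cannot be strictly smaller — the same observation used to produce $\tau$ from $\sigma$ after the contracting-property corollaries.

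The one substantive ingredient is the \textbf{uniform gauge bound}. Fix $T>0$ and apply the Corollary immediately preceding the theorem with the constant $T+1$: every $R$-submodule $N\subseteq R$ satisfying $\sC^{\ba^{t}}_+N=N$ for some $t<T+1$ is generated by elements lying in the \emph{single} finite-dimensional $k$-vector space $V:=R_{\le\frac{K}{p^e-1}+(T+1)d+1}$, where $K$ and $d$ are the constants attached to $\phi$ and to a generating set of $\ba$. Combined with \textbf{(Surjectivity)}, every test ideal $\tau(R;\Delta,\ba^t)$ with $t\in[0,T]$ is generated by elements of $V$. The set $\mathcal T:=\{\tau(R;\Delta,\ba^t)\mid 0\le t\le T\}$ is totally ordered by inclusion (by \textbf{Monotonicity}) and admits no infinite strictly monotone chain, since such a chain, intersected with $V$, would give an infinite strictly monotone chain of subspaces of the finite-dimensional space $V$ (if two members of the chain meet $V$ in the same subspace they are equal, being generated by that subspace). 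A totally ordered set with no infinite strictly monotone chain is finite, so $\mathcal T$ is finite.

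It remains to conclude. The function $t\mapsto\tau(R;\Delta,\ba^t)$ on $[0,T]$ is non-increasing with finite image $\mathcal T$; each preimage $\{t\in[0,T]\mid\tau(R;\Delta,\ba^t)=I\}$ for $I\in\mathcal T$ is therefore an interval, so the function is piecewise constant with at most $|\mathcal T|-1$ transition points. By right continuity (\autoref{lem.TauDoesntChangeFromAbove}) every $F$-jumping number in $[0,T]$ is one of these finitely many transition points, so $[0,T]$ contains only finitely many $F$-jumping numbers; as $T$ was arbitrary, the $F$-jumping numbers are discrete. The only genuinely hard input is the contracting (degree-reducing) property of $p^{-e}$-linear maps feeding the uniform gauge bound, and that is already settled in \autoref{prop.gaugebound} and its corollaries; the part needing care in a full write-up is the reduction of $\sC^{\Delta,\ba^t}$ to the single-operator-plus-powers-of-$\ba$ shape and the elementary bookkeeping with monotonicity and right continuity.
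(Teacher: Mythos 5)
Your argument is correct and follows essentially the same route as the paper: restrict to the case where $(p^e-1)(K_R+\Delta)$ is Cartier so that $\sC^{\Delta,\ba^t}$ has the single-operator form, combine monotonicity in $t$ and the fixed-point property $\sC^{\Delta,\ba^t}_+\tau=\tau$ with the uniform gauge-bound corollary to conclude there are only finitely many test ideals for $t$ below any bound $T$, hence only finitely many jumping numbers there. Your extra bookkeeping (right continuity, transition points) is harmless but not needed beyond what the paper's proof already uses.
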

\begin{proof}
In the case that $(p^e-1)(K_X + \Delta)$ is Cartier, the Cartier algebra $\sC^{\Delta}$ is of the form considered above. Since each test ideals $\tau(R,\Delta,\ba^t)$ has the properties $\tau(R,\Delta,\ba^t) \supseteq \tau(R,\Delta,\ba^{t'})$ for $t' \geq t$ and $\sC^{\Delta,\ba^t}_+ \tau(R,\Delta,\ba^t) = \tau(R,\Delta,\ba^t)$, The preceding corollary shows that there are only finitely for $t$ below a fixed bound $T$. Hence the jumping numbers must be discrete. The general case is similar or can be reduced to this case by using the methods of \autoref{subsec.FiniteMaps}, see \cite{SchwedeTuckerTestIdealFiniteMaps,SchwedeTuckerZhangTestIdealsSingleAlteration}.
\end{proof}

\subsection{Exercises}
\begin{exercise}\label{ex.GaugeOnCoeff}
Let $f \in S=k[x_1,\ldots,x_n]$ with $k$ perfect and with gauge $\delta$ corresponding to the generator $1$. Show that, if $\delta(f) \leq d$ and writing uniquely
\[
    f = \sum_{x^i \in S_{p^e-1}} s_i^{p^e} x^i
\]
one has $\delta(s_i) \leq \lfloor d/p^e \rfloor$. (Here we used multi-exponent notation $x^i$ as shorthand for $x_1^{i_1}\cdots x_n^{i_n}$)
\end{exercise}

\begin{exercise}\label{ex.FrobeniusCommutesMatlis}
Use the duality for finite morphisms to prove \autoref{lem.FrobeniusCommutesMatlis}.
\end{exercise}

\begin{exercise}
\label{ex.PropertiesOfGauge}
Prove \autoref{lem.PropertiesOfGauge}.
\end{exercise}

\begin{exercise}
\label{ex.ShrieckCommutesTensorRegular}
Let $R \into S$ be a module-finite and flat ring extension. Show that the natural map
\[
    \Hom_R(S,R) \tensor_R M \to[\phi \tensor n \mapsto (r \mapsto \phi(r)n)] \Hom_R(S,M)
\]
is an isomorphism. Derive from this the statement of \autoref{lem.ShreickCommutesTensor}.
\end{exercise}

\begin{exercise}
Consider the example of a Cartier structure $\kappa$ on the polynomial ring $k[x]$ given by sending $1 \mapsto x^t$ and $x,x^2,\ldots,x^{p-1} \mapsto 0$. Show that $\delta(\kappa(f)) \leq \delta(f)/p + t$ where $\delta$ is the gauge on $k[x]$ induced by the generator $1 \in k[x]$.
\end{exercise}

\begin{starexercise}
\label{ex.ChainOfAlgebraSubmodulesStabilizes}
Prove \autoref{thm.ChainOfAlgebraSubmodulesStabilizes} by using the same strategy as in \autoref{prop.CartierImagesStabilize}.
\end{starexercise}

\begin{exercise}\label{ex.FrobOnLocalCohom}
Let $(R,\bm)$ be complete local of dimension $d$ and denote by $F \colon H^d_\bm(R) \to H^d_\bm(R)$ the natural Frobenius action. Show that any left action $\phi$ on $H^d_\bm(R)$  of the Frobenius is of the form $\phi=r \cdot F$ for some $r \in R$.
\end{exercise}

\begin{exercise}
\label{ex.MUnderline}
Prove \autoref{cor.MUnderline}.
\end{exercise}

\begin{exercise}
\label{ex.CartierAlgebraOfDelta}
With notation as in \autoref{def.CartierAlgebraFromTriple}, show that $\sC^{\Delta}$ and $\sC^{\Delta, \ba^t}$ are Cartier subalgebras of $\sC^{R}$.  For a proof, see \cite[Remark 3.10]{SchwedeTestIdealsInNonQGor}.
\end{exercise}

\begin{exercise}
Suppose that $R$ is a normal local domain and that $\Delta \geq 0$ is a $\bQ$-divisor on $X = \Spec R$ such that $K_X + \Delta$ is $\bQ$-Cartier with index not divisible by $p > 0$.  Prove that $\sC^{\Delta}$ is a finitely generated ring over $\sC_0^{\Delta} = R$.
\vskip 3pt
\emph{Hint: } Show that $\Hom_R(F^e_* R(\lceil (p^e -1)\Delta \rceil), R) \cong F^e_* R$ for some $e > 0$ and then use \autoref{ex.CompositionOfGeneratingMaps}.  For additional discussion see \cite[Section 4]{SchwedeTestIdealsInNonQGor}.
\end{exercise}

\begin{exercise}
\label{ex.TauLocalizes}
Suppose that $R$ is a normal domain, $W \subseteq R$ is a multiplicative system, $\Delta \geq 0$ is a $\bQ$-divisor on $X = \Spec R$, $\ba \subseteq R$ is a nonzero ideal and $t \geq 0$ is a real number. Set $U = \Spec (W^{-1} R) \subseteq \Spec R = X$.  Prove that
\[
W^{-1} \tau(R; \Delta, \ba^t) = \tau(W^{-1}R; \Delta|_U, (W^{-1}\ba)^t).
\]
\end{exercise}

\begin{starexercise}
\label{ex.TauDoesntChangeFromAbove}
Prove \autoref{lem.TauDoesntChangeFromAbove}.
\vskip 3pt
\emph{Hint: } Use the description of $\tau(R; \Delta, \ba^t)$ from the proof of \autoref{prop.TauExistsForRings}.  Also use the fact that $R$ is Noetherian to see that the sum from \autoref{prop.TauExistsForRings} is a finite sum ($e = 0$ to $m$).  Now notice that if $c$ works in that sum, then so does $bc$ where $0 \neq b \in \ba$.  Set $\epsilon = {1 \over p^m}$.
\end{starexercise}

\begin{starexercise}
Suppose that $R$ is a normal ring and that $X = \Spec R$.  Consider the anticanonical ring
\[
K := \bigoplus_{n \geq 0} \O_X(-nK_X).
\]
Set $K_{F} := \bigoplus_{e \geq 0} \O_X( (1-p^e)K_X)$ to be the summand of $K$ made up of terms of degree $p^e - 1$ for some $e \geq 0$.  This is not a subring of $K$.  However, define a non-commutative multiplication on $K_F$ as follows.  If $\alpha \in \O_X( (1-p^e)K_X)$ and $\beta \in \O_X( (1-p^d)K_X)$ then define $\alpha \star \beta = \alpha^{p^d} \beta \in \O_X( ((1-p^e)p^d + p^e)K_X) = \O_X( (1-p^{e+d}) K_X)$.

With this ring operation, prove that $K_F$ is isomorphic to $\sC^R$.
\end{starexercise}

\appendix \section{Reflexification of sheaves and Weil divisors}\label{sec.reflex}

In this section, we briefly recall basic properties of reflexive sheaves and Weil divisors on normal varieties.  This material is all ``well known'' but there isn't a good source for it in the literature (we note that it is certainly assumed in \cite{KollarMori}).  We note that substantial generalizations of all this material (and complete proofs) can be found in \cite{HartshorneGeneralizedDivisorsOnGorensteinSchemes}.
As before, all schemes are of finite type over a field (or localizations or completions of such schemes).  We assume the reader is familiar with the basic notion of depth and $\textnormal{S}_n$ (Serre's $n$th condition) and the connections with local cohomology / cohomology with support.  See for example \cite[Chapter III, Exercises in Section 3]{Hartshorne}, \cite{BrunsHerzog} or \cite{HartshorneLocalCohomology}.

\subsection{Reflexive sheaves}

Given a coherent sheaf $\sF$ on any scheme $X$, there is the following (dualizing) operation:  $\sF^{\vee} = \sHom_{\O_X}(\sF, \O_X)$.  Furthermore there is a natural map from $\sF$ to the double-dual, $\sF \rightarrow (\sF^{\vee})^{\vee}$.

\begin{definition}
If this map is an isomorphism, we say that \emph{$\sF$ is reflexive} (or more specifically that it is $\O_X$-reflexive).
\end{definition}

Note that if a sheaf is reflexive, it is also coherent (by definition).  If $X = \Spec R$ and $M$ is a coherent $R$-module, we say that $M$ is reflexive if the corresponding sheaf is reflexive (equivalently, if $M \to \Hom_R(\Hom_R(M, R), R)$ is an isomorphism).

Notice first that any locally free sheaf is reflexive.  But there are other reflexive sheaves as well.  If one is careful, one can check that $\langle x, z\rangle \subseteq k[x,y,z]/(xy - z^2)$ corresponds to a reflexive ideal sheaf after taking Spec, \autoref{ex.ReflexiveSheafExample}.  There are a few basic facts about reflexive sheaves that should be mentioned.  We now limit ourselves to varieties (\ie integral schemes) which makes dealing with torsion much easier.  One can do analogues of the following in more general situations  (say for reduced schemes), but the statements become much more involved.

\begin{lemma}
\label{LemmaFDualIsTorsionFree}
Suppose that $X$ is a variety and suppose that $\sF$ is a coherent sheaf on $X$.  Then $\sF^{\vee}$ is torsion-free.  (That is, if $U \subset X$ is open and $0 \neq r \in \O_X(U)$ and $0 \neq z \in \sF^{\vee}(U)$, then $rz \neq 0$).  In particular, a reflexive sheaf is torsion-free.
\end{lemma}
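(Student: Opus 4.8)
The plan is to reduce immediately to the affine case over a domain and then run an essentially one-line argument. Since $X$ is a variety (integral), torsion-freeness of a coherent sheaf can be checked locally: a local section is torsion exactly when it dies at the generic point, so it suffices to verify the claim on an affine cover. Thus I would take $X = \Spec R$ with $R$ an integral domain and $\sF = \tld M$ for a finitely generated $R$-module $M$; then $\sF^{\vee}$ restricted to this chart is the sheaf associated to $\Hom_R(M,R)$, and it is enough to show $\Hom_R(M,R)$ is a torsion-free $R$-module.

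First I would take $0 \neq r \in R$ and $\varphi \in \Hom_R(M,R)$ with $r\varphi = 0$, and evaluate at an arbitrary $m \in M$: this gives $r\cdot\varphi(m) = 0$ in $R$, and since $R$ is a domain with $r \neq 0$ we conclude $\varphi(m) = 0$. As $m$ was arbitrary, $\varphi = 0$. Hence $\Hom_R(M,R)$ has no nonzero torsion, and gluing over the cover shows $\sF^{\vee}$ is torsion-free. For the final assertion, if $\sF$ is reflexive then $\sF \cong (\sF^{\vee})^{\vee}$ is the dual of the coherent sheaf $\sF^{\vee}$, hence torsion-free by what was just proved.

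The only point that needs any care — and it is genuinely minor — is the bookkeeping connecting the sheaf-level statement with the module-level statement: one should record that for an integral scheme ``torsion section'' is a local (indeed stalk-at-the-generic-point) condition, and that $\sHom_{\O_X}(\sF,\O_X)$ restricts on an affine open $\Spec R$ to the sheafification of $\Hom_R(M,R)$. I do not expect a serious obstacle here; the substance of the lemma is the trivial observation that a homomorphism into a domain which is annihilated by a nonzero scalar must be zero.
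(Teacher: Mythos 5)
Your proof is correct: the reduction to an affine chart $\Spec R$ of the integral variety is legitimate (restriction maps of $\O_X$ are injective and $\sHom_{\O_X}(\sF,\O_X)$ restricts to the sheaf associated to $\Hom_R(M,R)$ since $\sF$ is coherent), and the one-line domain argument, together with writing a reflexive $\sF$ as $(\sF^{\vee})^{\vee}$, gives the full statement. The paper itself states this lemma without proof in its appendix of standard facts, and your argument is exactly the expected one.
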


Note that a torsion-free sheaf is necessarily $\textnormal{S}_1$ (any nonzero element makes up a rather short regular sequence).

\begin{lemma}
\label{LemmaTorsionFreeMeansNaturalMapInjects}
Suppose that $X$ is a variety and that $\sF$ is a torsion-free coherent sheaf.  Then the natural map $\alpha : \sF \rightarrow \sF^{\vee \vee}$ is injective.
\end{lemma}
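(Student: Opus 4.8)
The plan is to reduce the statement to a question about modules over a local domain and then exploit the embedding of a finitely generated torsion-free module into a vector space over the fraction field.

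First I would use that injectivity of a morphism of $\O_X$-modules may be checked on stalks, together with the fact that for a coherent sheaf $\sF$ on a Noetherian scheme one has $(\sF^{\vee})_x \cong \Hom_{\O_{X,x}}(\sF_x, \O_{X,x})$, and hence $(\sF^{\vee\vee})_x \cong (\sF_x)^{\vee\vee}$ compatibly with $\alpha$. Since $X$ is a variety, each local ring $R := \O_{X,x}$ is a domain, and the stalk $M := \sF_x$ is a finitely generated $R$-module which is again torsion-free (torsion-freeness is inherited under localization). So it suffices to prove: if $R$ is a domain, $M$ is a finitely generated torsion-free $R$-module, and $m \in M$ satisfies $\varphi(m) = 0$ for every $\varphi \in \Hom_R(M, R)$, then $m = 0$.

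Next I would construct, for any nonzero $m \in M$, an $R$-linear functional that does not kill it. Let $K = \Frac(R)$. Torsion-freeness gives an injection $M \hookrightarrow M \otimes_R K$, and since $M$ is finitely generated the target is a finite-dimensional $K$-vector space; choosing a basis yields an embedding $\iota \colon M \hookrightarrow K^n$. If $m \neq 0$ then $\iota(m) \neq 0$, so some coordinate projection $\pi_i \colon K^n \to K$ satisfies $\pi_i(\iota(m)) \neq 0$. The composite $\pi_i \circ \iota \colon M \to K$ is $R$-linear but not $R$-valued; however, clearing the (finitely many) denominators appearing in the images of a generating set of $M$, there is a single nonzero $d \in R$ with $d \cdot (\pi_i \circ \iota)(M) \subseteq R$, so $\varphi := d \cdot (\pi_i \circ \iota) \in \Hom_R(M, R)$. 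As $R$ is a domain and $d, \pi_i(\iota(m))$ are both nonzero, $\varphi(m) = d \cdot \pi_i(\iota(m)) \neq 0$, contradicting the hypothesis. Hence $m = 0$, so $\alpha$ is injective on every stalk and therefore globally.

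There is no real obstacle here; the only points needing a sentence of care are the compatibility of $\sHom$ (and hence the double dual) with passage to stalks for coherent sheaves on a Noetherian scheme, and the descent of torsion-freeness to the stalks over the domains $\O_{X,x}$ — both standard. Alternatively, one could run the argument directly on an affine open $\Spec R$ with $R$ a (not necessarily local) domain, which avoids localization and is arguably cleanest.
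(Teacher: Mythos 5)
Your argument is correct. The only things to check are the two points you already flag: that $\sHom$ of a coherent sheaf commutes with passage to stalks (so $\alpha$ localizes to the natural map $M \to \Hom_R(\Hom_R(M,R),R)$ with $R = \O_{X,x}$ a domain), and that torsion-freeness of $\sF$ passes to the stalks — both are routine. The local step is the standard one: a nonzero $m$ in a finitely generated torsion-free module survives in $M \otimes_R \Frac(R)$, a coordinate functional separates it, and clearing denominators on a finite generating set produces an honest element of $\Hom_R(M,R)$ not vanishing on $m$. Note that the paper itself states this lemma without proof, as part of the appendix's review of reflexive sheaves (referring the reader to the literature on generalized divisors and stable reflexive sheaves), so there is no in-paper argument to compare against; yours is the expected proof, and your remark that one could equally work on an affine chart $\Spec R$ with $R$ a domain, avoiding stalks altogether, is also fine.
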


\begin{lemma} \cite[Proposition 1.1]{HartshorneStableReflexiveSheaves}
A coherent sheaf $\sF$ on a quasi-projective variety $X$ is reflexive if and only if it can be included in an exact sequence
\[
0 \rightarrow \sF \rightarrow \sE \rightarrow \sG \rightarrow 0
\]
where $\sE$ is locally free and $\sG$ is torsion-free.
\end{lemma}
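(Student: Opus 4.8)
The plan is to prove the two implications separately; only the ``only if'' direction will use quasi-projectivity (to find a locally free sheaf surjecting onto the dual), and for the ``if'' direction I will only need that $X$ is integral. Throughout I write $\sF^\vee=\sHom_{\O_X}(\sF,\O_X)$ and use Lemma~\ref{LemmaFDualIsTorsionFree} (the dual of a coherent sheaf is torsion-free, so reflexive sheaves are torsion-free) together with Lemma~\ref{LemmaTorsionFreeMeansNaturalMapInjects} (the biduality map is injective on torsion-free sheaves).

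\textbf{The ``only if'' direction.} Assuming $\sF$ reflexive, I would first invoke quasi-projectivity to pick a surjection $q\colon\sE_1\twoheadrightarrow\sF^\vee$ with $\sE_1$ locally free (a finite direct sum of sufficiently negative twists of an ample line bundle). Setting $\sK=\ker q$ and applying $\sHom_{\O_X}(-,\O_X)$ to $0\to\sK\to\sE_1\to\sF^\vee\to0$, the vanishing $\mathcal{E}xt^1_{\O_X}(\sE_1,\O_X)=0$ (local freeness of $\sE_1$) yields an exact sequence
\[
0\to\sF^{\vee\vee}\to\sE_1^{\vee}\to\sK^{\vee}\to\mathcal{E}xt^1_{\O_X}(\sF^\vee,\O_X)\to0 .
\]
I would then take $\sE=\sE_1^{\vee}$ (locally free) and $\sG=\coker(\sF^{\vee\vee}\to\sE_1^\vee)$. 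Since $\sF$ is reflexive, $\sF^{\vee\vee}=\sF$, and the four-term sequence identifies $\sG$ with the image of $\sE_1^\vee\to\sK^\vee$; hence $\sG$ is a subsheaf of $\sK^\vee$, which is torsion-free by Lemma~\ref{LemmaFDualIsTorsionFree}, so $\sG$ is torsion-free. Thus $0\to\sF\to\sE\to\sG\to0$ has the required shape.

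\textbf{The ``if'' direction.} Given $0\to\sF\to\sE\to\sG\to0$ with $\sE$ locally free and $\sG$ torsion-free, I note that $\sF$, being a subsheaf of the torsion-free sheaf $\sE$, is torsion-free, so Lemma~\ref{LemmaTorsionFreeMeansNaturalMapInjects} makes $\iota\colon\sF\hookrightarrow\sF^{\vee\vee}$ injective; it remains to prove surjectivity. Applying $\sHom_{\O_X}(-,\O_X)$ twice to $\sF\hookrightarrow\sE$ produces a morphism $\sF^{\vee\vee}\to\sE^{\vee\vee}=\sE$, and naturality of biduality forces the composite $\sF\to\sF^{\vee\vee}\to\sE$ to be the original inclusion. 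Now $\sF^{\vee\vee}$ is a dual, hence torsion-free, and the morphism $\sF^{\vee\vee}\to\sE$ is an isomorphism at the generic point $\eta$ of $X$ (there it is just $\sF_\eta\hookrightarrow\sE_\eta$, as biduality is trivial over the field $K(X)$); a morphism out of a torsion-free sheaf on an integral scheme that is injective at $\eta$ is injective, so $\sF^{\vee\vee}\hookrightarrow\sE$ compatibly with $\sF\hookrightarrow\sE$. Hence $\sF^{\vee\vee}/\sF$ is a subsheaf of $\sE/\sF=\sG$; since $(\sF^{\vee\vee}/\sF)_\eta=0$, this quotient is supported on a proper closed subset, i.e.\ is a torsion sheaf, and a torsion subsheaf of the torsion-free sheaf $\sG$ is zero. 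Therefore $\sF=\sF^{\vee\vee}$ is reflexive.

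\textbf{What remains and the delicate point.} The steps left to fill are all routine: the existence of $\sE_1\twoheadrightarrow\sF^\vee$ on a quasi-projective variety, the bookkeeping with the long exact $\mathcal{E}xt$ sequence, and the elementary ``injective at $\eta$ $\Rightarrow$ injective'' principle for torsion-free sheaves. I anticipate no serious obstacle; the single place that wants care is the naturality argument in the ``if'' direction, which is exactly what allows $\sF^{\vee\vee}$ to be realized inside $\sE$ and hence $\sF^{\vee\vee}/\sF$ to be exhibited as a torsion subsheaf of the torsion-free sheaf $\sG$.
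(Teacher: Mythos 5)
Your proof is correct: the paper itself gives no argument for this lemma (it simply cites Hartshorne's Proposition 1.1), and your two directions reproduce the standard cited argument — dualize a locally free presentation of $\sF^\vee$ for the forward implication, and trap the torsion quotient $\sF^{\vee\vee}/\sF$ inside the torsion-free sheaf $\sG$ for the converse. The only (harmless) variant is that in the forward direction you exhibit the cokernel inside $\sK^\vee$, which is torsion-free by \autoref{LemmaFDualIsTorsionFree}, rather than inside a second locally free term of a presentation, and, as you correctly note, the converse direction needs only that $X$ is integral.
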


We note that the $\O_X$-dual of any coherent sheaf is always reflexive.

\begin{theorem}
If $\sF$ is a coherent sheaf on a variety $X$, then $\sF^{\vee}$ is reflexive.  More generally, if $\sF$ is coherent and $\sG$ is reflexive, then $\sHom_{\O_X}(\sF, \sG)$ is reflexive.
\end{theorem}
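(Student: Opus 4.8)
The statement to prove: if $\sF$ is coherent on a variety $X$, then $\sF^{\vee}$ is reflexive; more generally, if $\sF$ is coherent and $\sG$ is reflexive, then $\sHom_{\O_X}(\sF, \sG)$ is reflexive. The first assertion is the case $\sG = \O_X$ of the second, so I would just prove the general statement.

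The plan is to work locally, so assume $X = \Spec R$ with $R$ a domain and replace sheaves by finitely generated modules: we must show $\Hom_R(M, N)$ is reflexive when $N$ is reflexive. First I would choose a finite presentation $R^a \to R^b \to M \to 0$ and apply the left-exact functor $\Hom_R(-, N)$ to obtain an exact sequence
\[
0 \to \Hom_R(M, N) \to N^b \to N^a.
\]
Thus $\Hom_R(M,N)$ is the kernel of a map $N^b \to N^a$ between reflexive modules. So the whole problem reduces to the lemma: \emph{the kernel of any homomorphism between reflexive modules is reflexive.} Let $K = \ker(\phi\colon P \to Q)$ with $P, Q$ reflexive. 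Since $Q$ is reflexive it is torsion-free (\autoref{LemmaFDualIsTorsionFree}), hence the image $\phi(P) \subseteq Q$ is torsion-free, and $K$ fits in $0 \to K \to P \to \phi(P) \to 0$ with $P$ reflexive (a fortiori, one can map $P$ into a locally free $\sE$) and $\phi(P)$ torsion-free. Chaining with a surjection from a free module onto a torsion-free sheaf, I would produce an exact sequence $0 \to K \to \sE \to \sG \to 0$ with $\sE$ locally free and $\sG$ torsion-free, and then invoke the criterion \cite[Proposition 1.1]{HartshorneStableReflexiveSheaves} already quoted in the excerpt to conclude $K$ is reflexive. (Concretely: embed $P \hookrightarrow \sE_0$ locally free with torsion-free cokernel; then $\sE_0 / K$ contains $P/K \cong \phi(P)$ as a submodule, and $(\sE_0/K)\big/(\phi(P))$ is a quotient of $\sE_0/P$... so one needs a small diagram chase to exhibit $\sE_0/K$ as torsion-free, which follows because it injects into $\sE_0/K \hookrightarrow$ something locally free once we use that an extension of torsion-free by torsion-free is torsion-free.)

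An alternative, perhaps cleaner, route avoids the Hartshorne criterion: show directly that $\Hom_R(M,N)$ is reflexive by using that $N \hookrightarrow N^{\vee\vee}$ and, since $N$ is reflexive, $N \cong (N^{\vee})^{\vee} = \Hom_R(N^\vee, R)$; then
\[
\Hom_R(M, N) \cong \Hom_R\big(M, \Hom_R(N^\vee, R)\big) \cong \Hom_R(M \otimes_R N^\vee, R) = (M \otimes_R N^\vee)^{\vee},
\]
using the tensor-hom adjunction. But we already know (or should record as a sub-lemma) that the $R$-dual of \emph{any} coherent module is reflexive — this is exactly the special case $\sG = \O_X$ of our theorem, which in turn reduces by the presentation argument to: the kernel of a map between finite free modules is reflexive. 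So the two approaches meet at the same core fact. I would present the tensor-hom version as the main line and prove the base case ``$(\text{coherent})^\vee$ is reflexive'' via the presentation $R^a \to R^b \to M \to 0$, dualize to get $0 \to M^\vee \to R^b \to R^a$, exhibit $\im(R^b \to R^a)$ as torsion-free (submodule of a free module, using that $R$ is a domain), and apply \cite[Proposition 1.1]{HartshorneStableReflexiveSheaves} with $\sE = R^b$.

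The main obstacle is really just bookkeeping: verifying that the relevant cokernels are torsion-free so that the Hartshorne criterion applies, and checking the adjunction isomorphism $\Hom(M,\Hom(N^\vee,R)) \cong \Hom(M \otimes N^\vee, R)$ is natural enough to glue over an affine cover — but both are standard and local, and naturality of the double-dual map makes the gluing automatic. No deep input is needed beyond the three lemmas already stated in this subsection (\autoref{LemmaFDualIsTorsionFree}, \autoref{LemmaTorsionFreeMeansNaturalMapInjects}, and the Hartshorne criterion) together with the observation that over a domain a submodule of a free module is torsion-free. I would end by remarking that the hypothesis ``$X$ is a variety'' (integral) is what lets us speak of torsion-freeness cleanly, and that the statement globalizes since reflexivity is a local condition and the natural map $\sHom(\sF,\sG) \to \sHom(\sF,\sG)^{\vee\vee}$ is defined globally.
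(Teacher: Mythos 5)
Your argument is correct. Note that the paper itself gives no proof of this statement: it appears in the appendix as ``well known,'' with the reader pointed to \cite{HartshorneGeneralizedDivisorsOnGorensteinSchemes} for complete proofs, so there is no in-paper argument to compare against. Both of your routes are sound and use only the lemmas already quoted in that subsection. The first reduces, via a finite presentation $R^a \to R^b \to M \to 0$ and left-exactness of $\Hom_R(-,N)$, to the fact that the kernel $K$ of a map $P \to Q$ of reflexive (indeed, of reflexive to torsion-free) modules is reflexive; your diagram chase is right, since embedding $P \hookrightarrow \sE_0$ with torsion-free cokernel exhibits $\sE_0/K$ as an extension of the torsion-free $\sE_0/P$ by $\phi(P) \subseteq Q$, and an extension of torsion-free by torsion-free is torsion-free, so \cite[Proposition 1.1]{HartshorneStableReflexiveSheaves} applies. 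The second route, $\sHom_{\O_X}(\sF,\sG) \cong \big(\sF \otimes \sG^{\vee}\big)^{\vee}$ via tensor-hom adjunction and reflexivity of $\sG$, is cleaner in that it reduces everything to the single base case that the dual of a coherent sheaf is reflexive, which your dualized-presentation argument handles correctly (the image of $R^b \to R^a$ is torsion-free because $X$ is integral). The only points worth flagging are minor: the quoted Hartshorne criterion is stated for quasi-projective varieties, so one should say explicitly that reflexivity is local and apply it on affine charts (as you do), and the sentence ``chaining with a surjection from a free module onto a torsion-free sheaf'' in your first route is garbled, though the parenthetical that follows repairs it.
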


We now come to a very useful criterion for checking whether a sheaf is reflexive.

\begin{theorem} \cite[Theorem 1.9]{HartshorneGeneralizedDivisorsOnGorensteinSchemes}
Suppose that $X$ is a normal (not necessarily quasi-projective) variety and that $\sF$ is a coherent sheaf on $X$ such that $\Supp(\sF) = X$.   Then $\sF$ is $\textnormal{S}_2$  if and only if $\sF$ is reflexive.
\end{theorem}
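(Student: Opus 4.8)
Both reflexivity and Serre's condition $\textnormal{S}_2$ are tested on an affine open cover, so the plan is to reduce at once to $X=\Spec R$ with $R$ a normal domain and $\sF=\tld M$ for a finitely generated $R$-module $M$ with $\Supp M=\Spec R$; after this reduction $X$ is quasi-projective, which will matter in one place. I would run the whole argument through the depth-versus-local-cohomology dictionary recalled in the preliminaries, in the form: a coherent sheaf $\sF$ of full support on $X$ is $\textnormal{S}_2$ if and only if $\mathcal{H}^0_Z(\sF)=0$ for every closed $Z$ with $\codim(Z,X)\geq 1$ (equivalently, $\sF$ is torsion-free) and $\mathcal{H}^1_Z(\sF)=0$ for every closed $Z$ with $\codim(Z,X)\geq 2$. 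Granting this, each direction becomes a diagram chase through a single short exact sequence.

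For ``reflexive $\Rightarrow\textnormal{S}_2$'', I would invoke the structure criterion for reflexive sheaves recalled above (\cite[Proposition 1.1]{HartshorneStableReflexiveSheaves}, applicable now that $X$ is quasi-projective) to write $0\to\sF\to\sE\to\sG\to 0$ with $\sE$ locally free and $\sG$ torsion-free. Since $X$ is normal it satisfies $\textnormal{S}_2$ by Serre's criterion, hence so does the locally free sheaf $\sE$, while the torsion-free $\sG$ satisfies $\textnormal{S}_1$. Feeding the sequence into the long exact sequence of $\mathcal{H}^{\bullet}_Z$ for a closed $Z$ with $\codim(Z,X)\geq 2$, the terms $\mathcal{H}^0_Z(\sE)$ and $\mathcal{H}^0_Z(\sG)$ vanish (torsion-freeness, $\codim Z\geq 1$) and $\mathcal{H}^1_Z(\sE)$ vanishes ($\sE$ is $\textnormal{S}_2$, $\codim Z\geq 2$), so $\mathcal{H}^0_Z(\sF)=\mathcal{H}^1_Z(\sF)=0$. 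Together with the fact that $\sF\hookrightarrow\sE$ forces $\sF$ torsion-free, hence $\mathcal{H}^0_Z(\sF)=0$ already for $\codim Z\geq 1$, this is exactly the $\textnormal{S}_2$ condition.

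For ``$\textnormal{S}_2\Rightarrow$ reflexive'', note first that an $\textnormal{S}_2$ sheaf is $\textnormal{S}_1$, hence torsion-free, so by \autoref{LemmaTorsionFreeMeansNaturalMapInjects} the biduality map $\alpha\colon\sF\to\sF^{\vee\vee}$ is injective, and $\sF^{\vee\vee}$ is torsion-free by \autoref{LemmaFDualIsTorsionFree}. I would then check that $\alpha$ is an isomorphism at every point of codimension $\leq 1$: at the generic point it is biduality of a finite-dimensional $\sK(X)$-vector space, and at a codimension-one point $\eta$ the local ring $\O_{X,\eta}$ is a DVR (a normal variety is $\textnormal{R}_1$), over which the torsion-free module $\sF_{\eta}$ is free and so reflexive. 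Hence $\sQ\colonequals\coker(\alpha)$ is supported on a closed set $Z$ with $\codim(Z,X)\geq 2$. Applying $\mathcal{H}^{\bullet}_Z$ to $0\to\sF\xrightarrow{\alpha}\sF^{\vee\vee}\to\sQ\to 0$ produces the exact piece $\mathcal{H}^0_Z(\sF^{\vee\vee})\to\mathcal{H}^0_Z(\sQ)\to\mathcal{H}^1_Z(\sF)$; the left term is $0$ ($\sF^{\vee\vee}$ torsion-free, $\codim Z\geq 1$) and the right term is $0$ ($\sF$ is $\textnormal{S}_2$, $\codim Z\geq 2$), so $\mathcal{H}^0_Z(\sQ)=0$. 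But $\sQ$ is supported on $Z$, whence $\mathcal{H}^0_Z(\sQ)=\sQ$; thus $\sQ=0$, $\alpha$ is an isomorphism, and $\sF$ is reflexive.

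The hard part is not any individual step but pinning down and uniformly applying the local-cohomology reformulation of $\textnormal{S}_2$: translating the depth condition on all stalks into the clean vanishing statements $\mathcal{H}^0_Z=0$ in codimension $\geq 1$ and $\mathcal{H}^1_Z=0$ in codimension $\geq 2$, quantified over all closed $Z$, and then keeping the codimension bookkeeping straight as these are fed through the long exact sequences. A lesser point deserving an explicit word is the reduction to the affine (hence quasi-projective) case, so that the quoted structure sequence for reflexive sheaves is legitimately available even though $X$ is assumed only normal.
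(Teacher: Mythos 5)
Your argument is correct, but note that the paper itself offers no proof of this statement at all: it is black-boxed with the citation to Hartshorne's \emph{Generalized divisors on Gorenstein schemes}, Theorem 1.9. So there is no ``paper proof'' to match; what you have written is a legitimate standalone argument, and it is essentially the standard one from the cited sources. Your two key moves are sound: (i) the reduction to an affine chart is harmless since both reflexivity and $\textnormal{S}_2$ are stalk-local and full support is preserved, and it legitimately makes the quasi-projectivity hypothesis of the quoted structure sequence $0 \to \sF \to \sE \to \sG \to 0$ available for the ``reflexive $\Rightarrow \textnormal{S}_2$'' direction; (ii) the translation of $\textnormal{S}_2$ (for a full-support sheaf) into vanishing of $\mathcal{H}^0_Z$ in codimension $\geq 1$ and $\mathcal{H}^1_Z$ in codimension $\geq 2$ is the correct depth--local-cohomology dictionary, and the codimension bookkeeping in both long exact sequences checks out, including the use of normality ($\textnormal{S}_2$ for $\O_X$, so for $\sE$; $\textnormal{R}_1$ to see $\alpha$ is an isomorphism at codimension-one points) and the observation $\mathcal{H}^0_Z(\sQ) = \sQ$ for $Z = \Supp \sQ$. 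The main differences from the cited treatment are cosmetic: Hartshorne works in greater generality (schemes satisfying weaker hypotheses than normality, generalized divisors), whereas your proof exploits normality exactly twice, via Serre's criterion and the DVR property in codimension one; this buys a shorter, more self-contained argument at the cost of generality the survey does not need.
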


The key reason why the previous criterion is so useful is the Hartog's phenomenon associated with $\textnormal{S}_2$ sheaves.

\begin{corollary}
Let $X$ be a integral, normal (not necessarily quasi-projective) variety and suppose that $\sF$ is a reflexive sheaf on $X$ (defined as above).  Let $Y \subset X$ be a closed subset of codimension $\geq 2$ and set $U = X \backslash Y$.  Then if $i : U \rightarrow X$ is the natural inclusion, then the natural map $\sF \rightarrow i_* \sF|_U$ is an isomorphism.
\end{corollary}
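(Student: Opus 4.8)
The plan is to reduce the statement to the vanishing of two local cohomology groups and then read that vanishing off from the $\textnormal{S}_2$-criterion above. Since the map $\sF \rightarrow i_*(\sF|_U)$ of sheaves on $X$ is an isomorphism precisely when it induces an isomorphism on sections over every affine open, and since restricting to an affine open $W$ only replaces $Y$ by $Y\cap W$ (still of codimension $\geq 2$) and $\sF$ by $\sF|_W$ (still reflexive), I may assume $X=\Spec R$ with $R$ a normal domain of finite type over a field (or a localization thereof), $\sF=\widetilde M$ for a finitely generated reflexive $R$-module $M$, and $Y=V(J)$ with $\height J\geq 2$. The case $M=0$ is trivial; otherwise $M$, being reflexive, is torsion-free, so $\Supp\widetilde M = X$. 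Writing $U=X\setminus V(J)$, the exact sequence of local cohomology for the pair $(V(J),X)$ reads
\[
0 \to H^0_J(M) \to M \to \Gamma(U,\widetilde M) \to H^1_J(M) \to H^1(X,\widetilde M),
\]
and $H^1(X,\widetilde M)=0$ because $X$ is affine; hence the map $M=\Gamma(X,\sF)\to\Gamma(U,\sF)$ is an isomorphism as soon as $H^0_J(M)=H^1_J(M)=0$, i.e. as soon as $\operatorname{grade}(J,M)\geq 2$.

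To obtain this grade bound I would argue as follows. By hypothesis $\sF$ is reflexive with full support, so by the $\textnormal{S}_2$-criterion above $M$ satisfies Serre's condition $\textnormal{S}_2$: $\operatorname{depth} M_{\mathfrak p}\geq\min\{2,\dim R_{\mathfrak p}\}$ for every $\mathfrak p\in\Spec R$. Let $\mathfrak p\in V(J)$. Then $\height\mathfrak p\geq\height J\geq 2$, and since $R$ is a variety over a field (so catenary, with codimension equal to height) we have $\dim R_{\mathfrak p}=\height\mathfrak p\geq 2$; hence $\operatorname{depth} M_{\mathfrak p}\geq 2$. Now use the standard fact that for a finitely generated module with $\Supp M=X$ one has $\operatorname{grade}(J,M)\geq 2$ as soon as $\operatorname{depth} M_{\mathfrak p}\geq 2$ for all $\mathfrak p\in V(J)$, together with $H^j_J(M)=0$ for all $j<\operatorname{grade}(J,M)$: these give $H^0_J(M)=H^1_J(M)=0$, and the displayed sequence yields $\sF\cong i_*(\sF|_U)$.

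I expect the one point requiring care to be the bookkeeping that links ``$\codim(Y,X)\geq 2$'' to ``$\dim R_{\mathfrak p}\geq 2$ for $\mathfrak p\in V(J)$'', since it is exactly this that upgrades the $\textnormal{S}_2$-inequality from $\operatorname{depth}\geq 1$ to the needed $\operatorname{depth}\geq 2$. For quasi-projective $X$ this is the usual equality of codimension and height, and in the stated generality it rests on catenarity of localizations of finite-type $k$-algebras; I would state this explicitly rather than leave it implicit. The remaining ingredients — the local-cohomology exact sequence identifying the kernel and cokernel of $M\to\Gamma(U,\widetilde M)$ with $H^0_J(M)$ and $H^1_J(M)$, the vanishing of $H^j_J(M)$ below the grade, and the grade–depth dictionary — are entirely standard and may be cited from \cite{BrunsHerzog} or \cite{HartshorneLocalCohomology}.
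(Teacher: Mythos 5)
Your proof is correct and is exactly the route the paper intends: the corollary is stated as an immediate consequence of the $\textnormal{S}_2$-criterion (the ``Hartogs phenomenon''), and your reduction to the affine case followed by the local cohomology sequence, the identification $\operatorname{grade}(J,M)=\min_{\mathfrak p\in V(J)}\operatorname{depth}M_{\mathfrak p}\geq 2$, and the vanishing of $H^0_J(M)$ and $H^1_J(M)$ is the standard way to make that precise. Your handling of the side points (full support of a nonzero reflexive sheaf on an integral scheme, codimension being preserved on affine opens) is also fine.
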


\begin{corollary}
\label{cor.ExtensionOfReflexiveIsReflexive}
 Suppose that $\sF$ is a reflexive sheaf on $U \subseteq X$ (where $X$ is as above) such that $X- U$ is codimension two.  Let us denote by $i : U \rightarrow X$ the inclusion.  Then $i_* \sF$ is a reflexive sheaf on $X$.
\end{corollary}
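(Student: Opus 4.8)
The plan is to reduce the statement to the Hartog-type isomorphism recorded in the preceding corollary. Since reflexivity is local on $X$, I would first shrink $X$ so that it is affine; then, because $U \subseteq X$ is an open subset of a Noetherian affine (hence quasi-projective) scheme, there is a coherent sheaf $\sG$ on $X$ with $\sG|_U \cong \sF$ (extension of coherent sheaves across an open immersion, \cf \cite[Chapter II, Exercise 5.15]{Hartshorne}). Such a $\sG$ need not be reflexive, so I would replace it by its double $\O_X$-dual $\sG^{\vee\vee}$; this is a reflexive coherent sheaf on $X$ by the theorem that the $\O_X$-dual of any coherent sheaf is reflexive (applied to $\sG^{\vee}$).

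Next I would observe that $\sG^{\vee\vee}$ restricts to $\sF$ on $U$. Indeed, $\sHom$ commutes with restriction to open subsets, so $\sG^{\vee\vee}|_U \cong (\sG|_U)^{\vee\vee} \cong \sF^{\vee\vee} \cong \sF$, the last isomorphism holding because $\sF$ is assumed reflexive on $U$. Now apply the Hartog corollary above to the reflexive sheaf $\sG^{\vee\vee}$ on the integral normal variety $X$, with the codimension $\geq 2$ closed subset $Y = X \setminus U$ and inclusion $i\colon U \to X$: the natural map $\sG^{\vee\vee} \to i_*\big(\sG^{\vee\vee}|_U\big)$ is an isomorphism. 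Composing with the identification $\sG^{\vee\vee}|_U \cong \sF$ gives $i_*\sF \cong \sG^{\vee\vee}$, which is reflexive; this is precisely the assertion. A small routine point to check is that the adjunction unit $\sG^{\vee\vee} \to i_* i^* \sG^{\vee\vee}$ is the map appearing in the Hartog corollary, and that the resulting isomorphism $i_*\sF \cong \sG^{\vee\vee}$ does not depend on the chosen extension $\sG$.

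The only genuine obstacle is the first step — producing \emph{some} coherent sheaf on $X$ restricting to $\sF$ — and this is exactly why I would reduce to the affine case at the outset; everything after that is formal manipulation of duals together with the Hartog property of reflexive sheaves established earlier in this appendix.
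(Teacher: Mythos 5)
Your argument is correct, and it is essentially the intended derivation: the paper states this as an immediate consequence of the Hartog-type corollary and the double-dual reflexivity theorem, which is exactly the route you take (coherent extension $\sG$, replace by $\sG^{\vee\vee}$, restrict back to $\sF$, apply the Hartog isomorphism to conclude $i_*\sF \cong \sG^{\vee\vee}$). A nice byproduct of your phrasing is that it also establishes coherence of $i_*\sF$ --- the only non-formal point --- since reflexivity presupposes coherence and the abstract isomorphism with $\sG^{\vee\vee}$ delivers both at once.
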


\subsection{Divisors}

Let $X$ be a normal variety of finite type over a field.  By a \emph{Weil divisor} on $X$, we mean a formal sum of integral codimension $1$ subschemes (prime divisors).  Recall that a divisor $D$ is called \emph{effective} if the coefficients of $D$ are nonnegative.  Just like in the regular case, each prime divisor $D$ corresponds to some discrete valuation $v_D$ of the fraction field of $X$ (although the reverse direction is not true).  %This is because the stalk at the generic point of a prime divisor is still a regular ring (since normal rings are regular in codimension $1$).

\begin{definition}
Choose $f \in \sK(X)$, $f \neq 0$.  We define the \emph{principal divisor} $\divisor(f)$ as in the regular case:  $\divisor(f) = \Sigma_i v_{D_i}(f) D_i$.   Likewise, we say that two Weil divisors $D_1$ and $D_2$ are \emph{linearly equivalent}, if $D_1 - D_2$ is principal.
\end{definition}

\begin{definition}
Given a divisor $D$, we define $\O_X(D)$ be the sheaf associated to the following rule:
\[
\Gamma(V, \O_X(D)) = \{ f \in \sK(X)\; |\; \divisor(f)|_V + D |_V \geq 0 \}
\]
A divisor $D$ is called \emph{Cartier} if $\O_X(D)$ is an invertible sheaf.  It is called \emph{$\bQ$-Cartier} if $nD$ is Cartier for some $n > 0$.
\end{definition}

Note that $D$ is effective if and only if $\O_X(D) \supseteq \O_X$.

\begin{proposition}
Suppose that $D$ is a prime divisor, then $\O_X(-D) = \sI_D$, the ideal sheaf defining $D$. Furthermore, if $D$ is any divisor, then $\O_X(D)$ is reflexive.
\end{proposition}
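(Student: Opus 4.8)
The plan is to handle the two assertions separately, in each case reducing to the codimension-one behaviour that normality controls. Throughout, $X$ is integral and normal, and $\sK(X)$ is the constant fraction-field sheaf inside which every $\O_X(D)$ is defined.

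For the first assertion the statement is local, so I would take an affine open $V = \Spec R$ meeting $D$ (the case $D \cap V = \emptyset$ being trivial, as both sides are $R$) with $R$ a normal Noetherian domain, and let $\bp \subseteq R$ be the height-one prime cutting out $D$ on $V$. By definition $\Gamma(V,\O_X(-D))$ consists of the $f \in \Frac(R)$ with $\divisor(f)|_V \geq D|_V$, i.e. $v_{D'}(f) \geq 0$ for every prime divisor $D'$ of $V$ and in addition $v_D(f) \geq 1$. The first family of inequalities says precisely that $f$ lies in $R_{\bq}$ for every height-one prime $\bq$, and since $R$ is normal one has $R = \bigcap_{\height \bq = 1} R_{\bq}$ inside $\Frac(R)$ (Serre's criterion); hence $f \in R$. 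Given this, the condition $v_D(f) \geq 1$ says that the image of $f$ in the DVR $R_{\bp}$ lies in its maximal ideal, i.e. $f \in \bp R_{\bp} \cap R = \bp$. Conversely every element of $\bp$ is a regular function (so all $v_{D'} \geq 0$) vanishing along $D$ (so $v_D \geq 1$). Thus $\Gamma(V,\O_X(-D)) = \bp = \Gamma(V,\sI_D)$, and as these are compatible identifications of subsheaves of $\sK(X)$ we get $\O_X(-D) = \sI_D$.

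For the second assertion I would invoke Corollary~\ref{cor.ExtensionOfReflexiveIsReflexive} together with the observation that forming $\O_X(D)$ only ``sees'' codimension one. Let $U = X_{\mathrm{reg}}$ be the regular locus and $j \colon U \into X$ the inclusion; since $X$ is normal, $X \setminus U$ has codimension $\geq 2$. On $U$ every Weil divisor is Cartier (a regular local ring is a UFD, so every height-one prime is principal), so $\O_X(D)|_U = \O_U(D|_U)$ is invertible, in particular locally free and hence reflexive. I then claim $\O_X(D) = j_*\big(\O_X(D)|_U\big)$. Indeed, for an open $V \subseteq X$ the rule
\[
\Gamma(V,\O_X(D)) = \{\, f \in \sK(X) \mid \divisor(f)|_V + D|_V \geq 0 \,\}
\]
is a condition on the valuations $v_{D_i}$ of the prime divisors $D_i$ meeting $V$; each such $D_i$ has its generic point in $U$ (generic points of prime divisors are regular points of a normal variety) and $D_i \cap U \neq \emptyset$, so the prime divisors meeting $V$ are exactly those meeting $V \cap U$ and the inequalities are literally the same. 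Hence $\Gamma(V,\O_X(D)) = \Gamma(V \cap U,\O_X(D)) = \Gamma\big(V, j_*(\O_X(D)|_U)\big)$, proving the claim. Corollary~\ref{cor.ExtensionOfReflexiveIsReflexive} applied to the reflexive sheaf $\O_X(D)|_U$ then gives that $j_*(\O_X(D)|_U) = \O_X(D)$ is reflexive; as a bonus this also establishes coherence of $\O_X(D)$ without a separate argument.

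The two unwindings of definitions are routine. The one point demanding care is the identity $\O_X(D) = j_*(\O_X(D)|_U)$: one must be sure that passing from $X$ to its regular locus discards no prime divisor, which is exactly the assertion that a normal variety is regular in codimension one. Everything else is bookkeeping with discrete valuations and the Hartogs-type results for reflexive sheaves already recorded in this appendix.
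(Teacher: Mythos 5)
Your proof is correct and follows essentially the same route as the paper: the first claim is the paper's brief local computation fleshed out via $R=\bigcap_{\height\bq=1}R_{\bq}$, and the second claim is exactly the paper's argument that $\O_X(D)$ agrees with the pushforward of its (invertible, hence reflexive) restriction to the regular locus, followed by \autoref{cor.ExtensionOfReflexiveIsReflexive}. The extra details you supply (Serre's criterion, regularity in codimension one ensuring no prime divisor is lost) are exactly the points the paper leaves implicit.
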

\begin{proof}
We first show the equality.  The object defined above is clearly a sheaf.  We will prove the equality of the sheaves in the setting where $U$ is affine.  Then $\Gamma(U, \O_X(D))$ is just the functions in $\O_X$ which vanish to order at least $1$ along $D$, in other words the ideal of $D$.

We now want to show that this sheaf is reflexive (or equivalently, that it is $\textnormal{S}_2$).  First notice that clearly if $U$ is the regular locus of $X$, then $\Gamma(V \cap U, \O_X(D)) \cong \Gamma(V, \O_X(D))$ for any open set $V$.  This is because $V \cap U = U \backslash \{\text{non-regular locus}\}$, the non-regular locus is codimension 2, and the sections of $\O_X(D)$ obviously do not change when removing a codimension 2 subset.  This implies that the natural map $\O_X(D) \rightarrow i_* \O_X(D)|_U$ is an isomorphism, but then we notice that $\O_X(D)|_U$ is reflexive (since it is invertible) and thus, by corollary \autoref{cor.ExtensionOfReflexiveIsReflexive}, $\O_X(D)$ is also reflexive.
\end{proof}

We now list some basic properties of rank-1 reflexive sheaves which completely link their behavior to divisors.

\begin{proposition}
\label{prop.DivisorReflexiveSheafCorrespondence}
Suppose that $X$ is a normal variety.  Then:
\begin{itemize}
\item[(a)]  If $X$ is regular, then every reflexive rank-1 sheaf $\sF$ on $X$ is invertible.  \cite[Proposition 1.9]{HartshorneStableReflexiveSheaves}
\item[(b)]  Every rank one reflexive sheaf $\sF$ on a normal scheme $X$ embeds as a subsheaf of $\sK(X)$.
\item[(c)]  Any reflexive rank 1 subsheaf of $\sK(X)$ is $\O_X(D)$ for some (uniquely determined) divisor $D$.
\end{itemize}
\end{proposition}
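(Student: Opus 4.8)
The statement to prove is Proposition~\ref{prop.DivisorReflexiveSheafCorrespondence}, with its three parts (a), (b), (c). Here is how I would organize the argument.

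\textbf{Part (a).} The plan is to work locally at a point $x \in X$ where $X$ is regular, so $R = \O_{X,x}$ is a regular local ring, hence a UFD. A rank-1 reflexive module $M$ over a regular local ring is free: first embed $M$ into $\sK(X)$ (by part (b), or directly: $M$ is torsion-free of rank 1, so tensoring the injection $M \hookrightarrow M^{\vee\vee}$ with the fraction field identifies $M$ with a fractional ideal). A fractional ideal over a UFD that is reflexive must be principal—clearing denominators reduces to an honest ideal $I \subseteq R$ with $I = I^{\vee\vee}$, and writing $I = \bigcap_{\height \bp = 1} I_\bp$ (the $\textnormal{S}_2$/reflexivity Hartog's property), each $I_\bp$ is principal in the DVR $R_\bp$, and the intersection over the principal divisor data gives $I = \O_X(\divisor(\text{gcd}))$, which is principal since $R$ is a UFD. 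Hence $M$ is free of rank 1, i.e. $\sF$ is invertible. I would cite \cite[Proposition 1.9]{HartshorneStableReflexiveSheaves} as indicated, but sketch this UFD argument.

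\textbf{Part (b).} Let $\sF$ be rank-1 reflexive. At the generic point $\eta$, $\sF_\eta \cong \sK(X)$; fix such an isomorphism. I want to show the composite $\sF \to \sF_\eta \cong \sK(X)$ is injective as a map of sheaves. By Lemma~\ref{LemmaFDualIsTorsionFree} a reflexive sheaf is torsion-free, and a torsion-free sheaf on an integral scheme injects into its stalk at the generic point: locally on an affine open $\Spec R$, the kernel of $M \to M \otimes_R \Frac(R)$ is the torsion submodule, which vanishes. Therefore $\sF \hookrightarrow \sK(X)$ as a subsheaf.

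\textbf{Part (c).} Now suppose $\sG \subseteq \sK(X)$ is a rank-1 reflexive subsheaf. Define a Weil divisor by $D := -\sum_{D_i} \big(\min_{0 \ne f \in \Gamma(U_i, \sG)} v_{D_i}(f)\big) D_i$, where for each prime divisor $D_i$ we localize at its generic point (a DVR) and read off the order; because $\sG$ is coherent this minimum is finite and is $0$ for all but finitely many $D_i$, so $D$ is a well-defined divisor. By construction $\sG$ and $\O_X(D)$ agree at every codimension-1 point. Both sheaves are reflexive—$\sG$ by hypothesis and $\O_X(D)$ by the preceding proposition—so both satisfy the Hartog extension property: each is the pushforward of its restriction to the regular locus $U$ (which has complement of codimension $\ge 2$), where moreover by part (a) both are invertible and agree in codimension 1, hence agree on $U$ by the sheaf axiom for a rank-1 torsion-free sheaf determined by its valuations. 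Pushing forward, $\sG = i_*(\sG|_U) = i_*(\O_X(D)|_U) = \O_X(D)$. Uniqueness of $D$ follows because $\O_X(D) = \O_X(D')$ as subsheaves of $\sK(X)$ forces $v_{D_i}(D) = v_{D_i}(D')$ for all $i$.

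\textbf{Main obstacle.} The genuinely delicate point is part (c): one must be careful that the two reflexive subsheaves of $\sK(X)$ are \emph{equal} and not merely isomorphic. The isomorphism-versus-equality distinction is exactly what pins down $D$ rather than just its linear equivalence class, and the argument hinges on using the fixed embeddings into $\sK(X)$ together with the Hartog property to reduce the comparison to the regular locus, where part (a) trivializes everything. A secondary technical care point is confirming that $\O_X(D)$ as defined by the ``$\divisor(f) + D \geq 0$'' rule is genuinely coherent and reflexive, but this is handled by the proposition immediately preceding, so I would simply invoke it.
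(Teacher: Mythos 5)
The paper gives no argument for this proposition: its ``proof'' simply defers to \autoref{ex.DivisorReflexiveSheafCorrespondence} (with the citation to \cite[Proposition 1.9]{HartshorneStableReflexiveSheaves} for part (a)), so there is no in-text proof to compare against, and your write-up is in effect the intended solution to that exercise. Your argument is correct and standard: (a) reduce to a regular local ring, where a rank-1 reflexive module is a divisorial fractional ideal and the UFD property makes it principal; (b) reflexive implies torsion-free (\autoref{LemmaFDualIsTorsionFree}), and a torsion-free sheaf on an integral scheme injects into the constant sheaf of its generic stalk; (c) read off the coefficients of $D$ from the stalks at codimension-1 points (each a fractional ideal of a DVR, so of the form $t^{n}\O_{X,\eta}$, with $n=0$ for all but finitely many prime divisors by coherence) and then show the two subsheaves of $\sK(X)$ coincide. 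One small streamlining: in (c) the detour through the regular locus and part (a) is not needed -- since both $\sG$ and $\O_X(D)$ are reflexive (hence $\textnormal{S}_2$) subsheaves of $\sK(X)$ on the normal variety $X$, each satisfies $\Gamma(V,\blank)=\bigcap_{\eta\text{ of codim }1\text{ in }V}(\blank)_\eta$ inside $\sK(X)$, so equality of all codimension-1 stalks gives equality of the sheaves directly; this is also the fact implicitly used in your phrase ``agree on $U$ by the sheaf axiom,'' which is the one step you should spell out (it rests on normality, i.e.\ $\O_V=\bigcap_{\height \bp =1}\O_{V,\bp}$, exactly as in \autoref{cor.ExtensionOfReflexiveIsReflexive} plus the codimension-1 description). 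With that point made explicit, the proof is complete, including the equality-versus-isomorphism distinction that pins down $D$ itself rather than its linear equivalence class.
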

\begin{proof}
Left to the reader in \autoref{ex.DivisorReflexiveSheafCorrespondence}.
\end{proof}
The addition operations for divisors translates into the tensor of the associated sheaves, up to reflexification.

\begin{proposition}
\label{prop.SumOfDivisorsReflexification}
Suppose that $X$ is a normal variety and $D$ and $E$ are divisors on $X$.  Then
\begin{itemize}
\item[(a)]  If $E$ is Cartier, then $\O_X(D) \tensor \O_X(E) \cong \O_X(D+E)$
\item[(b)]  In general, $\O_X(D+E) \cong (\O_X(D) \tensor \O_X(E))^{\vee \vee}$
\item[(c)]  $\O_X(-D) = \sHom_{\O_X}(\O_X(D), \O_X) = \O_X(-D)^{\vee}$
\end{itemize}
\end{proposition}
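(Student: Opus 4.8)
My plan is to reduce all three parts to the behavior of a single canonical map, and then to handle that map by the two workhorses of this appendix: the criterion ``reflexive $\Leftrightarrow$ $S_2$'' together with the Hartog-type corollary (a reflexive sheaf on a normal $X$ is $i_*$ of its restriction to the regular locus), and the fact that duals of coherent sheaves are reflexive. The common thread is that each of $\O_X(D)$, $\O_X(E)$, $\O_X(D+E)$ is by definition a subsheaf of the constant sheaf $\sK(X)$, so multiplication of rational functions gives a canonical $\O_X$-bilinear pairing $\O_X(D)\times\O_X(E)\to\sK(X)$, hence a canonical $\O_X$-linear map. For local sections $f\in\O_X(D)(V)$ and $h\in\O_X(E)(V)$ one has, as divisors on $V$, $\divisor(fh)+(D+E)|_V=(\divisor(f)+D|_V)+(\divisor(h)+E|_V)\ge 0$, so the pairing factors through $\O_X(D+E)$; write $\mu=\mu_{D,E}\colon \O_X(D)\otimes_{\O_X}\O_X(E)\to\O_X(D+E)$ for the resulting map. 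Everything comes down to understanding $\mu$.

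For (a): the assertion is local, so where $E$ is Cartier we may choose $g\in\sK(X)$ with $E=\divisor(g)$ on a chart $V$; then $\O_X(E)(V)=g^{-1}\O_X(V)$ and $\O_X(D+E)(V)=g^{-1}\O_X(D)(V)$ directly from the valuative definition, and under the identification $\O_X(D)\otimes_{\O_X}g^{-1}\O_X\cong\O_X(D)$ the map $\mu$ becomes $f\mapsto g^{-1}f$, an isomorphism. Hence $\mu$ is an isomorphism on such a chart, and therefore globally.

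For (b): in general $\O_X(D)\otimes\O_X(E)$ need not be reflexive — it can pick up torsion or fail $S_2$ at the singular points of $X$ — so $\mu$ itself need not be an isomorphism, and one double-dualizes. Since $\O_X(D+E)$ is reflexive (preceding proposition on divisorial sheaves), $\mu$ induces $\mu^{\vee\vee}\colon(\O_X(D)\otimes\O_X(E))^{\vee\vee}\to\O_X(D+E)^{\vee\vee}=\O_X(D+E)$; the source is a dual sheaf, hence reflexive by the theorem that duals are reflexive, so both sides are rank-$1$ reflexive sheaves on $X$. Let $i\colon U\hookrightarrow X$ be the regular locus; since $X$ is normal, $X\setminus U$ has codimension $\ge 2$. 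On $U$ the divisors $D|_U,E|_U$ are Cartier, because a reflexive rank-$1$ sheaf on a regular variety is invertible (\autoref{prop.DivisorReflexiveSheafCorrespondence}(a)), so part (a) makes $\mu|_U$ an isomorphism; as $(-)^{\vee}$ commutes with restriction to opens and is the identity on invertible sheaves, $\mu^{\vee\vee}|_U=\mu|_U$ is an isomorphism. By the Hartog-type corollary, a reflexive sheaf on $X$ is canonically $i_*$ of its restriction to $U$, so $\mu^{\vee\vee}=i_*(\mu^{\vee\vee}|_U)$, and $i_*$ of an isomorphism is an isomorphism; this proves (b).

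For (c): take $E=-D$, so $\mu$ is the evaluation pairing $\O_X(D)\otimes\O_X(-D)\to\O_X$, which by adjunction corresponds to an $\O_X$-linear map $\nu\colon\O_X(-D)\to\sHom_{\O_X}(\O_X(D),\O_X)=\O_X(D)^{\vee}$. Both sides are reflexive ($\O_X(-D)$ by the preceding proposition, $\O_X(D)^{\vee}$ as a dual), and on $U$ the pairing is the usual perfect pairing $\sL\otimes\sL^{-1}\to\O_U$ of the line bundle $\sL=\O_U(D|_U)$, so $\nu|_U$ is an isomorphism; exactly as in (b) the Hartog-type corollary upgrades this to $\nu$ being an isomorphism on $X$, and combined with the reflexivity of $\O_X(-D)$ this gives the identifications displayed in (c). None of the steps is deep: the one point to watch is that in each part one must first exhibit a genuine canonical \emph{global} morphism ($\mu$, its double dual, or $\nu$) before restricting to $U$, so that ``agrees on $U$ and both sides reflexive $\Rightarrow$ isomorphic'' applies verbatim; the only real conceptual subtlety, and the reason (b) needs $(-)^{\vee\vee}$ rather than $\mu$ alone, is the failure of $\O_X(D)\otimes\O_X(E)$ to be $S_2$ along $\Sing X$.
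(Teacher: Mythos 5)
The paper gives no proof of this proposition (it is left to the reader as an exercise), and your argument is correct and uses exactly the toolkit the appendix provides: the canonical multiplication map into $\sK(X)$, the local computation when one divisor is Cartier, restriction to the regular locus where rank-one reflexive sheaves are invertible, and the fact that a map of reflexive sheaves which is an isomorphism off a codimension-$\geq 2$ set is an isomorphism. One small remark: as printed the last term in (c) reads $\O_X(-D)^{\vee}$, which is surely a typo for $\O_X(D)^{\vee}$; what your adjunction argument establishes is precisely this intended identification $\O_X(-D) \cong \sHom_{\O_X}(\O_X(D),\O_X) = \O_X(D)^{\vee}$.
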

\begin{proof}
Left to the reader, see \autoref{ex.SumOfDivisorsReflexification}
\end{proof}

Finally, we mention a result relating sections and linearly equivalent divisors, which will be a key part of this paper.

\begin{theorem}
\label{thm.BijectionBetweenSectionsAndDivisors}
Suppose that $X$ is a normal variety and $D$ is a Weil divisor on $X$.  Then there is a bijection between the following two sets
\[
\left\{ \begin{array}{c} \text{Effective divisors $E$} \\ \text{linearly equivalent to $D$} \end{array} \right\} \longleftrightarrow \left\{ \begin{array}{c} \text{Nonzero sections $\gamma \in H^0(X, \O_X(D))$}\\ \text{modulo equivalence} \end{array}\right\}
\]
where we define $\gamma$ and $\gamma'$ in $H^0(X, \O_X(D))$ to be equivalent if there exists a unit $u \in H^0(X, \O_X)$ such that $u\gamma = \gamma'$.
\end{theorem}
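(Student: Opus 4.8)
The plan is to write down the correspondence explicitly and then check that the two candidate maps are mutually inverse; the only substantive ingredient is the Hartogs-type principle for normal varieties recalled in \autoref{sec.reflex}.

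First I would construct the two maps. Given a nonzero section $\gamma \in H^0(X, \O_X(D))$, the very definition of $\O_X(D)$ says that $\gamma$ is a nonzero rational function $f \in \sK(X)$ with $\divisor(f) + D \geq 0$; set $E_\gamma := \divisor(f) + D$. Then $E_\gamma$ is effective by construction and $E_\gamma - D = \divisor(f)$ is principal, so $E_\gamma \sim D$. Conversely, given an effective divisor $E \sim D$, pick $f \in \sK(X)$, $f \neq 0$, with $\divisor(f) = E - D$ (possible since $E - D$ is principal); then $\divisor(f) + D = E \geq 0$, so $f$ is a nonzero element $\gamma_E$ of $H^0(X, \O_X(D))$. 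These assignments $\gamma \mapsto E_\gamma$ and $E \mapsto \gamma_E$ are the two directions of the bijection.

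Next I would isolate the key lemma: for $g \in \sK(X)$ with $g \neq 0$, one has $\divisor(g) \geq 0$ if and only if $g \in H^0(X, \O_X)$, and $\divisor(g) = 0$ if and only if $g$ is a unit of $H^0(X, \O_X)$. The first statement is exactly the description $\Gamma(V, \O_X(0)) = \{g : \divisor(g)|_V \geq 0\}$ combined with $\O_X(0) = \O_X$ — this is where normality is used, via the reflexivity of $\O_X$ and the Hartogs phenomenon of \autoref{sec.reflex} — and the second follows by applying the first to both $g$ and $g^{-1}$.

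Finally I would verify that the maps are mutually inverse modulo the stated equivalence. If $\gamma = f$ and $\gamma' = f'$ are nonzero sections with $E_\gamma = E_{\gamma'}$, then $\divisor(f/f') = \divisor(f) - \divisor(f') = 0$, so by the lemma $f/f' = u$ is a unit of $H^0(X, \O_X)$ and $\gamma' = u\gamma$; conversely $\gamma' = u\gamma$ with $u$ a unit forces $\divisor(u) = 0$ and hence $E_\gamma = E_{\gamma'}$. Thus $\gamma \mapsto E_\gamma$ descends to a well-defined injection on equivalence classes. For $E \sim D$ effective, $\gamma_E = f$ (with $\divisor f = E - D$) satisfies $E_{\gamma_E} = \divisor(f) + D = E$, giving surjectivity; and for $\gamma = f$, any $f'$ with $\divisor(f') = E_\gamma - D = \divisor(f)$ differs from $f$ by a unit, so the class of $\gamma_{E_\gamma}$ equals that of $\gamma$. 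This establishes the bijection. The computations are routine; the only point demanding care — and the main obstacle, such as it is — is ensuring the equivalence relation on sections matches precisely the ambiguity in choosing $f$, which is exactly the identification of global units of $\O_X$ with rational functions of trivial divisor, and this identification is precisely what normality buys us (note that $H^0(X,\O_X)^\times$ need not be just $k^\times$ when $X$ is not proper).
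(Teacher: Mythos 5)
Your proof is correct and is essentially the paper's own argument made explicit: where the paper embeds $\O_X(D)$ into $\sK(X)$ via $\gamma \mapsto 1$ and cites \autoref{prop.DivisorReflexiveSheafCorrespondence} to produce the divisor and handle uniqueness, you unwind this to the rational-function description of sections, $E_\gamma = \divisor(\gamma) + D$, together with the fact (your key lemma, which is exactly where normality enters) that a rational function with $\divisor(g) = 0$ is a unit of $H^0(X, \O_X)$. The verification that the equivalence by global units matches the ambiguity in choosing $f$ is exactly the content of the paper's final two sentences, so there is no gap.
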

\begin{proof}
Set $\sM = \O_X(D)$.
The choice $\gamma$ induces an embedding $i_{\gamma} : \sM \hookrightarrow \sK(X)$ which sends $\gamma$ to $1$.  Thus $\gamma$ induces a divisor via \autoref{prop.DivisorReflexiveSheafCorrespondence}.  It follows from the same argument that $\gamma$ and $\gamma'$ induce the same divisor if and only if $i_{\gamma}$ and $i_{\gamma'}$ have the same image in $\sK(X)$.  But this happens if and only if $\gamma$ and $\gamma'$ are unit multiplies of one another.
\end{proof}

\subsection{Exercises}

\begin{exercise}
\label{ex.ReflexiveSheafExample}
Show that $\langle x, z\rangle \in k[x,y,z]/\langle xy - z^2 \rangle$ corresponds to a reflexive ideal sheaf after taking Spec.
\end{exercise}

\begin{exercise}
 Which of the following $k[x,y]=R$-modules are reflexive?  If a module is not reflexive, compute its double dual $M^{\vee \vee}$.
\begin{itemize}
\item[(a)]  The ideal $\langle x \rangle$.
\item[(b)]  The ideal $\langle x, y\rangle$.
\item[(c)]  The module $R/\langle x,y\rangle$.
\item[(d)]  The module $R/\langle x\rangle $.
\item[(e)]  The ideal $\langle x^2, xy\rangle = \langle x,y \rangle^2 \cap \langle y\rangle$.
\end{itemize}
\end{exercise}

\begin{exercise}
Suppose that $\pi : Y \to X$ is a finite dominant map of normal varieties and $\sF$ is a coherent sheaf on $Y$.  Then $\sF$ is reflexive on $Y$ if and only if $\pi_* \sF$ is reflexive on $X$. \vskip 3pt
\emph{Hint: } Use the fact that you can check whether a sheaf is reflexive by checking whether it is $\textnormal{S}_2$.  Then use the criterion for checking depth via local cohomology.
\end{exercise}

\begin{exercise}
\label{ex.DivisorReflexiveSheafCorrespondence}
Prove \autoref{prop.DivisorReflexiveSheafCorrespondence}.
\end{exercise}

\begin{exercise}
\label{ex.SumOfDivisorsReflexification}
Prove \autoref{prop.SumOfDivisorsReflexification}.
\end{exercise}

\bibliographystyle{skalpha}
\bibliography{CommonBib}

\end{document}